\newtheorem*{corollary*}{Corollary}
\newtheorem{theorem}{Theorem}[section]
\newtheorem{corollary}[theorem]{Corollary}
\newtheorem{lemma}[theorem]{Lemma}
\newtheorem{proposition}[theorem]{Proposition}
\newtheorem*{proposition*}{Proposition}
\newtheorem{claim}{Claim}
\newtheorem*{claim*}{Claim}
\theoremstyle{definition}
\newtheorem{definition}[theorem]{Definition}
\newtheorem{remark}[theorem]{Remark}
\newtheorem{condition}[theorem]{ Condition}
\newtheorem{example}[theorem]{Example}
\newtheorem{definitiontheorem}[theorem]{Definition-Theorem}
\theoremstyle{remark}
\numberwithin{equation}{theorem}
\newenvironment{pfclaim}{

\begin{proof}
}
{
\end{proof}

}
\renewcommand{\mod}{\operatorname{mod}}
\newcommand{\proj}{\operatorname{proj}}
\newcommand{\End}{\operatorname{End}}
\newcommand{\Hom}{\operatorname{Hom}}
\newcommand{\add}{\operatorname{\mathsf{add}}}
\newcommand{\Kb}{\mathsf{K}^{\rm b}}
\renewcommand{\H}{\mathcal{H}}
\newcommand{\silt}{\operatorname{\mathsf{silt}}}
\newcommand{\tsilt}{\operatorname{\mathsf{2silt}}}
\newcommand{\sttilt}{\operatorname{\mathsf{s\tau -tilt}}}
\newcommand{\trigid}{\operatorname{\mathsf{\tau\text{-}rigid}}}
\newcommand{\supp}{\operatorname{Supp}}
\newcommand{\Z}{{\mathbb{Z}}}
\newcommand{\R}{{\mathbb{R}}}
\newcommand{\Rad}{\operatorname{Rad}}
\renewcommand{\Im}{\operatorname{Im}}
\newcommand{\fac}{\operatorname{\mathsf{Fac}}}
\newcommand{\Sym}{\mathfrak S}
\newcommand{\ind}{\operatorname{ind}}
\newcommand{\dip}{\operatorname{dp}}
\newcommand{\dis}{\operatorname{ds}}
\begin{document}
\title[Weak orders on symmetric groups and  posets of support $\tau$-tilting modules]
{Weak orders on symmetric groups and posets of support $\tau$-tilting modules}

\author{ Ryoichi Kase}
\address{Department of Mathematics,
Nara Women's University, Kitauoya-Nishimachi, Nara city, Nara 630-8506, Japan}
\email{r-kase@cc.nara-wu.ac.jp}
\urladdr{}
\begin{abstract}
We give a necessary and sufficient condition for that the support $\tau$-tilting poset of a finite dimensional
algebra $\Lambda$ is isomorphic to the poset of symmetric group $\Sym_{n+1}$ with weak order. 
Moreover we show that there are infinitely many finite dimensional algebras
whose support $\tau$-tilting posets are isomorphic  to $\Sym_{n+1} $.
\end{abstract}
\maketitle
\section{Introduction}
The notion of tilting modules was introduced in \cite{BrB}.
It is known that they control derived equivalence \cite{H}.
 Therefore to obtain many tilting modules
 is an important problem in representation theory of finite dimensional algebras.
Tilting mutation given by Riedtmann-Schofield \cite{RS} is an approach to this problem.
It is an operation which gives a new tilting module from given one by replacing an
 indecomposable direct summand. However tilting mutation is not always possible
 depending on a choice of an indecomposable direct summand.

 Adachi-Iyama-Reiten introduced the notion of support $\tau$-tilting modules as a 
   generalization of tilting modules \cite{AIR}. 
   They give a mutation of support $\tau$-tilting modules and complemented
   that of tilting modules. i.e. the support $\tau$-tilting mutation has following nice properties:
   \begin{itemize}  
\item Support $\tau$-tilting mutation is always possible.
\item There is a partial order on the set of (isomorphism classes of) basic support $\tau$-tilting modules
such that its Hasse quiver realizes the support $\tau$-tilting mutation. (An analogue of Happel-Unger's result \cite{HU} for tilting modules.)
\end{itemize}
   Moreover they showed deep connections between $\tau$-tilting theory, silting theory, torsion theory and cluster tilting theory.
   
   Then for several classes of algebras, support $\tau$-tilting posets are calculated.
One interesting example is a preprojective algebra of Dynkin type. Preprojective
algebras play an important role in representation theory of algebras and Lie theory.
Mizuno shows the following result.
\begin{theorem}
\label{mizuno}\cite[Theorem\;2.30]{M} Let $\Lambda$ be a preprojective algebra of Dynkin type.
Then the support $\tau$-tilting poset of $\Lambda$ is isomorphic to 
corresponding Weyl group with weak order.
\end{theorem}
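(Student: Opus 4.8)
The plan is to exhibit an explicit bijection between $W$ and $\sttilt\Lambda$ built from the two‑sided ideals of $\Lambda$, and to show it matches the Hasse quivers. Fix a complete set $e_1,\dots,e_n$ of primitive orthogonal idempotents of $\Lambda$, let $S_1,\dots,S_n$ be the corresponding simple modules, and let $W$ be the Weyl group with Coxeter generators $s_1,\dots,s_n$. For each $i$ set $I_i:=\Lambda(1-e_i)\Lambda$, and for $w\in W$ with a reduced expression $w=s_{i_1}\cdots s_{i_\ell}$ set $I_w:=I_{i_1}I_{i_2}\cdots I_{i_\ell}$, viewed as a left $\Lambda$‑module. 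I would prove that
\[
I_{(-)}\colon W\longrightarrow \sttilt\Lambda,\qquad w\longmapsto I_w
\]
is a well‑defined, order‑reversing bijection, where $W$ carries the weak order and $\sttilt\Lambda$ the order of \cite{AIR}. Since the weak order is anti‑isomorphic to its own opposite (via $w\mapsto w_0w$, $w_0$ the longest element), this is equivalent to the asserted \emph{isomorphism} $\sttilt\Lambda\cong (W,\le)$. The one structural input I would take as given is that $I_w$ is independent of the chosen reduced expression; this belongs to the module theory of preprojective algebras and $2$‑Calabi–Yau categories, and is used throughout \cite{M}.

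\textbf{Step 1: each $I_w$ is support $\tau$‑tilting, and right multiplication by $I_i$ is a mutation.} I would induct on $\ell(w)$, the base case $I_e=\Lambda$ being the regular module, which is $\tau$‑tilting. For the step, assume $\ell(ws_i)=\ell(w)+1$, so $I_{ws_i}=I_wI_i$, and consider the short exact sequence of $\Lambda$‑modules
\[
0\longrightarrow I_wI_i\longrightarrow I_w\longrightarrow I_w\otimes_\Lambda(\Lambda/I_i)\longrightarrow 0 .
\]
The crux — and the place where the specific structure of a preprojective algebra of Dynkin type is indispensable — is that the ideals $I_i$ act as reflection functors; using this, together with the self‑injective, $2$‑Calabi–Yau structure of $\Lambda$, one reads off from the sequence above exactly the minimal left approximation and cokernel defining the $i$‑th left mutation $\mu_i^-$ of \cite{AIR}. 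Hence $I_{ws_i}=\mu_i^-(I_w)$ is again a basic support $\tau$‑tilting module, and $I_w$ covers $I_{ws_i}$ in $\sttilt\Lambda$; dually, if $\ell(ws_i)=\ell(w)-1$ then $I_{ws_i}=\mu_i^+(I_w)$ covers $I_w$. So the edge $\{w,ws_i\}$ of the Hasse diagram of $(W,\le)$ corresponds to the arrow between $I_w$ and $I_{ws_i}$ in the support $\tau$‑tilting quiver, with orientation reversed.

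\textbf{Steps 2 and 3: bijectivity and conclusion.} For injectivity I would show $\fac I_w=\fac I_v\Rightarrow w=v$ by induction on $\ell(w)+\ell(v)$: if $w\ne e$, pick a right descent $s_i$ of $w$; by Step 1, $\mu_i^-(I_w)=I_{ws_i}$, so the torsion class $\fac I_{ws_i}$ is the lower cover of $\fac I_w=\fac I_v$ in mutation‑direction $i$, which forces $s_i$ to be a right descent of $v$ and $\fac I_{ws_i}=\fac I_{vs_i}$; the inductive hypothesis gives $ws_i=vs_i$, hence $w=v$. (Equivalently, the layer filtration, or the $\mathbf g$‑vector, of $I_w$ determines $w$.) For surjectivity, recall that $\Lambda$ is $\tau$‑tilting finite — $\operatorname{mod}\Lambda$ has only finitely many torsion classes — so the support $\tau$‑tilting quiver of $\Lambda$ is connected \cite{AIR}; by Step 1 the set $\{I_w:w\in W\}$ is closed under mutation, hence is a union of connected components, and since it contains $\Lambda$ it is all of $\sttilt\Lambda$. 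Finally, Steps 1–3 show that $I_{(-)}$ is a bijection carrying the Hasse quiver of $(W,\le)$ onto that of $\sttilt\Lambda$ with reversed orientation; since a finite poset is the transitive closure of its Hasse quiver, $I_{(-)}$ is an order anti‑isomorphism, whence $\sttilt\Lambda\cong(W,\le)$.

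The decisive and most delicate point is Step 1: identifying right multiplication by the two‑sided ideal $I_i$ with support $\tau$‑tilting mutation in the $i$‑th direction. This is exactly where the fine module theory of preprojective algebras is needed — the reflection‑functor interpretation of the $I_i$, the self‑injective/$2$‑Calabi–Yau structure of $\Lambda$, and the compatibility of the natural short exact sequences above with the approximation sequences defining mutation in \cite{AIR}. Once this is in place, the combinatorics of the weak order and the general $\tau$‑tilting formalism make the remaining steps routine.
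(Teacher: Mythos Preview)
The paper does not prove this theorem: it is stated in the introduction as a citation of \cite[Theorem~2.30]{M}, and the more detailed restatement in Theorem~\ref{Mizunosresult} is likewise quoted without proof. So there is no ``paper's own proof'' to compare against. Your outline is, in fact, a faithful sketch of Mizuno's original argument: define $I_w$ via reduced expressions (well-definedness quoted from \cite{BIRS,M}), show that multiplying by $I_i$ realises support $\tau$-tilting mutation, deduce that $\{I_w\}$ is closed under mutation and hence exhausts $\sttilt\Lambda$, and finally match Hasse quivers. You correctly identify Step~1 as the decisive point and correctly flag that it is where all the hard work lies; but be aware that your paragraph there is an invocation rather than a proof --- the identification of $I_wI_i$ with $\mu_i^-(I_w)$ requires a careful analysis of approximation sequences that constitutes the technical core of \cite{M}, and phrases like ``one reads off'' understate what is involved.

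It is worth noting that for type $A_n$ specifically, the present paper does furnish an independent proof as a byproduct of its main theorem: Example~\ref{examplenscd}(1) verifies that the preprojective algebra $\Pi_n$ satisfies Condition~\ref{nscd}, and Proposition~\ref{ifpart} then gives $\sttilt\Pi_n\simeq(\Sym_{n+1},\le)$ directly, by exhibiting an explicit family of two-term complexes $X_{\mathbf{i}}$ and checking the presilting conditions combinatorially. This route avoids the ideal-multiplication/mutation machinery entirely, at the cost of being specific to type $A$.
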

In particular, the support $\tau$-tilting poset of preprojective algebra of 
 type $A$ is realized by the symmetric group with weak order.
 Such an algebra is not only preprojective algebra of type $A$. Iyama-Zhang shows that
 support $\tau$-tilting poset of the Auslander algebra of the truncated polynomial ring is also
 isomorphic to the symmetric group with weak order \cite{IZ}.
In this paper we classify such algebras.
  

    
\subsection*{Notation} Throughout this paper, let $\Lambda=kQ/I$ be a basic finite dimensional algebra over an algebraically
closed field $k$, where $Q$ is a finite quiver and $I$ is an admissible ideal of $kQ$.

We denote by $Q_0$ the set of 
vertices  of $Q$ and $Q_1$ the set of arrows of $Q$. We set $Q^{\circ}$ the quiver obtained from $Q$ by 
deleting all loops.

\begin{enumerate}[1.]

\item For arrows $\alpha:a_0\to a_1$ and $\beta: b_0\to b_1$ of $Q$,
we mean by $\alpha\beta$ the path $a_0\xrightarrow{\alpha}a_1\xrightarrow{\beta}b_1$ if $a_1=b_0$, otherwise 0 in $kQ$.

\item We denote by $\mod \Lambda\ (\proj\Lambda)$ the category of finitely generated (projective) right $\Lambda$-modules.

\item By a module, we always mean a finitely generated right module.

\item For a poset $\mathbb{P}$ and $a,b \in \mathbb{P}$, we denote by $\H(\mathbb{P})$ the Hasse quiver
of $\mathbb{P}$ and put $[a,b]:=\{x\in \mathbb{P}
\mid a\leq x\leq b\}$. We denote by $\dip(a)$ the set of direct predecessor of $a$ in $\H(\mathbb{P})$
and by $\dis(a)$ the set of direct successor of $a$ in $\H(\mathbb{P})$.
We say that $\mathbb{P}$ is $n$-\emph{regular} provided $\dip(a)+\dis(a)=n$ holds for any element $a\in \mathbb{P}$. 
We call a subposet $\mathbb{P'}$ of $\mathbb{P}$
a \emph{full subposet} if the inclusion $\mathbb{P'}\subset \mathbb{P}$ induces
 a quiver inclusion from $\H(\mathbb{P'})$ to $\H(\mathbb{P})$. 
By definition if $\mathbb{P'}$ is a full subposet of $\mathbb{P}$, then $\H(\mathbb{P'})$
is a full subquiver of $\H(\mathbb{P})$. 
\end{enumerate} 
\section{Preliminary}
In this section, we recall the definitions and their basic properties of 
support $\tau$-tilting modules, silting complexes and the weak order on Symmetric groups.


\subsection{Support $\tau$-tilting modules}
\label{subsec:2.2}
For a module $M$, we denote by $|M|$ the number of non-isomorphic indecomposable direct summands of $M$.
The Auslander-Reiten translation is denoted by $\tau$. 
(Refer to \cite{ASS, ARS} for definition and properties.)

Let us recall the definition of support $\tau$-tilting modules.

\begin{definition}
Let $M$ be a $\Lambda$-module and $P$ a projective $\Lambda$-module.
\begin{enumerate}
\item We say that $M$ is $\tau$-\emph{rigid} if it satisfies
$\Hom_{\Lambda}(M, \tau M)=0$.
\item A pair $(M,P)$ is said to be $\tau$-\emph{rigid} if $M$ is $\tau$-rigid and $\Hom_\Lambda(P,M)=0$.
\item 
A support $\tau$-tilting pair $(M,P)$ is defined to be a $\tau$-rigid pair with $|M|+|P|=|\Lambda|$. 
\item We call $M$ a \emph{support $\tau$-tilting module} if there exists a projective module $P$ such that
$(M,P)$ is a support $\tau$-tilting pair. 
The set of isomorphism classes of basic support $\tau$-tilting modules of $\Lambda$ is denoted by $\sttilt\Lambda$.
\end{enumerate}
\end{definition}
We denote by $e_i$ the primitive idempotent corresponding to a vertex $i$ of $Q$.
For a module $M$, we define a subset of $Q_0$ by
\[\supp(M):=\{i\in Q_0\ |\ Me_i\neq0 \}.\]
If $(M, e\Lambda)$ is a support $\tau$-tilting pair for some idempotent $e$, 
then $\supp(M)$ coincides with the set of vertices $i$ satisfying $ee_i=0$.
\begin{proposition}
\cite[Proposition 2.3]{AIR}
Let $M$ be a support $\tau$-tilting module. If $(M,P)$ and $(M,P')$ are support $\tau$-tilting pairs,
then $\add P=\add P'=\add e\Lambda$, where $e=\sum_{i\in Q_0\setminus \supp(M) }e_i$. 
\end{proposition}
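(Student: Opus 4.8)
The plan is to pin down $\add P$ entirely from the single condition $\Hom_\Lambda(P,M)=0$ together with the cardinality constraint $|M|+|P|=|\Lambda|$, and then to observe that the very same description applies verbatim to $P'$, so that $\add P=\add P'$ follows.

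First I would record the elementary identification $\Hom_\Lambda(e_i\Lambda,M)\cong Me_i$ for each $i\in Q_0$. Since $P$ is a basic projective, each of its indecomposable summands has the form $e_i\Lambda$, and $\Hom_\Lambda(P,M)=0$ forces $Me_i=0$, i.e.\ $i\in Q_0\setminus\supp(M)$, for every such summand. Hence $\add P\subseteq\add e\Lambda$, and the identical argument gives $\add P'\subseteq\add e\Lambda$. For the reverse inclusion I would argue by contradiction: suppose some indecomposable summand $e_j\Lambda$ of $e\Lambda$ — so $j\notin\supp(M)$, that is $Me_j=0$ — does not lie in $\add P$. Then $(M,\,P\oplus e_j\Lambda)$ is again a $\tau$-rigid pair, because $M$ is unchanged (hence still $\tau$-rigid) and $\Hom_\Lambda(P\oplus e_j\Lambda,M)\cong\Hom_\Lambda(P,M)\oplus Me_j=0$. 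But the total number of indecomposable summands of this pair is $|M|+|P|+1=|\Lambda|+1$, contradicting the uniform bound $|N|+|Q|\le|\Lambda|$ valid for every $\tau$-rigid pair $(N,Q)$ over $\Lambda$. Therefore $\add e\Lambda\subseteq\add P$, and combining the two inclusions yields $\add P=\add e\Lambda$; the same reasoning gives $\add P'=\add e\Lambda$, so $\add P=\add P'=\add e\Lambda$. (As a byproduct one also reads off $|M|=|\supp(M)|$ for any support $\tau$-tilting module $M$.)

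The only input that is not pure bookkeeping with idempotents is the uniform inequality $|N|+|Q|\le|\Lambda|$ for $\tau$-rigid pairs, and this is where the real work of the statement sits. I would obtain it from the corresponding bound on the number of terms of a $2$-term presilting complex in $\Kb(\proj\Lambda)$ (equivalently, from the classical Auslander–Smalø bound on the number of indecomposable summands of a $\tau$-rigid module, transported through the standard correspondence between $\tau$-rigid pairs and $2$-term presilting complexes); if this bound is taken as a known prerequisite, the argument above is immediate. Thus the main obstacle is simply having that bound in hand — the rest is the definition of $\supp$ and the Hom–computation $\Hom_\Lambda(e_i\Lambda,M)\cong Me_i$.
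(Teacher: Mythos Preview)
The paper does not give its own proof of this proposition; it is simply quoted from \cite{AIR}. Your argument is correct and is essentially the standard one: the inclusion $\add P\subseteq\add e\Lambda$ is immediate from $\Hom_\Lambda(e_i\Lambda,M)\cong Me_i$, and the reverse inclusion comes from the bound $|N|+|Q|\le|\Lambda|$ for $\tau$-rigid pairs, which the paper records (also from \cite{AIR}) just afterwards as Proposition~\ref{basicfact}(1). One minor quibble: you assert that $P$ is basic, which is not part of the hypothesis, but this is harmless since passing to the basic version of $P$ changes neither $\add P$, nor $\Hom_\Lambda(P,M)$, nor $|P|$.
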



\begin{proposition}\cite[Proposition 1.3, Lemma 2.1]{AIR}
\label{basicfact} The following hold.
 \begin{enumerate}[{\rm (1)}]
\item A $\tau$-rigid pair $(M,P)$ satisfies the inequality $|M|+|P|\leq |\Lambda|$. 
\item Let $J$ be an ideal of $\Lambda$. Let $M$ and $N$ be  $(\Lambda/J)$-modules .
If $\Hom_{\Lambda}(M,\tau N)=0$, then $\Hom_{\Lambda/I}(M,\tau_{\Lambda/J} N)=0$.
Moreover, if $J=(e)$ is an two-sided ideal generated by an idempotent $e$, then the converse holds.

\end{enumerate}
\end{proposition}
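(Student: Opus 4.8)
The plan is to prove part (2) first and then to obtain part (1) by reducing it, via part (2), to the classical fact (due to Auslander--Smal\o) that a $\tau$-rigid module over a basic finite dimensional algebra $\Gamma$ has at most $|\Gamma|$ pairwise non-isomorphic indecomposable summands. For the reduction, since $\Lambda$ is basic I would write $P=e\Lambda$ with $e=\sum_{i\in S}e_i$; then $\Hom_\Lambda(P,M)=0$ is equivalent to $Me=0$, i.e.\ $M$ is a module over $\Lambda/(e)$, which is again basic with $|\Lambda/(e)|=|\Lambda|-|P|$. Applying the forward direction of part (2) with $N=M$ and $J=(e)$ shows that $M$ is $\tau$-rigid as a $\Lambda/(e)$-module, so that very bound applied to $\Lambda/(e)$ gives $|M|\le|\Lambda/(e)|=|\Lambda|-|P|$, whence $|M|+|P|\le|\Lambda|$. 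Alternatively, one can observe that a minimal projective presentation of $M$ defines a two-term presilting complex in $\Kb(\proj\Lambda/(e))$ (by the standard correspondence between $\tau$-rigidity and two-term presilting complexes), hence a direct summand of a silting complex, and every silting complex has exactly $|\Lambda/(e)|$ indecomposable summands.

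For part (2) I would fix a minimal projective presentation $P_1\xrightarrow{p}P_0\to N\to 0$ over $\Lambda$ and apply $-\otimes_\Lambda\Lambda/J$ to get a projective presentation $\bar P_1\to\bar P_0\to N\to 0$ over $\Lambda/J$ with $\bar P_i=P_i/P_iJ$. Dualizing the $\Lambda$-presentation by $\Hom_\Lambda(-,\Lambda)$ and the $\Lambda/J$-presentation by $\Hom_{\Lambda/J}(-,\Lambda/J)$, the adjunction isomorphism $\Hom_{\Lambda/J}(\bar P_i,\Lambda/J)\cong\Hom_\Lambda(P_i,\Lambda/J)$ together with the surjections $\Hom_\Lambda(P_i,\Lambda)\twoheadrightarrow\Hom_\Lambda(P_i,\Lambda/J)$ (valid since $P_i$ is projective) produces a map of complexes, hence a surjection $\operatorname{Tr}_\Lambda N\twoheadrightarrow\operatorname{Tr}_{\Lambda/J}N$ modulo projective summands. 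Applying the $k$-duality $D$ yields an embedding $\tau_{\Lambda/J}N\hookrightarrow\tau_\Lambda N$ of $\Lambda$-modules (modulo an injective $\Lambda/J$-summand, which is harmless below). If $\Hom_\Lambda(M,\tau_\Lambda N)=0$ then $\Hom_\Lambda(M,\tau_{\Lambda/J}N)=0$, and since $M$ and $\tau_{\Lambda/J}N$ are $\Lambda/J$-modules we get $\Hom_{\Lambda/J}(M,\tau_{\Lambda/J}N)=\Hom_\Lambda(M,\tau_{\Lambda/J}N)=0$: this is the forward implication.

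For the converse, assume $J=(e)$ with $e$ idempotent. The extra ingredient is that $(-)e\cong-\otimes_\Lambda\Lambda e$ is exact, because $\Lambda e$ is a projective left $\Lambda$-module; using $Ne_m=0$ for $m\in\supp(e)$ one deduces $P_0e_m=(\ker(P_0\to N))e_m$ for all such $m$, and a short check then shows the tensored presentation above is already minimal over $\Lambda/(e)$. Hence the surjection $\operatorname{Tr}_\Lambda N\twoheadrightarrow\operatorname{Tr}_{\Lambda/(e)}N$ is honest with kernel $(e)\operatorname{Tr}_\Lambda N$, and dualizing yields a short exact sequence $0\to\tau_{\Lambda/(e)}N\to\tau_\Lambda N\to C\to 0$ with $C=D\big((e)\operatorname{Tr}_\Lambda N\big)$. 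For any $\Lambda/(e)$-module $M$ one has $\Hom_\Lambda(M,C)\cong D\big(M\otimes_\Lambda(e)\operatorname{Tr}_\Lambda N\big)$, and since $(e)\operatorname{Tr}_\Lambda N$ is a quotient of $\Lambda e\otimes_{e\Lambda e}(e\operatorname{Tr}_\Lambda N)$, the group $M\otimes_\Lambda(e)\operatorname{Tr}_\Lambda N$ is a quotient of $(M\otimes_\Lambda\Lambda e)\otimes_{e\Lambda e}(e\operatorname{Tr}_\Lambda N)=Me\otimes_{e\Lambda e}(e\operatorname{Tr}_\Lambda N)=0$; thus $\Hom_\Lambda(M,C)=0$. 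Applying $\Hom_\Lambda(M,-)$ to the short exact sequence gives $\Hom_\Lambda(M,\tau_\Lambda N)\cong\Hom_\Lambda(M,\tau_{\Lambda/(e)}N)=\Hom_{\Lambda/(e)}(M,\tau_{\Lambda/(e)}N)$, which yields the converse. (Equivalently, one can combine the characterization $\Hom_\Lambda(X,\tau Y)=0\iff\Ext^1_\Lambda(Y,\fac X)=0$ with the facts that $\fac_\Lambda M=\fac_{\Lambda/(e)}M$ for a $\Lambda/(e)$-module $M$ and that, by exactness of $(-)e$, every $\Lambda$-extension of $\Lambda/(e)$-modules is already a $\Lambda/(e)$-extension, so $\Ext^1_\Lambda(N,-)$ and $\Ext^1_{\Lambda/(e)}(N,-)$ agree on $\Lambda/(e)$-modules.)

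The main obstacle — and the only point where the hypothesis on $J$ is genuinely needed — is the converse in part (2): for a general ideal $J$ the middle term of a $\Lambda$-extension of $\Lambda/J$-modules need not be annihilated by $J$, and dually the tensored presentation need not remain minimal, so only the embedding $\tau_{\Lambda/J}N\hookrightarrow\tau_\Lambda N$ survives. I expect the delicate bookkeeping to be precisely the verification that, when $J=(e)$, minimality of the projective presentation is preserved under $-\otimes_\Lambda\Lambda/(e)$ (equivalently, that the extra projective summand of the dualized tensored presentation vanishes). Once the behaviour of $\operatorname{Tr}$ under $\Lambda\to\Lambda/J$ is pinned down in this way, everything else is formal.
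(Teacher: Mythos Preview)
The paper does not give its own proof of this proposition; it is simply quoted from \cite[Proposition~1.3, Lemma~2.1]{AIR}. Your sketch is essentially the argument found there: the forward direction of (2) comes from the surjection $\operatorname{Tr}_\Lambda N\twoheadrightarrow\operatorname{Tr}_{\Lambda/J}N$ (up to projectives) obtained by tensoring a minimal presentation, the converse for $J=(e)$ uses that $-\otimes_\Lambda\Lambda/(e)$ preserves minimality of presentations (equivalently, your $\Ext^1$ argument via exactness of $(-)e$), and (1) is then reduced to the classical $P=0$ case; so there is nothing to compare beyond noting that your argument matches the cited source.
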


Denote by $\fac M$ the category of factor modules of finite direct sums of copies of $M$.

\begin{definitiontheorem}\cite[Lemma 2.25]{AIR}
For support $\tau$-tilting modules $M$ and $M'$, we write $M\geq M' $ if $\fac M\supseteq \fac M'$.
 Then one has the following equivalent conditions:
\begin{enumerate}
\item $M\geq M'$.
\item  $\Hom_{\Lambda}(M',\tau M)=0$ and $\supp(M)\supseteq \supp(M')$.
\end{enumerate}
Moreover, $\geq$ gives a partial order on $\sttilt\Lambda$.
\end{definitiontheorem} 

Let $(N,R)$ be a pair of a module $N$ and a projective module $R$.

We say that $(N,R)$ is \emph{basic} if so are $N$ and $R$.
A direct summand $(N', R')$ of $(N,R)$ is also a pair of a module $N'$ and a projective module $R'$
 which are direct summands of $N$ and $R$, respectively.

A pair $(N,R)$ is said to be \emph{almost complete support $\tau$-tilting} provided it is a $\tau$-rigid pair with $|N|+|R|=|\Lambda|-1$.

\begin{theorem}\label{basicprop}
\begin{enumerate}[{\rm (1)}]
\item \cite[Theorem\;2.18]{AIR} 
Every basic almost complete support $\tau$-tilting pair is a direct summand of exactly two basic support $\tau$-tilting pairs.

\item \cite[Corollary\;2.34]{AIR} Let $(M,P)$ and $(M',P')$ be basic support $\tau$-tilting pairs.
Then $M$ and $M'$ are connected by an arrow of $\H(\sttilt\Lambda)$ if and only if $(M,P)$ and $(M',P')$
 have a common basic almost complete support $\tau$-tilting pair as a direct summand.
In particular, $\sttilt\Lambda$ is $|\Lambda|$-regular. 

\item \cite[Corollary\;2.38]{AIR} If $\H(\sttilt\Lambda)$ has a finite connected component $\mathcal{C}$, then
$\mathcal{C}=\H(\sttilt\Lambda)$. 
\end{enumerate}   
\end{theorem}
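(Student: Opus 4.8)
These three statements are the exchange theorems for support $\tau$-tilting pairs, and a self-contained argument would establish (1), then deduce (2), then (3), since each rests on the previous. Throughout I would use the translation provided by the Definition-Theorem above: attach to a $\tau$-rigid pair $(M,P)$ the torsion class $\fac M\subseteq\mod\Lambda$ together with $\supp(M)$, and (as in \cite{AIR}) set up the bijection between functorially finite torsion classes and basic support $\tau$-tilting modules, so that comparisons of pairs become comparisons of torsion classes. For \emph{part (1)}, let $(N,R)$ be a basic almost complete support $\tau$-tilting pair, so $|N|+|R|=|\Lambda|-1$. One completion is produced by a Bongartz-type construction: the class $\mathcal{T}_0$ of modules $X$ with $\Hom_\Lambda(X,\tau N)=0$ and $\Hom_\Lambda(R,X)=0$ is a functorially finite torsion class containing $\fac N$, and I would show that the support $\tau$-tilting module $M_0$ with $\fac M_0=\mathcal{T}_0$ has $N$ as a direct summand, yielding a completion $(M_0,P_0)=(N\oplus X_0,R)$ of $(N,R)$. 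For the remaining completion and the ``exactly two'' claim, observe that any completion $(M,P)$ of $(N,R)$ has $|M|+|P|=|\Lambda|$, hence either $P=R$ and $M=N\oplus X$ with $X$ indecomposable, or $M=N$ and $P=R\oplus Q$ with $Q$ indecomposable projective; the admissible $X$ (resp. $Q$) are pinned down by a minimal left $\add N$-approximation $X_0\xrightarrow{f}N'$, the mutation replacing $X_0$ by $X_1:=\Coker f$ and shifting a summand into the projective part precisely when $f$ is not a monomorphism. The heart of the matter --- and the step I expect to be the main obstacle for the whole theorem --- is verifying that $(N\oplus X_1,R_1)$ is again a $\tau$-rigid pair and that no third completion occurs; this demands controlling $\Hom_\Lambda(-,\tau-)$ along the approximation sequence, using the $\Hom$-$\tau$ symmetry of $\tau$-rigidity and invoking Proposition \ref{basicfact}(2) to move between $\Lambda$ and the quotients $\Lambda/(e)$ that appear as the support drops.

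For \emph{part (2)}, the ``if'' direction: if $(M,P)$ and $(M',P')$ both have $(N,R)$ as a direct summand, then by (1) they are its two completions, and comparing the torsion classes $\fac M$, $\fac M'$ (both trapped between classes determined by $N$ and $R$) makes them comparable, say $M\geq M'$. To see this is a covering relation, take support $\tau$-tilting $M''$ with $M\geq M''\geq M'$; then $\fac M\supseteq\fac M''\supseteq\fac M'$, and the approximation description forces every indecomposable summand of $N$ into $\add M''$ while $R$ still annihilates $M''$, so $(N,R)$ is a summand of $(M'',P'')$ and hence $M''\in\{M,M'\}$. The ``only if'' direction: given a Hasse arrow between $M$ and $M'$, say $M>M'$, suppose they had two or more non-common indecomposable summands; pick an indecomposable summand $X$ of $M$ with $X\notin\add M'$, form the mutation $\mu_X(M)$ from (1), and check $M>\mu_X(M)\geq M'$ with $\mu_X(M)\neq M'$, contradicting directness; hence $M$ and $M'$ differ in a single indecomposable summand, i.e.\ share an almost complete pair. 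Finally, $|\Lambda|$-regularity follows: deleting in turn each of the $|M|$ indecomposable summands of $M$ and each of the $|\Lambda|-|M|$ indecomposable projectives of $P$ yields $|\Lambda|$ distinct almost complete support $\tau$-tilting pairs, each (by (1)) completed in exactly one way besides $(M,P)$; by the equivalence just shown these $|\Lambda|$ partners are precisely the Hasse neighbours of $M$, so $\dip(M)+\dis(M)=|\Lambda|$.

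For \emph{part (3)}, suppose $\mathcal{C}$ is a finite connected component of $\H(\sttilt\Lambda)$. Since $\mathcal{C}$ is finite, repeatedly passing to a strictly larger Hasse-neighbour terminates at an element maximal within $\mathcal{C}$; because any Hasse-neighbour of such an element again lies in $\mathcal{C}$, it is in fact maximal in all of $\sttilt\Lambda$, and as $\fac\Lambda=\mod\Lambda\supseteq\fac M$ for every $M$ the unique maximal element is $\Lambda$, so $\Lambda\in\mathcal{C}$; dually the minimum $0$ (with pair $(0,\Lambda)$) lies in $\mathcal{C}$. By part (2) every Hasse arrow is a support $\tau$-tilting mutation, so $\mathcal{C}$ is closed under mutation. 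It then remains to show every support $\tau$-tilting module is reachable from $\Lambda$ by finitely many mutations: if some $M$ lay outside $\mathcal{C}$, choosing such an $M$ extremal in the order and using that a Hasse-neighbour of $M$ is strictly comparable to $M$ would contradict $\Lambda\in\mathcal{C}$. The delicate point is the existence of such an extremal $M$, and it is here that the finiteness of $\mathcal{C}$ is used --- together, if needed, with the reduction $\sttilt(\Lambda/(e))\hookrightarrow\sttilt\Lambda$ for $e=\sum_{i\notin\supp M}e_i$, permitting induction on $|\Lambda|$. With reachability established, $\mathcal{C}=\H(\sttilt\Lambda)$.
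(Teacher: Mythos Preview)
The paper does not prove this theorem at all: it merely quotes the three statements with citations to \cite{AIR}. So there is no ``paper's own proof'' to compare against; your proposal is an attempt to reconstruct the arguments of \cite{AIR}. Your outline of (1) is broadly faithful to what happens there, but parts (2) and (3) contain genuine gaps.

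In the ``only if'' direction of (2) you write: pick $X\in\add M$ with $X\notin\add M'$, form $\mu_X(M)$, and ``check $M>\mu_X(M)\geq M'$ with $\mu_X(M)\neq M'$''. Neither inequality is justified. First, the mutation $\mu_X(M)$ may lie above $M$ rather than below. Second, and more seriously, even if $\mu_X(M)<M$ there is no reason given why $\mu_X(M)\geq M'$: the pair $(M',P')$ need not have $M\ominus X$ as a direct summand (your hypothesis only says $X\notin\add M'$, not that the remaining summands of $M$ lie in $\add M'$), so the torsion class $\fac\mu_X(M)$ has no evident relation to $\fac M'$. This step is essentially the content of what you are trying to prove, and in \cite{AIR} it is handled by a different route, via the description of Ext-projectives in the torsion classes $\fac M$ and $\fac M'$.

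In (3) your argument that $\Lambda,0\in\mathcal{C}$ is correct, but the reachability step is not. You propose to take $M\notin\mathcal{C}$ ``extremal in the order'', yet the complement of $\mathcal{C}$ may well be infinite, so no such extremal element need exist; the finiteness of $\mathcal{C}$ says nothing about its complement. The induction on $|\Lambda|$ you float does not close the gap either: for $T\notin\mathcal{C}$ with $\supp(T)=Q_0$ there is no smaller algebra to pass to, and for $\supp(T)\subsetneq Q_0$ you would need to know that $\mathcal{C}\cap\sttilt_{e\Lambda^-}\Lambda$ is a finite connected component of $\H(\sttilt\Lambda/(e))$, which is not automatic. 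The missing ingredient is the (non-trivial) fact from \cite{AIR} that whenever $U>T$ in $\sttilt\Lambda$ there is a left mutation $U'$ of $U$ with $U>U'\geq T$; granted this, one takes $U$ minimal in the finite set $\{V\in\mathcal{C}:V\geq T\}$ and derives a contradiction unless $U=T$.
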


%

For a basic $\tau$-rigid pair $(N,R)$, we define
\[\sttilt_{N\oplus R^-}\Lambda:=\{M\in \sttilt\Lambda\mid N\in \add M,\;\Hom_{\Lambda}(R, M)=0\},\]
equivalently, which consists of all support $\tau$-tilting pairs having $(N,R)$ as a direct summand. 
For simplicity, we omit 0 if $N=0$ or $R=0$.

Given an idempotent $e=e_{i_1}+\cdots+e_{i_\ell}$ of $\Lambda$ so that $R=e\Lambda$,
we see that $M$ belongs to $\sttilt_{R^-}\Lambda$ if and only if it is a basic support $\tau$-tilting module
with $\supp(M)=Q_0\setminus\{i_1,\dots,i_\ell\}$.
Hence, by Proposition\;\ref{basicfact} this leads to a poset isomorphism $\sttilt_{R^-}\Lambda\simeq \sttilt\Lambda/(e)$.
More generally, we have following reduction theorem.
\begin{theorem}\cite{J}
\label{reduction theorem}
Let $(N,R)$ be a basic  $\tau$-rigid pair and let $T$ be the Bongartz completion of $(N,R)$.
 If we set $\Gamma:=\End_{\Lambda}(T)/( e )$, then
  $|\Gamma|=|\Lambda|-|N|-|R|$ and $\sttilt_{N\oplus R^-}(\Lambda)\simeq \sttilt(\Gamma)$,
   where $e$ is the idempotent corresponding to the projective $\End_{\Lambda}(T)$-module $\Hom_{\Lambda}(T,N )$.
  \end{theorem}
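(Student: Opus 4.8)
The plan is to translate the statement into the language of torsion pairs, where $\tau$-tilting reduction becomes the assertion that an interval of functorially finite torsion classes of $\mod\Lambda$ collapses onto the lattice of torsion classes of a perpendicular subcategory, after which one invokes the Adachi-Iyama-Reiten bijection \cite{AIR} between support $\tau$-tilting modules and functorially finite torsion classes. As a first step I would dispose of the projective part: writing $R=e_R\Lambda$, the isomorphism $\sttilt_{R^-}\Lambda\simeq\sttilt(\Lambda/(e_R))$ noted above, together with Proposition~\ref{basicfact}(2), lets one regard $N$ as a $\tau$-rigid module over $\bar\Lambda:=\Lambda/(e_R)$; since the Bongartz completion $T$ of $(N,R)$ is maximal it must have support exactly $Q_0\setminus\supp(R)$, hence is — as a module — the Bongartz completion of $N$ over $\bar\Lambda$ and is a $\tau$-tilting $\bar\Lambda$-module, so $|C|=|\End_{\bar\Lambda}(T)|=|T|=|\bar\Lambda|=|\Lambda|-|R|$. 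As $\Hom_\Lambda(T,-)$ restricts to an equivalence $\add T\xrightarrow{\sim}\proj C$ and $N\in\add T$, the module $\Hom_\Lambda(T,N)$ is basic projective with exactly $|N|$ indecomposable summands, whence $|\Gamma|=|C|-|N|=|\Lambda|-|N|-|R|$. This settles the cardinality claim and reduces the construction of the poset isomorphism to the case $R=0$: $N$ a basic $\tau$-rigid module, $T$ its Bongartz completion, $C=\End_\Lambda(T)$, $e$ the idempotent with $eC\simeq\Hom_\Lambda(T,N)$, $\Gamma=C/(e)$.

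Next I would introduce the $\tau$-perpendicular subcategory. Put $\mathcal{T}:=\fac T$; because $T$ is the Bongartz completion this is the functorially finite torsion class ${}^{\perp}(\tau N)$, in which $N$ is $\Ext$-projective, so $\Ext^{1}_{\Lambda}(N,X)=0$ for every $X\in\mathcal{T}$. Hence
\[\mathcal{W}:=\{X\in\mathcal{T}\mid\Hom_\Lambda(N,X)=0\}\]
is a genuine right perpendicular category inside the exact category $\mathcal{T}$. The technical heart of the argument is the claim that \emph{$\mathcal{W}$ is a functorially finite wide subcategory of $\mod\Lambda$ and $\Hom_\Lambda(T,-)$ induces an equivalence $\mathcal{W}\xrightarrow{\sim}\mod\Gamma$ sending the simple objects of $\mathcal{W}$ to the simple $\Gamma$-modules}. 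I would prove this by the standard perpendicular-category machinery (Geigle-Lenzing, Schofield) carried out relative to $\mathcal{T}$: exhibit a projective generator of $\mathcal{W}$, identify its endomorphism ring with $C/(e)$ by transporting along the equivalence $\add T\simeq\proj C$ and noting that factoring out $e$ is exactly deleting the $\add N$-summand of $T$, and check that the projective dimension of $T$ possibly exceeding $1$ does no harm because every $\Ext^{1}$ that occurs is computed inside $\mathcal{T}$, where $N$ is $\Ext$-projective.

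With this equivalence in hand I would match the posets through torsion classes. Let $\mathcal{S}_0$ be the smallest torsion class of $\mod\Lambda$ containing $N$. The maps $\mathcal{S}\mapsto\mathcal{S}\cap\mathcal{W}$ and $\mathcal{U}\mapsto\mathcal{S}_0\ast\mathcal{U}$ (extension closure) should be mutually inverse, inclusion-preserving bijections between the functorially finite torsion classes $\mathcal{S}$ with $\mathcal{S}_0\subseteq\mathcal{S}\subseteq\mathcal{T}$ and the functorially finite torsion classes of $\mathcal{W}$; combined with the previous step and the Adachi-Iyama-Reiten bijection, the latter set is $\sttilt\Gamma$. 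On the $\Lambda$-side, for $M\in\sttilt\Lambda$ one has $\fac M\subseteq\mathcal{T}\iff M\leq T$, which holds automatically since $T$ is the Bongartz completion; and $N\in\add M$ is equivalent to ``$N$ is $\Ext$-projective in $\fac M$'' (because the $\Ext$-projectives of the functorially finite torsion class $\fac M$ are precisely $\add M$), which in turn is equivalent to $\mathcal{S}_0\subseteq\fac M\subseteq\mathcal{T}$, again by the maximality property defining the Bongartz completion. Thus $M\mapsto\fac M$ identifies $\sttilt_{N\oplus R^-}\Lambda=\{M\in\sttilt\Lambda\mid N\in\add M\}$ with that interval of torsion classes. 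Since every bijection in sight preserves inclusion and the partial order on $\sttilt$ is $M\geq M'\iff\fac M\supseteq\fac M'$ by the Definition-Theorem \cite[Lemma 2.25]{AIR} recalled above, composing them yields the desired poset isomorphism $\sttilt_{N\oplus R^-}\Lambda\simeq\sttilt\Gamma$.

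The main obstacle is the displayed claim in the second step — that $\mathcal{W}\simeq\mod\Gamma$ with $\Gamma=\End_\Lambda(T)/(e)$ for $T$ the \emph{Bongartz} completion, rather than an arbitrary support $\tau$-tilting module having $N$ as a summand. Pinning the endomorphism ring of a projective generator of $\mathcal{W}$ down to exactly $C/(e)$, as opposed to some a priori different algebra, is the delicate point, and it is precisely where the Bongartz property and the bookkeeping along $\add T\simeq\proj C$ must be handled with care; verifying that functorial finiteness is preserved along the torsion-class correspondence of the third step is a secondary, essentially routine, matter. One could instead sidestep $\mathcal{W}$ entirely by passing to $\Kb(\proj\Lambda)$ through the bijection between $\sttilt\Lambda$ and $2$-term silting complexes and applying silting reduction (Iyama-Yang, Aihara-Iyama), in which formulation the second step turns into a statement about the Verdier quotient by a presilting subcategory.
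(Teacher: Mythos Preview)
The paper does not prove this theorem at all: it is stated with the citation \cite{J} and no argument is given, so there is nothing in the paper to compare your proposal against. Your sketch is essentially an outline of Jasso's original proof in \cite{J}: reduce to $R=0$, identify the $\tau$-perpendicular category $N^{\perp}\cap{}^{\perp}(\tau N)$ with $\mod\Gamma$ via $\Hom_\Lambda(T,-)$, and then transport the interval of functorially finite torsion classes between $\fac N$ and ${}^{\perp}(\tau N)$ to $\sttilt\Gamma$ through the Adachi--Iyama--Reiten bijection. The main caution is that your second step, as you yourself flag, is where the work lies; in Jasso's paper this is handled by showing that $\Hom_\Lambda(T,-)$ restricts to an equivalence from $N^{\perp}\cap{}^{\perp}(\tau N)$ to $\mod\Gamma$ using the Brenner--Butler tilting correspondence for the tilting $\bar\Lambda$-module $T$ (here $T$ is genuinely tilting over $\bar\Lambda$, of projective dimension at most one, which is what makes the identification clean). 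Your alternative route via two-term silting and Iyama--Yang silting reduction is also viable and is closer in spirit to how such reductions are now often presented.
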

 Theorem\;\ref{reduction theorem} implies that for an idempotent $e\in \Lambda$, we have a poset isomorphism
 $\sttilt_{e\Lambda} \Lambda\simeq \sttilt \Lambda/(e).$ 
\subsection{Silting complexes}

We denote by $\Kb(\proj\Lambda)$ the bounded homotopy category of $\proj\Lambda$.

A complex $T=[\cdots\rightarrow T^i\rightarrow T^{i+1}\rightarrow\cdots]$ in $\Kb(\proj\Lambda)$ 
is said to be \emph{two-term} provided $T^i=0$ unless $i=0,-1$.

We recall the definition of silting complexes.

\begin{definition} Let $T$ be a complex in $\Kb(\proj \Lambda)$.
\begin{enumerate}
\item We say that $T$ is \emph{presilting} if $\Hom_{\Kb(\proj \Lambda)}(T,T[i])=0$ for any positive integer $i$.
\item A \emph{silting complex} is defined to be presilting and generate $\Kb(\proj \Lambda)$ by taking direct summands, mapping cones and shifts.   
\end{enumerate}
We denote by $\silt\Lambda\ (\tsilt \Lambda)$ the set of isomorphism classes of basic (two-term) silting complexes in $\Kb(\proj \Lambda)$.
\end{definition}

We give an easy property of (pre)silting complexes. 

\begin{lemma}\label{factformpp}\cite[Lemma\;2.25]{AI}
Let $M$ be a $\tau$-rigid module and $P_1\stackrel{d}{\to} P_0\to M\to 0$ a minimal
projective presentation of $M$. Then $\add P_1\cap \add P_0=\{0\}.$
\end{lemma}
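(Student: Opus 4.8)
The plan is to combine the standard homological criterion for $\tau$-rigidity with the fact that an indecomposable projective module has local endomorphism ring. Write the given minimal projective presentation as $P_1\xrightarrow{d}P_0\xrightarrow{q}M\to 0$. Since it is \emph{minimal}, $M$ is $\tau$-rigid if and only if
\[(-)\circ d\colon\ \Hom_\Lambda(P_0,M)\longrightarrow\Hom_\Lambda(P_1,M)\]
is surjective; this is classical (cf.\ \cite{AIR}). I would then argue by contradiction: assume $\add P_0\cap\add P_1\neq\{0\}$ and fix an indecomposable projective $P\in\add P_0\cap\add P_1$, a section $u\colon P\to P_0$ with retraction $u'\colon P_0\to P$ (so $u'u=\Id_P$), and a section $v\colon P\to P_1$ with retraction $v'\colon P_1\to P$ (so $v'v=\Id_P$).

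The homomorphism to play with is $f:=q\circ u\circ v'\in\Hom_\Lambda(P_1,M)$, that is, the copy of $P$ inside $P_1$ mapped identically onto the copy of $P$ inside $P_0$ and then into $M$. First I would note $f\neq 0$: since $f\circ v=q\circ u$, it suffices that $q\circ u\neq 0$, and this holds because $q$ is a projective cover, so $\Ker q\subseteq\operatorname{rad}P_0$ cannot contain the nonzero direct summand $\Im u\cong P$ of $P_0$. By the criterion, $f=\psi\circ d$ for some $\psi\colon P_0\to M$, and as $P_0$ is projective and $q$ is onto we may write $\psi=q\circ g$ with $g\in\End_\Lambda(P_0)$. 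Precomposing $q\circ u\circ v'=q\circ g\circ d$ with $v$ yields $q\circ u=q\circ g\circ d\circ v$, hence
\[\Im(u-gdv)\subseteq\Ker q\subseteq\operatorname{rad}P_0 ,\]
the second inclusion being minimality once more.

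Applying the retraction $u'$ then gives the contradiction. On one side, $u'(u-gdv)=\Id_P-u'gdv$ has image inside $u'(\operatorname{rad}P_0)\subseteq\operatorname{rad}P$, so it lies in $\operatorname{rad}\End_\Lambda(P)$. On the other side, $dv\colon P\to P_0$ has image in $\Im d\subseteq\operatorname{rad}P_0$ (minimality), the module map $g$ sends $\operatorname{rad}P_0$ into $\operatorname{rad}P_0$, and $u'$ sends $\operatorname{rad}P_0$ into $\operatorname{rad}P$, so $u'gdv\in\operatorname{rad}\End_\Lambda(P)$ as well. Adding, $\Id_P\in\operatorname{rad}\End_\Lambda(P)$, which is impossible since $\End_\Lambda(P)$ is local. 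Hence $\add P_0\cap\add P_1=\{0\}$.

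I expect the only genuinely non-formal input to be the $\tau$-rigidity criterion used at the start; the rest is radical bookkeeping. One could equally run the argument inside $\Kb(\proj\Lambda)$, since the two-term complex $(P_1\xrightarrow{d}P_0)$ built from $M$ is presilting and the vanishing $\Hom_{\Kb(\proj\Lambda)}\big((P_1\xrightarrow{d}P_0),\,(P_1\xrightarrow{d}P_0)[1]\big)=0$ unwinds to precisely the factorisation statement above. The essential mechanism is the conflict between minimality, which forces $\Im d\subseteq\operatorname{rad}P_0$, and the supposition that one indecomposable projective is a direct summand of both $P_0$ and $P_1$ — over a local ring this is untenable.
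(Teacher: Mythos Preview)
The paper does not supply its own proof of this lemma; it is stated with a citation to \cite[Lemma~2.25]{AI} and used as a black box thereafter. So there is nothing to compare against on the paper's side.

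Your argument is correct and self-contained. The $\tau$-rigidity criterion you invoke at the outset --- surjectivity of $\Hom_\Lambda(d,M)\colon\Hom_\Lambda(P_0,M)\to\Hom_\Lambda(P_1,M)$ for a minimal presentation --- is standard (and is essentially what underlies the silting reformulation in \cite{AI,AIR}), and the radical bookkeeping that follows is clean: minimality forces $\Im d\subseteq\operatorname{rad}P_0$, module maps preserve radicals, and locality of $\End_\Lambda(P)$ delivers the contradiction. One small remark: the verification that $f\neq 0$ is not actually used; the contradiction arises purely from showing that both $\Id_P-u'gdv$ and $u'gdv$ lie in $\operatorname{rad}\End_\Lambda(P)$, so you could drop that paragraph without loss.
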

\begin{remark} Let $[P_1\stackrel{d}{\to} P_0]\in \tsilt \Lambda.$ 
By Theorem\;\ref{bijection} and Lemma\;\ref{factformpp}, we may assume that $\add P_1\cap \add P_0=\{0\}$.
\end{remark}

The set $\silt\Lambda$ also has poset structure as follows.

\begin{definitiontheorem}\cite[Theorem 2.11]{AI}
For silting complexes $T$ and $T'$ of $\Kb(\proj\Lambda)$, we write $T\geq T'$
 if $\Hom_{\Kb(\proj\Lambda)}(T, T'[i])=0$ for every positive integer $i$.
Then the relation $\geq$ gives a partial order on $\silt\Lambda$.
\end{definitiontheorem}

The following result connects silting theory with $\tau$-tilting theory.

\begin{theorem}\label{bijection}
\cite[Corollary\;3.9]{AIR}
We consider an assignment 
\[\begin{array}{ccccr}
&  &\;\;(-1\mathrm{th})& & (0\mathrm{th})\; \\
\mathbf{S}:(M,P)&\mapsto &[\;P_1\oplus P&\stackrel{(p_M,0)}{\longrightarrow}&P_0\;\;]\\
\end{array} \vspace{5pt}
\]
 where $p_M:P_1\to P_0$ is a minimal projective presentation of $M$.
\begin{enumerate}[{\rm (1)}]
\item\cite[Lemma\;3.4]{AIR} For modules $M,N$, the following are equivalent:
\begin{enumerate}[{\rm (a)}]
\item $\Hom_{\Lambda}(M,\tau N)=0$.
\item $\Hom_{\Kb(\proj \Lambda)}(\mathbf{S}(M),\mathbf{S}(N)[1])=0$.
\end{enumerate}
\item\cite[Lemma\;3.5]{AIR} For any projective module $P$ and any module $M$, the following are equivalent:
\begin{enumerate}[{\rm (a)}]
\item $\Hom_{\Lambda}(P,M)=0$.
 \item $\Hom_{\Kb(\proj \Lambda)}(\mathbf{S}(0,P),\mathbf{S}(M)[1])=0.$
\end{enumerate} 
\end{enumerate}
In particular, the assignment $\mathbf{S}$ gives rise to a poset isomorphism 
$\sttilt\Lambda\xrightarrow{\sim}\tsilt\Lambda$.
\end{theorem}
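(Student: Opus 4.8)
The plan is to dispose of parts (1) and (2) by direct \(\operatorname{Hom}\)-computations — one inside \(\Kb(\proj\Lambda)\), one via the transpose functor \(\operatorname{Tr}\) — and then to obtain the poset isomorphism by translating the two partial orders through (1) and (2). I would carry these out in that order.

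\textbf{The two \(\operatorname{Hom}\)-computations.} Write \(P_1\xrightarrow{p_M}P_0\) and \(Q_1\xrightarrow{p_N}Q_0\) for minimal projective presentations, so \(\mathbf{S}(M)=[P_1\to P_0]\) and \(\mathbf{S}(N)=[Q_1\to Q_0]\) with \(P_0,Q_0\) in degree \(0\). A morphism between these two-term complexes (suitably shifted) lives in the single overlapping degree, so the standard homotopy bookkeeping gives
\[
\Hom_{\Kb(\proj\Lambda)}(\mathbf{S}(M),\mathbf{S}(N)[1])\;\cong\;\Hom_\Lambda(P_1,Q_0)\big/\bigl(\Hom_\Lambda(P_0,Q_0)\,p_M+p_N\,\Hom_\Lambda(P_1,Q_1)\bigr).
\]
On the other side, \(\tau N=D\operatorname{Tr}N\) with \(\operatorname{Tr}N=\operatorname{Coker}(Q_0^{\ast}\xrightarrow{p_N^{\ast}}Q_1^{\ast})\), \((-)^{\ast}=\Hom_\Lambda(-,\Lambda)\); feeding this through \(\Hom_\Lambda(X,DL)\cong D(X\otimes_\Lambda L)\), the natural isomorphism \(X\otimes_\Lambda Q_i^{\ast}\cong\Hom_\Lambda(Q_i,X)\) for \(Q_i\) finitely generated projective, and right-exactness of \(X\otimes_\Lambda-\), one gets
\[
\Hom_\Lambda(M,\tau N)\;\cong\;D\operatorname{Coker}\!\bigl(\Hom_\Lambda(Q_0,M)\xrightarrow{\,g\,\mapsto\,g\,p_N\,}\Hom_\Lambda(Q_1,M)\bigr).
\]
Since \(Q_1\) is projective, applying \(\Hom_\Lambda(Q_1,-)\) to \(0\to\Omega M\to P_0\to M\to0\) and using \(P_1\twoheadrightarrow\Omega M\) rewrites this, turning \(\Hom_\Lambda(Q_1,M)\) into \(\Hom_\Lambda(Q_1,P_0)/p_M\Hom_\Lambda(Q_1,P_1)\) and the displayed image into \(\Hom_\Lambda(Q_0,P_0)\,p_N\) modulo it; comparing with the first display exhibits \(\Hom_\Lambda(M,\tau N)\) as the \(k\)-dual of a group of the form \(\Hom_{\Kb(\proj\Lambda)}(\mathbf{S}(-),\mathbf{S}(-)[1])\), so it vanishes exactly when that group does — which is the equivalence (a)\(\Leftrightarrow\)(b). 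Both conditions say concretely that every homomorphism out of a projective-cover term factors through the relevant differential. Part (2) is the shorter computation and has no duality in it: \(\mathbf{S}(0,P)=[P\xrightarrow{0}0]=P[1]\), so \(\Hom_{\Kb(\proj\Lambda)}(\mathbf{S}(0,P),\mathbf{S}(M)[1])\cong\Hom_{\Kb(\proj\Lambda)}(P,\mathbf{S}(M))\cong\Hom_\Lambda(P,M)\), the last step being right-exactness of \(\Hom_\Lambda(P,-)\) on a projective presentation of \(M\).

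\textbf{The poset isomorphism.} Minimal projective presentations are unique up to isomorphism, so \(\mathbf{S}\) is well defined on isomorphism classes; a direct check from (1)–(2) (take \(N=M\); the cross-terms of the direct sum are trivially zero) shows that \(\mathbf{S}(M,P)=\mathbf{S}(M)\oplus P[1]\) is two-term presilting whenever \((M,P)\) is \(\tau\)-rigid, and when \(|M|+|P|=|\Lambda|\) it carries the maximal number of indecomposable summands and is therefore silting. Conversely, \(\mathbf{S}\) is inverted by sending a two-term silting complex \([P^{-1}\xrightarrow{d}P^0]\) to the pair \((\operatorname{Coker}d,\ \text{the projective direct summand of }P^{-1}\text{ killed by }d)\), which is the right decomposition by Lemma~\ref{factformpp}; so \(\mathbf{S}\colon\sttilt\Lambda\to\tsilt\Lambda\) is a bijection. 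It remains to compare orders: for two-term complexes \(T\ge T'\) in \(\silt\Lambda\) is just \(\Hom_{\Kb(\proj\Lambda)}(T,T'[1])=0\), while \(M\ge M'\) in \(\sttilt\Lambda\) means \(\Hom_\Lambda(M',\tau M)=0\) together with \(\supp M\supseteq\supp M'\). Writing \((M,e\Lambda)\), \((M',e'\Lambda)\) for the corresponding pairs and splitting \(\Hom_{\Kb(\proj\Lambda)}(\mathbf{S}(M,e\Lambda),\mathbf{S}(M',e'\Lambda)[1])\) into its module part and its projective part, (1) equates the vanishing of the first with \(\Hom_\Lambda(M',\tau M)=0\) and (2) equates the vanishing of the second with \(\supp M\supseteq\supp M'\). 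Hence the orders correspond under \(\mathbf{S}\), and \(\mathbf{S}\) is a poset isomorphism.

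\textbf{Main obstacle.} The crux is the identification in (1): matching the two explicit subquotients of \(\operatorname{Hom}\)-groups. This uses projectivity of the \((-1)\)-terms \(P_1,Q_1\) essentially — to lift maps along \(P_0\twoheadrightarrow M\) and along \(P_1\twoheadrightarrow\Omega M\) — and it is the one place where one must keep careful track of which argument the transpose functor is applied to. Once that is in hand, part (2) and the passage to the poset isomorphism are routine.
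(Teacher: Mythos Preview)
The paper does not give its own proof of this theorem: it is quoted from \cite{AIR} (Lemmas~3.4, 3.5 and Corollary~3.9 there). Your argument is precisely the standard one from that source, so there is nothing to compare.

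Two points nonetheless deserve comment. First, your computation in (1) actually produces
\[
D\Hom_\Lambda(M,\tau N)\;\cong\;\Hom_{\Kb(\proj\Lambda)}\bigl(\mathbf{S}(N),\mathbf{S}(M)[1]\bigr),
\]
with $M$ and $N$ in the \emph{opposite} order from the statement as printed. You paper over this by writing ``of the form $\Hom_{\Kb}(\mathbf{S}(-),\mathbf{S}(-)[1])$'', which is too vague: be explicit. (In fact the printed statement here has the arguments swapped relative to \cite[Lemma~3.4]{AIR}; your later use of the identification in the poset step, where you correctly match $\Hom_{\Kb}(\mathbf{S}(M),\mathbf{S}(M')[1])=0$ with $\Hom_\Lambda(M',\tau M)=0$, shows you know the right version, so just say so in (1).)

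Second, the passage ``carries the maximal number of indecomposable summands and is therefore silting'' is not automatic: that a two-term presilting complex with $|\Lambda|$ summands is silting requires the Bongartz-type completion argument of \cite[Proposition~2.16 and Corollary~3.8]{AIR} (equivalently, that $\Lambda$ and $\Lambda[1]$ lie in $\thick(T)$). Cite it rather than assert it.
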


  
In the end of this subsection, we recall g-vector of 2-term objects of $\Kb(\proj \Lambda)$.
\begin{definition}
Let $X=[P'\to P]$ be a 2-term object of $\Kb(\proj \Lambda)$.
If $[P]-[P']=\sum_{i\in Q_0}g_i [e_i \Lambda]$ in the Grothendieck group $K_0(\proj \Lambda)$
of $\proj \Lambda$, then we call $(g_i)_{i\in Q_0}\in \Z^{Q_0}$ the {\bf g-vector} of $X$
and denote it by $g^X$.
\end{definition}

\begin{theorem}
\label{gvector}
\cite[Theorem\;5.5]{AIR}
The map $T\to g^T$ gives an injection from the set of isomorphism classes of 2-term presilting objects
to $K_0(\proj \Lambda)$.
\end{theorem}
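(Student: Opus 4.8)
The plan is to pass through the bijection $\mathbf S$ of Theorem~\ref{bijection} and reduce the statement to two structural facts about $g$-vectors. First I would record the easy ingredients: $g$-vectors are additive on direct sums; every $2$-term presilting complex is a direct summand of a $2$-term silting complex (equivalently, by Bongartz completion, every $\tau$-rigid pair is a direct summand of a support $\tau$-tilting pair); and for a $2$-term complex $X=[P'\to P]$ the class $[P]-[P']$ is by definition its $g$-vector, so $g^X$ is nothing but the class of $X$ in $K_0(\Kb(\proj\Lambda))\cong K_0(\proj\Lambda)\cong\Z^{Q_0}$. The first structural fact is then: if $T=\bigoplus_{i=1}^{n}T_i$ is a $2$-term silting complex with $T_i$ indecomposable and $n=|\Lambda|$, the vectors $g^{T_1},\dots,g^{T_n}$ form a $\Z$-basis of $\Z^{Q_0}$. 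Indeed $\Kb(\proj\Lambda)=\thick(T)$, so the classes $[T_i]$ generate $K_0$; since a generating set of cardinality $n$ in a free abelian group of rank $n$ is automatically a basis (a surjective endomorphism of $\Z^{n}$ is an isomorphism), and since $\tsilt\Lambda\simeq\sttilt\Lambda$ forces $T$ to have exactly $n$ indecomposable summands, the claim follows.

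The second and decisive fact is a sign-coherence statement: the cones $\operatorname{cone}(g^{Z_1},\dots,g^{Z_r})$, as $Z=\bigoplus_j Z_j$ runs over $2$-term presilting complexes with indecomposable summands $Z_j$, are the faces of a simplicial fan in $\R^{Q_0}$; in particular each point of $\R^{Q_0}$ lies in the relative interior of at most one such cone, and distinct indecomposable $2$-term presilting complexes span distinct rays. Granting this, injectivity is immediate. Let $X,Y$ be $2$-term presilting with $g^X=g^Y$; complete $X$ to a $2$-term silting complex $\tilde X=X\oplus X'$, so that $g^X=\sum_{X_i\mid X}g^{X_i}$ is a positive combination of distinct members of a basis and therefore lies in the relative interior of $\operatorname{cone}(g^{X_i}:X_i\mid X)$. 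Likewise $g^Y=\sum_{Y_j\mid Y}g^{Y_j}$ lies in the relative interior of $\operatorname{cone}(g^{Y_j}:Y_j\mid Y)$. As the two relative interiors share the point $g^X=g^Y$, uniqueness of the minimal cone forces $\{g^{X_i}:X_i\mid X\}=\{g^{Y_j}:Y_j\mid Y\}$ as sets, and matching rays (the indecomposable case) then yields $X\cong Y$ by Krull--Schmidt.

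The hard part is the sign-coherence / fan statement: in contrast with the linear bookkeeping above, it genuinely restricts which finite subsets of $\Z^{Q_0}$ can occur as the $g$-vectors of the indecomposable summands of a presilting complex, and it fails without the presilting hypothesis. I would attack it homologically through the $\mathbf S$-dictionary. For the core case, let $X,Y$ be indecomposable $2$-term presilting complexes with $g^X=g^Y$; complete $X$ to a silting complex $\tilde X$, expand $g^Y$ in the basis $\{g^{\tilde X_i}\}$ (its coordinates being $1$ on the summands of $X$ and $0$ elsewhere), use $\Kb(\proj\Lambda)=\thick(\tilde X)$ to assemble $Y$ from the $\tilde X_i$ by triangles, and analyze $\Hom_{\Kb(\proj\Lambda)}(\tilde X,Y[1])$ and $\Hom_{\Kb(\proj\Lambda)}(Y,\tilde X[1])$. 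Via Theorem~\ref{bijection}(1)--(2) these Hom-spaces are controlled by $\tau$-rigidity of the associated pairs, and their vanishing should force $Y$ to be compatible with every summand of $X'$, hence $Y\in\add\tilde X$, hence $Y\cong X_i$ for the unique $i$ with $g^{X_i}=g^Y$. If one prefers to import machinery, the same fan property is available from the now-standard theory of the $g$-vector (stability) fan attached to the functorially finite torsion classes of $\mod\Lambda$.
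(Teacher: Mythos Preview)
The paper does not prove this theorem; it is quoted from \cite[Theorem~5.5]{AIR} and used as a black box, so there is no in-paper argument to compare against. The relevant benchmark is the original proof in \cite{AIR}, and your preliminary steps match it: additivity of $g$-vectors, completion of a $2$-term presilting complex to a $2$-term silting complex via Bongartz, and the argument that the $g$-vectors of the indecomposable summands of a $2$-term silting complex form a $\Z$-basis of $K_0(\proj\Lambda)$ all go through as written.

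The gap is exactly where you locate it, and neither of the two routes you sketch closes it. Invoking the $g$-vector fan is circular: in the standard references (Demonet--Iyama--Jasso, Asai) the fan property is established \emph{using} the present injectivity theorem, not the other way around. Your direct route---complete $X$ to a silting complex $\tilde X$, note that $g^Y$ has coordinates $(1,0,\dots,0)$ in the basis $\{g^{\tilde X_i}\}$, then ``analyse $\Hom(\tilde X,Y[1])$ and $\Hom(Y,\tilde X[1])$''---stalls because nothing you have written connects the numerical equality $g^X=g^Y$ to any vanishing of these Hom-spaces; Theorem~\ref{bijection} translates such vanishing into $\tau$-rigidity, but you have no $\tau$-rigidity relation between the pair underlying $Y$ and the summands of $\tilde X$. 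What the argument in \cite{AIR} actually supplies at this point is an Euler-form identity: for a $\tau$-rigid pair $(M,P)$ and any module $N$ one has $\langle g^{(M,P)},\dimvec\, N\rangle=\dim\Hom_\Lambda(M,N)-\dim\Hom_\Lambda(N,\tau M)-\dim\Hom_\Lambda(P,N)$. Equating this for the two pairs with common $g$-vector and specialising $N$ is what produces the required Hom-vanishing and forces the underlying modules, and then the projective parts, to coincide. Once that identity is inserted, your outline becomes the proof in \cite{AIR}.
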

\subsection{Weak orders on Symmetric groups}
Let $\Sym_{n+1}$ be the $(n+1)$-th symmetric group and $s_i\in \Sym_{n+1}$ denotes 
an adjacent transposition $(i,i+1)$. Then each element $w\in \Sym_{n+1}$ can be written in the from
$w=s_{i_{\ell}}s_{i_{\ell-1}}\cdots s_{i_1}$. If $\ell$ is minimum, then we call $\ell$  the
\emph{length} of $w$ and denote it by $\ell (w)$. In this case, an expression $s_{i_{\ell}}s_{i_{\ell-1}}\cdots s_{i_1}$
of $w$ is said to be a \emph{reduced expression} of $w$. The following is well known (see \cite[Section\;1]{BjB} for example).
\begin{theorem}
\label{matsumoto}
Let $w=s_{i_{\ell}}\cdots s_{i_1}$. 
\begin{enumerate}[{\rm(1)}]
\item  Assume that  $j<\ell$ satisfies 
\[\mathrm{(i)}\  s_{i_{\ell}}\cdots s_{i_{j+1}}(i_j)>s_{i_{\ell}}\cdots s_{i_{j+1}}(i_j+1).\]
 Then there 
exists $k<j$
  such that 
\[\mathrm{(ii)}\ s:=s_{i_{k-1}}\cdots s_{i_{j+1}}(i_j)<s_{i_{k-1}}\cdots s_{i_{j+1}}(i_j+1)=:t\ \mathrm{and}\ s_{i_k}(s)>s_{i_k}(t).\]
Moreover, we have 
\[w=s_{i_{\ell}}\cdots \widehat{s_{i_k}}\cdots \widehat{s_{i_j}}\cdots s_{i_1}. \]
\item The inversion number of $w$ is coinsides to $\ell(w)$.
\item$(${\rm Matsumoto's exchange condition}$).$ If $s_{i_{\ell}}\cdots s_1$ is a non-reduced expression, then
there exists $j<\ell$ satisfying {\rm (i)} of above and so
\[w=s_{i_{\ell}}\cdots \widehat{s_{i_k}}\cdots \widehat{s_{i_j}}\cdots s_{i_1}. \]
\end{enumerate}
  \end{theorem}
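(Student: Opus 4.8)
The plan is to establish (1) first and then read off (3) and (2). For part (1) I would follow the two values $i_j$ and $i_j+1$ as they are carried along by the left factor $s_{i_\ell}\cdots s_{i_{j+1}}$. For $j\le m\le\ell$ set
\[
p_m:=\bigl(s_{i_m}s_{i_{m-1}}\cdots s_{i_{j+1}}\bigr)(i_j),\qquad
q_m:=\bigl(s_{i_m}s_{i_{m-1}}\cdots s_{i_{j+1}}\bigr)(i_j+1),
\]
so that $p_m=s_{i_m}(p_{m-1})$, $q_m=s_{i_m}(q_{m-1})$, $p_m\ne q_m$ for every $m$, and $p_j=i_j<i_j+1=q_j$, while $p_\ell>q_\ell$ is exactly hypothesis (i). The elementary point is that an adjacent transposition moves any value by at most one, so a pair in the order $p_{m-1}<q_{m-1}$ can be turned into the order $p_m>q_m$ only when $s_{i_m}$ swaps the two, i.e.\ only when $\{p_{m-1},q_{m-1}\}=\{i_m,i_m+1\}$. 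Hence there is a least index $k$ with $j<k\le\ell$ at which this occurs, and minimality forces $p_{k-1}<q_{k-1}$, so $p_{k-1}=i_k$ and $q_{k-1}=i_k+1$; this is precisely (ii), with $s:=p_{k-1}$ and $t:=q_{k-1}$.

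For the ``moreover'' clause, put $\sigma:=s_{i_{k-1}}s_{i_{k-2}}\cdots s_{i_{j+1}}$, the factor of $w$ lying strictly between $s_{i_k}$ and $s_{i_j}$. Since $\sigma(i_j)=p_{k-1}=i_k$ and $\sigma(i_j+1)=q_{k-1}=i_k+1$, the conjugate $\sigma s_{i_j}\sigma^{-1}$ is the transposition of $\{i_k,i_k+1\}$, i.e.\ $s_{i_k}\sigma=\sigma s_{i_j}$. Substituting this into
\[
w=\bigl(s_{i_\ell}\cdots s_{i_{k+1}}\bigr)\,s_{i_k}\,\sigma\, s_{i_j}\,\bigl(s_{i_{j-1}}\cdots s_{i_1}\bigr)
 =\bigl(s_{i_\ell}\cdots s_{i_{k+1}}\bigr)\,\sigma\, s_{i_j}s_{i_j}\,\bigl(s_{i_{j-1}}\cdots s_{i_1}\bigr)
\]
and cancelling $s_{i_j}s_{i_j}=1$ leaves $w=s_{i_\ell}\cdots\widehat{s_{i_k}}\cdots\widehat{s_{i_j}}\cdots s_{i_1}$.

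Part (3) then follows by exhibiting such a $j$: consider the prefixes $u_m:=s_{i_\ell}\cdots s_{i_{m+1}}$ (so $u_\ell$ is the identity and $u_0=w$); since $u_{m-1}=u_m s_{i_m}$ we have $\ell(u_{m-1})=\ell(u_m)\pm1$, the sign being $+$ exactly when $u_m(i_m)<u_m(i_m+1)$. If $s_{i_\ell}\cdots s_{i_1}$ is non-reduced then $\ell(w)<\ell$, so not all of the $\ell$ signs are $+$; since the first step (from the identity to $s_{i_\ell}$) is a $+$, a $-$ occurs at some $j<\ell$, which is precisely condition (i) at $j$, and part (1) applies. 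For part (2), write $\operatorname{inv}(w):=\#\{(a,b):a<b,\ w(a)>w(b)\}$; a short check shows that right multiplication by a single $s_i$ changes $\operatorname{inv}$ by exactly $\pm1$ (only the pair $\{i,i+1\}$ switches its inversion status), so $\operatorname{inv}(w)\le\ell(w)$. Conversely, if $w$ is not the identity it has a descent $i$, i.e.\ $w(i)>w(i+1)$, for which $\operatorname{inv}(ws_i)=\operatorname{inv}(w)-1$; induction on $\operatorname{inv}(w)$ then gives $\ell(w)\le\operatorname{inv}(w)$, hence equality.

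The one point that needs genuine care is the bookkeeping in part (1): one must check carefully that the relative order of the two carried values can reverse only at an honest transposition of the two (the ``moves by at most one'' observation), and keep the deleted slots $j$ and $k$, the intermediate permutation $\sigma$, and the identity $s_{i_k}\sigma=\sigma s_{i_j}$ all consistent with the convention $w=s_{i_\ell}\cdots s_{i_1}$. Everything else is routine.
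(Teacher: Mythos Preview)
Your argument is essentially the paper's: both track the pair $(i_j,i_j+1)$ through the partial product, note that an adjacent transposition can reverse the order of two values only by swapping them (so $(s,t)=(i_k,i_k+1)$), and then use the conjugation identity $\sigma s_{i_j}\sigma^{-1}=s_{i_k}$ to cancel $s_{i_k}$ and $s_{i_j}$ simultaneously. Your write-up of (1) is more explicit than the paper's, but structurally identical.

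One ordering issue: you prove (3) before (2), but your (3) invokes the claim ``$\ell(u_{m-1})=\ell(u_m)\pm1$, the sign being $+$ exactly when $u_m(i_m)<u_m(i_m+1)$''. That descent characterisation of \emph{length} is precisely (2); as stated, your (3) is circular. The paper sidesteps this by working with the inversion number $\gamma$ instead (for which the $\pm1$ rule is an elementary count), proving (2) first, and then deducing (3) from $\gamma(w)=\ell(w)$. The fix for you is trivial: either swap the order and prove (2) before (3), or run your (3) argument with $\gamma$ in place of $\ell$ and finish via the inequality $\gamma(w)\le\ell(w)$, which is the easy half of your own (2).
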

 We give a proof for reader's convenience. 
  \begin{proof}
   For $w\in \Sym_{n+1}$, we denote by $\gamma(w)$  the inversion number of $w$.
  It is well known that 
  \begin{itemize}
  \item $\gamma(s_i w )=\gamma(w)+1\Leftrightarrow w^{-1}(i)<w^{-1}(i+1).$
  \item $\gamma(s_i w )=\gamma(w)-1\Leftrightarrow w^{-1}(i)>w^{-1}(i+1).$
  \item $\gamma(ws_i )=\gamma(w)+1\Leftrightarrow w(i)<w(i+1).$
  \item $\gamma(ws_i )=\gamma(w)-1\Leftrightarrow w(i)>w(i+1).$
  \end{itemize}
    We show (1).
     Assume that $j<\ell$ satisfies (i).
  Then (ii) follows from (i) and $i_j<i_j+1$.
  It is easy to check that $(s,t)=(i_k,i_k+1)$.
  Hence we conclude that
  \[s_{i_{k-1}}\cdots s_{i_{j+1}}s_{i_j}s_{i_{j+1}}\cdots s_{i_{k-1}}=s_{i_k}. \]
  In fact, we have that
  \[s_{i_{k-1}}\cdots s_{i_{j+1}}(i_j)=i_k\ \mathrm{and}\ s_{i_{k-1}}\cdots s_{i_{j+1}}(i_j+1)=i_k+1. \] 
  Then we obtain that 
  \[s_{i_k}s_{i_{k-1}}\cdots s_{i_{j+1}}=s_{i_{k-1}}\cdots s_{i_{j+1}}s_{i_j}.\]
  In particular, we have
  \[w=s_{i_{\ell}}\cdots s_{i_k}\cdots s_{i_{j+1}} s_{i_j}\cdots s_{i_1}=
  s_{i_{\ell}}\cdots \widehat{s_{i_k}}\cdots \widehat{s_{i_j}}\cdots s_{i_1}. \]
  
  Next we prove (2). Let $s_{i_{\ell}}\cdots s_{i_1}$ be a reduced expression of $w$.
  By (1), we have that
 \[ s_{i_{\ell}}\cdots s_{i_{j+1}}(i_j)<s_{i_{\ell}}\cdots s_{i_{j+1}}(i_j+1)\]
for any $j$. Hence, we obtain that
\[\gamma(w)=\gamma(s_{i_{\ell}}\cdots s_{i_2})+1=\cdots=\gamma(1)+\ell=\ell.\]  

Finally, we show the assertion (3).
  Suppose that 
  \[s_{i_{\ell}}\cdots s_{i_{j+1}}(i_j)< s_{i_{\ell}}\cdots s_{i_{j+1}}(i_j+1)\]
  for any $j\in\{1,\dots, \ell-1\}$.
   Then same argument used in the proof of (2) gives that
  \[\ell(w)=\gamma(w)=\ell.\]
  This is a contradiction.
  Therefore we have (i).
  \end{proof}
    
    We recall definition of the (left) weak order on $\Sym_{n+1}$.
\begin{definition}
Let $w,w'\in \Sym_{n+1}$. We write $w\leq w'$ if there exists $s_{i_1},\dots,s_{i_k}$ such that
\[w'=s_{i_k}\cdots s_{i_1}w\ \mathrm{and\ }\ell(w')=\ell(w)+k. \]
It is obvious that $\leq$ gives a partial order on $\Sym_{n+1}$. We call this partial order
the \emph{left weak order} on $\Sym_{n+1}$.  
\end{definition}

Clearly $(\Sym_{n+1},\leq)$ is a ranked poset by the length function $\ell$. Moreover,
 $(\Sym_{n+1},\leq)$ has the lattice properties i.e. for any $w,w'\in \Sym_{n+1}$, 
 $\{\sigma\in \Sym_{n+1} \mid \sigma\geq w,w'\}$ admits a maximum element $w\wedge w'$ and 
 $\{\sigma\in \Sym_{n+1} \mid \sigma\leq w,w'\}$ admits a minimum element $w\vee w'$.
  (see \cite[Section\;3.2]{BjB}). 
By definition the minimum element
of $(\Sym_{n+1},\leq)$ is the identity $1\in \Sym_{n+1}$ and the maximum element is the longest element
$w_0:=(n+1,n,\dots,1)\in \Sym_{n+1}$. 
Then the assignment $w\mapsto ww_0$ gives a poset isomorphism
\[(\Sym_{n+1},\leq)\stackrel{\sim}{\to}(\Sym_{n+1},\leq).\] 

For a non-empty subset $J\subset\{1,2,\dots,n\}$, we denote by $w_0(J)\in \Sym_{n+1}$ the longest element
of $\langle s_j\mid j\in J \rangle\subset \Sym_{n+1}$. Then we have the following.
\begin{proposition}
\label{fresultsym}
 Let $J$ be a non-empty subset of $\{1,\dots,n\}$.
\begin{enumerate}[{\rm (1)}]
\item\cite[Lemma\;3.2.3]{BjB} $\bigvee_{j\in J} s_j=w_0(J).$
\item $[1,w_0(J)]=\langle s_j\mid j\in J \rangle$.
\item\cite[Lemma\;3.24]{BjB} If $w\leq s_j w$ for any $j\in J$, then we have
\[\bigvee_{j\in J}(s_jw)=w_0(J)w.\]
\end{enumerate}
\end{proposition}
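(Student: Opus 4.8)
The plan is to prove part (2) directly; parts (1) and (3) are the cited results of \cite{BjB}, so I will not reprove them. Write $W_J:=\langle s_j\mid j\in J\rangle$, and recall that $w_0(J)$ is by definition the longest element of the finite group $W_J$.

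First I would check the inclusion $W_J\subseteq[1,w_0(J)]$. Given $w\in W_J$, put $u:=w_0(J)w^{-1}\in W_J$. The longest element of a finite Coxeter group obeys $\ell(w_0(J)v)=\ell(w_0(J))-\ell(v)$ for every $v\in W_J$; taking $v=w^{-1}$ and using $\ell(w^{-1})=\ell(w)$ gives $\ell(w_0(J))=\ell(u)+\ell(w)$. Since $w_0(J)=uw$, a reduced expression $u=s_{i_k}\cdots s_{i_1}$ exhibits $w\le w_0(J)$ in the sense of the left weak order, so $w\in[1,w_0(J)]$.

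For the reverse inclusion, suppose $w\le w_0(J)$, say $w_0(J)=s_{i_k}\cdots s_{i_1}w$ with $\ell(w_0(J))=k+\ell(w)$. Concatenating $s_{i_k}\cdots s_{i_1}$ with a reduced expression of $w$ produces a reduced expression of $w_0(J)$ having a reduced expression of $w$ as a suffix; hence every generator occurring in that expression of $w$ also occurs in some reduced expression of $w_0(J)$. Thus it suffices to show that only the $s_j$ with $j\in J$ occur in reduced expressions of $w_0(J)$, and I would prove the slightly more general claim: if $v\in W_J$ and $v=s_{i_m}\cdots s_{i_1}$ is a reduced expression in $\Sym_{n+1}$, then $i_1,\dots,i_m\in J$. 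This goes by induction on $m$: since $\ell(s_{i_m}v)=m-1<m$, the length formulas recalled in the proof of Theorem~\ref{matsumoto} give $v^{-1}(i_m)>v^{-1}(i_m+1)$; because $v\in W_J$ permutes indices only within the consecutive blocks of $\{1,\dots,n+1\}$ cut out by the maximal runs of $J$ (a run $[a,b]\subseteq J$ giving the block $[a,b+1]$), while fixing every other index, this inequality forces $i_m$ and $i_m+1$ to lie in a common block, i.e.\ $i_m\in J$; then $s_{i_m}v\in W_J$ has reduced expression $s_{i_{m-1}}\cdots s_{i_1}$, and the inductive hypothesis applies. Taking $v=w_0(J)$ now yields $w\in W_J$, completing (2).

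The step I expect to be the main point is this last claim, which is really the statement that the set of generators appearing in a reduced expression is an invariant of the element and, for an element of the parabolic subgroup $W_J$, is contained in $\{s_j\mid j\in J\}$. The block-structure computation above settles it by hand for $\Sym_{n+1}$; alternatively one may invoke that any two reduced expressions of an element are linked by braid relations — a standard consequence of the exchange condition in Theorem~\ref{matsumoto} — which never change the set of generators used. With (2) available, one may also remark that it is consistent with (1), since $w_0(J)$ is the least common upper bound of the $s_j$, $j\in J$, and both sides are easily checked to be $\langle s_j\mid j\in J\rangle$ in small examples such as $\Sym_3$ with $J=\{1,2\}$.
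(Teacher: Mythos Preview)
Your proof is correct. The paper's argument for (2) only treats the inclusion $[1,w_0(J)]\subseteq W_J$ (leaving the other direction implicit) and does so by contradiction: from a reduced expression of $w_0(J)$ extending one of $w$, it picks the minimal index $r$ with $i_r\notin J$, notes that $w':=s_{i_\ell}\cdots s_{i_r}$ lies in $W_J$, asserts $w'(i_r+1)>w'(i_r)$ (this is exactly your block-preservation observation, left unspoken), and then invokes Theorem~\ref{matsumoto} to conclude the expression is non-reduced. Your direct induction peels off the leftmost generator using the same block-preservation step, and you also supply the inclusion $W_J\subseteq[1,w_0(J)]$ via the length identity $\ell(w_0(J)v)=\ell(w_0(J))-\ell(v)$. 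The core idea is the same in both; your version is more self-contained and slightly more elementary in that it bypasses the exchange-condition machinery, while the paper's is terser once Theorem~\ref{matsumoto} is available.
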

\begin{proof} We prove (2). Let $w\leq w_0(J)$.
 Suppose that $w\not\in \langle s_j\mid j\in J\rangle$.
$w\leq w_0(J)$ implies that there exists a reduced expression
\[s_{i_{\ell}}\cdots s_{i_1}\]
of $w_0(J)$ such that $R=\{r\mid i_t\not\in J \}\neq\emptyset$.  
We take a minimum element $r$ of $R$. 
Since $s_{i_{r-1}}\cdots s_{i_1}, w_0(J)\in \langle s_j\mid j\in J \rangle$,
we have that $w':=s_{i_{\ell}}\cdots s_{i_r}\in \langle s_j\mid j\in J \rangle$.
Then $s_{i_{\ell}}\cdots s_{i_{r+1}}(i_r)=w'(i_r+1)>w'(i_r)=s_{i_{\ell}}\cdots s_{i_{r+1}}(i_r+1)$.
Hence Theorem\;\ref{matsumoto} gives that $s_{i_{\ell}}\cdots s_{i_r}$ is non-reduced.
This is a contradiction.
\end{proof}

\section{Main result}
Let $\Lambda=kQ/I$ be a basic finite dimensional algebra, where $I$ is an admissible ideal of $kQ$.
We consider the following condition.
\begin{condition}
\label{nscd}
\begin{enumerate}[(a)]
\item 
$Q^{\circ}$ is isomorphic to the following quiver:
\[
{\unitlength 0.1in%
\begin{picture}( 16.3000,  1.3400)( 22.2000, -7.5400)%
\put(22.2000,-7.5000){\makebox(0,0)[lb]{$1$}}%
%
\special{pn 8}%
\special{pa 2360 670}%
\special{pa 2560 670}%
\special{fp}%
\special{sh 1}%
\special{pa 2560 670}%
\special{pa 2493 650}%
\special{pa 2507 670}%
\special{pa 2493 690}%
\special{pa 2560 670}%
\special{fp}%
%
\special{pn 8}%
\special{pa 2530 720}%
\special{pa 2330 720}%
\special{fp}%
\special{sh 1}%
\special{pa 2330 720}%
\special{pa 2397 740}%
\special{pa 2383 720}%
\special{pa 2397 700}%
\special{pa 2330 720}%
\special{fp}%
\put(25.7000,-7.6000){\makebox(0,0)[lb]{$2$}}%
%
\special{pn 8}%
\special{pa 2720 670}%
\special{pa 2920 670}%
\special{fp}%
\special{sh 1}%
\special{pa 2920 670}%
\special{pa 2853 650}%
\special{pa 2867 670}%
\special{pa 2853 690}%
\special{pa 2920 670}%
\special{fp}%
%
\special{pn 8}%
\special{pa 2890 720}%
\special{pa 2690 720}%
\special{fp}%
\special{sh 1}%
\special{pa 2690 720}%
\special{pa 2757 740}%
\special{pa 2743 720}%
\special{pa 2757 700}%
\special{pa 2690 720}%
\special{fp}%
\put(29.3000,-7.6000){\makebox(0,0)[lb]{$3$}}%
%
\special{pn 8}%
\special{pa 3080 677}%
\special{pa 3280 677}%
\special{fp}%
\special{sh 1}%
\special{pa 3280 677}%
\special{pa 3213 657}%
\special{pa 3227 677}%
\special{pa 3213 697}%
\special{pa 3280 677}%
\special{fp}%
%
\special{pn 8}%
\special{pa 3250 727}%
\special{pa 3050 727}%
\special{fp}%
\special{sh 1}%
\special{pa 3050 727}%
\special{pa 3117 747}%
\special{pa 3103 727}%
\special{pa 3117 707}%
\special{pa 3050 727}%
\special{fp}%
%
\special{pn 8}%
\special{pa 3630 677}%
\special{pa 3830 677}%
\special{fp}%
\special{sh 1}%
\special{pa 3830 677}%
\special{pa 3763 657}%
\special{pa 3777 677}%
\special{pa 3763 697}%
\special{pa 3830 677}%
\special{fp}%
%
\special{pn 8}%
\special{pa 3800 727}%
\special{pa 3600 727}%
\special{fp}%
\special{sh 1}%
\special{pa 3600 727}%
\special{pa 3667 747}%
\special{pa 3653 727}%
\special{pa 3667 707}%
\special{pa 3600 727}%
\special{fp}%
%
\special{pn 4}%
\special{sh 1}%
\special{ar 3320 700 8 8 0  6.28318530717959E+0000}%
\special{sh 1}%
\special{ar 3520 700 8 8 0  6.28318530717959E+0000}%
%
\special{pn 4}%
\special{sh 1}%
\special{ar 3420 700 8 8 0  6.28318530717959E+0000}%
\put(38.5000,-7.6000){\makebox(0,0)[lb]{$n$}}%
\end{picture}}%
\]
\item  For each arrow $x:i\to j$ with $i\neq j$ in $Q$, 
$x\Lambda e_j=e_{i}\Lambda e_{j}=e_{i}\Lambda x$.
\item  For any pair $(i,j)$ of $Q_0$, $w_j^i\neq 0$ in $\Lambda$, where $w_j^i$ is the shortest path from $i$ to $j$ in $Q$.
\end{enumerate} 
\end{condition}
\begin{example}
\label{examplenscd}
\begin{enumerate}[{\rm (1)}]
\item Let $\overrightarrow{\Delta}$ be a quiver of type $A_n$ with linear orientation. $Q$ denotes the double quiver 
of $\overrightarrow{\Delta}$ i.e.
$Q_0:=\overrightarrow{\Delta}_0$ and 
$Q_1:=\overrightarrow{\Delta}_1\sqcup\{\alpha^*:t(\alpha)\to s(\alpha)\mid \alpha\in \overrightarrow{\Delta}_1\}$.
Then $\Pi_n:=kQ/(\sum_{\alpha\in \overrightarrow{\Delta}_1}\alpha\alpha^*-\alpha^*\alpha)$ is said to be the
 \emph{preprojective algebra} of type $A_n$. 
 
We can easily check  that the preprojective algebra $\Pi_n$ of type $A_n$  satisfies the Condition\;\ref{nscd}.
In fact, (a) and (c) of the Condition\;\ref{nscd} obviously hold. 
Let $\alpha$ be an arrow from $x$ to $y$. Then the relation 
$\sum_{\alpha\in \overrightarrow{\Delta}_1} (\alpha\alpha^*-\alpha^*\alpha) $
 induces that for any path $w$ from $x$ to $y$ on $Q$,
there exists $N$ such that
\[w=(\alpha\alpha^*)^N\alpha=\alpha(\alpha^*\alpha)^N.\]
This gives (b) of the Condition\;\ref{nscd}.  
\item The Auslander algebra of the truncated polynomial ring $k[X]/(X^n)$ satisfies the Condition\;\ref{nscd}.
\item Let $Q$ be the following quiver:
\[
{\unitlength 0.1in%
\begin{picture}( 18.8000,  3.4400)( 21.5000, -7.5400)%
\put(22.2000,-7.5000){\makebox(0,0)[lb]{$1$}}%
%
\special{pn 8}%
\special{pa 2360 670}%
\special{pa 2560 670}%
\special{fp}%
\special{sh 1}%
\special{pa 2560 670}%
\special{pa 2493 650}%
\special{pa 2507 670}%
\special{pa 2493 690}%
\special{pa 2560 670}%
\special{fp}%
%
\special{pn 8}%
\special{pa 2530 720}%
\special{pa 2330 720}%
\special{fp}%
\special{sh 1}%
\special{pa 2330 720}%
\special{pa 2397 740}%
\special{pa 2383 720}%
\special{pa 2397 700}%
\special{pa 2330 720}%
\special{fp}%
\put(25.7000,-7.6000){\makebox(0,0)[lb]{$2$}}%
%
\special{pn 8}%
\special{pa 2720 670}%
\special{pa 2920 670}%
\special{fp}%
\special{sh 1}%
\special{pa 2920 670}%
\special{pa 2853 650}%
\special{pa 2867 670}%
\special{pa 2853 690}%
\special{pa 2920 670}%
\special{fp}%
%
\special{pn 8}%
\special{pa 2890 720}%
\special{pa 2690 720}%
\special{fp}%
\special{sh 1}%
\special{pa 2690 720}%
\special{pa 2757 740}%
\special{pa 2743 720}%
\special{pa 2757 700}%
\special{pa 2690 720}%
\special{fp}%
\put(29.3000,-7.6000){\makebox(0,0)[lb]{$3$}}%
%
\special{pn 8}%
\special{pa 3080 677}%
\special{pa 3280 677}%
\special{fp}%
\special{sh 1}%
\special{pa 3280 677}%
\special{pa 3213 657}%
\special{pa 3227 677}%
\special{pa 3213 697}%
\special{pa 3280 677}%
\special{fp}%
%
\special{pn 8}%
\special{pa 3250 727}%
\special{pa 3050 727}%
\special{fp}%
\special{sh 1}%
\special{pa 3050 727}%
\special{pa 3117 747}%
\special{pa 3103 727}%
\special{pa 3117 707}%
\special{pa 3050 727}%
\special{fp}%
%
\special{pn 8}%
\special{pa 3630 677}%
\special{pa 3830 677}%
\special{fp}%
\special{sh 1}%
\special{pa 3830 677}%
\special{pa 3763 657}%
\special{pa 3777 677}%
\special{pa 3763 697}%
\special{pa 3830 677}%
\special{fp}%
%
\special{pn 8}%
\special{pa 3800 727}%
\special{pa 3600 727}%
\special{fp}%
\special{sh 1}%
\special{pa 3600 727}%
\special{pa 3667 747}%
\special{pa 3653 727}%
\special{pa 3667 707}%
\special{pa 3600 727}%
\special{fp}%
%
\special{pn 4}%
\special{sh 1}%
\special{ar 3320 700 8 8 0  6.28318530717959E+0000}%
\special{sh 1}%
\special{ar 3520 700 8 8 0  6.28318530717959E+0000}%
%
\special{pn 4}%
\special{sh 1}%
\special{ar 3420 700 8 8 0  6.28318530717959E+0000}%
\put(38.5000,-7.6000){\makebox(0,0)[lb]{$n$}}%
%
\special{pn 8}%
\special{pa 2220 610}%
\special{pa 2192 587}%
\special{pa 2169 562}%
\special{pa 2153 536}%
\special{pa 2151 507}%
\special{pa 2161 476}%
\special{pa 2183 447}%
\special{pa 2212 424}%
\special{pa 2245 411}%
\special{pa 2280 413}%
\special{pa 2314 427}%
\special{pa 2342 452}%
\special{pa 2362 481}%
\special{pa 2370 512}%
\special{pa 2364 540}%
\special{pa 2345 564}%
\special{pa 2318 587}%
\special{pa 2300 600}%
\special{fp}%
%
\special{pn 8}%
\special{pa 2318 587}%
\special{pa 2300 600}%
\special{fp}%
\special{sh 1}%
\special{pa 2300 600}%
\special{pa 2366 577}%
\special{pa 2343 569}%
\special{pa 2342 545}%
\special{pa 2300 600}%
\special{fp}%
%
\special{pn 8}%
\special{pa 2590 610}%
\special{pa 2562 587}%
\special{pa 2539 562}%
\special{pa 2523 536}%
\special{pa 2521 507}%
\special{pa 2531 476}%
\special{pa 2553 447}%
\special{pa 2582 424}%
\special{pa 2615 411}%
\special{pa 2650 413}%
\special{pa 2684 427}%
\special{pa 2712 452}%
\special{pa 2732 481}%
\special{pa 2740 512}%
\special{pa 2734 540}%
\special{pa 2715 564}%
\special{pa 2688 587}%
\special{pa 2670 600}%
\special{fp}%
%
\special{pn 8}%
\special{pa 2688 587}%
\special{pa 2670 600}%
\special{fp}%
\special{sh 1}%
\special{pa 2670 600}%
\special{pa 2736 577}%
\special{pa 2713 569}%
\special{pa 2712 545}%
\special{pa 2670 600}%
\special{fp}%
%
\special{pn 8}%
\special{pa 2940 620}%
\special{pa 2912 597}%
\special{pa 2889 572}%
\special{pa 2873 546}%
\special{pa 2871 517}%
\special{pa 2881 486}%
\special{pa 2903 457}%
\special{pa 2932 434}%
\special{pa 2965 421}%
\special{pa 3000 423}%
\special{pa 3034 437}%
\special{pa 3062 462}%
\special{pa 3082 491}%
\special{pa 3090 522}%
\special{pa 3084 550}%
\special{pa 3065 574}%
\special{pa 3038 597}%
\special{pa 3020 610}%
\special{fp}%
%
\special{pn 8}%
\special{pa 3038 597}%
\special{pa 3020 610}%
\special{fp}%
\special{sh 1}%
\special{pa 3020 610}%
\special{pa 3086 587}%
\special{pa 3063 579}%
\special{pa 3062 555}%
\special{pa 3020 610}%
\special{fp}%
%
\special{pn 8}%
\special{pa 3880 630}%
\special{pa 3852 607}%
\special{pa 3829 582}%
\special{pa 3813 556}%
\special{pa 3811 527}%
\special{pa 3821 496}%
\special{pa 3843 467}%
\special{pa 3872 444}%
\special{pa 3905 431}%
\special{pa 3940 433}%
\special{pa 3974 447}%
\special{pa 4002 472}%
\special{pa 4022 501}%
\special{pa 4030 532}%
\special{pa 4024 560}%
\special{pa 4005 584}%
\special{pa 3978 607}%
\special{pa 3960 620}%
\special{fp}%
%
\special{pn 8}%
\special{pa 3978 607}%
\special{pa 3960 620}%
\special{fp}%
\special{sh 1}%
\special{pa 3960 620}%
\special{pa 4026 597}%
\special{pa 4003 589}%
\special{pa 4002 565}%
\special{pa 3960 620}%
\special{fp}%
\end{picture}}%
\vspace{5pt}\]
 $I_m$ denotes an admissible ideal of $kQ$ generated by 
 \[\{l_i\alpha_i,l_{i+1}\alpha_i^*, 
  \alpha_i l_{i+1},\alpha_i^* l_i,l_i^m,\alpha_i\alpha_i^*,\alpha_i^*\alpha_i\mid i\in Q_0\}, \]
  where $\alpha_i$ (resp. $\alpha_i^*$) is the arrow from $i$ to $i+1$ (resp. from $i+1$ to $i$) and $l_i$ is the loop on $i$
 (in the case that $m=1$, we regard $Q=Q^{\circ}$ and $I_1$ is generated by 
  $\{\alpha_i\alpha_i^*,\alpha_i^*\alpha_i\mid i\in Q_0\}$).
 Then $\Lambda_m:=kQ/I_m$ satisfies the Condition\;\ref{nscd}.
 
 We remark that for any algebra $\Lambda$ satisfying the Condition\;\ref{nscd}, there is a
 surjective algebra homomorphism from $\Lambda$ to $\Lambda_1$.
 \item Let $Q$ be the following quiver.
 \[
{\unitlength 0.1in%
\begin{picture}(  8.7000,  7.1000)( 17.3000,-12.5000)%
\put(18.0000,-10.0000){\makebox(0,0)[lb]{$1$}}%
%
\special{pn 8}%
\special{pa 1960 920}%
\special{pa 2360 920}%
\special{fp}%
\special{sh 1}%
\special{pa 2360 920}%
\special{pa 2293 900}%
\special{pa 2307 920}%
\special{pa 2293 940}%
\special{pa 2360 920}%
\special{fp}%
%
\special{pn 8}%
\special{pa 2340 970}%
\special{pa 1940 970}%
\special{fp}%
\special{sh 1}%
\special{pa 1940 970}%
\special{pa 2007 990}%
\special{pa 1993 970}%
\special{pa 2007 950}%
\special{pa 1940 970}%
\special{fp}%
\put(21.1000,-8.6000){\makebox(0,0)[lb]{$\alpha$}}%
\put(21.2000,-11.6000){\makebox(0,0)[lb]{$\beta$}}%
\put(24.4000,-9.9000){\makebox(0,0)[lb]{$2$}}%
%
\special{pn 8}%
\special{pa 1810 850}%
\special{pa 1774 832}%
\special{pa 1745 814}%
\special{pa 1730 793}%
\special{pa 1736 770}%
\special{pa 1758 747}%
\special{pa 1792 729}%
\special{pa 1833 720}%
\special{pa 1873 722}%
\special{pa 1909 735}%
\special{pa 1933 754}%
\special{pa 1940 777}%
\special{pa 1929 802}%
\special{pa 1905 828}%
\special{pa 1880 850}%
\special{fp}%
%
\special{pn 8}%
\special{pa 1905 828}%
\special{pa 1880 850}%
\special{fp}%
\special{sh 1}%
\special{pa 1880 850}%
\special{pa 1943 821}%
\special{pa 1920 815}%
\special{pa 1917 791}%
\special{pa 1880 850}%
\special{fp}%
%
\special{pn 8}%
\special{pa 2470 860}%
\special{pa 2434 842}%
\special{pa 2405 824}%
\special{pa 2390 803}%
\special{pa 2396 780}%
\special{pa 2418 757}%
\special{pa 2452 739}%
\special{pa 2493 730}%
\special{pa 2533 732}%
\special{pa 2569 745}%
\special{pa 2593 764}%
\special{pa 2600 787}%
\special{pa 2589 812}%
\special{pa 2565 838}%
\special{pa 2540 860}%
\special{fp}%
%
\special{pn 8}%
\special{pa 2565 838}%
\special{pa 2540 860}%
\special{fp}%
\special{sh 1}%
\special{pa 2540 860}%
\special{pa 2603 831}%
\special{pa 2580 825}%
\special{pa 2577 801}%
\special{pa 2540 860}%
\special{fp}%
%
\special{pn 8}%
\special{pa 1810 1030}%
\special{pa 1774 1048}%
\special{pa 1745 1066}%
\special{pa 1730 1087}%
\special{pa 1736 1110}%
\special{pa 1758 1133}%
\special{pa 1792 1151}%
\special{pa 1833 1160}%
\special{pa 1873 1158}%
\special{pa 1909 1145}%
\special{pa 1933 1126}%
\special{pa 1940 1103}%
\special{pa 1929 1078}%
\special{pa 1905 1052}%
\special{pa 1880 1030}%
\special{fp}%
%
\special{pn 8}%
\special{pa 1905 1052}%
\special{pa 1880 1030}%
\special{fp}%
\special{sh 1}%
\special{pa 1880 1030}%
\special{pa 1917 1089}%
\special{pa 1920 1065}%
\special{pa 1943 1059}%
\special{pa 1880 1030}%
\special{fp}%
%
\special{pn 8}%
\special{pa 2470 1030}%
\special{pa 2434 1048}%
\special{pa 2405 1066}%
\special{pa 2390 1087}%
\special{pa 2396 1110}%
\special{pa 2418 1133}%
\special{pa 2452 1151}%
\special{pa 2493 1160}%
\special{pa 2533 1158}%
\special{pa 2569 1145}%
\special{pa 2593 1126}%
\special{pa 2600 1103}%
\special{pa 2589 1078}%
\special{pa 2565 1052}%
\special{pa 2540 1030}%
\special{fp}%
%
\special{pn 8}%
\special{pa 2565 1052}%
\special{pa 2540 1030}%
\special{fp}%
\special{sh 1}%
\special{pa 2540 1030}%
\special{pa 2577 1089}%
\special{pa 2580 1065}%
\special{pa 2603 1059}%
\special{pa 2540 1030}%
\special{fp}%
\put(18.0000,-6.7000){\makebox(0,0)[lb]{$l_1$}}%
\put(18.0000,-13.6000){\makebox(0,0)[lb]{$l'_1$}}%
\put(24.7000,-6.7000){\makebox(0,0)[lb]{$l_2$}}%
\put(24.6000,-13.8000){\makebox(0,0)[lb]{$l'_2$}}%
\end{picture}}%
 \vspace{5pt}
 \]
 Let $I$ be an admissible ideal of $kQ$ generated by 
 \[\{(\alpha\beta)^2,(\beta\alpha)^2,l_1\alpha-\alpha(l_2+l'_2),
 l'_1\alpha-\alpha l'_2, l_2\beta-\beta l'_1, l_2'\beta-\beta l_1,
  l_i^2,{l'_{i}}^{2}, l_il'_i,l'_il_i\ (i=1,2)
 \}\]
Then $\Gamma=kQ/I$ satisfies the Condition\;\ref{nscd}. 
\end{enumerate}
\end{example}
Main result of this paper is the following.
\begin{theorem}
\label{mainresult}
Let $\Lambda=kQ/I$ is a finite dimensional algebra with $I$ being an admissible ideal of $kQ$.
Then $\sttilt \Lambda\simeq (\Sym_{n+1},\leq)$ if and only if $\Lambda$ satisfies the Condition\;\ref{nscd}.
\end{theorem}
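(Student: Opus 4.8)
The plan is to prove both implications by passing, in each case, to algebras of smaller rank via Jasso's reduction theorem (Theorem~\ref{reduction theorem}) and its corollary $\sttilt_{e\Lambda}\Lambda\simeq\sttilt\Lambda/(e)$, and matching these reductions against the parabolic structure of $(\Sym_{n+1},\leq)$ recorded in Proposition~\ref{fresultsym}. It is convenient throughout to transport everything to $\tsilt\Lambda$ via the isomorphism $\mathbf{S}$ of Theorem~\ref{bijection} and to keep track of $g$-vectors (Theorem~\ref{gvector}), so that the objects being compared are governed by combinatorial data.

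\emph{Sufficiency.} Assume $\Lambda=kQ/I$ satisfies Condition~\ref{nscd}. Using (b) and (c) one first pins the algebra down: for each pair $(i,j)$ the bimodule $e_i\Lambda e_j$ is generated, cyclically over $e_i\Lambda e_i$ and over $e_j\Lambda e_j$, by the nonzero shortest path $w^i_j$, and from this one extracts the $\tau$-rigid indecomposable modules together with their minimal projective presentations. The decisive point is that the resulting list --- hence the set of $g$-vectors of $2$-term presilting complexes, and the Hom-vanishing relations among them that control the order --- depends only on $n$, not on the particular algebra satisfying Condition~\ref{nscd}; since the preprojective algebra $\Pi_n$ also satisfies Condition~\ref{nscd} by Example~\ref{examplenscd}(1), comparison with it together with Theorem~\ref{mizuno} gives $\sttilt\Lambda\simeq\sttilt\Pi_n\simeq(\Sym_{n+1},\leq)$. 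Equivalently one argues directly by induction on $n$: mutation of the minimum $(0,\Lambda)$ in its $n$ directions yields exactly $n$ atoms $M_1,\dots,M_n$, the full subposets $\sttilt_{e_j\Lambda}\Lambda\simeq\sttilt\Lambda/(e_j)$ are support $\tau$-tilting posets of algebras again of the form in Condition~\ref{nscd} (products of type-$A$ pieces), these identifications respect covering relations, and the recursive description of $(\Sym_{n+1},\leq)$ through its atoms $s_1,\dots,s_n$ and the parabolic intervals $[1,w_0(J)]=\langle s_j\mid j\in J\rangle$ of Proposition~\ref{fresultsym} then forces the isomorphism.

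\emph{Necessity.} Assume $\sttilt\Lambda\simeq(\Sym_{n+1},\leq)$; the isomorphism necessarily sends the unique minimum $(0,\Lambda)$ to $1$ and the unique maximum to $w_0$. Since for each $w$ exactly $n$ of $s_1w,\dots,s_nw$ are joined to $w$ by a cover, $(\Sym_{n+1},\leq)$ is $n$-regular, so by Theorem~\ref{basicprop}(2) $\sttilt\Lambda$ is $|\Lambda|$-regular and hence $|\Lambda|=n$, i.e.\ $Q$ has $n$ vertices. Next one recovers (a) from the local structure at $1$: the $n$ upward directions at $(0,\Lambda)$ are indexed by the indecomposable projectives $e_1\Lambda,\dots,e_n\Lambda$, and two of them have their join covering both atoms --- spanning a rank-two square --- precisely when the corresponding two generators of $\Sym_{n+1}$ commute, whereas for non-commuting generators the join sits at the top of a rank-three hexagon; as the commutation graph of $s_1,\dots,s_n$ is the path $1-2-\cdots-n$, this identifies the underlying graph of $Q^{\circ}$ with the Dynkin diagram of type $A_n$, and a separate argument (e.g.\ by a count of two-term silting complexes) rules out orientations in which an arrow between two consecutive vertices is missing, leaving exactly the doubled quiver of~(a). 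Finally, with $Q^{\circ}$ fixed, one recovers (b) and (c) by induction on $n$: applying Theorem~\ref{reduction theorem} (or its corollary at an end vertex) exhibits a full subposet of $\sttilt\Lambda$ as the support $\tau$-tilting poset of a rank-$(n-1)$ algebra, which by the induction hypothesis --- together with the parabolic description in Proposition~\ref{fresultsym} --- must again be of the classified form, hence satisfies Condition~\ref{nscd}; tracking which elements of $kQ$ are thereby forced to vanish and which are forced not to vanish, so that exactly $(n+1)!$ support $\tau$-tilting modules arise with the prescribed Hasse quiver, yields precisely Condition~\ref{nscd}(b) and~(c).

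\emph{Main obstacle.} The genuinely hard point is the last step of the necessity argument: extracting the exact multiplicative relations in~(b) from nothing more than a poset isomorphism. Coarse invariants --- $n$-regularity, cardinality, or the lattice property of the weak order --- are far too weak, so one must analyse enough explicit support $\tau$-tilting modules and their mutations (in practice by iterating the reduction theorem and inducting carefully on $n$) to see that any admissible ideal violating~(b) or~(c) either destroys the $\tau$-rigidity of some module or produces the wrong number of mutations, and therefore cannot realise $(\Sym_{n+1},\leq)$.
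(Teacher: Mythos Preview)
Your outline follows the paper's strategy---induction on $n$, reduction to smaller ranks, and for sufficiency a comparison with the preprojective algebra---but what you have written is a sketch, and the place where it ceases to be an argument is exactly where the paper's real work lies.

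The genuine gap is in the necessity direction, at condition~(b). You propose that, once $Q^\circ$ is identified, Jasso reduction plus induction shows each quotient $\Lambda/(1-e)$ satisfies Condition~\ref{nscd}, and then one ``tracks which elements of $kQ$ are forced'' to obtain~(b) for $\Lambda$ itself. But condition~(b) for the quotient is a statement about $e_i\bigl(\Lambda/(1-e)\bigr)e_j$, i.e.\ about cosets modulo $\Lambda(1-e)\Lambda$; it says nothing directly about $e_i\Lambda e_j$, and there is no evident lifting. The paper does not try to lift. Instead it locates, via the poset isomorphism, specific indecomposable summands $M_i^{\pm}=e_i\Lambda/e_i\Lambda e_{i\mp 1}\Lambda$ of certain support $\tau$-tilting modules (Lemmas~\ref{titoj}--\ref{impotantlemma}), proves that $P_i\oplus M_i^{\pm}$ is $\tau$-rigid (Lemma~\ref{impotantlemma2}(1)), and then feeds this into the two-vertex computation of Lemma~\ref{1to2} to force the minimal projective presentation of $M_i^+$ to be $P_{i-1}\xrightarrow{\alpha_{i-1}^*}P_i$ with $\alpha_{i-1}^*\Lambda=e_i\Lambda e_{i-1}\Lambda$ and $e_i\Lambda\alpha_{i-1}^*=e_i\Lambda e_{i-1}$. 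Pinning down $M_i^{\pm}$ and the needed $\tau$-rigidity requires first determining the permutation $\sigma$ with $T_{s_iw_0}=Y_{\sigma(i)}$ and proving $\sigma(i)=n-i+1$ (Lemmas~\ref{localstroflambda}, \ref{pathha0jyanai}); none of this is indicated in your sketch. Condition~(c) is obtained similarly, by showing $\supp(P_1)=\supp(P_n)=Q_0$ from the explicit shape of $\sttilt_{P_1}\Lambda$ and $\sttilt_{P_n^-}\Lambda$ inside $\Sym_{n+1}$, not by a count as you suggest.

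For sufficiency your target statement---that the $g$-vectors and the $\Hom$-vanishing pattern among indecomposable two-term presiltings depend only on $n$---is correct and is what the paper proves, but it does so by writing down explicit complexes $X_{\mathbf{i}}$ indexed by combinatorial data and giving a purely combinatorial criterion for $\Hom_{\Kb(\proj\Lambda)}(X_{\mathbf{i}},X_{\mathbf{j}}[1])=0$ (Lemmas~\ref{rigidcondition}--\ref{rigidcondition4}, Proposition~\ref{rigidconditionforx}) that is visibly independent of which $\Lambda$ satisfying Condition~\ref{nscd} one uses; the preprojective case then shows these complexes exhaust everything. Your alternative inductive route via mutation and parabolic intervals is plausible but would need the same explicit control at every vertex of the Hasse quiver, not just at the minimum.
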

\begin{remark}
By using \cite[Theorem\;4.1]{EJR} (and Theorem\;\ref{mizuno}), 
we can construct infinitely many algebras whose support $\tau$-tilting posets
are isomorphic to $(\Sym_{n+1},\leq)$. In fact, let $\Lambda_m$ be the algebra considered in
Example\;\ref{examplenscd} (3). Then $z_m:=l_1^{m-1}+l_2^{m-1}+\cdots+l_n^{m-1}$ is in $\Rad \Lambda\cap Z(\Lambda_m)$,
where $Z(\Lambda_m)$ is the center of $\Lambda_m$. It is easy to check that $\Lambda_m/(z_m)=\Lambda_{m-1}$.
Hence \cite[Theorem\;4.1]{EJR} implies that $\sttilt \Lambda_m\simeq \sttilt \Lambda_1$ for any $m\geq 1$.
Also by using \cite[Theorem\;4.1]{EJR} and Theorem\;\ref{mizuno}, we have that 
\[\sttilt \Lambda_m\simeq \sttilt \Lambda_1\simeq \sttilt \Pi_n\simeq (\Sym_{n+1},\leq).\]
Therefore \cite[Theorem\;4.1]{EJR} is very powerful. But unfortunately, there exists
an algebra $\Lambda$ with $\sttilt \Lambda\simeq (\Sym_{n+1},\leq)$ such that
we can't prove $\sttilt \Lambda\simeq (\Sym_{n+1},\leq)$ by using \cite[Theorem\;4.1]{EJR} and Theorem\;\ref{mizuno}.

We denote by $\Lambda\leadsto \Lambda'$ if there exists $z\in \Rad \Lambda\cap Z(\Lambda)$ such that $\Lambda/(z)\cong \Lambda'$.
Let $\sim$ be the  equivalence relation on the set of (isomorphism classes of) basic finite dimensional algebras generated
by $\leadsto$.  Then we can show that
 \[\Gamma\not\sim \Pi_2,\]
where $\Gamma$ is the algebra considered in Example\;\ref{examplenscd} (4). Note that if 
$\Lambda \leadsto \Lambda^{(1)}, \Lambda^{(2)},$
then there is $\Lambda' $ such that $\Lambda^{(1)},\Lambda^{(2)}\leadsto \Lambda'$. In particular,
$\Lambda^{(1)}\sim \Lambda^{(2)}$ if and only if there exists $\Lambda$ such that
\[\Lambda^{(1)}\leadsto \cdots\leadsto \Lambda\ \mathrm{and}\ \Lambda^{(2)}\leadsto\cdots\leadsto \Lambda.\]
Now suppose that $\Gamma\sim \Pi_2(\sim \Lambda_1)$. Since $\Lambda_1$ has no non-zero element
in $\Rad \Lambda_1\cap Z(\Lambda_1)$, there is a path
\[\Gamma\leadsto\cdots\leadsto \Lambda_1.\]
Let $z\in \Rad \Gamma\cap Z(\Gamma)$. Since $\Rad \Gamma\cap Z(\Gamma)\subset e_1(\Rad \Gamma)e_1+e_2(\Rad \Gamma)e_2$ and
\[l_1,l_1' \alpha\beta,l_1\alpha\beta,l_1'\alpha\beta,l_2,l_2',\beta\alpha,l_2\beta\alpha,l_2'\beta\alpha\] form a basis
of $e_1(\Rad \Gamma)e_1+e_2(\Rad \Gamma)e_2$, we can write
\[z=al_1+bl_1'+c\alpha\beta+dl_1\alpha\beta+el_1'\alpha\beta+a'l_2+b'l_2'+c'\beta\alpha+d'l_2\beta\alpha+e'l_2'\beta\alpha.\]
By $z\alpha=\alpha z$ and $z\beta=\beta z$, we obtain that $a=a'=b=b'=d=d'=e=e'=0$ and $c=c'$.
Then $c=c'=0$ follows from $l_1z=zl_1$. This implies that $\Rad \Gamma\cap Z(\Gamma)=0$.
Therefore we have that $\Gamma\simeq \Lambda_1$ and reach a contradiction.
 $\Gamma\not\sim \Pi_2$ says that we can't prove $\sttilt\Gamma\simeq (\Sym_{3},\leq)$
by using \cite[Theorem\;4.1]{EJR} and Theorem\;\ref{mizuno}.    
\end{remark}
\section{Proof of Theorem\;\ref{mainresult}}
In this section, we give a proof of Theorem\;\ref{mainresult}. From now on, we put 
$P_i=e_i \Lambda$ the indecomposable projective module of $\Lambda$ associated with $i\in Q_0$.
Also we put $X_i:=e_i\Lambda/e_i\Lambda(1-e_i)\Lambda\simeq \Lambda/(1-e_i)$. Note that
$X_i$ is in $\sttilt \Lambda$ with $\supp(X_i)=\{i\}$. Therefore we have that
$\dip(0)=\{X_i\mid i\in Q_0\}.$

\subsection{Case  $n=2$}
\label{subsec:n=2}
In this subsection, we see that Theorem\;\ref{mainresult} hold for the case that $n=2$.
The following results are proved in \cite{AK}. We give  proofs for reader's convenience.
\begin{lemma}\label{1to2}\cite{AK}
Let $Q$ be a quiver with precisely two vertices, say  $1,2$,
 and $I$  an admissible ideal of $kQ$. Suppose that there is an arrow $\alpha$ from $1$ to $2$.
Put $\Lambda:=kQ/I$.
Then $X_1\oplus P_2 $ is not $\tau$-rigid. Moreover, we have
$\Hom_{\Lambda}(P_1,\tau X_1)=0$ if and only if 
$\alpha$ is a unique arrow from $1$ to $2$ and 
$\alpha\Lambda e_2=e_1\Lambda e_2=e_1\Lambda\alpha$.  
\end{lemma}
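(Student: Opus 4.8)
The plan is to compute $\tau X_1$ from a minimal projective presentation via the Nakayama functor, and then to recognise $\Hom_\Lambda(P_j,\tau X_1)$ as an easily analysed cokernel. Put $K:=e_1\Lambda e_2\Lambda$, the right ideal spanned by the paths from $1$ that pass through $2$; then $X_1\simeq P_1/K$, and since $\alpha\in K$ we have $K\ne 0$, so $X_1$ is a nonprojective module with simple top $S_1$ and projective cover $P_1$. Now $K$ is generated as a right $\Lambda$-module by $e_1\Lambda e_2$, and each such generator spans (inside $\Lambda$) a quotient of $P_2$, so $\Top K\cong S_2^{\oplus m}$ with $m:=\dim_k\Top K\ge 1$; moreover $\alpha\notin K\Rad\Lambda\subseteq\Rad^2\Lambda$, so $\alpha$ is always one of the minimal generators. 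Fixing minimal generators $p_1,\dots,p_m\in e_1\Lambda e_2$ of $K$, a minimal projective presentation of $X_1$ is $P_2^{\oplus m}\xrightarrow{d}P_1\to X_1\to 0$, where $d$ sends the top of the $i$-th summand to $p_i$. Applying $\Hom_\Lambda(-,\Lambda)$ turns $d$ into $\phi\colon\Lambda e_1\to(\Lambda e_2)^{\oplus m}$, $y\mapsto(yp_1,\dots,yp_m)$, whence $\tau X_1\cong D\,\Coker\phi$ for $D=\Hom_k(-,k)$, and for each vertex $j$
\[
\Hom_\Lambda(P_j,\tau X_1)\;\cong\;(\tau X_1)e_j\;\cong\;D\,\Coker\!\bigl(e_j\phi\colon e_j\Lambda e_1\longrightarrow(e_j\Lambda e_2)^{\oplus m}\bigr).
\]

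For the first assertion I take $j=2$: the tuple $(e_2,0,\dots,0)$ is not in the image of $e_2\phi$, because each $yp_1$ with $y\in e_2\Lambda e_1$ lies in $(e_2\Lambda e_1)(e_1\Lambda e_2)\subseteq\Rad^2\Lambda$ while $e_2\notin\Rad\Lambda$. Hence $\Hom_\Lambda(P_2,\tau X_1)\ne 0$; since $\tau(X_1\oplus P_2)=\tau X_1$, this already shows $X_1\oplus P_2$ is not $\tau$-rigid, using only the existence of the arrow $\alpha$.

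For the second assertion I take $j=1$, so $\Hom_\Lambda(P_1,\tau X_1)=0$ is equivalent to surjectivity of $e_1\phi\colon e_1\Lambda e_1\to(e_1\Lambda e_2)^{\oplus m}$, $y\mapsto(yp_i)_i$. If the stated conditions hold, then $e_1\Lambda e_2=\alpha\Lambda e_2$ forces $K=\alpha\Lambda$, so $m=1$ and one may take $p_1=\alpha$; the image of $e_1\phi$ is then $(e_1\Lambda e_1)\alpha=e_1\Lambda\alpha=e_1\Lambda e_2$, so $e_1\phi$ is onto. Conversely, assume $e_1\phi$ surjective. First $m=1$: if $m\ge 2$, then $(p_1,0,\dots,0)$ lies in the image, so there is $y\in e_1\Lambda e_1$ with $yp_1=p_1$ and $yp_2=0$; but $yp_1=p_1\ne 0$ forces $y$ to be a unit of the local ring $e_1\Lambda e_1$ (otherwise $y$ is nilpotent and $p_1=y^Np_1=0$), and then $yp_2=0$ gives $p_2=0$, a contradiction. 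With $m=1$ write $p:=p_1$; surjectivity reads $e_1\Lambda p=e_1\Lambda e_2$, and $K=p\Lambda$. Comparing $p$ and $\alpha$ inside the one-dimensional $\Top K$ and matching radical degrees, one obtains $\alpha=vp=pw$ for a unit $v\in e_1\Lambda e_1$ and a unit $w\in e_2\Lambda e_2$, whence $p=\alpha w^{-1}\in\alpha\Lambda$ and therefore $K=\alpha\Lambda$. From $K=\alpha\Lambda$ the three conclusions follow: every arrow $\beta\colon 1\to 2$ lies in $K=\alpha\Lambda$, so $\beta=\alpha w'$ with $w'$ a unit of $e_2\Lambda e_2$, forcing $\beta$ to be a nonzero scalar multiple of $\alpha$ modulo $\Rad^2\Lambda$ and hence $\beta=\alpha$ (distinct arrows being independent modulo $\Rad^2\Lambda$); $e_1\Lambda e_2\subseteq K=\alpha\Lambda$ together with $\alpha\Lambda\cap\Lambda e_2=\alpha\Lambda e_2$ gives $e_1\Lambda e_2=\alpha\Lambda e_2$; and $p=v^{-1}\alpha$ gives $e_1\Lambda e_2=e_1\Lambda p\subseteq e_1\Lambda\alpha\subseteq e_1\Lambda e_2$.

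The real obstacle is this converse direction, and specifically the step that squeezes out of the bare surjectivity of $e_1\phi$ the fact that $p_1$ differs from $\alpha$ by two‑sided units, equivalently that $K=\alpha\Lambda$. This is where one must combine the locality of the endomorphism rings $e_1\Lambda e_1$ and $e_2\Lambda e_2$ with careful radical‑filtration bookkeeping, the more so since a priori the minimal generators $p_i$ are merely elements of $e_1\Lambda e_2$ rather than honest paths.
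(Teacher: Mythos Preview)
Your proof is correct and follows essentially the same route as the paper: both reduce $\Hom_\Lambda(P_j,\tau X_1)=0$ to the surjectivity of the map $e_j\Lambda e_1\to (e_j\Lambda e_2)^{m}$, $y\mapsto (yp_i)_i$, and then argue from the test element $(\alpha,0,\dots,0)$ (the paper) or $(p_1,0,\dots,0)$ (you) that $m=1$ and that $\alpha$ differs from the single generator by units on both sides. The only cosmetic difference is that the paper phrases this surjectivity as $\Hom_{\Kb(\proj\Lambda)}(P_{X_1},P_j[1])=0$ via Theorem~\ref{bijection}, whereas you unpack $\tau X_1=D\operatorname{Tr}X_1$ directly; the underlying linear-algebra is identical.
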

\begin{proof}
Let $P_{X_1}=[P_2^r\stackrel{d}{\to} P_1]$ be a two term presilting complex associated with $X_1$.
We can easily  see that $\Hom_{\Lambda}(P_2,\tau X_1)\neq 0$. In fact,
we have that $\Hom_{\Kb(\proj \Lambda)}(P_{X_1},P_2[1])\neq 0$.

We assume that $\Hom_{\Kb(\proj \Lambda)}(P_{X_1},P_1[1])= 0(\Leftrightarrow \Hom_{\Lambda}(P_1,\tau X_1)=0)$.
Then any homomorphism $f\in \Hom_{\Lambda}(P_2^r, P_1)$ factors through $d$.
By considering $f=(\overbrace{\alpha,0,\dots,0}^r):P_2^r\to P_1$, we obtain that $r=1$.
In particular, we may regard $d$ as an element of $e_1\Lambda e_2$ and get that
\[(e_1\Lambda e_1)d=e_1\Lambda e_2.\]
Hence there exists 
 $x\in e_1 \Lambda e_1\setminus e_1 (\Rad \Lambda) e_1$ such that $\alpha=xd$.
We conclude that
\[e_1\Lambda \alpha=e_1\Lambda xd=e_1 \Lambda d=e_1\Lambda e_2. \]
Note that $\Im d=d \Lambda=e_1\Lambda e_2 \Lambda$. Therefore
there also exists 
 $y\in e_2 \Lambda e_2\setminus e_2 (\Rad \Lambda) e_2$ such that $\alpha=dy$.
This implies that
\[\alpha \Lambda=dy\Lambda=d\Lambda=e_1 \Lambda e_2 \Lambda.\] 
Hence, $\alpha$ is a unique arrow from 1 to 2 and we have 
\[\alpha\Lambda e_2=e_1\Lambda e_2=e_1\Lambda \alpha.\]

If $\alpha$ is a unique arrow from 1 to 2 and  
$\alpha\Lambda e_2=e_1\Lambda e_2=e_1\Lambda \alpha$ holds,
then it is easy to check that 
\[P_{X_1}=[P_2\stackrel{\alpha}{\to}P_1]\]
gives a minimal projective presentation of $X_1$ 
and $\Hom_{\Kb(\proj\Lambda)}(P_{X_1},P_1[1])=0$.
\end{proof}


The following proposition is an immediate consequence of Lemma \ref{1to2}.

\begin{proposition}\label{mr}
Let $Q$ and $\Lambda$ be as in Lemma \ref{1to2}.
\item $\sttilt\Lambda\simeq (\Sym_{3},\leq)$ if and only if the following hold.
\begin{enumerate}[{\rm (a)}]
\item There exists a unique arrow $\alpha$ of $Q$ from $1$ to $2$ and $\alpha\Lambda e_2=e_1\Lambda e_2=e_1\Lambda \alpha$.
\item There exists a unique arrow $\beta$ of $Q$ from $2$ to $1$ and $\beta\Lambda e_1=e_2\Lambda e_1=e_2\Lambda \beta$.
\end{enumerate} 
\end{proposition}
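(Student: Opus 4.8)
The plan is to deduce Proposition~\ref{mr} from Lemma~\ref{1to2} together with the structural facts about $\sttilt\Lambda$ recalled in Section~2 and the description of $(\Sym_3,\leq)$ as the $2$-regular poset on $6$ elements. First I would observe that $|\Lambda|=2$, so Theorem~\ref{basicprop}(2) gives that $\H(\sttilt\Lambda)$ is $2$-regular, and since $(\Sym_3,\leq)$ has $6$ elements I should aim to show: $\sttilt\Lambda\simeq(\Sym_3,\leq)$ iff $|\sttilt\Lambda|=6$ (equivalently, $\sttilt\Lambda$ is finite), and then identify exactly when the count is $6$ versus strictly larger. The maximal element $\Lambda$ and the minimal element $0$ are always present, with $\dip(0)=\{X_1,X_2\}$ and dually $\dis(\Lambda)=\{\text{the two mutations of }\Lambda\}$; the only room for variation is in the "middle" vertices $X_1,X_2$ and the two support $\tau$-tilting modules with full support covering them.

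Next I would analyze $X_1$ (and symmetrically $X_2$). Since $\supp(X_1)=\{1\}$, the two neighbours of $X_1$ in $\H(\sttilt\Lambda)$ are $0$ (obtained by mutating at the summand $X_1$, enlarging the support — wait, rather shrinking; in any case one neighbour is $0$) and one other vertex $M_1$ with $\supp(M_1)=\{1,2\}=Q_0$, obtained by adding an indecomposable summand with support containing $2$. By Theorem~\ref{bijection} and $2$-regularity, $X_1$ has exactly two neighbours, so this $M_1$ is forced to exist, and $M_1\geq X_1$. The key dichotomy, supplied by Lemma~\ref{1to2}: whether $X_1\oplus P_2$ is $\tau$-rigid — it never is — versus whether $\Hom_\Lambda(P_1,\tau X_1)=0$, which holds iff condition (a) of Proposition~\ref{mr} holds. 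When (a) holds, $X_1$ is a summand of a support $\tau$-tilting module with full support, namely the Bongartz completion relative to support $\{1,2\}$, giving a canonical $M_1$; when (a) fails, I must argue $\sttilt\Lambda$ is infinite — or at least not $2$-regular with $6$ elements, forcing non-isomorphism with $(\Sym_3,\leq)$. Running the same analysis at vertex $2$ yields condition (b).

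The cleanest route for the "only if" direction is: if $\sttilt\Lambda\simeq(\Sym_3,\leq)$ then it has exactly one element of support $\{1\}$ (the unique element of rank $1$ below the unique rank-$2$ element on that side), call it $X$; but $X_1$ is always such an element with $\supp=\{1\}$, and any support $\tau$-tilting module $N$ with $\supp(N)=\{1\}$ is a $\Lambda/(e_2)$-module, and $\sttilt\Lambda/(e_2)\simeq\sttilt_{e_2\Lambda}\Lambda$ embeds as a full subposet — this subposet must be $\{X_1\}\cup\{0\}$, i.e. $\Lambda/(e_2)$ is local, which it is. The real content is that the rank-$2$ vertex $M_1$ above $X_1$ exists and is unique, which via $\mathbf{S}$ and Theorem~\ref{gvector} translates into the existence of a two-term silting complex with $X_1$ as a summand; by Lemma~\ref{1to2} this existence is equivalent to $\Hom_\Lambda(P_1,\tau X_1)=0$, i.e. condition (a). For the "if" direction, assuming (a) and (b) I would exhibit the six modules explicitly: $0$, $X_1$, $X_2$, $M_1$, $M_2$ (the Bongartz completions), and $\Lambda$, compute the Hasse quiver using Theorem~\ref{basicprop}, verify $2$-regularity is respected and the component is finite hence all of $\H(\sttilt\Lambda)$ by Theorem~\ref{basicprop}(3), and match it with $(\Sym_3,\leq)$, which is the hexagon $1 < s_1, s_2 < s_1s_2, s_2s_1 < w_0$.

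The main obstacle I anticipate is the "only if" direction, specifically showing that the failure of condition (a) (or (b)) forces $\sttilt\Lambda\not\simeq(\Sym_3,\leq)$ — i.e. that $\Hom_\Lambda(P_1,\tau X_1)\neq 0$ prevents the poset from being the hexagon. The delicate point is that one cannot directly conclude $\sttilt\Lambda$ is infinite from a single failed $\tau$-rigidity; instead I expect to argue that when (a) fails, the element $X_1$ has \emph{no} neighbour of full support, contradicting $2$-regularity applied to the putative copy of $X_1$ inside the hexagon (where the rank-$1$ vertex $s_1$ has a neighbour $s_1s_2$ of rank $2$). Making this rigorous requires knowing that the neighbour of $X_1$ of full support, if it exists, corresponds via $\mathbf{S}$ to completing the presilting complex $\mathbf{S}(X_1,0)$ by a two-term presilting complex supported at vertex $1$ in degree $0$ and involving $P_1$ — which is exactly the computation in the proof of Lemma~\ref{1to2} showing such a complex exists iff the minimal projective presentation of $X_1$ has the form $[P_2\xrightarrow{\alpha}P_1]$ with the stated identities. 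So Proposition~\ref{mr} is essentially a repackaging of Lemma~\ref{1to2} via $2$-regularity, and the proof should be short.
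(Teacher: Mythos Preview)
Your approach is essentially the paper's, and your final paragraph states it correctly: Proposition~\ref{mr} is a repackaging of Lemma~\ref{1to2} via $2$-regularity. The paper's proof is just more direct: it observes that $\sttilt\Lambda\simeq(\Sym_3,\leq)$ if and only if there is a path of length exactly $2$ from $\Lambda$ to $X_1$ and one from $\Lambda$ to $X_2$, and that this is equivalent to $X_1,X_2$ being non-projective together with $X_1\oplus P_i$ and $X_2\oplus P_j$ being $\tau$-rigid for some $\{i,j\}=\{1,2\}$; Lemma~\ref{1to2} then forces $i=1,j=2$ and yields (a), (b).

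One point in your write-up is phrased misleadingly and would need correcting in a final version. You say that when (a) fails ``$X_1$ has no neighbour of full support'', but by $2$-regularity $X_1$ \emph{always} has a unique predecessor $M_1$, and $\supp(M_1)=Q_0$ automatically (the pair for $X_1$ is $(X_1,P_2)$, and the upward mutation replaces $P_2$, giving a pair $(X_1\oplus N,0)$). The actual dichotomy is whether $M_1\in\dis(\Lambda)$, i.e.\ whether $N$ is projective. Since $M_1\in\dis(\Lambda)$ forces $N\in\{P_1,P_2\}$ and Lemma~\ref{1to2} rules out $N=P_2$, one gets $M_1\in\dis(\Lambda)\iff X_1\oplus P_1$ is $\tau$-rigid $\iff\Hom_\Lambda(P_1,\tau X_1)=0\iff$ (a). Your detour through ``$|\sttilt\Lambda|=6$'' is unnecessary; once both $M_1,M_2\in\dis(\Lambda)$ (and they are distinct, else $2$-regularity fails at $M_1$), the six elements $\{0,X_1,X_2,M_1,M_2,\Lambda\}$ form a $2$-regular connected component, and Theorem~\ref{basicprop}(3) finishes.
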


\begin{proof}

Since $\dip(0)=\{X_1,X_2\}$, $\sttilt \Lambda\simeq (\Sym_3,\leq)$ if and only if
there are a path from $\Lambda$ to $X_1$ with length 2 and a path from $\Lambda$ to $X_2$ with length 2.
This is equivalent to that $X_1$, $X_2$ are not projective and $X_1\oplus P_i, X_2\oplus P_j$ are
$\tau$-rigid with $\{i,j\}=\{1,2\}$.
Then the assertion  follows from Lemma\;\ref{1to2}.
\end{proof}

\subsection{`if' part}
\label{subsec:ifpart}
We assume that $\Lambda$ satisfies the Condition\;\ref{nscd}. 

For vertices $i,j\in Q_0$, we denote by 
$f_i^j$ the homomorphism from $P_i$ to $P_j$ given by the path $w_i^j$ i.e. $f_i^j(e_i \lambda )=w_i^j \lambda\in P_j.$
\begin{lemma}
\label{pathcondition}
\begin{enumerate}[{\rm (1)}]
\item Let $\alpha:i\to j$ be an arrow of $Q^{\circ}$. Then we have 
\[ e_i (\Rad \Lambda)  \alpha = \alpha (\Rad \Lambda) e_{j}\]
\item  Let $w\in e_i \Lambda e_j $ 
 Then there is a unique $a\in k$ and
(not necessary unique) $l\in e_i(\Rad \Lambda) e_i$ such that
$w=(a+l)w_j^i$ in $\Lambda$. Also there is a unique $a'\in k$ and
(not necessary unique) $l'\in e_j(\Rad \Lambda) e_j$ such that
$w=w_j^i(a'+l')$ in $\Lambda$. Furthermore, we have $a=a'$. 
\item Let $f\in \Hom_{\Lambda}(P_j,P_i)$ and $V\subset Q_0$. Assume that $f$ is given by  $w=(ae_i+l)w_j^i$ with $a\neq 0$.
Then $f$ is factors through $P=\bigoplus_{t\in V} P_t$ if and only if $w_i^j$ factors through
some $t\in V$. 
\end{enumerate}
\end{lemma}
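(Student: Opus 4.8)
The plan is to prove the three items in order, each feeding the next; throughout I use the identification $\Hom_{\Lambda}(P_s,P_t)\cong e_t\Lambda e_s$ given by left multiplication, the fact that each $e_s\Lambda e_s$ is a local ring with residue field $k$, and Conditions~\ref{nscd}(b), (c), writing $Q_0=\{1,\dots,n\}$ with its linear order and calling $\{\min(i,j),\dots,\max(i,j)\}$ the \emph{segment} between $i$ and $j$. For (1), observe first that, since $Q^{\circ}$ is the type $A_n$ double quiver of Condition~\ref{nscd}(a), the arrow $\alpha\colon i\to j$ is the unique arrow $i\to j$ of $Q$ and $i\neq j$. From $e_i\Lambda e_i=ke_i\oplus e_i(\Rad\Lambda)e_i$ and Condition~\ref{nscd}(b) I get the two decompositions
\[
e_i\Lambda e_j=(e_i\Lambda e_i)\alpha=k\alpha\oplus e_i(\Rad\Lambda)e_i\alpha
\qquad\text{and}\qquad
e_i\Lambda e_j=\alpha(e_j\Lambda e_j)=k\alpha\oplus\alpha e_j(\Rad\Lambda)e_j,
\]
directness in each case following from Condition~\ref{nscd}(c) (if $\alpha=l\alpha$ with $l$ in the radical then $e_i-l$ is a unit and $\alpha=0$). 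Both $e_i(\Rad\Lambda)e_i\alpha$ and $\alpha e_j(\Rad\Lambda)e_j$ lie in $e_i\Lambda e_j\cap\Rad^2\Lambda$ with codimension $1$ in $e_i\Lambda e_j$; but the arrows of $Q$ form a basis of $\Rad\Lambda/\Rad^2\Lambda$ and $\alpha$ is the only arrow $i\to j$, so $e_i\Lambda e_j\cap\Rad^2\Lambda$ itself has codimension $1$ in $e_i\Lambda e_j$. Hence the three subspaces coincide, which after absorbing idempotents is exactly $e_i(\Rad\Lambda)\alpha=\alpha(\Rad\Lambda)e_j$.

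Turning to (2), I would induct on the length $m$ of $w_j^i$. The case $m=0$ is the decomposition of $e_i\Lambda e_i$, and $m=1$ (where $w_j^i=\alpha$ is an arrow) is immediate from the two decompositions in (1), the equality of the left and right scalars following from $(a-a')\alpha\in\Rad^2\Lambda$ while $\alpha\notin\Rad^2\Lambda$. For $m\ge 2$ I write $w_j^i=\alpha w'$ with $\alpha\colon i\to v$ the first arrow and $w'=w_j^v$; every path of $Q$ from $i$ to $j$ visits $v$ (erase the loops to obtain a walk on the line $Q^{\circ}$, which must pass through $v$), so $e_i\Lambda e_j\subseteq e_i\Lambda e_v\Lambda e_j$, and combining Condition~\ref{nscd}(b) in the form $e_i\Lambda e_v=\alpha(e_v\Lambda e_v)=(e_i\Lambda e_i)\alpha$ with the inductive hypothesis $e_v\Lambda e_j=(e_v\Lambda e_v)w'$ gives $e_i\Lambda e_j=(e_i\Lambda e_i)w_j^i$, i.e.\ existence. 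Uniqueness of $a$ and $a'$ is forced by $w_j^i\neq 0$, since a nonzero scalar plus a radical element is a unit and cannot annihilate $w_j^i$; and $a=a'$ follows one arrow at a time: $(a+l)\alpha=\alpha(a+l_1)$ by the case $m=1$, then $(a+l_1)w'=w'(a+l_2)$ by the inductive hypothesis, so $w=\alpha w'(a+l_2)=w_j^i(a+l_2)$.

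For (3), since $a\neq 0$ the element $ae_i+l$ is a unit of $\End_{\Lambda}(P_i)$, so $f$ is the composite of $f_0\colon P_j\to P_i$, $x\mapsto w_j^i x$, with an automorphism of $P_i$, and therefore $f$ factors through $P=\bigoplus_{t\in V}P_t$ if and only if $f_0$ does. If some $t\in V$ lies on the segment between $i$ and $j$, then $w_j^i=w_t^i w_j^t$, so $f_0$ factors as $P_j\xrightarrow{x\mapsto w_j^t x}P_t\xrightarrow{y\mapsto w_t^i y}P_i$ and hence through $P$. Conversely, a factorization of $f_0$ through $P$ is a relation $w_j^i\in\sum_{t\in V}e_i\Lambda e_t\Lambda e_j$, so it suffices to show that $w_j^i\notin\sum_{t}e_i\Lambda e_t\Lambda e_j$ when the sum runs over the vertices $t$ \emph{not} on the segment between $i$ and $j$. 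For this I would apply the natural surjection $\pi\colon\Lambda\twoheadrightarrow\Lambda_1$ of Example~\ref{examplenscd}(3) (with $\pi(e_s)=e_s$): in $\Lambda_1$ any path from $i$ to $j$ through an off-segment vertex must turn around and so contains a relation $\alpha_s\alpha_s^{*}$ or $\alpha_s^{*}\alpha_s$, whence $e_i\Lambda_1 e_t\Lambda_1 e_j=0$ for every such $t$, whereas $\pi(w_j^i)$ is the straight path in $\Lambda_1$, which is nonzero; applying $\pi$ to the putative relation yields $0=\pi(w_j^i)\neq 0$, a contradiction. (Alternatively one argues directly that $w_j^i\notin\Rad^{m+1}\Lambda$, $m$ its length, since any off-segment detour strictly lengthens the path --- this still uses Conditions (b), (c) and the nilpotence of the loops.) Since $w_i^j$ visits precisely the vertices of the segment between $i$ and $j$, this is the asserted equivalence.

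The step I expect to be the main obstacle is this last point in (3): ruling out that the straight path $w_j^i$ can be rewritten modulo $I$ through vertices outside the interval $[\min(i,j),\max(i,j)]$. Items (1) and (2) are essentially bookkeeping with the local rings $e_s\Lambda e_s$ and Condition (b); here, by contrast, one must genuinely use that $\Lambda$ is non-degenerate --- that is, Condition (c), or equivalently the existence of the transparent quotient $\Lambda_1$.
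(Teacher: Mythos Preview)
Your arguments for (1) and (2) are correct and close to the paper's. For (1) you count codimensions where the paper argues elementwise (if $l\alpha=\alpha(a+l')$ with $a\neq 0$ then $\alpha\in I$, contradicting admissibility); both are one-line arguments. Your inductive proof of (2) spells out what the paper leaves as ``directly follows from Condition~\ref{nscd}(b)''.

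For (3) there is a gap. You invoke the surjection $\pi\colon\Lambda\twoheadrightarrow\Lambda_1$ of Example~\ref{examplenscd}(3), but that remark is stated in the paper without proof, and establishing it is essentially equivalent to what you are trying to prove: one must show that no relation in $I$ can have $w_j^i$ as its ``leading term'' modulo loops and backtracks, and the clean way to see this is precisely via the well-definedness of the scalar $a$ in (2). Your parenthetical alternative, that $w_j^i\notin\Rad^{m+1}\Lambda$, is likewise not immediate from Condition~\ref{nscd} alone.

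The paper's route avoids $\Lambda_1$ and uses (1) and (2) directly. For $t$ off the segment, say $t<i\le j$, one has $w_j^t=w_i^t\,w_j^i$, so by (2)
\[
e_i\Lambda e_t\Lambda e_j=(e_i\Lambda e_i)w_t^i\cdot w_j^t(e_j\Lambda e_j)
=(e_i\Lambda e_i)(w_t^i w_i^t)\,w_j^i(e_j\Lambda e_j)\subseteq e_i(\Rad\Lambda)e_i\cdot w_j^i,
\]
where the last inclusion uses (1) iterated along $w_j^i$ to absorb $(e_j\Lambda e_j)$ on the right into $(e_i\Lambda e_i)$ on the left. By the uniqueness of $a$ in (2), every element of $e_i(\Rad\Lambda)e_i\,w_j^i$ has $a=0$, so no element with $a\neq 0$ factors through off-segment vertices. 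This is exactly the leading-coefficient functional you would need to build $\pi$; if you insert this observation, your $\Lambda_1$ argument becomes correct, but at that point the detour is unnecessary.
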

\begin{proof}
We show (1). Let $l\in e_i(\Rad \Lambda) e_i$. Then 
\[l\alpha \in e_i \Lambda e_j = \alpha \Lambda e_j. \]
Accordingly, there exists $a\in k$ and $l'\in e_j(\Rad \Lambda) e_j$ such that
\[l\alpha=\alpha (a+l'). \]
If $a\neq 0$, then $a+l'$ is an invertible element in $\Lambda=kQ/I$.
Therefore we have that $\alpha- l\alpha l''\equiv 0 \mod I$
for some $l''\in kQ$. Since $I$ is admissible, we reach a contradiction.
Hence we conclude that
\[e_i(\Rad \Lambda)\alpha=e_i (\Rad \Lambda) e_i \alpha \subset \alpha (\Rad \Lambda) e_{j}.\]
Similarly, one can check that
\[e_i (\Rad \Lambda) e_i \alpha \supset \alpha e_j(\Rad \Lambda) e_{j}.\] 

Next we prove (2). Existence of $a\in k$ and $l\in e_i(\Rad \Lambda) e_i$
 directly follows from Condition\;\ref{nscd}\;(b). Suppose that
 \[w=(a_1+l_1)w_j^i=(a_2+l_2)w_j^i.\]
 It is sufficient to show that $a_1=a_2$.
 If $a_1\neq a_2$, then $(a_1-a_2+l_1-l_2)w_j^i=0$ and $a_1-a_2+l_1-l_2$ is invertible.
 In particular, we obtain $w_j^i=0$. This  contradicts to Condition\;\ref{nscd}\;(c).  
 Same argument gives that there are unique $a'\in k$ and
 $l'\in e_j(\Rad \Lambda) e_j$ such that
$w=w_j^i(a'+l')$ in $\Lambda$. Then $a=a'$ follows from (1).
 The assertion (3) follows from (1) and (2). 
\end{proof}
\begin{theorem}\cite{M}
\label{Mizunosresult} Let $\Lambda$ be the preprojective algebra of type $A_n$.
For $i\in Q_0$, we let $I_i=(1-e_i)$. 
\begin{enumerate}[{\rm (1)}]
\item $($See also \cite[I\hspace{-.1em}I\hspace{-.1em}I]{BIRS}$)$. Let $w\in \Sym_{n+1}$. If $s_{i_{\ell}}\cdots s_{i_1}$ and $s_{j_{\ell}}\cdots s_{j_1}$ are reduced 
expression of $w$, then we have 
\[I_{i_{\ell}}\cdots I_{i_1}=I_{j_{\ell}}\cdots I_{j_1}.\]   
In this case, we denote $I_{i_{\ell}}\cdots I_{i_1}$ by $I_w$. 
\item $I_w\in \sttilt \Lambda$.
\item The map  $w\mapsto I_{ww_0}$ gives a poset isomorphism
\[(\Sym_{n+1},\leq)\stackrel{\sim}{\to} \sttilt \Lambda.\]
\end{enumerate}
\end{theorem}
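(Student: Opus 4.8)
The plan is to establish (1) first as an assertion about two-sided ideals, and then to obtain (2) and (3) together by an induction on $\ell(w)$ whose inductive step is an instance of support $\tau$-tilting mutation. For (1): each $I_i=\Lambda(1-e_i)\Lambda$ is idempotent, because $(1-e_i)=(1-e_i)^2$ forces $I_i=I_i^2$. By the word property of Coxeter groups, any two reduced expressions of $w$ are connected by a chain of braid moves $s_as_b=s_bs_a$ $(|a-b|\geq2)$ and $s_as_{a+1}s_a=s_{a+1}s_as_{a+1}$, so it suffices to verify the matching ideal identities $I_aI_b=I_bI_a$ for $|a-b|\geq 2$ and $I_aI_{a+1}I_a=I_{a+1}I_aI_{a+1}$. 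For $|a-b|\geq2$ the vertices $a$ and $b$ are non-adjacent in $Q^{\circ}$, so paths through $a$ and paths through $b$ do not interfere and the commutation is a direct check; the triple identity around the edge joining $a$ and $a+1$ is the computation carried out for preprojective algebras in \cite{BIRS}. Granting these, $I_{i_\ell}\cdots I_{i_1}$ depends only on $w$, and one sets $I_w:=I_{i_\ell}\cdots I_{i_1}$.

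For (2) and (3), I would prove by induction on $\ell(w)$ that $I_w\in\sttilt\Lambda$ and that the $n$ mutation-neighbours of $I_w$ in $\H(\sttilt\Lambda)$ are exactly the modules $I_{s_jw}$ for $j=1,\dots,n$, the arrow being directed $I_{s_jw}\to I_w$ when $\ell(s_jw)<\ell(w)$ and $I_w\to I_{s_jw}$ when $\ell(s_jw)>\ell(w)$. The base case is $w=1$, where $I_1=\Lambda$ is the maximal element of $\sttilt\Lambda$: all of its mutations are downward, and using minimal projective presentations in $\mod\Pi_n$ one computes the mutation of $(\Lambda,0)$ at the summand $P_i$ to be $I_i=I_{s_i}$, a $\tau$-rigid module with $n$ indecomposable summands. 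The inductive step rests on the key assertion that for any $w$ and any $i$ with $\ell(s_iw)>\ell(w)$, left multiplication by $I_i$ sends $I_w$ to $I_iI_w=I_{s_iw}$ and realizes the downward mutation of $I_w$ at the vertex $i$. Granting this, if $w\neq1$ choose $i$ with $\ell(s_iw)<\ell(w)$ and put $w'=s_iw$, so $\ell(w')=\ell(w)-1$; the inductive hypothesis gives $I_{w'}\in\sttilt\Lambda$ with $I_w=I_iI_{w'}$ one of its mutations, whence $I_w\in\sttilt\Lambda$. The remaining $n-1$ mutations of $I_w$ are then identified with the $I_{s_jw}$ ($j\neq i$) by applying the key assertion again, downward, when $\ell(s_jw)>\ell(w)$, and, when $\ell(s_jw)<\ell(w)$, by writing $I_w=I_jI_{s_jw}$ and invoking the involutivity of support $\tau$-tilting mutation (Theorem \ref{basicprop}(1)); in particular $I_{s_jw}\in\sttilt\Lambda$ for every $j$. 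Since every $w$ arises this way, the induction produces all of the $I_w$.

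It remains to show $w\mapsto I_w$ is a poset isomorphism onto $\sttilt\Lambda$. First, it is injective: one checks that the tuple of $g$-vectors of the indecomposable summands of $I_w$ is governed by the action of $w$ on $K_0(\proj\Lambda)$ in a way that recovers $w$, so injectivity follows from Theorem \ref{gvector}. Consequently, for each $w$ the modules $I_{s_1w},\dots,I_{s_nw}$ are $n$ distinct mutations of $I_w$ and hence exhaust the $n=|\Lambda|$ neighbours of $I_w$ guaranteed by the $|\Lambda|$-regularity of $\sttilt\Lambda$ (Theorem \ref{basicprop}(2)). Thus $\{I_w\mid w\in\Sym_{n+1}\}$ is a finite subquiver of $\H(\sttilt\Lambda)$ that is connected (being the image of the connected Hasse quiver of $(\Sym_{n+1},\leq)$) and closed under neighbours, so it is a finite connected component and, by Theorem \ref{basicprop}(3), equals $\H(\sttilt\Lambda)$; hence $w\mapsto I_w$ is a bijection. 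Finally, the recorded mutation directions say $I_{s_iw}\to I_w$ is an arrow of $\H(\sttilt\Lambda)$ exactly when $s_iw<w$, i.e.\ exactly when $w\to s_iw$ is an arrow in the Hasse quiver of $(\Sym_{n+1},\leq)$; so $w\mapsto I_w$ reverses covering relations (consistently with $I_1=\Lambda$ maximal and $I_{w_0}=0$ minimal), and pre-composing with the length-reversing bijection $w\mapsto ww_0$ yields the poset isomorphism $(\Sym_{n+1},\leq)\xrightarrow{\sim}\sttilt\Lambda$, $w\mapsto I_{ww_0}$, of (3).

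The main obstacle is the key assertion that left multiplication by $I_i$ realizes the downward mutation of $I_w$ at $i$. This is not formal: it needs genuine information about $\mod\Pi_n$ — explicit minimal projective presentations and $\add$-approximations — both to compute the mutated two-term silting complex and to identify it, via $\mathbf{S}$, with $I_{s_iw}$; and it is exactly here that the direction of each mutation gets pinned down, which is what couples the combinatorics of $\H(\sttilt\Lambda)$ to the length function on $\Sym_{n+1}$ through Theorem \ref{matsumoto}.
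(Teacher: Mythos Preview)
The paper does not give its own proof of this theorem: it is stated with the attribution \cite{M} (and \cite{BIRS} for part (1)) and used as a black box, with no proof environment following it. So there is nothing in the paper to compare your argument against.

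That said, your sketch is a fair outline of Mizuno's original argument in \cite{M}. The reduction of (1) to the two braid identities via the word property is exactly how it is done (with the triple relation being the substantive computation, carried out in \cite{BIRS} and \cite{M}), and the inductive scheme for (2)--(3), in which left multiplication by $I_i$ realizes the left mutation at $i$, is precisely Mizuno's Theorem~2.21. You correctly identify this last point as the crux and as non-formal; in \cite{M} it is proved by showing that the surjection $I_w\twoheadrightarrow I_w/I_iI_w$ is a minimal left $\fac(I_w/e_iI_w)$-approximation, which requires the structure theory of $\Pi_n$. One small comment: your injectivity argument via $g$-vectors is fine, but Mizuno instead proves directly that $I_w=I_{w'}$ forces $w=w'$ by comparing dimensions (his Theorem~2.14), which avoids appealing to Theorem~\ref{gvector}.
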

We let $\Xi:=\{\mathbf{i}=\{i_{0}<i_1<\cdots<i_{2m}\}\subset \{0,1,\dots,n+1\}\mid m\geq 0,
\ I\neq \{0\},\{n+1\}\}$. For $\Xi\ni \mathbf{i}=\{i_{0}<i_1<\cdots<i_{2m}\}$, we set 
$m_{\mathbf{i}}:=m$.
\begin{lemma}
\label{trigidofpreproj}
 If $\Lambda=kQ/I$ is a preprojective algebra of type $A_n$, then
\[\#\trigid \Lambda \leq \# \{ \mathbf{i}\in \Xi\mid m_{\mathbf{i}}>0 \}.\]
\end{lemma}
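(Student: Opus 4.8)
The plan is to construct an injective map from $\trigid \Pi_n$, the set of indecomposable $\tau$-rigid $\Pi_n$-modules, to the set of \emph{join-irreducible} elements of the poset $\sttilt \Pi_n$ (those covering exactly one element), then transport everything along Mizuno's isomorphism of Theorem~\ref{Mizunosresult} and reduce to a count of join-irreducibles of the weak order on $\Sym_{n+1}$.

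For an indecomposable $\tau$-rigid module $M$, the pair $(M,0)$ is $\tau$-rigid, so by Theorem~\ref{reduction theorem} the subposet $\sttilt_M \Pi_n = \{S \in \sttilt \Pi_n \mid M \in \add S\}$ is poset-isomorphic to $\sttilt \Gamma$ for an algebra $\Gamma$ with $|\Gamma| = n-1$; in particular it has a unique minimum element, which I denote $T_M$. I claim $T_M$ is join-irreducible. Since $\sttilt \Pi_n$ is $n$-regular, $T_M$ has exactly $n$ neighbours in the Hasse quiver, one for each indecomposable summand of the support $\tau$-tilting pair $T_M$ (Theorem~\ref{basicprop}). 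Mutating $T_M$ at any summand other than $M$ yields a pair still having $M$ as a summand of its module part: for a module summand $M' \neq M$ this module part contains $N/M'$, where $N$ is the module part of $T_M$; for a projective summand the mutation only enlarges the module part, hence the support. Thus each of these $n-1$ neighbours lies in $\sttilt_M \Pi_n$ and so is strictly greater than $T_M$. Therefore $T_M$ has at most one direct predecessor, and since $M \neq 0$ the pair $T_M$ is not the minimum $0$ of the finite connected poset $\sttilt \Pi_n$, so it has exactly one; i.e.\ $T_M$ is join-irreducible. Injectivity follows by the same analysis: if $T_M = T_{M'} =: T$, then the unique direct predecessor $T'$ of $T$ is obtained by mutating $T$ at one of its summands, which must be a module summand $M_0$ (mutation at a projective summand strictly increases, using the description of $\leq$ on $\sttilt\Pi_n$ via $\fac$ and $\supp$). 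For any module summand $Y \neq M_0$ of $T$ we have $Y \in \add T'$, hence $T' \in \sttilt_Y \Pi_n$ and so $T_Y \leq T' < T$; thus $T = T_M$ forces $M = M_0$, and likewise $M' = M_0$, so $M = M'$.

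It remains to count the join-irreducible elements of $(\Sym_{n+1}, \leq)$, which by Mizuno's isomorphism is the number we need. An element $w$ has $k$ direct predecessors precisely when $\#\{\,1\le i\le n \mid w^{-1}(i) > w^{-1}(i+1)\,\} = k$, by the inversion-number criteria recalled in the proof of Theorem~\ref{matsumoto}; so $w$ is join-irreducible iff $w^{-1}$ has exactly one descent. The number of permutations of $\{1,\dots,n+1\}$ with exactly one descent is the Eulerian number $\sum_{j=1}^{n}\bigl(\binom{n+1}{j}-1\bigr) = 2^{n+1}-n-2$. On the other hand, $\#\{\mathbf i \in \Xi \mid m_{\mathbf i} > 0\}$ is the number of odd-cardinality subsets of $\{0,1,\dots,n+1\}$ of size at least $3$, namely $\sum_{m \geq 1}\binom{n+2}{2m+1} = 2^{n+1} - (n+2) = 2^{n+1}-n-2$. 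These agree, giving $\#\trigid \Pi_n \leq 2^{n+1}-n-2 = \#\{\mathbf i \in \Xi \mid m_{\mathbf i} > 0\}$.

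The main obstacle I expect is the middle paragraph: making rigorous, via the explicit form of support $\tau$-tilting mutation (Theorem~\ref{basicprop}), that every mutation of $T_M$ away from $M$ lands back inside $\sttilt_M \Pi_n$ and strictly above $T_M$ — in particular that mutation at a projective summand strictly increases the module, which is where the characterization of $\leq$ in terms of $\fac$ and $\supp$ enters. Once join-irreducibility and injectivity of $M \mapsto T_M$ are established, the rest is the routine binomial identity above together with Theorem~\ref{Mizunosresult}.
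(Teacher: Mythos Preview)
Your proof is correct and takes a genuinely different route from the paper's.

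The paper argues structurally: using Mizuno's classification $\trigid\Pi_n=\{e_iI_w\}\setminus\{0\}$, it analyses the Loewy shape of each $e_iI_w$ inside the ``diamond'' picture of $e_i\Pi_n$, extracts from the profile of its top a sequence $\mathbf{i}=(i_0<\cdots<i_{2m})$, and then checks via an explicit coordinate computation that the bottom vertex of the resulting polygon recovers $i$ --- so the module is determined by $\mathbf{i}$ alone. This yields an injection $\trigid\Pi_n\hookrightarrow\{\mathbf{i}\in\Xi\mid m_{\mathbf{i}}>0\}$ directly, with the bonus that one sees which $\mathbf{i}$ actually arises (information the paper implicitly uses downstream when matching $e_iI_w$ with the two-term complexes $X_{\mathbf i}$).

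Your argument instead never touches the modules $e_iI_w$: you produce an injection from $\trigid\Pi_n$ into the join-irreducibles of $\sttilt\Pi_n$ via $M\mapsto T_M=\min\sttilt_M\Pi_n$, transport through Theorem~\ref{Mizunosresult} to join-irreducibles of $(\Sym_{n+1},\leq)$, and finish with the Eulerian count $A(n+1,1)=2^{n+1}-n-2$, which you match against the elementary count of odd subsets of $\{0,\dots,n+1\}$ of size $\geq 3$. The join-irreducibility and injectivity steps are clean consequences of Theorem~\ref{reduction theorem} and Theorem~\ref{basicprop}; your verification that mutation at a projective summand strictly increases (because the new indecomposable summand must be a module, forcing $\supp$ to grow) is exactly the point that makes the unique downward mutation of $T_M$ occur at a module summand $M_0$, pinning down $M=M_0$.

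What each buys: the paper's computation gives an explicit parametrisation of $\trigid\Pi_n$ by $\Xi$ that feeds into Lemma~\ref{casepreproj}, whereas your argument is shorter, coordinate-free, and in fact proves the general statement that for any $\tau$-tilting finite algebra the indecomposable $\tau$-rigid modules inject into the join-irreducibles of $\sttilt\Lambda$ --- a fact of independent interest. For the bare inequality of Lemma~\ref{trigidofpreproj}, your approach is the more economical one.
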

\begin{proof}
Since any indecomposable $\tau$-rigid module $X$ is in $\add I_w$ for some $w\in \Sym_{n+1}$ and 
$I_w=e_1 I_w\oplus\cdots \oplus e_n I_w$, one sees that
\[\trigid \Lambda=\{e_i I_w\mid 1\leq i\leq n,\;w\in \Sym_{n+1}\}\setminus\{0\}.\]

Note that $e_i \Lambda$ has following form (loewy  series).
\[
{\unitlength 0.1in%
\begin{picture}( 18.0000, 18.0000)( 27.1000,-23.3500)%
\put(38.0000,-6.0000){\makebox(0,0){$i$}}%
\put(36.0000,-8.0000){\makebox(0,0){$i-1$}}%
\put(40.0000,-8.0000){\makebox(0,0){$i+1$}}%
\put(28.0000,-16.0000){\makebox(0,0){$1$}}%
\put(30.0000,-18.0000){\makebox(0,0){$2$}}%
\put(36.0000,-24.0000){\makebox(0,0){$n-i+1$}}%
\put(38.0000,-10.0000){\makebox(0,0){$i$}}%
\put(46.0000,-14.0000){\makebox(0,0){$n$}}%
\put(44.0000,-16.0000){\makebox(0,0){$n-1$}}%
%
\special{pn 20}%
\special{pa 4090 900}%
\special{pa 4490 1300}%
\special{dt 0.045}%
%
\special{pn 20}%
\special{pa 3470 910}%
\special{pa 2870 1510}%
\special{dt 0.045}%
%
\special{pn 20}%
\special{pa 3090 1900}%
\special{pa 3490 2300}%
\special{dt 0.045}%
%
\special{pn 20}%
\special{pa 4270 1700}%
\special{pa 3670 2300}%
\special{dt 0.045}%
%
\special{pn 20}%
\special{pa 3670 1100}%
\special{pa 3070 1700}%
\special{dt 0.045}%
%
\special{pn 20}%
\special{pa 3890 1100}%
\special{pa 4290 1500}%
\special{dt 0.045}%
%
\special{pn 4}%
\special{sh 1}%
\special{ar 3730 1360 16 16 0  6.28318530717959E+0000}%
\special{sh 1}%
\special{ar 3730 1620 16 16 0  6.28318530717959E+0000}%
%
\special{pn 4}%
\special{sh 1}%
\special{ar 3730 1890 16 16 0  6.28318530717959E+0000}%
\end{picture}}%
\vspace{5pt}
\]
Let $w\in \Sym_{n+1}$ and $j\in \{1,\dots,n\}$. We show that $e_iI_w/e_i I_w(1-e_j)\Lambda$
is either a simple module or $0$.
Assume that \[e_iI_w/e_i I_w(1-e_j)\Lambda\neq 0.\]
Let $\lambda=e_i\lambda \in e_i I_w$ such that $\overline{\lambda}:=(\lambda+e_i I_w(1-e_j)\Lambda)\neq 0.$
Since $\overline{\lambda} (1-e_j)=0$, we may assume that
$\lambda\in e_i I_w e_j$. By the relation of $\Lambda$, one sees that
\[\lambda=w_j^i(a+(\alpha_j\alpha_j^*)f(\alpha_j\alpha_j^*))(\alpha_j\alpha_j^*)^N,\]
fore some $N\in\Z_{\geq 0}$, $a\in k\setminus \{0\}$ and $f(X)\in k[X]$.
 Hence we also may assume that $\lambda=w_j^i(\alpha_j\alpha_j^*)^N$.
Now we let $\lambda'\in e_i I_w$. Above argument implies that
there are $N'\in \Z_{\geq 0}$, $b\in k\setminus\{0\}$ and $g(X)\in k[X]$
such that $\lambda'=w_j^i(b+(\alpha_j\alpha_j^*)g(\alpha_j\alpha_j^*))(\alpha_j \alpha_j^*)^{N'}$.
Then $N>N'$ implies that $\overline{\lambda}=\overline{w_j^i(\alpha_j\alpha_j^*)^N}=0$. Therefore
one has that $N\leq N'$. In particular, $\overline{\lambda'}\in \overline{\lambda}\Lambda$.
Thus $e_iI_w/e_i I_w(1-e_j)\Lambda$ is a simple module associated with $j$.

Hence one obtains that $e_iI_w$ has following form. 
\[
{\unitlength 0.1in%
\begin{picture}( 40.3000, 42.7000)( 15.2000,-51.7000)%
%
\special{pn 8}%
\special{pa 1600 3000}%
\special{pa 3600 1000}%
\special{fp}%
%
\special{pn 8}%
\special{pa 3730 1000}%
\special{pa 5530 2800}%
\special{fp}%
\put(15.2000,-30.9000){\makebox(0,0)[lb]{$1$}}%
\put(36.2000,-10.3000){\makebox(0,0)[lb]{$i$}}%
%
\special{pn 8}%
\special{pa 1590 3100}%
\special{pa 1790 3300}%
\special{fp}%
\put(55.5000,-29.1000){\makebox(0,0)[lb]{$n$}}%
\put(17.0000,-34.6000){\makebox(0,0)[lb]{$i_0+1$}}%
%
\special{pn 8}%
\special{pa 1900 3300}%
\special{pa 2500 2700}%
\special{fp}%
\put(24.8000,-26.8000){\makebox(0,0)[lb]{$i_1$}}%
%
\special{pn 8}%
\special{pa 2580 2710}%
\special{pa 2780 2910}%
\special{fp}%
%
\special{pn 4}%
\special{sh 1}%
\special{ar 2780 2790 8 8 0  6.28318530717959E+0000}%
\special{sh 1}%
\special{ar 2840 2790 8 8 0  6.28318530717959E+0000}%
\special{sh 1}%
\special{ar 2900 2790 8 8 0  6.28318530717959E+0000}%
%
\special{pn 8}%
\special{pa 2950 2900}%
\special{pa 3150 2700}%
\special{fp}%
%
\special{pn 8}%
\special{pa 4270 3100}%
\special{pa 4070 2900}%
\special{fp}%
%
\special{pn 8}%
\special{pa 3260 2710}%
\special{pa 3660 3110}%
\special{fp}%
%
\special{pn 8}%
\special{pa 3760 3100}%
\special{pa 3960 2900}%
\special{fp}%
\put(30.8000,-26.8000){\makebox(0,0)[lb]{$i_{2t-1}$}}%
\put(36.5000,-32.6000){\makebox(0,0)[lb]{$i_{2t}$}}%
\put(38.9000,-28.6000){\makebox(0,0)[lb]{$i_{2t+1}$}}%
%
\special{pn 4}%
\special{sh 1}%
\special{ar 4280 3000 8 8 0  6.28318530717959E+0000}%
\special{sh 1}%
\special{ar 4340 3000 8 8 0  6.28318530717959E+0000}%
\special{sh 1}%
\special{ar 4400 3000 8 8 0  6.28318530717959E+0000}%
%
\special{pn 8}%
\special{pa 4470 3220}%
\special{pa 4670 3020}%
\special{fp}%
\put(45.9000,-30.0000){\makebox(0,0)[lb]{$i_{2m-1}$}}%
%
\special{pn 8}%
\special{pa 4770 3030}%
\special{pa 5040 3300}%
\special{fp}%
\put(49.0000,-34.6000){\makebox(0,0)[lb]{$i_{2m}-1$}}%
%
\special{pn 8}%
\special{pa 5540 2900}%
\special{pa 5140 3300}%
\special{fp}%
%
\special{pn 8}%
\special{pa 1990 3490}%
\special{pa 3390 4890}%
\special{fp}%
%
\special{pn 8}%
\special{pa 4950 3490}%
\special{pa 3550 4890}%
\special{fp}%
\put(33.3000,-38.0000){\makebox(0,0)[lb]{$e_iI_w$}}%
%
\special{pn 8}%
\special{pa 1860 3370}%
\special{pa 3460 4970}%
\special{pa 5100 3360}%
\special{pa 4710 2970}%
\special{pa 4440 3260}%
\special{pa 4010 2840}%
\special{pa 3710 3160}%
\special{pa 3200 2650}%
\special{pa 2870 2980}%
\special{pa 2530 2660}%
\special{pa 1840 3360}%
\special{pa 2200 3690}%
\special{ip}%
%
\special{pn 4}%
\special{pa 1950 3240}%
\special{pa 2150 3440}%
\special{pa 2040 3560}%
\special{ip}%
%
\special{pn 4}%
\special{pa 3250 3630}%
\special{pa 3760 3630}%
\special{pa 3760 3830}%
\special{pa 3250 3830}%
\special{pa 3250 3630}%
\special{ip}%
%
\special{pn 4}%
\special{pa 3620 3080}%
\special{pa 3620 3280}%
\special{pa 3790 3290}%
\special{pa 3790 3090}%
\special{ip}%
%
\special{pn 8}%
\special{pa 4970 3230}%
\special{pa 4780 3430}%
\special{pa 4980 3630}%
\special{ip}%
%
\special{pn 8}%
\special{pa 2600 1740}%
\special{pa 2400 1940}%
\special{fp}%
\special{sh 1}%
\special{pa 2400 1940}%
\special{pa 2461 1907}%
\special{pa 2438 1902}%
\special{pa 2433 1879}%
\special{pa 2400 1940}%
\special{fp}%
%
\special{pn 8}%
\special{pa 4680 1740}%
\special{pa 4880 1940}%
\special{fp}%
\special{sh 1}%
\special{pa 4880 1940}%
\special{pa 4847 1879}%
\special{pa 4842 1902}%
\special{pa 4819 1907}%
\special{pa 4880 1940}%
\special{fp}%
\put(23.3000,-18.2000){\makebox(0,0)[lb]{$\alpha^*$}}%
\put(48.2000,-18.1000){\makebox(0,0)[lb]{$\alpha$}}%
\put(15.9000,-36.2000){\makebox(0,0)[lb]{$i_0$}}%
\put(34.2000,-52.5000){\makebox(0,0)[lb]{$i'$}}%
\put(51.9000,-36.2000){\makebox(0,0)[lb]{$i_{2m}$}}%
%
\special{pn 8}%
\special{pa 1820 3690}%
\special{pa 3300 5170}%
\special{fp}%
%
\special{pn 8}%
\special{pa 5120 3700}%
\special{pa 3670 5150}%
\special{fp}%
\put(50.2000,-44.5000){\makebox(0,0)[lb]{$i'=n-i+1$}}%
%
\special{pn 8}%
\special{pa 3290 4790}%
\special{pa 3470 4970}%
\special{fp}%
%
\special{pn 8}%
\special{pa 3470 4970}%
\special{pa 3670 4770}%
\special{fp}%
%
\special{pn 4}%
\special{pa 2750 2880}%
\special{pa 2320 2880}%
\special{fp}%
\special{pa 2860 2970}%
\special{pa 2230 2970}%
\special{fp}%
\special{pa 3610 3060}%
\special{pa 2140 3060}%
\special{fp}%
\special{pa 3620 3150}%
\special{pa 2050 3150}%
\special{fp}%
\special{pa 3620 3240}%
\special{pa 1960 3240}%
\special{fp}%
\special{pa 4870 3330}%
\special{pa 2040 3330}%
\special{fp}%
\special{pa 4960 3240}%
\special{pa 4460 3240}%
\special{fp}%
\special{pa 4890 3150}%
\special{pa 4540 3150}%
\special{fp}%
\special{pa 4800 3060}%
\special{pa 4630 3060}%
\special{fp}%
\special{pa 4790 3420}%
\special{pa 2130 3420}%
\special{fp}%
\special{pa 4860 3510}%
\special{pa 2090 3510}%
\special{fp}%
\special{pa 4840 3600}%
\special{pa 2100 3600}%
\special{fp}%
\special{pa 3250 3690}%
\special{pa 2190 3690}%
\special{fp}%
\special{pa 3250 3780}%
\special{pa 2280 3780}%
\special{fp}%
\special{pa 4570 3870}%
\special{pa 2370 3870}%
\special{fp}%
\special{pa 4660 3780}%
\special{pa 3760 3780}%
\special{fp}%
\special{pa 4750 3690}%
\special{pa 3760 3690}%
\special{fp}%
\special{pa 4480 3960}%
\special{pa 2460 3960}%
\special{fp}%
\special{pa 4390 4050}%
\special{pa 2550 4050}%
\special{fp}%
\special{pa 4300 4140}%
\special{pa 2640 4140}%
\special{fp}%
\special{pa 4210 4230}%
\special{pa 2730 4230}%
\special{fp}%
\special{pa 4120 4320}%
\special{pa 2820 4320}%
\special{fp}%
\special{pa 4030 4410}%
\special{pa 2910 4410}%
\special{fp}%
\special{pa 3940 4500}%
\special{pa 3000 4500}%
\special{fp}%
\special{pa 3850 4590}%
\special{pa 3090 4590}%
\special{fp}%
\special{pa 3760 4680}%
\special{pa 3180 4680}%
\special{fp}%
\special{pa 3660 4770}%
\special{pa 3270 4770}%
\special{fp}%
\special{pa 3570 4860}%
\special{pa 3360 4860}%
\special{fp}%
\special{pa 4420 3240}%
\special{pa 3790 3240}%
\special{fp}%
\special{pa 4330 3150}%
\special{pa 3790 3150}%
\special{fp}%
\special{pa 4230 3060}%
\special{pa 3800 3060}%
\special{fp}%
\special{pa 4140 2970}%
\special{pa 3890 2970}%
\special{fp}%
\special{pa 4050 2880}%
\special{pa 3970 2880}%
\special{fp}%
\special{pa 3520 2970}%
\special{pa 2880 2970}%
\special{fp}%
\special{pa 3430 2880}%
\special{pa 2970 2880}%
\special{fp}%
\special{pa 3340 2790}%
\special{pa 3060 2790}%
\special{fp}%
\special{pa 3250 2700}%
\special{pa 3150 2700}%
\special{fp}%
\special{pa 2660 2790}%
\special{pa 2410 2790}%
\special{fp}%
\special{pa 2570 2700}%
\special{pa 2490 2700}%
\special{fp}%
\end{picture}}%
\vspace{5pt}\]

Accordingly, $e_iI_w$ determines a polygon $\mathcal{P}$ whose vertices are $v_0,v_1,v_2,\dots,v_{2m-1},v_{2m}$ and $v$
corresponding to $i_0,i_1,i_2,\dots,i_{2m-1},i_{2m}$ and $n-i+1$. We note that $i_0\neq i_{2m}$.
Now we input $\mathcal{P}$ in $\R^2 $ by following correspondence:
\begin{itemize}
\item $v_0=(0,0)$.
\item $\alpha=(1,-1)$.
\item $\alpha^*=(-1,-1)$.
\end{itemize} 
Then we have 
\[\begin{array}{ccl}
v_{2t-1}&=&(i_{2t-1}-i_0,i_{2t-1}-i_0+2\sum_{s=1}^{t-1}i_{2s-1}-2\sum_{s=1}^{t-1}i_{2s}),\\\\
v_{2t}&=&(i_{2t}-i_0,-i_{2t}-i_0+2\sum_{s=1}^t i_{2s-1}-2\sum_{s=1}^{t-1}i_{2s}).\\
\end{array} \]
Let 
$v_{2m}=(i_{2t}-i_0,-i_{2t}-i_0+2\sum_{s=1}^m i_{2s-1}-2\sum_{s=1}^{m-1}i_{2s})=:(x,y)$.
Since $v$ is the intersection of $\{a(1,-1)\mid a\in \R \}$ and $\{b(1,1)+v_{2m}\mid b\in\R \}$, we conclude that  
\[(n-i-i_0+1,-(n-i-i_0+1))=v=(\frac{x-y}{2},\frac{-x+y}{2}).\]
Therefore $e_i I_w$ is uniquely determined by $\mathbf{i}=(i_0<i_1<\cdots<i_{2m})\in \Xi $. 
In particular, $\trigid \Lambda$ is parametrized by a subset of
 \[ \{\mathbf{i}\in \Xi\mid m_{\mathbf{i}}>0\}.\]
\end{proof} 
For  $\mathbf{i}\in \Xi$,  
we set a two-term objects $X_{\mathbf{i}}(\Lambda)=X_{\mathbf{i}}=[X_{\mathbf{i}}^{-1}\stackrel{d_{\mathbf{i}}}{\to} 
X_{\mathbf{i}}^0]$  as follows:\vspace{5pt}
\[
{\unitlength 0.1in%
\begin{picture}( 16.5000, 36.7000)( 22.7000,-44.6000)%
\put(24.0000,-14.0000){\makebox(0,0)[lb]{$P_{i_0}$}}%
\put(24.0000,-23.4000){\makebox(0,0)[lb]{$P_{i_2}$}}%
\put(39.2000,-18.8000){\makebox(0,0)[lb]{$P_{i_1}$}}%
\put(23.3000,-35.8000){\makebox(0,0)[lb]{$P_{i_{2m_{\mathbf{i}}-2}}$}}%
\put(24.0000,-45.4000){\makebox(0,0)[lb]{$P_{i_{2m_{\mathbf{i}}}}$}}%
\put(39.2000,-40.8000){\makebox(0,0)[lb]{$P_{i_{2m_{\mathbf{i}}-1}}$}}%
\put(39.0000,-31.6000){\makebox(0,0)[lb]{$P_{i_{2m_{\mathbf{i}}-3}}$}}%
%
\special{pn 4}%
\special{sh 1}%
\special{ar 3320 2540 8 8 0  6.28318530717959E+0000}%
\special{sh 1}%
\special{ar 3320 2740 8 8 0  6.28318530717959E+0000}%
\special{sh 1}%
\special{ar 3320 2940 8 8 0  6.28318530717959E+0000}%
\put(32.2000,-16.1000){\makebox(0,0)[lb]{$f_{i_0}^{i_1}$}}%
\put(32.2000,-21.4000){\makebox(0,0)[lb]{$f_{i_2}^{i_1}$}}%
\put(22.7000,-9.2000){\makebox(0,0)[lb]{$(-1\mathrm{th})$}}%
\put(38.5000,-9.2000){\makebox(0,0)[lb]{$(0\mathrm{th})$}}%
%
\special{pn 8}%
\special{pa 2700 1380}%
\special{pa 3080 1490}%
\special{fp}%
%
\special{pn 8}%
\special{pa 3600 1660}%
\special{pa 3860 1750}%
\special{fp}%
\special{sh 1}%
\special{pa 3860 1750}%
\special{pa 3804 1709}%
\special{pa 3810 1733}%
\special{pa 3790 1747}%
\special{pa 3860 1750}%
\special{fp}%
%
\special{pn 8}%
\special{pa 2720 2260}%
\special{pa 3100 2150}%
\special{fp}%
%
\special{pn 8}%
\special{pa 3600 1990}%
\special{pa 3860 1910}%
\special{fp}%
\special{sh 1}%
\special{pa 3860 1910}%
\special{pa 3790 1910}%
\special{pa 3809 1926}%
\special{pa 3802 1949}%
\special{pa 3860 1910}%
\special{fp}%
\special{pa 3790 1700}%
\special{pa 3790 1700}%
\special{fp}%
\put(31.2000,-38.5000){\makebox(0,0)[lb]{$f_{i_{2m_{\mathbf{i}}-2}}^{i_{2m_{\mathbf{i}}-1}}$}}%
%
\special{pn 8}%
\special{pa 2700 3580}%
\special{pa 3080 3690}%
\special{fp}%
%
\special{pn 8}%
\special{pa 3600 3860}%
\special{pa 3860 3950}%
\special{fp}%
\special{sh 1}%
\special{pa 3860 3950}%
\special{pa 3804 3909}%
\special{pa 3810 3933}%
\special{pa 3790 3947}%
\special{pa 3860 3950}%
\special{fp}%
%
\special{pn 8}%
\special{pa 2720 4460}%
\special{pa 3100 4350}%
\special{fp}%
%
\special{pn 8}%
\special{pa 3600 4190}%
\special{pa 3860 4110}%
\special{fp}%
\special{sh 1}%
\special{pa 3860 4110}%
\special{pa 3790 4110}%
\special{pa 3809 4126}%
\special{pa 3802 4149}%
\special{pa 3860 4110}%
\special{fp}%
\special{pa 3790 3900}%
\special{pa 3790 3900}%
\special{fp}%
\put(31.3000,-44.1000){\makebox(0,0)[lb]{$f_{i_{2m_{\mathbf{i}}}}^{i_{2m_{\mathbf{i}}-1}}$}}%
%
\special{pn 8}%
\special{pa 2720 3460}%
\special{pa 3100 3350}%
\special{fp}%
%
\special{pn 8}%
\special{pa 3600 3190}%
\special{pa 3860 3110}%
\special{fp}%
\special{sh 1}%
\special{pa 3860 3110}%
\special{pa 3790 3110}%
\special{pa 3809 3126}%
\special{pa 3802 3149}%
\special{pa 3860 3110}%
\special{fp}%
\special{pa 3790 2900}%
\special{pa 3790 2900}%
\special{fp}%
\put(31.3000,-34.2000){\makebox(0,0)[lb]{$f_{i_{2m_{\mathbf{i}}-2}}^{i_{2m_{\mathbf{i}}-3}}$}}%
\end{picture}}%
\vspace{5pt}\]
where we assume $P_0=P_{n+1}=0$.
\begin{lemma}
\label{indecomposability}
$X_{\mathbf{i}}$ is indecomposable.
\end{lemma}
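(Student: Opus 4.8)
The plan is to prove directly that the only idempotent endomorphisms of $X_{\mathbf{i}}$ in $\Kb(\proj\Lambda)$ are $0$ and the identity, which is exactly indecomposability. Two cases are immediate and can be disposed of first: if $m_{\mathbf{i}}=0$ then $X_{\mathbf{i}}=P_{i_0}[1]$ with $i_0\in\{1,\dots,n\}$, and if $m_{\mathbf{i}}\geq 1$ but $X_{\mathbf{i}}^{-1}=0$ (forcing $\mathbf{i}=\{0<i_1<n+1\}$) then $X_{\mathbf{i}}=P_{i_1}$; in both cases $X_{\mathbf{i}}$ is a shift of an indecomposable projective. So assume $m_{\mathbf{i}}\geq 1$ with $X_{\mathbf{i}}^{-1},X_{\mathbf{i}}^{0}\neq 0$, and recall that $X_{\mathbf{i}}^{-1}=\bigoplus_{t}P_{i_{2t}}$ and $X_{\mathbf{i}}^{0}=\bigoplus_{t}P_{i_{2t-1}}$ (with $P_0=P_{n+1}=0$), that the $i_t$ are pairwise distinct, and that each non-zero entry of $d_{\mathbf{i}}$ is one of the maps $f^{i_{2t}}_{i_{2t\pm1}}$.

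The central device is a ``leading coefficient'' map $\operatorname{lc}\colon\Hom_{\Lambda}(P_a,P_b)\to k$ for $a,b\in Q_0$. For $a\neq b$, Lemma~\ref{pathcondition}(2) writes the element of $e_b\Lambda e_a$ corresponding to $g\in\Hom_{\Lambda}(P_a,P_b)$ uniquely as $(\operatorname{lc}(g)+l)\,w$ with $w$ the shortest path from $b$ to $a$ and $l\in e_b(\Rad\Lambda)e_b$; for $a=b$, $\operatorname{lc}$ is the projection $\End_{\Lambda}(P_a)\twoheadrightarrow\End_{\Lambda}(P_a)/\Rad\cong k$, which is a ring homomorphism. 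I would first record the properties of $\operatorname{lc}$ needed below: it is $k$-linear on each $\Hom$-space; $\operatorname{lc}(f_i^j)=1$ for $i\neq j$; for a composite $P_a\xrightarrow{g}P_b\xrightarrow{h}P_c$ with $b\in\{a,c\}$ one has $\operatorname{lc}(h\circ g)=\operatorname{lc}(h)\operatorname{lc}(g)$; and $\operatorname{lc}(h\circ g)=0$ whenever $b$ does \emph{not} lie between $a$ and $c$ in the linear order on $Q_0$. The first three are routine. For the last, the case $a=c$ (so $b\neq a$) is clear, since $h\circ g$ then factors through $\Rad\Lambda$ and hence lies in $e_a(\Rad\Lambda)e_a$; the general ``detour'' case reduces to it, because if $b$ is not the median of $\{a,b,c\}$ then the median is $a$ or $c$, one of the two composing shortest paths factors through the median vertex, and this exhibits $h\circ g$ as a positive-length radical multiple (in the sense of Lemma~\ref{pathcondition}(1)) of the shortest path from $c$ to $a$. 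Proving this vanishing is where essentially all of the work lies, and I expect it to be the main obstacle; everything after is bookkeeping.

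Granting this, take a chain endomorphism $\varphi=(\varphi^{-1},\varphi^0)$ of $X_{\mathbf{i}}$, written in block form $\varphi^{-1}=(\varphi^{-1}_{st})$, $\varphi^0=(\varphi^0_{st})$ against the decompositions above. Reading off, for each relevant even $t$, the $(t\pm1,t)$-components of the chain-map identity $\varphi^0 d_{\mathbf{i}}=d_{\mathbf{i}}\varphi^{-1}$ and applying $\operatorname{lc}$: since $i_0<i_1<\dots<i_{2m}$, every term carrying an off-diagonal block is a detour composite and vanishes, while the two surviving terms are compositions of a diagonal block with a differential entry of the easy type $b\in\{a,c\}$, giving $\operatorname{lc}(\varphi^0_{t+1,t+1})=\operatorname{lc}(\varphi^{-1}_{tt})=\operatorname{lc}(\varphi^0_{t-1,t-1})$. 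These identities chain together along $\mathbf{i}$, so every diagonal block of $\varphi^{-1}$ and of $\varphi^0$ has a common leading coefficient $\mu(\varphi)\in k$. A null-homotopy $h\colon X_{\mathbf{i}}^0\to X_{\mathbf{i}}^{-1}$ contributes to each diagonal block of $hd_{\mathbf{i}}$ or $d_{\mathbf{i}}h$ only detour composites $P_{i_t}\to P_{i_{t\pm1}}\to P_{i_t}$, whose $\operatorname{lc}$ is $0$, so $\mu$ descends to a map on $\End_{\Kb(\proj\Lambda)}(X_{\mathbf{i}})$; the same detour-vanishing, applied to the diagonal blocks of a product, makes $\mu$ a $k$-algebra homomorphism with $\mu(\operatorname{id})=1$.

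Finally I would deduce the conclusion. If $\mu(\varphi)\neq0$, then each diagonal block $\varphi^{-1}_{tt},\varphi^0_{tt}$ is an endomorphism of an indecomposable projective with non-zero scalar part, hence a unit of $e_{i_t}\Lambda e_{i_t}$; since the $i_t$ are pairwise distinct, every off-diagonal entry of $\varphi^{-1}$ and $\varphi^0$ lies in $\Rad\Lambda$, hence in the (nilpotent) radical of $\End_{\Lambda}(X_{\mathbf{i}}^{-1})$, resp.\ $\End_{\Lambda}(X_{\mathbf{i}}^{0})$, so writing these as ``invertible diagonal plus radical'' shows $\varphi^{-1},\varphi^{0}$ are isomorphisms and therefore $\varphi$ is an isomorphism of complexes, hence invertible in $\Kb(\proj\Lambda)$. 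Thus $\mu$ is a surjective algebra map whose kernel consists of non-units. An idempotent $\varphi$ then has $\mu(\varphi)\in\{0,1\}$; if $\mu(\varphi)=1$ then $\varphi$ is an invertible idempotent, so $\varphi=\operatorname{id}$, and if $\mu(\varphi)=0$ the same argument applied to $\operatorname{id}-\varphi$ gives $\varphi=0$. Hence $X_{\mathbf{i}}$ has no non-trivial idempotent endomorphism and is indecomposable.
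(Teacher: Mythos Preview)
Your proof is correct and follows essentially the same strategy as the paper: both arguments show that $\End_{\Kb(\proj\Lambda)}(X_{\mathbf{i}})$ is local by exhibiting the non-units as a two-sided ideal. The paper does this implicitly, arguing via Lemma~\ref{pathcondition}(2) that if any diagonal block $u_t^t$ or $v_t^t$ is an isomorphism then its neighbours are too, so invertibility of one diagonal block propagates to all, and conversely the non-isomorphisms are exactly those with every diagonal block in the radical. Your construction of the algebra homomorphism $\mu\colon\End_{\Kb(\proj\Lambda)}(X_{\mathbf{i}})\to k$ makes this explicit: the chaining of leading coefficients along the zigzag is precisely the paper's propagation argument, and $\ker\mu$ is the ideal of non-units. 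One point where your write-up is actually cleaner than the paper's is the verification that $\mu$ descends to the homotopy category (i.e., vanishes on null-homotopic maps), which the paper leaves tacit; this is needed since the conclusion is about $\End_{\Kb}$ rather than chain maps.
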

\begin{proof}
It is sufficient to show that $\End_{\Kb(\proj \Lambda) }(X_{\mathbf{i}})$ is local.
Let $\varphi=(u,v)\in \End_{\Kb(\proj \Lambda) }(X_{\mathbf{i}})$, where
$u\in \End_{\Lambda}(\bigoplus_{t} P_{i_{2t}})$ and $v\in \End_{\Lambda}(\bigoplus_{t} P_{i_{2t+1}})$.
Denote by $u_t^s:P_{i_{2t}} \to P_{i_{2s}}$ and $v_t^s:P_{i_{2t+1}} \to P_{i_{2s+1}}$ given by
$u$ and $v$ respectively. If $t\neq s$, then $u_t^s$ and $v_t^s$ are in radical of $\mod \Lambda$.
Hence $u$ (resp. $v$) is an isomorphism if and only if $u_t^t$ (resp. $v_t^t$) is isomorphism
for any $t$. 

By Lemma\;\ref{pathcondition}\;(2), we can easily check  that if $u_t^t$ (resp. $v_t^t$) is an isomorphism, 
then $v_t^t $ and $v_{t-1}^{t-1}$ (resp. $u_t^t $ and $u_{t+1}^{t+1}$) are isomorphisms.
In this case, we have that $\varphi$ is an isomorphism. Therefore,
 $\varphi$ is not an isomorphism only if $u_t^s$ and $v_t^s$ are in radical of $\mod \Lambda$
for any $t,s$. Conversely, if $u_t^s$ and $v_t^s$ are in radical of $\mod \Lambda$
for any $t,s$, then $\varphi$ is not an isomorphism. In particular, the set of non-isomorphisms of
 $\End_{\Kb(\proj \Lambda) }(X_{\mathbf{i}})$ form an ideal. This gives the assertion.
\end{proof}

\begin{lemma} 
\label{rigidcondition} Let $\mathbf{i}, 
\mathbf{j}\in \Xi$. 
For a pair $(t,s)$ such that $0<i_{2t}<j_{2s+1}<n+1$,
 we define two sequences $\mathbf{t}^{+}(t,s):=(t=t_{0}\leq t_{1}\leq t_{2} \leq \cdots)$ and 
$\mathbf{s}^{+}(t,s):=(s=s_{0}\leq s_{1}\leq s_{2}\cdots)$
by following rule$:$
\begin{itemize}
\item[{\rm (i)}] $t_r:=\left\{\begin{array}{cl}
\mathrm{max}\{t\leq m_\mathbf{i}+1\mid i_{2t-2}<j_{2s_{r-1}+1}\} & \mathrm{if\ } s_{r-1}\leq m_{\mathbf{j}}-1\\
m_{\mathbf{i}}+1 & \mathrm{if\ } s_{r-1}\geq m_{\mathbf{j}}, t_{r-1}\leq m_{\mathbf{i}}\\
m_{\mathbf{i}}+2 & \mathrm{if\ } t_{r-1}\geq m_{\mathbf{i}}+1
\end{array}\right..$

\item[{\rm (ii)}] $s_r:=\left\{\begin{array}{cl}
\mathrm{max}\{s\leq m_\mathbf{j}\mid j_{2s-1}<i_{2t_r}\} & \mathrm{if\ } t_r\leq m_{\mathbf{i}}\\
m_{\mathbf{j}} & \mathrm{if\ } s_{r-1}< m_{\mathbf{j}}, t_r= m_{\mathbf{i}}+1\\
m_{\mathbf{j}}+1 & \mathrm{if\ }s_{r-1}\geq m_{\mathbf{j}}.
\end{array}\right.
$

\end{itemize}
Also we define two sequences $\mathbf{t}^{-}(t,s):=(t=t_{0}\geq t_{-1}\geq t_{-2}\geq \cdots)$ and 
$\mathbf{s}^{-}(t,s):=(s=s_{0}\geq s_{-1}\geq s_{-2}\geq \cdots)$
by following rule$:$
\begin{itemize}
\item[{\rm (iii)}] 
$s_r:=\left\{\begin{array}{cl}
\mathrm{min}\{s\geq -1 \mid j_{2s+3}>i_{2t_{r+1}}\} & \mathrm{if\ } t_{r+1}\geq 0\\
-1& \mathrm{if\ } t_{r+1}\leq -1, s_{r+1}\geq 0 \\
-2 & \mathrm{if\ } s_{r+1}\leq -1.\\
\end{array}\right.$

\item[{\rm (iv)}] 
$t_r:=\left\{\begin{array}{cl}
\mathrm{min}\{t\geq -1  \mid i_{2t+2}>j_{2s_r+1}\} & \mathrm{if\ } s_r\geq 0 \\
-1 & \mathrm{if\ }s_r=-1, t_{r+1}\geq 0\\
-2 & \mathrm{if\ } t_{r+1}\leq -1\\
\end{array}\right.
$

\end{itemize}
If $\Hom_{\Kb(\proj \Lambda)}(X_{\mathbf{i}},X_{\mathbf{j}}[1])=0$, then one of the following holds.
\begin{itemize}
\item[(1)] We have {\rm (a),\ (b),\ (c)} and {\rm (d)}.
\begin{enumerate}[{\rm (a)}]
\item $i_{2t_r}<i_{2t_{r}+1}\leq j_{2s_{r}+1}$ for any $r\geq 0$ such that $t_r\leq m_{\mathbf{i}}$.
\item $i_{2t_{r}-2}<i_{2t_{r}-1}\leq j_{2s_{r-1}+1}$ for any $r\geq 1$ such that $t_r\leq m_{\mathbf{i}}+1$.
\item $j_{2s_{r-1}+1}<j_{2s_{r-1}+2}\leq i_{2t_r}$ for any $r\geq 1$ such that $s_r\leq m_{\mathbf{j}}$.
\item $j_{2s_{r}-1}<j_{2s_{r}}\leq i_{2t_r}$ for any $r\geq 1$ such that $s_r\leq m_{\mathbf{j}}$.

\end{enumerate} 
Where we put $i_{2m_{\mathbf{i}}+1}=j_{2m_{\mathbf{j}}+1}=n+2$ and $i_{2m_{\mathbf{i}}+1}=j_{2m_{\mathbf{j}}+1}=n+3$. 
\item[(2)] We have {\rm (a),\ (b),\ (c)} and {\rm (d)}.
\begin{enumerate}[{\rm (a')}]
\item $j_{2s_{r}+1}>j_{2s_{r}}\geq i_{2t_{r}}$ for any $r\leq 0$ such that $s_{r}\geq 0$.
\item $j_{2s_{r}+3}>j_{2s_{r}+2}\geq i_{2t_{r+1}}$ for any $r\leq -1$ such that $s_{r}\geq -1$.
\item $i_{2t_{r+1}}>i_{2t_{r+1}-1}\geq j_{2s_{r}+1}$ for any $r\leq -1$ such that $t_{r}\geq -1$.
\item $i_{2t_{r}+2}>i_{2t_{r}+1}\geq j_{2s_{r}+1}$ for any $r\leq -1$ such that $t_{r}\geq -1$.
\end{enumerate}
Where we put $i_{-1}=j_{-1}=-1$ and $i_{-2}=j_{-2}=-2$
\end{itemize}
\end{lemma}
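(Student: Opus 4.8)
The plan is to compute $\Hom_{\Kb(\proj\Lambda)}(X_{\mathbf{i}},X_{\mathbf{j}}[1])$ explicitly and to extract the inequalities of (1) and (2) as necessary conditions for its vanishing. First I would use the standard description of morphisms between two objects of $\Kb(\proj\Lambda)$ concentrated in degrees $-1$ and $0$: a morphism $X_{\mathbf{i}}\to X_{\mathbf{j}}[1]$ is represented by a homomorphism $\phi\colon X_{\mathbf{i}}^{-1}\to X_{\mathbf{j}}^{0}$, and two of them represent the same class precisely when their difference lies in $\Hom_{\Lambda}(X_{\mathbf{i}}^{0},X_{\mathbf{j}}^{0})\circ d_{\mathbf{i}}+d_{\mathbf{j}}\circ\Hom_{\Lambda}(X_{\mathbf{i}}^{-1},X_{\mathbf{j}}^{-1})$. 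Writing $\phi=(\phi_{t,s})$ with $\phi_{t,s}\colon P_{i_{2t}}\to P_{j_{2s+1}}$ and expanding a candidate null-homotopy into matrices $h=(h_{t,s}\colon P_{i_{2t+1}}\to P_{j_{2s+1}})$ and $g=(g_{t,s}\colon P_{i_{2t}}\to P_{j_{2s}})$, the condition that $\phi$ be a boundary becomes the grid of equations
\[\phi_{t,s}=h_{t,s}\,f_{i_{2t}}^{i_{2t+1}}+h_{t-1,s}\,f_{i_{2t}}^{i_{2t-1}}-f_{j_{2s}}^{j_{2s+1}}\,g_{t,s}-f_{j_{2s+2}}^{j_{2s+1}}\,g_{t,s+1},\]
with the conventions $P_0=P_{n+1}=0$ governing which terms survive near the ends of the two zigzags.

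Next I would linearize these equations using Lemma~\ref{pathcondition}. By parts (1)--(3) there, every $\Hom_{\Lambda}(P_a,P_b)$ is filtered by radical powers, with one-dimensional top quotient spanned by the class of $f_a^b$; writing $c(f)\in k$ for the leading coefficient of $f\in\Hom_\Lambda(P_a,P_b)$ in this filtration, one checks that a composite $f_b^c\circ f_a^b$ has $c(f_b^c\circ f_a^b)=c(f_a^b)\,c(f_b^c)$ when $b$ lies between $a$ and $c$ on the line $1,\dots,n$, and is a strictly deeper radical multiple of $f_a^c$ otherwise. Passing to leading coefficients in the grid above therefore reduces ``$\phi$ is a boundary'' to a finite linear system over $k$ in the scalars $c(\phi_{t,s}),c(h_{t,s}),c(g_{t,s})$; the combinatorial content is exactly which of the four terms on the right actually contributes to $c(\phi_{t,s})$, and this is decided by comparing the two vertices $i_{2t},j_{2s+1}$ being joined with their neighbours $i_{2t\pm1},j_{2s},j_{2s+2}$ in the zigzags, i.e. by exactly the quantities occurring in (a)--(d) and (a')--(d'). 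For the implication we need, only this leading-coefficient obstruction is relevant: if the leading coefficients cannot be matched then $\phi$ is certainly not a boundary, and matching the residual radical tails (which would be needed for a converse) is not required here.

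Then I would analyze this linear system for the elementary morphism $\phi^{(t,s)}$ whose single leading entry is $f_{i_{2t}}^{j_{2s+1}}$ at a fixed admissible position $0<i_{2t}<j_{2s+1}<n+1$. Solving for $(h,g)$ amounts to propagating the coefficient $1$ through the two interlocking zigzags: at each step exactly one of two ``enabling'' terms can carry it, and using it forces a move to the next relevant pair of indices. A direct inspection shows that the indices one is forced to visit when pushing the coefficient toward the top of the complexes are exactly the $t_r,s_r$ produced by rules (i)--(ii), and those visited when pushing it toward the bottom are the $t_r,s_r$ of rules (iii)--(iv), the degenerate outputs $m_{\mathbf{i}}+1,m_{\mathbf{i}}+2,-1,-2$ together with the boundary conventions for the terminal indices encoding the cases where a chain runs off an end. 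If the full list (a)--(d) holds, the top-ward chain carries the coefficient all the way out (so $\phi^{(t,s)}$ is a boundary); if the full list (a')--(d') holds, the bottom-ward chain does; but if neither full list holds, then along each chain some enabling term vanishes, the coefficient $1$ cannot be absorbed, $\phi^{(t,s)}$ is not a boundary, and hence $\Hom_{\Kb(\proj\Lambda)}(X_{\mathbf{i}},X_{\mathbf{j}}[1])\neq0$. Contrapositively, if this Hom space vanishes then every $\phi^{(t,s)}$ is a boundary, so for each admissible $(t,s)$ at least one of the lists (a)--(d), (a')--(d') holds, which is the assertion.

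I expect the heart of the work to be the third step: one must verify that the grid equations force exactly the index moves prescribed by the max/min rules (i)--(iv), that the chains terminate correctly against the stated boundary conventions (including the degenerate outputs $m_{\mathbf{i}}+1$, $-1$, and so on), and that at each step the only possible obstruction is the failure of one of the listed inequalities; particular care is needed at the ends of the zigzags, where the conventions $P_0=P_{n+1}=0$ change which enabling terms exist, and at the transitions between the three branches in each of the rules (i)--(iv). The homological reduction and the linearization via Lemma~\ref{pathcondition} are comparatively routine.
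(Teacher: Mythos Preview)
Your overall strategy—write out the null-homotopy equations, pass to leading coefficients via Lemma~\ref{pathcondition}, and interpret the resulting linear constraints as a propagation along the grid governed by the rules (i)--(iv)—is exactly the paper's strategy, and your identification of the sequences $t_r,s_r$ with the forced propagation path is correct. The paper also singles out the elementary morphism with a single entry $f_{i_{2t}}^{j_{2s+1}}$ (denoted there $\varphi(f)$) and analyses its null-homotopies.

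There is, however, a genuine gap at the step you flag as the ``heart of the work''. You assert that if neither list (a)--(d) nor (a')--(d') holds in full, then the coefficient $1$ ``cannot be absorbed'' and hence $\phi^{(t,s)}$ is not a boundary. This does not follow from what you have written: a general null-homotopy $(h,g)$ is not constrained to follow a single pure chain upward or downward. At the very first step one may split the leading coefficient between $h_{t_0,s_0}$ and $g_{t_0,s_0}$, and each piece then propagates independently; moreover after moving upward via $h$ one may, depending on the sign of $i_{2t+2}-j_{2s+1}$ at the next position, be forced into a $g$-move, so the propagation is not monotone in any obvious sense. Showing that no such mixture can cancel the obstruction is precisely the delicate part, and your linear-system reduction alone does not settle it.

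The paper closes this gap differently from what you sketch. Rather than arguing directly that $\phi^{(t,s)}$ is not a boundary, it introduces the failure sets $\xi^{+}$ and $\xi^{-}$ (recording where (1) respectively (2) breaks down along the sequences attached to the fixed $(t,s)$), picks the \emph{lexicographically extremal} elements of each, and analyses the null-homotopies of the elementary morphisms placed at \emph{those} extremal positions, not at $(t,s)$ itself. The point of extremality is that all neighbouring positions then lie outside the failure set, so the relevant inequalities do hold there; combined with Lemma~\ref{pathcondition}(3), the existence of a null-homotopy forces a chain of \emph{equalities} such as $i_{2t_{r'}+1}=j_{2s_{r'}+1}$ and $h^{j_{2s_{r'}+1}}_{i_{2t_{r'}+1}}\equiv\pm1$ modulo the radical (Claims~\ref{rigidcdadclaim} and~\ref{rigidcdclaim2}). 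Feeding the equalities coming from the $\xi^{-}$ side into the $\xi^{+}$ side eventually produces an inequality $i_{2t_r+1}\le j_{2s_r+1}$ that directly contradicts membership in $\xi^{+}$. Finally the paper reduces the case $t_1>t_0+1$ to $t_1=t_0+1$ via a separate claim (Claim~\ref{rigidcdclaim1}); your sketch does not address this reduction either.

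So: keep your setup and your leading-coefficient reduction, but replace the bare assertion ``the coefficient cannot be absorbed'' by the extremal-element argument with forced equalities, and add the reduction handling $t_1>t_0+1$.
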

\begin{proof}
We note that if $t_r\leq m_{\mathbf{i}}$ (resp. $s_r\leq m_{\mathbf{j}}$) for $r\geq 0$, then $t_r<t_{r+1}$ (resp. $s_r<s_{r+1}$).
and if $t_r\geq -1$ (resp. $s_r\geq -1$) for $r\leq 0$, then $t_r>t_{r-1}$ (resp. $s_r>s_{r-1}$).
We also note that $\Hom_{\Kb(\proj \Lambda)}(X_{\mathbf{i}},X_{\mathbf{j}}[1])=0$ implies that $i_{2p}\neq j_{2q+1}$ for any $p,q$.
We first show the assertion in the case that $t_1=t_0+1$. 
Let \[\begin{array}{l}
 \begin{array}{lrl}
 \xi^{+}
 &:=& \{(i_{2t_0+1},j_{2s_0+1})\mid i_{2t_0+1}>j_{2s_0+1}\}(=:\xi^+_0)\\
& \sqcup &\{(i_{2t_r-1},j_{2s_{r-1}+1})\mid r\geq 1, i_{2t_r-1}>j_{2s_{r-1}+1}, t_r\leq m_{\mathbf{i}}\}(=:\xi^+_1)\\
&\sqcup& \{(i_{2t_r},j_{2s_{r-1}+2})\mid r\geq 1, i_{2t_r}<j_{2s_{r-1}+2}, s_r\leq m_{\mathbf{j}}\}(=:\xi^+_2)\\
 &\sqcup & \{(i_{2t_r}, j_{2s_r})\mid r\geq 1, i_{2t_r}<j_{2s_r}, s_r\leq m_{\mathbf{j}}\}(=:\xi^+_3)\\
&\sqcup & \{(i_{2t_r+1},j_{2s_r+1})\mid r\geq 1, i_{2t_r+1}>j_{2s_r+1}, t_r\leq m_{\mathbf{i}}\}(=:\xi^+_4).
\end{array}\\\\
\begin{array}{lrl}
\xi^{-}
 &:=&\{(i_{2t_0},j_{2s_0})\mid i_{2t_0}>j_{2s_0}\}(=:\xi^-_0) \\
 &\sqcup&\{(i_{2t_{r+1}},j_{2s_r+2})\mid r\leq -1, i_{2t_{r+1}}>j_{2s_r+2}, s_r\geq -1\}(=:\xi^-_1)\\
&\sqcup & \{(i_{2t_{r+1}-1},j_{2s_r+1})\mid r\leq -1, i_{2t_{r+1}-1}<j_{2s_r+1}, t_r\geq -1\}(=:\xi^-_2)\\
 &\sqcup & \{(i_{2t_r+1},j_{2s_r+1})\mid r\leq -1, i_{2t_r+1}<j_{2s_r+1}, t_r \geq -1\}(=:\xi^-_3)\\
&\sqcup & \{(i_{2t_r},j_{2s_r})\mid r\leq -1, i_{2t_r}>j_{2s_r}, s_r\geq 0\}(=:\xi^-_4).
\end{array}
\end{array} \]
It is sufficient to show that either $\xi^{+}=\emptyset$ or $\xi^{-}=\emptyset$ holds.
We assume that $\xi^{+}\neq \emptyset$.
We consider lexicographical order $\preceq$ on $\Z^2$ 
(i.e. $(x,y)\preceq (x',y')$ if either (i) $x<x',$ or (ii) $x=x',\;y\leq y'$ hold)
 and take a minimum element $(x,y)\in \xi^{+}$. 
\begin{claim}
\label{rigidcdadclaim}
Suppose that $\xi^-\neq\emptyset$. Let $(x',y')$ be a maximum element of $\xi^-$.
\begin{enumerate}[{\rm (1)}]
\item Assume that $(x',y')=(i_{2t_{r'+1}},j_{2s_{r'}+2})\in \xi_1^-$. If $r'=-1$, then we have 
\[i_{2t_0+1}\leq j_{2s_{-1}+3}.\] If $r'<-1$, then the following holds. 
\[ \left\{\begin{array}{rcccccccccccccclll}
  &           & &            & &          & &j_{2s_{r'}+3}&=&j_{2s_{r'+1}+1}&=&i_{2t_{r'+1}+1}&<\cdots \\
< & i_{2t_p+2}&=&i_{2t_{p+1}}&=&j_{2s_p+2}&<&j_{2s_p+3}&=&j_{2s_{p+1}+1}&=&i_{2t_{p+1}+1}&<\cdots\\
< & i_{2t_{-2}+2}&=&i_{2t_{-1}}&=&j_{2s_{-2}+2}&<&j_{2s_{-2}+3}&=&j_{2s_{-1}+1}&=&i_{2t_{-1}+1}\\
< & i_{2t_{-1}+2}&=& i_{2t_0}&=& j_{2s_{-1}+2}&<&j_{2s_{-1}+3}&\geq &i_{2t_0+1}
\end{array}\right..\] 
\item Assume that $(x',y')=(i_{2t_{r'+1}-1},j_{2s_{r'}+1})\in \xi_2^-$,. If $r'=-1$, then we have
\[i_{2t_0}=j_{2s_{-1}+2}<j_{2s_{-1}+3}\geq i_{2t_0+1}.\] If $r'<-1$, then the following holds.
\[ \left\{\begin{array}{rcccccccccclll}
& & &i_{2t_{r'+1}}&=&j_{2s_{r'}+2}&<&j_{2s_{r'}+3}&=&j_{2s_{r'+1}+1}&=&i_{2t_{r'+1}+1}&<\cdots\\
< & i_{2t_p+2}&=&i_{2t_{p+1}}&=&j_{2s_p+2}&<&j_{2s_p+3}&=&j_{2s_{p+1}+1}&=&i_{2t_{p+1}+1}&<\cdots\\
< & i_{2t_{-2}+2}&=&i_{2t_{-1}}&=&j_{2s_{-2}+2}&<&j_{2s_{-2}+3}&=&j_{2s_{-1}+1}&=&i_{2t_{-1}+1}\\
< & i_{2t_{-1}+2}&=& i_{2t_0}&=& j_{2s_{-1}+2}&<&j_{2s_{-1}+3}&\geq &i_{2t_0+1}
\end{array}\right..\] 
\item Assume that $(x',y')=(i_{2t_{r'}+1},j_{2s_{r'}+1})\in \xi_3^-$. Then we obtain $i_{2t_{r'}+2}>j_{2s_{r'}+1}>0$.
 If $r'=-1$, then we have 
\[i_{2t_{-1}+2}=i_{2t_0}=j_{2s_{-1}+2}<j_{2s_{-1}+3}\geq i_{2t_0+1}.\] If $r'<-1$, then the following holds. 
\[ \left\{\begin{array}{rcccccccccclll}
&i_{2t_{r'}+2}&=&i_{2t_{r'+1}}&=&j_{2s_{r'}+2}&<&j_{2s_{r'}+3}&=&j_{2s_{r'+1}+1}&=&i_{2t_{r'+1}+1}&<\cdots\\
< & i_{2t_p+2}&=&i_{2t_{p+1}}&=&j_{2s_p+2}&<&j_{2s_p+3}&=&j_{2s_{p+1}+1}&=&i_{2t_{p+1}+1}&<\cdots\\
< & i_{2t_{-2}+2}&=&i_{2t_{-1}}&=&j_{2s_{-2}+2}&<&j_{2s_{-2}+3}&=&j_{2s_{-1}+1}&=&i_{2t_{-1}+1}\\
< & i_{2t_{-1}+2}&=& i_{2t_0}&=& j_{2s_{-1}+2}&<&j_{2s_{-1}+3}&\geq &i_{2t_0+1}
\end{array}\right..\] 
\item Assume that $(x',y')=(i_{2t_{r'}},j_{2s_{r'}})\in \xi_4^-$. Then we obtain 
 $0<i_{2t_{r'}}<i_{2t_{r'}+1}=j_{2s_{r'}+1}$.
 If $r'=-1$, then we have 
\[i_{2t_{-1}+2}=i_{2t_0}=j_{2s_{-1}+2}<j_{2s_{-1}+3}\geq i_{2t_0+1}.\] If $r'<-1$, then the following holds.
\[ \left\{\begin{array}{rcccccccccclll}
&i_{2t_{r'}+2}&=&i_{2t_{r'+1}}&=&j_{2s_{r'}+2}&<&j_{2s_{r'}+3}&=&j_{2s_{r'+1}+1}&=&i_{2t_{r'+1}+1}&<\cdots\\
< & i_{2t_p+2}&=&i_{2t_{p+1}}&=&j_{2s_p+2}&<&j_{2s_p+3}&=&j_{2s_{p+1}+1}&=&i_{2t_{p+1}+1}&<\cdots\\
< & i_{2t_{-2}+2}&=&i_{2t_{-1}}&=&j_{2s_{-2}+2}&<&j_{2s_{-2}+3}&=&j_{2s_{-1}+1}&=&i_{2t_{-1}+1}\\
< & i_{2t_{-1}+2}&=& i_{2t_0}&=& j_{2s_{-1}+2}&<&j_{2s_{-1}+3}&\geq &i_{2t_0+1}
\end{array}\right..\] 
\end{enumerate}
\end{claim}
\begin{pfclaim}
We treat the case that $(x',y')=(i_{2t_{r'}},j_{2s_{r'}})\in \xi^-_4$. By definition we obtain that
 $i_{2t}\geq i_{2t_{r'}+2}>j_{2s_{r'}+1}>i_{2t_{r'}}>j_{2s_{r'}}\geq 0.$ Accordingly, $P_{i_{2t_{r'}}},P_{j_{2s_{r'}+1}}\neq 0$. 
 Let $f_{i_{2t_{r'}}}^{j_{2s_{r'}+1}}\in \Hom_{\Lambda}(P_{i_{2t_{r'}}},P_{j_{2s_{r'}+1}})$ 
 given by $w_{i_{2t_{r'}}}^{j_{2s_{r'}+1}}.$ 
We consider $\varphi=(\varphi_{i_{2t}}^{j_{2s+1}}:P_{i_{2t}}^{j_{2s+1}}\to P_{j_{2s+1}})\in \Hom_{\Lambda}(X_{\mathbf{i}}^{-1},X_{\mathbf{j}}^{0})
=\Hom_{\Lambda}(\oplus P_{i_{2t}},\oplus P_{j_{2s+1}})$
such that \[\varphi_{i_{2t}}^{j_{2s+1}}=\left\{\begin{array}{cl}
f_{i_{2t_{r'}}}^{j_{2s_{r'}+1}} & (t,s)=(t_{r'},s_{r'})\\\\
0 & \mathrm{otherwise}.
\end{array}\right.\]
We may regard $\varphi$ as a morphism in $\Hom_{\Kb(\proj \Lambda)}(X_{\mathbf{i}},X_{\mathbf{j}}[1])$ by natural way.
Note that \[\Hom_{\Kb(\proj \Lambda)}(X_{\mathbf{i}},X_{\mathbf{j}}[1])=0.\]
 Therefore there are $h=(h_{i_{2t+1}}^{j_{2s+1}})\in \Hom_{\Lambda}(X_{\mathbf{i}}^0,Y_J^0)$ and
 $h'=(h_{i_{2t}}^{j_{2s}})\in \Hom_{\Lambda}(X_{\mathbf{i}}^{-1},Y_J^{-1})$ such that
 \[\varphi=h\circ d_{X_{\mathbf{i}}}+d_{X_{\mathbf{j}}}\circ h'.\]
 In particular, one sees the following equation. (See figure\;\ref{fig:one}.)
 \[f_{i_{2t_{r'}}}^{j_{2s_{r'}+1}}=h_{i_{2t_{r'}+1}}^{j_{2s_{r'}+1}}\circ f_{i_{2t_{r'}}}^{i_{2t_{r'}+1}}+
 h_{i_{2t_{r'}-1}}^{j_{2s_{r'}+1}}\circ f_{i_{2t_{r'}}}^{i_{2t_{r'}-1}}+
 f_{j_{2s_{r'}}}^{j_{2s_{r'}+1}}\circ h_{i_{2t_{r'}}}^{j_{2s_{r'}}}+
 f_{j_{2s_{r'}+2}}^{j_{2s_{r'}+1}}\circ h_{i_{2t_{r'}}}^{j_{2s_{r'}+2}}.\]
 \vspace{5pt}
 \begin{figure}[h]
\begin{center}
{\unitlength 0.1in%
\begin{picture}( 42.0000, 31.3200)(  3.8000,-35.6200)%
\put(24.0000,-10.0000){\makebox(0,0)[lb]{$P_{i_{2t_{r'}}}$}}%
\put(24.0000,-19.4000){\makebox(0,0)[lb]{$P_{i_{2t_{r'}+2}}$}}%
%
\special{pn 8}%
\special{pa 2990 1810}%
\special{pa 3870 1440}%
\special{fp}%
\special{sh 1}%
\special{pa 3870 1440}%
\special{pa 3801 1447}%
\special{pa 3821 1461}%
\special{pa 3816 1484}%
\special{pa 3870 1440}%
\special{fp}%
\put(39.2000,-14.8000){\makebox(0,0)[lb]{$P_{i_{2t_{r'}+1}}$}}%
\put(9.3000,-27.1000){\makebox(0,0)[lb]{$P_{j_{2s_{r'}}}$}}%
\put(9.3000,-36.5000){\makebox(0,0)[lb]{$P_{j_{2s_{r'}+2}}$}}%
\put(24.5000,-32.2000){\makebox(0,0)[lb]{$P_{j_{2s_{r'}+1}}$}}%
\put(39.0000,-5.6000){\makebox(0,0)[lb]{$P_{i_{2t_{r'}-1}}$}}%
\put(31.3000,-24.5000){\makebox(0,0)[lb]{$h_{i_{2t_{r'}+1}}^{j_{2s_{r'}+1}}$}}%
\put(44.0000,-14.4000){\makebox(0,0)[lb]{$h_{i_{2t_{r'}-1}}^{j_{2s_{r'}+1}}$}}%
\put(15.6000,-19.2000){\makebox(0,0)[lb]{$h_{i_{2t_{r'}}}^{j_{2s_{r'}}}$}}%
\put(21.6000,-23.8000){\makebox(0,0)[lb]{$f_{i_{2t_{r'}}}^{j_{2s_{r'}+1}}$}}%
%
\special{pn 8}%
\special{pn 8}%
\special{pa 2320 990}%
\special{pa 2291 1001}%
\special{pa 2261 1012}%
\special{pa 2260 1012}%
\special{fp}%
\special{pa 2198 1036}%
\special{pa 2174 1045}%
\special{pa 2144 1057}%
\special{pa 2139 1059}%
\special{fp}%
\special{pa 2077 1082}%
\special{pa 2056 1090}%
\special{pa 2027 1101}%
\special{pa 2017 1105}%
\special{fp}%
\special{pa 1956 1129}%
\special{pa 1938 1135}%
\special{pa 1909 1147}%
\special{pa 1896 1152}%
\special{fp}%
\special{pa 1835 1176}%
\special{pa 1820 1182}%
\special{pa 1791 1193}%
\special{pa 1775 1199}%
\special{fp}%
\special{pa 1714 1224}%
\special{pa 1702 1229}%
\special{pa 1654 1248}%
\special{fp}%
\special{pa 1594 1273}%
\special{pa 1552 1290}%
\special{pa 1534 1298}%
\special{fp}%
\special{pa 1474 1323}%
\special{pa 1462 1328}%
\special{pa 1431 1341}%
\special{pa 1415 1349}%
\special{fp}%
\special{pa 1355 1375}%
\special{pa 1340 1381}%
\special{pa 1310 1395}%
\special{pa 1296 1401}%
\special{fp}%
\special{pa 1236 1428}%
\special{pa 1218 1436}%
\special{pa 1187 1451}%
\special{pa 1178 1455}%
\special{fp}%
\special{pa 1119 1484}%
\special{pa 1097 1495}%
\special{pa 1062 1514}%
\special{fp}%
\special{pa 1005 1546}%
\special{pa 982 1560}%
\special{pa 951 1581}%
\special{fp}%
\special{pa 897 1618}%
\special{pa 876 1633}%
\special{pa 846 1658}%
\special{fp}%
\special{pa 797 1702}%
\special{pa 783 1716}%
\special{pa 762 1738}%
\special{pa 753 1748}%
\special{fp}%
\special{pa 713 1800}%
\special{pa 706 1810}%
\special{pa 679 1854}%
\special{fp}%
\special{pa 649 1913}%
\special{pa 647 1917}%
\special{pa 635 1945}%
\special{pa 624 1972}%
\special{fp}%
\special{pa 603 2034}%
\special{pa 585 2096}%
\special{fp}%
\special{pa 571 2160}%
\special{pa 564 2192}%
\special{pa 558 2223}%
\special{fp}%
\special{pa 547 2288}%
\special{pa 547 2290}%
\special{pa 541 2324}%
\special{pa 538 2351}%
\special{fp}%
\special{pa 528 2416}%
\special{pa 527 2424}%
\special{pa 523 2458}%
\special{pa 520 2480}%
\special{fp}%
\put(3.8000,-28.1000){\makebox(0,0)[lb]{$h_{i_{2t_{r'}}}^{j_{2s_{r'}+2}}$}}%
%
\special{pn 8}%
\special{pn 8}%
\special{pa 500 2890}%
\special{pa 502 2953}%
\special{fp}%
\special{pa 506 3018}%
\special{pa 506 3022}%
\special{pa 509 3054}%
\special{pa 513 3081}%
\special{fp}%
\special{pa 526 3145}%
\special{pa 526 3147}%
\special{pa 535 3177}%
\special{pa 546 3205}%
\special{fp}%
\special{pa 576 3263}%
\special{pa 590 3287}%
\special{pa 608 3312}%
\special{pa 611 3315}%
\special{fp}%
\special{pa 652 3365}%
\special{pa 670 3385}%
\special{pa 693 3408}%
\special{pa 696 3411}%
\special{fp}%
\special{pa 743 3455}%
\special{pa 767 3477}%
\special{pa 791 3498}%
\special{fp}%
\special{pa 840 3540}%
\special{pa 840 3540}%
\special{fp}%
%
\special{pn 8}%
\special{pa 818 3521}%
\special{pa 840 3540}%
\special{da 0.070}%
\special{sh 1}%
\special{pa 840 3540}%
\special{pa 803 3481}%
\special{pa 800 3505}%
\special{pa 776 3512}%
\special{pa 840 3540}%
\special{fp}%
%
\special{pn 8}%
\special{pn 8}%
\special{pa 2780 3060}%
\special{pa 2809 3049}%
\special{pa 2839 3038}%
\special{pa 2840 3038}%
\special{fp}%
\special{pa 2901 3015}%
\special{pa 2927 3005}%
\special{pa 2960 2992}%
\special{fp}%
\special{pa 3021 2969}%
\special{pa 3073 2949}%
\special{pa 3081 2946}%
\special{fp}%
\special{pa 3141 2923}%
\special{pa 3162 2915}%
\special{pa 3191 2904}%
\special{pa 3201 2900}%
\special{fp}%
\special{pa 3262 2876}%
\special{pa 3280 2869}%
\special{pa 3309 2857}%
\special{pa 3321 2852}%
\special{fp}%
\special{pa 3382 2829}%
\special{pa 3399 2822}%
\special{pa 3428 2809}%
\special{pa 3440 2804}%
\special{fp}%
\special{pa 3501 2780}%
\special{pa 3518 2773}%
\special{pa 3560 2755}%
\special{fp}%
\special{pa 3620 2730}%
\special{pa 3639 2722}%
\special{pa 3678 2705}%
\special{fp}%
\special{pa 3738 2678}%
\special{pa 3760 2669}%
\special{pa 3790 2655}%
\special{pa 3796 2652}%
\special{fp}%
\special{pa 3856 2626}%
\special{pa 3882 2613}%
\special{pa 3913 2599}%
\special{pa 3913 2599}%
\special{fp}%
\special{pa 3972 2570}%
\special{pa 4003 2554}%
\special{pa 4028 2540}%
\special{fp}%
\special{pa 4085 2508}%
\special{pa 4089 2506}%
\special{pa 4117 2489}%
\special{pa 4139 2475}%
\special{fp}%
\special{pa 4193 2438}%
\special{pa 4198 2435}%
\special{pa 4223 2416}%
\special{pa 4244 2399}%
\special{fp}%
\special{pa 4293 2357}%
\special{pa 4295 2355}%
\special{pa 4317 2334}%
\special{pa 4338 2311}%
\special{fp}%
\special{pa 4379 2261}%
\special{pa 4411 2215}%
\special{pa 4415 2208}%
\special{fp}%
\special{pa 4446 2151}%
\special{pa 4454 2135}%
\special{pa 4466 2107}%
\special{pa 4471 2093}%
\special{fp}%
\special{pa 4493 2031}%
\special{pa 4506 1989}%
\special{pa 4511 1970}%
\special{fp}%
\special{pa 4526 1907}%
\special{pa 4529 1895}%
\special{pa 4538 1844}%
\special{fp}%
\special{pa 4549 1780}%
\special{pa 4551 1765}%
\special{pa 4555 1732}%
\special{pa 4557 1717}%
\special{fp}%
\special{pa 4564 1652}%
\special{pa 4570 1588}%
\special{fp}%
\special{pa 4575 1524}%
\special{pa 4577 1493}%
\special{pa 4580 1460}%
\special{fp}%
%
\special{pn 8}%
\special{pa 2809 3049}%
\special{pa 2780 3060}%
\special{da 0.070}%
\special{sh 1}%
\special{pa 2780 3060}%
\special{pa 2849 3055}%
\special{pa 2830 3041}%
\special{pa 2835 3018}%
\special{pa 2780 3060}%
\special{fp}%
%
\special{pn 8}%
\special{pn 8}%
\special{pa 4100 600}%
\special{pa 4124 622}%
\special{pa 4148 643}%
\special{pa 4150 645}%
\special{fp}%
\special{pa 4200 692}%
\special{pa 4218 709}%
\special{pa 4240 732}%
\special{pa 4247 740}%
\special{fp}%
\special{pa 4293 791}%
\special{pa 4303 804}%
\special{pa 4322 829}%
\special{pa 4334 845}%
\special{fp}%
\special{pa 4371 903}%
\special{pa 4392 936}%
\special{pa 4406 960}%
\special{fp}%
\special{pa 4439 1020}%
\special{pa 4439 1021}%
\special{pa 4454 1049}%
\special{pa 4469 1078}%
\special{pa 4470 1080}%
\special{fp}%
%
\special{pn 8}%
\special{pa 2410 1050}%
\special{pa 1890 1590}%
\special{da 0.070}%
%
\special{pn 8}%
\special{pa 1570 1960}%
\special{pa 1070 2480}%
\special{da 0.070}%
\special{sh 1}%
\special{pa 1070 2480}%
\special{pa 1131 2446}%
\special{pa 1107 2442}%
\special{pa 1102 2418}%
\special{pa 1070 2480}%
\special{fp}%
%
\special{pn 8}%
\special{pa 4020 1560}%
\special{pa 3500 2100}%
\special{da 0.070}%
%
\special{pn 8}%
\special{pa 3180 2510}%
\special{pa 2680 3030}%
\special{da 0.070}%
\special{sh 1}%
\special{pa 2680 3030}%
\special{pa 2741 2996}%
\special{pa 2717 2992}%
\special{pa 2712 2968}%
\special{pa 2680 3030}%
\special{fp}%
%
\special{pn 8}%
\special{pa 2530 1060}%
\special{pa 2518 1090}%
\special{pa 2505 1119}%
\special{pa 2481 1179}%
\special{pa 2469 1208}%
\special{pa 2457 1238}%
\special{pa 2424 1328}%
\special{pa 2414 1358}%
\special{pa 2404 1389}%
\special{pa 2394 1419}%
\special{pa 2385 1450}%
\special{pa 2376 1480}%
\special{pa 2360 1542}%
\special{pa 2339 1635}%
\special{pa 2333 1667}%
\special{pa 2326 1698}%
\special{pa 2321 1730}%
\special{pa 2315 1761}%
\special{pa 2309 1793}%
\special{pa 2304 1825}%
\special{pa 2299 1856}%
\special{pa 2284 1952}%
\special{pa 2280 1983}%
\special{pa 2270 2047}%
\special{pa 2270 2050}%
\special{fp}%
%
\special{pn 8}%
\special{pa 2260 2460}%
\special{pa 2266 2556}%
\special{pa 2269 2588}%
\special{pa 2273 2620}%
\special{pa 2278 2651}%
\special{pa 2285 2682}%
\special{pa 2293 2713}%
\special{pa 2302 2743}%
\special{pa 2312 2773}%
\special{pa 2323 2803}%
\special{pa 2336 2833}%
\special{pa 2362 2891}%
\special{pa 2377 2920}%
\special{pa 2391 2949}%
\special{pa 2407 2978}%
\special{pa 2422 3007}%
\special{pa 2437 3035}%
\special{pa 2440 3040}%
\special{fp}%
%
\special{pn 8}%
\special{pa 2437 3035}%
\special{pa 2440 3040}%
\special{fp}%
\special{sh 1}%
\special{pa 2440 3040}%
\special{pa 2423 2973}%
\special{pa 2413 2994}%
\special{pa 2389 2993}%
\special{pa 2440 3040}%
\special{fp}%
%
\special{pn 8}%
\special{pa 2740 870}%
\special{pa 3040 760}%
\special{fp}%
\put(30.7000,-8.1000){\makebox(0,0)[lb]{$f_{i_{2t_{r'}}}^{i_{2t_{r'}-1}}$}}%
%
\special{pn 8}%
\special{pa 3600 590}%
\special{pa 3820 510}%
\special{fp}%
\special{sh 1}%
\special{pa 3820 510}%
\special{pa 3751 514}%
\special{pa 3770 528}%
\special{pa 3764 552}%
\special{pa 3820 510}%
\special{fp}%
%
\special{pn 8}%
\special{pa 1320 3562}%
\special{pa 1620 3452}%
\special{fp}%
\put(16.4000,-35.0200){\makebox(0,0)[lb]{$f_{j_{2s_{r'}+2}}^{j_{2s_{r'}+1}}$}}%
%
\special{pn 8}%
\special{pa 2180 3282}%
\special{pa 2400 3202}%
\special{fp}%
\special{sh 1}%
\special{pa 2400 3202}%
\special{pa 2331 3206}%
\special{pa 2350 3220}%
\special{pa 2344 3244}%
\special{pa 2400 3202}%
\special{fp}%
%
\special{pn 8}%
\special{pa 2730 970}%
\special{pa 3020 1070}%
\special{fp}%
\put(30.9000,-12.5000){\makebox(0,0)[lb]{$f_{i_{2t_{r'}}}^{i_{2t_{r'}+1}}$}}%
%
\special{pn 8}%
\special{pa 3590 1250}%
\special{pa 3820 1340}%
\special{fp}%
\special{sh 1}%
\special{pa 3820 1340}%
\special{pa 3765 1297}%
\special{pa 3770 1321}%
\special{pa 3751 1334}%
\special{pa 3820 1340}%
\special{fp}%
%
\special{pn 8}%
\special{pa 1320 2740}%
\special{pa 1610 2840}%
\special{fp}%
\put(16.8000,-30.2000){\makebox(0,0)[lb]{$f_{j_{2s_{r'}}}^{j_{2s_{r'}+1}}$}}%
%
\special{pn 8}%
\special{pa 2180 3020}%
\special{pa 2410 3110}%
\special{fp}%
\special{sh 1}%
\special{pa 2410 3110}%
\special{pa 2355 3067}%
\special{pa 2360 3091}%
\special{pa 2341 3104}%
\special{pa 2410 3110}%
\special{fp}%
\end{picture}}%
  \end{center}
\caption{}
\label{fig:one}
\end{figure} 
 Note that $i_{2t_{r'}-1},j_{2s_{r'}}<i_{2t_{r'}}<j_{2s_{r'}+1}<j_{2s_{r'}+2}$.
  Lemma\;\ref{pathcondition} implies that
 \[i_{2t_{r'}+1}\leq j_{2s_{r'}+1}.\]
 Since $(i_{2t_{r'}},j_{2s_{r'}})$ is maximum in $\xi^-$ and 
 $(i_{2t_{r'}},j_{2s_{r'}})\prec (i_{2t_{r'}+1},j_{2s_{r'}+1})$,
   we conclude that
  \[j_{2s_{r'}+1}\leq i_{2t_{r'}+1}.\]
In particular, we get  
\[i_{2t_{r'}+1}= j_{2s_{r'}+1}.\] Lemma\;\ref{pathcondition} also says that
\[h_{i_{2t_{r'}+1}}^{j_{2s_{r'}+1}}= 1+l,\]
for some $l\in  \Rad\End_{\Lambda}(P_{j_{2s_{r'}+1}}).$  

Next we consider $\varphi_{i_{2t_{r'}+2}}^{j_{2s_{r'}+1}}=0$. 
We obtain the following equation. 
\[0=h_{i_{2t_{r'}+1}}^{j_{2s_{r'}+1}}\circ f_{i_{2t_{r'}+2}}^{i_{2t_{r'}+1}}+
 h_{i_{2t_{r'}+3}}^{j_{2s_{r'}+1}}\circ f_{i_{2t_{r'}+2}}^{i_{2t_{r'}+3}}+
 f_{j_{2s_{r'}}}^{j_{2s_{r'}+1}}\circ h_{i_{2t_{r'}+2}}^{j_{2s_{r'}}}+
 f_{j_{2s_{r'}+2}}^{j_{2s_{r'}+1}}\circ h_{i_{2t_{r'}+2}}^{j_{2s_{r'}+2}}.\]
 \vspace{5pt}
 By Lemma\;\ref{pathcondition}, we have that
 \[\begin{array}{ccl}
  h_{i_{2t_{r'}+3}}^{j_{2s_{r'}+1}}\circ f_{i_{2t_{r'}+2}}^{i_{2t_{r'}+3}}+
 f_{j_{2s_{r'}}}^{j_{2s_{r'}+1}}\circ h_{i_{2t_{r'}+2}}^{j_{2s_{r'}}}+
 f_{j_{2s_{r'}+2}}^{j_{2s_{r'}+1}}\circ h_{i_{2t_{r'}+2}}^{j_{2s_{r'}+2}}
 &=&(-1+l)\circ f_{i_{2t_{r'}+2}}^{j_{2s_{r'}+1}}\\
 &=&f_{i_{2t_{r'}+2}}^{j_{2s_{r'}+1}}\circ(-1+l'),
 \end{array} \]
 for some $l'\in \Rad\End_{\Lambda}(P_{j_{2s_{r'}+2}}).$
 Then Lemma\;\ref{pathcondition} implies that
 \[j_{2s_{r'}+2}\leq i_{2t_{r'}+2}.\] Since $(i_{2t_{r'}},j_{2s_{r'}})$ is maximum in $\xi^-$
  and $(i_{2t_{r'}},j_{2s_{r'}})\prec (i_{2t_{r'+1}},j_{2s_{r'}+2})$,
   we conclude that
  \[i_{2t_{r'}+2}\leq i_{2t_{r'+1}}\leq j_{2s_{r'}+2}.\]
  In particular, we have that
  \[i_{2t_{r'}+2}= i_{2t_{r'+1}}= j_{2s_{r'}+2}\ \mathrm{and}\ 
  h_{i_{2t_{r'+1}}}^{j_{2s_{r'}+2}}\equiv -1 \mod \Rad\End_{\Lambda}(P_{j_{2s_{r'}+2}}).\]\vspace{5pt}
If $r'<-1$, then  one can check that
 \[(\sharp) \left\{\begin{array}{rcccccccccclll}
&i_{2t_{r'}+2}&=&i_{2t_{r'+1}}&=&j_{2s_{r'}+2}&<&j_{2s_{r'}+3}&=&j_{2s_{r'+1}+1}&=&i_{2t_{r'+1}+1}&<\cdots\\
< & i_{2t_p+2}&=&i_{2t_{p+1}}&=&j_{2s_p+2}&<&j_{2s_p+3}&=&j_{2s_{p+1}+1}&=&i_{2t_{p+1}+1}&<\cdots\\
< & i_{2t_{-2}+2}&=&i_{2t_{-1}}&=&j_{2s_{-2}+2}&<&j_{2s_{-2}+3}&=&j_{2s_{-1}+1}&=&i_{2t_{-1}+1}\\
< & i_{2t_{-1}+2}&=& i_{2t_0}&=& j_{2s_{-1}+2}&<&j_{2s_{-1}+3}&\geq &i_{2t_0+1}
\end{array}\right.,\] 
and
\[(\natural) \left\{\begin{array}{cccl}
h_{i_{2t_p+1}}^{j_{2s_p+1}}&\equiv & 1 & \mod  \Rad\End_{\Lambda}(P_{j_{2s_p+1}})\\
h_{i_{2t_{p+1}}}^{j_{2s_p+2}}&\equiv & -1 & \mod  \Rad\End_{\Lambda}(P_{j_{2s_p+2}}),
\end{array}\right.\] 
for any $p\in\{r',r'+1,\dots, -1\}$ inductively. In the case that $r'=-1$, we have
 that
\[i_{2t_0+1}\leq j_{2s_{-1}+3}.\] In fact, we have an equation
\[f_{j_{2s_{-1}+2}}^{j_{2s_{-1}+3}}\circ h_{i_{2t_0}}^{j_{2s_{-1}+2}}
+ f_{j_{2s_{-1}+4}}^{j_{2s_{-1}+3}}\circ h_{i_{2t_0}}^{j_{2s_{-1}+4}}
+h_{i_{2t_0-1}}^{j_{2s_{-1}+3}}\circ f_{i_{2t_0}}^{i_{2t_0-1}}
+h_{i_{2t_0+1}}^{j_{2s_{-1}+3}}\circ f_{i_{2t_0}}^{i_{2t_0+1}}=0
\] and that $h_{i_{2t_0}}^{j_{2s_{-1}+2}}\equiv  -1\;\mod 
\Rad\End_{\Lambda}(P_{j_{2s_{-1}+2}})$. Accordingly, Lemma\;\ref{pathcondition}
implies the assertion.
Similar argument gives remaining assertions. 
(We only  note that if $(x',y')=(i_{2t_{r'}+1},j_{2s_{r'}+1})\in \xi_3^-$, 
then $-1\leq i_{2t_{r'}+1}<j_{2s_{r'}+1}$ implies that $s_{r'}\geq 0$. Therefore, we obtain that
$0<j_{2s_{r'}+1}<i_{2t_{r'}+2}$. In particular, we have $P_{i_{2t_{r'}+2}}\neq 0\neq P_{j_{2s_{r'}+1}}.$)

\end{pfclaim}
\begin{claim}
 \label{rigidcdclaim2}
\begin{enumerate}[{\rm (1)}]
\item If $(x,y)=(i_{2t_0+1},j_{2s_0+1})\in \xi^+_0$, then $\xi^{-}=\emptyset$.
\item If $(x,y)=(i_{2t_r},j_{2s_{r-1}+2})\in \xi^+_2$, then we have $i_{2t_r-1}=j_{2s_{r-1}+1}$ and 
\[(\ast)\left\{\begin{array}{rccccccccccll}
&i_{2t_r-2}&=&i_{2t_{r-1}}&=&j_{2s_{r-1}}&>&j_{2s_{r-1}-1}&=&j_{2s_{r-2}+1}&=&i_{2t_{r-1}-1}>\cdots\\
> & i_{2t_p-2}&=&i_{2t_{p-1}}&=&j_{2s_{p-1}}&>&j_{2s_{p-1}-1}&=&j_{2s_{p-2}+1}&=&i_{2t_{p-1}-1}>\cdots\\
> & i_{2t_2-2}&=&i_{2t_{1}}&=&j_{2s_{1}}&>&j_{2s_{1}-1}&=&j_{2s_{0}+1}&=&i_{2t_{1}-1}\\
> & j_{2s_0}&\geq& i_{2t_1-2}&=&i_{2t_0}.
\end{array}\right.\]
\item If $(x,y)=(i_{2t_r},j_{2s_r})\in \xi^+_3$, then we have $s_r=s_{r-1}+1$ and so
 \[(x,y)=(i_{2t_r},j_{2s_r})=(i_{2t_r},j_{2s_{r-1}+2}).\]
\item Assume that $(x,y)=(i_{2t_r+1},j_{2s_r+1})\in \xi^+_{4}$.
If $t_r<m_{\mathbf{i}}$, then $i_{2t_r}=j_{2s_r}>j_{2s_r-1}=j_{2s_{r-1}+1}=i_{2t_r-1}$
 and $(\ast)$ hold. If $t_r=m_{\mathbf{i}}$, then $i_{2t_r}\neq n+1$. Furthermore, we obtain that
 $i_{2t_r}=j_{2s_r}>j_{2s_r-1}=j_{2s_{r-1}+1}=i_{2t_r-1}$
 and $(\ast)$.
\item If $(x,y)=(i_{2t_{r+1}-1},j_{2s_r+1})\in \xi^+_1$ $(r\geq 0)$, then we have that $t_{r+1}=t_r+1$ and so 
\[(x,y)=(i_{2t_{r+1}-1},j_{2s_r+1})=(i_{2t_r+1},j_{2s_r+1}).\]

\end{enumerate}
 \end{claim} 
 \begin{pfclaim}
 We show (1). Suppose that $\xi^-\neq \emptyset$ and take a maximum element $(x',y')$ of $\xi^-$.
 If $(x',y')=(i_{2t_0},j_{2s_0})$, then $i_{2t_0}>j_{2s_0}$.
 Now we consider $f=f_{i_{2t_0}}^{j_{2s_0+1}}\in \Hom_{\Lambda}(P_{i_{2t_0}},P_{j_{2s_0+1}})$ given by the path
 $ w_{i_{2t_0}}^{j_{2s_0+1}}$. Since $\Hom_{\Kb(\proj \Lambda)}(X_{\mathbf{i}},X_{\mathbf{j}}[1])=0$,
 we have that $f$ factors through $P_{i_{2t_0-1}}\oplus P_{i_{2t_0+1}}\oplus P_{j_{2s_0}}\oplus P_{j_{2s_0+2}}$.
 Note that $w_{i_{2t_0}}^{j_{2s_0+1}}$ does not factor through
 $i_{2t_0-1}$, $j_{2s_0+2}$ and $j_{2s_0}$. 
 Therefore Lemma\;\ref{pathcondition} implies that 
 $i_{2t_0}<i_{2t_0+1}\leq  j_{2s_0+1}$.
 This contradicts to $i_{2t_0+1}>j_{2s_0+1}$.
   Hence we may assume that $(x',y')\in \xi^-_b\ (b=1,2,3,4)$. 
Then  Claim\;\ref{rigidcdadclaim} implies that 
\[i_{2t_0+1}\leq j_{2s_{-1}+3}\leq j_{2s_0+1}.\]
This contradicts to the hypothesis of $(1)$.



Next we show (2). We consider 
$\varphi\in \Hom_{\Lambda}(X_{\mathbf{i}}^{-1},Y_J^{0})=\Hom_{\Lambda}(\oplus P_{i_{2p}},\oplus P_{j_{2q+1}})$
given by $(\varphi_{i_{2p}}^{j_{2q+1}}:P_{i_{2p}}\to P_{j_{2q+1}})$ where  \[\varphi_{i_{2p}}^{j_{2q+1}}=\left\{\begin{array}{cl}
f_{i_{2t_r}}^{j_{2s_{r-1}+1}} & (p,q)=(t_r,s_{r-1})\\\\
0 & \mathrm{otherwise}.
\end{array}\right.\]
Then one can apply similar argument we used in the proof of Claim\;\ref{rigidcdadclaim} for $\varphi$ and
obtain $(\ast)$. Likewise, we have $(\ast)$ in the case of $(3)$, $(4)$ and $(5)$.
(For the assertion (4), we remark  that $(x,y)=(i_{2t_r+1},j_{2s_r+1})$ implies that $s_r<m_{\mathbf{j}}$. 
By definition of $s_r$, we have $i_{2t_r}<j_{2s_r+1}<j_{2m_{\mathbf{j}}+1}=n+2$ and so $i_{2t_r}\neq n+1$.)
 \end{pfclaim}

We continue a proof of Lemma\;\ref{rigidcondition}. (Remark: We now assume that $\xi^+\neq \emptyset$ and 
consider the case that $t_1=t_0+1$.)
 Suppose that 
$\xi^-\neq\emptyset$ and take a maximum element $(x',y')\in \xi^-.$
We will give a contradiction in the case  that $(x,y)\in \xi^+_4$ and $(x',y')\in \xi^-_4$.

Let $(\varphi,h,h')$ be a triple considered in the proof of Claim\;\ref{rigidcdadclaim}.
Then one has $(\sharp)$ and $(\natural)$.
 Therefore by Claim\;\ref{rigidcdclaim2}\;(4) and $(\sharp)$,
we conclude that
\[i_{2t_1-1}=i_{2t_0+1}\leq j_{2s_{-1}+3}\leq j_{2s_0+1}=i_{2t_1-1}. \]
In particular, we have that
\[j_{2s_{-1}+3}=j_{2s_0+1}=i_{2t_1-1}.\] Note that $\varphi_{i_{2t_0}}^{j_{2s_0+1}}=0$
and $h_{i_{2t_0}}^{j_{2_{s_0}}}\equiv -1  \mod  \Rad\End_{\Lambda}(P_{j_{2s_0}})$ 
(by $(\natural)$ and $s_{0}=s_{-1}+1$). Hence Lemma\;\ref{pathcondition} and $i_{2t_1-1}=j_{2s_0+1}$
 imply that
\[h_{i_{2t_1-1}}^{j_{2s_0+1}}\equiv  1  \mod  \Rad\End_{\Lambda}(P_{j_{2s_0+1}}).\]
Then by using Lemma\;\ref{pathcondition} and the condition $(\ast)$, one can check that 
\[\left\{\begin{array}{cccl}
h_{i_{2t_p-1}}^{j_{2s_{p-1}+1}}&\equiv & 1 & \mod  \Rad\End_{\Lambda}(P_{j_{2s_{p-1}+1}})\\
h_{i_{2t_p}}^{j_{2s_p}}&\equiv & -1 & \mod  \Rad\End_{\Lambda}(P_{j_{2s_p}}),
\end{array}\right.\]
for any $p\in\{1,\dots,r \}$ inductively. 
Now Claim\;\ref{rigidcdclaim2}\;(4) implies that
$i_{2t_r}\neq n+1(\Leftrightarrow P_{i_{2t_r}}\neq 0)$. Then $i_{2t_r+1}>j_{2s_r+1}$ gives that 
$s_r<m_{\mathbf{j}}(\Leftrightarrow P_{j_{2s_r+1}}\neq 0)$.
Note that $\varphi_{i_{2t_r}}^{j_{2s_r}+1}=0$ and
 \[h_{i_{2t_r}}^{j_{2s_r}}\equiv  -1 \mod  \Rad\End_{\Lambda}(P_{j_{2s_r}}).\]
If $t_r=m_{\mathbf{i}}$, then by Lemma\;\ref{pathcondition}, 
$w_{i_{2t_r}}^{j_{2s_r+1}}$ factors through either $j_{2s_r+2}$ or $i_{2t_r-1}$.
This is a contradiction.
 If $t_r<m_{\mathbf{i}}$, then Lemma\;\ref{pathcondition} implies that $w_{i_{2t_r}}^{j_{2s_r}+1}$ have to through $i_{2t_r+1}$.
 In particular, we obtain that  
 \[i_{2t_r}<i_{2t_r+1}\leq j_{2s_r+1}.\]
 This contradicts to that $(x,y)=(i_{2t_r+1},j_{2s_r+1})\in \xi_4^+$.
 
 If $(x,y)\in \xi^+_{a}$ and $(x',y')\in \xi^-_{b}$, then similar argument gives a contradiction.
 Therefore we have the assertion in the case that $t_1=t_0+1$. 
 
 

Suppose that $t_0<m_{\mathbf{i}}$ and $2t_1-2>2t$. Let $t':=t_{1}-1>t$ and 
$\mathbf{t}^{+}(t',s)=(t'_{0}<t'_{1}<\cdots)$, $\mathbf{s}^{-}(t',s)=(s'_{0}<s'_{1}<\cdots)$, 
$\mathbf{t}^{-}(t',s)=(t'_{0}>t'_{-1}>\cdots)$, $\mathbf{s}^{-}(t',s)=(s'_{0}>s'_{1}>\cdots)$. 

By definition we have the following:
\[\left\{ \begin{array}{cl}
 t'_{r}=t_{r} & r\geq 1\\
 s'_{r}=s_{r} & r\geq 1 \\
 \end{array}\right.\]
We also obtain that $t'_1=t'_0+1$. Hence the assertion holds for $(t',s)$.
If $(t',s)$ satisfies the condition (1), then the assertion is obvious. Therefore we assume that
 $(t',s)$ satisfies the condition (2).
If $s_0=0$, then we have $s_{-1}=-1=t_{-1}$ and
\[j_{2s_0}=j_{2s'_0}\geq i_{2t'_0}\geq i_{2t_0}.\]
Then it is easy to check the condition (2) of this lemma. 
Hence we may assume that $s_0>0$.
\begin{claim}
\label{rigidcdclaim1} We have the following.
\begin{enumerate}[{\rm (1)}]
\item There exists $\ell\geq 0$ such that 
\[\left\{ \begin{array}{cl}
 t'_{r-\ell}=t_{r} & r\leq -1\\
 s'_{r-\ell}=s_{r} & r\leq -1 \\
 \end{array}\right.\]
 \item If $t_0\neq t'_{-\ell}$, then we have  
 \[j_{2s_0}\geq j_{2s_{-1}+2}\geq i_{2t_{0}}>i_{2t_0-1} \geq j_{2s_{-1}+1}.\]
 In particular, $(t,s)$ satisfies the condition (2).
 \end{enumerate}
\end{claim}
\begin{pfclaim}
Note that $s'_{-1}\geq s_{-1}$.
First we assume that $s'_{-1}=s_{-1}$. If $s'_{-1}=s_{-1}=-1$, then $t'_{-1}=t_{-1}=-1$. If 
$s'_{-1}=s_{-1}\geq 0$, then we have that
\[i_{2t'_0}>\cdots>i_{2t_0}\geq i_{2t_{-1}+2}>j_{2s'_{-1}+1}=j_{2s_{-1}+1}>i_{2t_{-1}}.\]
Therefore we obtain $t'_{-1}=t_{-1}$. Hence the assertion is obvious.
Next we assume that $s'_{-1}>s_{-1}$. Let $\ell$ be a positive integer such that
$j_{2s'_{-\ell}+1}\geq j_{2s_{-1}+3} \geq j_{2s'_{-\ell-1}+3}$.
Thus we have that 
\[i_{2t'_{-\ell}+2}>j_{2s'_{-\ell}+1}\geq j_{2s_{-1}+3} \geq j_{2s'_{-\ell-1}+3}> i_{2t'_{-\ell}}.\]
Note that $j_{2s_{-1}+3}>i_{2t_0}>j_{2s_{-1}+1}$. If $j_{2s_{-1}+3}>j_{2s'_{-\ell-1}+3}$, then 
$j_{2s_{-1}+1}\geq j_{2s'_{-\ell-1}+3}$ and so we have 
\[i_{2t'_{-\ell}+2}>j_{2s'_{-\ell}+1}\geq j_{2s_{-1}+3}>i_{2t_0}>
j_{2s_{-1}+1}\geq j_{2s'_{-\ell-1}+3}> i_{2t'_{-\ell}}.\]
This gives that
\[t'_{-\ell}<t_0<t'_{-\ell}+1.\]
This is a contradiction. Hence we conclude that
\[s_{-1}=s'_{-\ell-1}.\]
If $i_{2t_0}>i_{2t'_{-\ell}}$, then we get that
\[i_{2t'_{-\ell}+2}>j_{2s'_{-\ell}+1}\geq j_{2s_{-1}+3}>i_{2t_0}>i_{2t'_{-\ell}}.\]
This is a contradiction.
Therefore, we obtain that 
\[i_{2t_{0}}\leq i_{2t'_{-\ell}}.\]
Then we have that
\[i_{2t'_{-\ell}}\geq i_{2t_0}\geq i_{2t_{-1}+2}>j_{2s'_{-\ell-1}+1}=j_{2s_{-1}+1}>i_{2t_{-1}}.\]
This implies  $t'_{-\ell-1}=t_{-1}$. (Remark: If $s_{-1}=s'_{\ell-1}=-1$, then $t'_{-\ell-1}=t_{-1}=-1$.) 
In particular, we conclude that
\[\left\{ \begin{array}{cl}
 t'_{r-\ell}=t_{r} & r\leq -1\\
 s'_{r-\ell}=s_{r} & r\leq -1 \\
 \end{array}\right.\]
 Suppose that $t_0\neq t'_{-\ell}$. In this case, we have $i_{2t'_{-\ell}}> i_{2t_{0}}$. 
 Since $(t',s)$ satisfies the condition (2), we conclude that
\[j_{2s_0}\geq j_{2s_{-1}+2}=j_{2s'_{-\ell-1}+2}\geq i_{2t'_{-\ell}}> i_{2t_0}>j_{2s'_{-\ell-1}+1}=j_{2s_{-1}+1}.\]
By applying Lemma\;\ref{pathcondition}\;(3) for $f_{i_{2t_0}}^{j_{2s'_{-\ell-1}+1}}$, we obtain that
\[i_{2t_{0}-1}\geq j_{2s'_{-\ell-1}+1}=j_{2s_{-1}+1}.\]
\end{pfclaim} 
Therefore by Claim\;\ref{rigidcdclaim1}, the assertion also holds for the case that
 $2t_1-2>2t$. 
\end{proof}
For $\mathbf{i},\mathbf{j}\in \Xi$ and $f:P_{i_{2t}}\to P_{j_{2s+1}}$, we set
$\varphi(\mathbf{i},\mathbf{j},f)=\varphi(f):=
(\varphi_{i_{2p}}^{j_{2q+1}}:P_{i_{2p}}\to P_{j_{2q+1}})\in \Hom_{\Lambda}(X_{\mathbf{i}}^{-1},X_{\mathbf{j}}^0)$
 where
\[\varphi_{i_{2p}}^{j_{2q+1}}=\left\{\begin{array}{cl}
f & (p,q)=(t,s)\\\\
0 & \mathrm{otherwise}.
\end{array}\right.\]
We regard $\varphi(t,s,f)$ as a morphism in $\Hom_{\Kb(\proj \Lambda)}(X_{\mathbf{i}},X_{\mathbf{j}}[1])$.
\begin{lemma}
\label{rigidcondition3} Let $\mathbf{i},\mathbf{j}\in \Xi$.
Let $(t,s)$ be a pair such that $0<i_{2t}<j_{2s+1}<n+1$.
If either (1) or (2) of Lemma\;\ref{rigidcondition} holds, then $\varphi(f)=0$
 for any $f\in \Hom_{\Lambda}(P_{i_{2t}},P_{j_{2s+1}}).$.
\end{lemma}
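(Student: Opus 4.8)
The plan is to write down an explicit null‑homotopy of $\varphi(f)$. Since $X_{\mathbf j}[1]$ is concentrated in degrees $-2$ and $-1$, any $\Lambda$‑homomorphism $X_{\mathbf i}^{-1}\to X_{\mathbf j}^{0}$ represents a morphism $X_{\mathbf i}\to X_{\mathbf j}[1]$ in $\Kb(\proj\Lambda)$, and it represents the zero morphism precisely when it can be written as $h\circ d_{\mathbf i}+d_{\mathbf j}\circ h'$ with $h\in\Hom_\Lambda(X_{\mathbf i}^{0},X_{\mathbf j}^{0})$ and $h'\in\Hom_\Lambda(X_{\mathbf i}^{-1},X_{\mathbf j}^{-1})$; this is exactly the set‑up appearing, for a single matrix entry, in the proof of Claim\;\ref{rigidcdadclaim}. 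So I must produce such $h,h'$ for $\varphi(f)$. The map $f\mapsto[\varphi(f)]$ is $k$‑linear, and by Lemma\;\ref{pathcondition}(2) every $f$ has the form $(a+l)\,f_{i_{2t}}^{j_{2s+1}}$ with $a\in k$ and $l\in e_{j_{2s+1}}(\Rad\Lambda)e_{j_{2s+1}}$; the construction below applies to such an $f$ directly, the factor $a+l$ being inserted into the first component of $h$ and then pushed along the chain by repeated use of Lemma\;\ref{pathcondition}(1), which moves a radical factor from target to source along any path.

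Assume first that alternative (1) of Lemma\;\ref{rigidcondition} holds; alternative (2) is treated symmetrically, with the backward sequences $\mathbf t^{-}(t,s),\mathbf s^{-}(t,s)$ replacing $\mathbf t^{+}(t,s),\mathbf s^{+}(t,s)$. Exactly as in the reduction carried out in the proof of Lemma\;\ref{rigidcondition} via Claim\;\ref{rigidcdclaim1}, I first arrange $t_1=t_0+1$ and $s_1=s_0+1$. The sequences $t=t_0<t_1<\cdots$ and $s=s_0<s_1<\cdots$ now trace out a zigzag chain of indecomposable projectives, and the homotopy is built along it by setting, up to alternating signs,
\[
h_{i_{2t_r-1}}^{j_{2s_{r-1}+1}}=\pm f_{i_{2t_r-1}}^{j_{2s_{r-1}+1}},\qquad
h_{i_{2t_r}}^{j_{2s_r}}=\pm f_{i_{2t_r}}^{j_{2s_r}}\qquad(r\ge1),
\]
all other entries of $h,h'$ being zero; here $f_a^b$ is the shortest‑path homomorphism, which points in the required direction -- an ascent for the first family by (b), a descent for the second by (d) -- the first entry ($r=1$) carries the factor $a+l$ (and, recursively, each later entry a further scalar‑plus‑radical correction), the remaining inequalities (a) and (c) enter the verification of the off‑diagonal entries, and an entry is simply omitted whenever its source or target index is $0$ or $n+1$. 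Such omissions begin once $t_r$ exceeds $m_{\mathbf i}$ or $s_r$ exceeds $m_{\mathbf j}$, at which point the zigzag terminates.

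It then remains to verify that $h\circ d_{\mathbf i}+d_{\mathbf j}\circ h'$ has a single nonzero matrix entry, namely $f$ in position $(t_0,s_0)$. The entry in position $(p,q)$ is given by the identity
\[
(h\circ d_{\mathbf i}+d_{\mathbf j}\circ h')_{i_{2p}}^{j_{2q+1}}
=h_{i_{2p+1}}^{j_{2q+1}}\circ f_{i_{2p}}^{i_{2p+1}}
+h_{i_{2p-1}}^{j_{2q+1}}\circ f_{i_{2p}}^{i_{2p-1}}
+f_{j_{2q}}^{j_{2q+1}}\circ h_{i_{2p}}^{j_{2q}}
+f_{j_{2q+2}}^{j_{2q+1}}\circ h_{i_{2p}}^{j_{2q+2}},
\]
and the mechanism is Lemma\;\ref{pathcondition}. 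Whenever a relevant inequality in (a)--(d) is an equality, two consecutive shortest paths concatenate to the shortest path between their endpoints, so the corresponding composite equals $\pm f_a^b$ and these cancel in pairs over consecutive values of $r$ thanks to the alternating signs; whenever the inequality is strict, the concatenated path is a ``valley'' or ``peak'' that factors through a vertex lying outside the support of $X_{\mathbf j}^{-1}$ (respectively $X_{\mathbf i}^{0}$), so by Lemma\;\ref{pathcondition}(3) it is already absorbed by the adjacent term of the differential and contributes nothing new. The telescoping kills every entry except the one at $(t_0,s_0)$, where the one surviving term is $(a+l)\,f_{i_{2t_0+1}}^{j_{2s_0+1}}\circ f_{i_{2t_0}}^{i_{2t_0+1}}=(a+l)\,f_{i_{2t_0}}^{j_{2s_0+1}}=f$; here the hypothesis $0<i_{2t}<j_{2s+1}<n+1$ is used to guarantee $P_{i_{2t}},P_{j_{2s+1}}\ne0$ and $i_{2t}<i_{2t_0+1}\le j_{2s+1}$ (via (a) with $r=0$), so that the two shortest paths involved are genuine ascents.

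I expect the main obstacle to be this final verification: keeping track of the signs, handling the two ends of the zigzag where $t_r$ passes $m_{\mathbf i}$ or $s_r$ passes $m_{\mathbf j}$ (so that several of the conditions (a)--(d) become vacuous because the projectives involved vanish), checking that the recursive scalar‑plus‑radical correction really can be propagated along the chain without disturbing any cancellation, and carrying out beforehand the reduction to $t_1=t_0+1$, $s_1=s_0+1$ in the spirit of Claim\;\ref{rigidcdclaim1}. All of this is bookkeeping layered on top of Lemma\;\ref{pathcondition} and the defining relations of Condition\;\ref{nscd}.
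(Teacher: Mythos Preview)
Your overall strategy---construct an explicit null-homotopy $(h,h')$ for $\varphi(f)$---is the right one, and it is also what the paper does. However, there is a real gap in your execution: your homotopy only has entries at the ``corner'' indices $i_{2t_r-1}$ and $j_{2s_r}$, and this is not enough in general.

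Concretely, suppose condition (1) holds and $t_1>t_0+1$. After you subtract $h_{i_{2t_1-1}}^{j_{2s_0+1}}\circ f_{i_{2t_0}}^{i_{2t_1-1}}$ (your only $h$-entry touching column $j_{2s_0+1}$), the entry at position $(t_0,s_0)$ is \emph{not} killed: since $i_{2t_0}<i_{2t_0+1}<i_{2t_1-1}$, the shortest path $w_{i_{2t_0}}^{j_{2s_0+1}}$ does not pass through $i_{2t_1-1}$, so $f_{i_{2t_1-1}}^{j_{2s_0+1}}\circ f_{i_{2t_0}}^{i_{2t_1-1}}$ is a valley composite, not $f_{i_{2t_0}}^{j_{2s_0+1}}$. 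The paper's proof fixes this by constructing $h_{i_{2p+1}}^{j_{2s_0+1}}$ for \emph{every} $p\in\{t_0,\dots,t_1-1\}$: it uses (1)(a) to peel off $\varphi(f)$ at $(t_0,s_0)$ into $\varphi(f_1)$ at $(t_0+1,s_0)$, then repeats step by step until it reaches $(t_1,s_0)$, and only then switches to the $h'$ side, building $h'_{i_{2t_1}}^{j_{2q+2}}$ for every $q\in\{s_0,\dots,s_1-1\}$ to arrive at $(t_1,s_1)$. The argument is thus an iterative reduction $\varphi(f)\leadsto\varphi(g)$ from $(t_r,s_r)$ to $(t_{r+1},s_{r+1})$, with all intermediate positions visited, followed by a terminal-case analysis (Claim~\ref{rigidcdforxclaim}(3)--(5)).

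Your proposed shortcut---``first arrange $t_1=t_0+1$ and $s_1=s_0+1$ via Claim~\ref{rigidcdclaim1}''---does not transfer to this lemma. In Lemma~\ref{rigidcondition} that reduction replaces the starting pair $(t_0,s_0)$ by $(t_1-1,s_0)$ in order to deduce the combinatorial conditions; but here $(t_0,s_0)$ is part of the datum of $\varphi(f)$ itself, so changing it changes the morphism you are trying to kill. Even if one could justify replacing $(t_0,s_0)$, this would only normalise the first step, not force $t_{r+1}=t_r+1$ for all $r$, so your corner-only homotopy would still be incomplete. The cleanest fix is simply to enlarge your homotopy to include entries at all intermediate indices $i_{2p+1}$ (for $t_r\le p\le t_{r+1}-1$) and $j_{2q+2}$ (for $s_r\le q\le s_{r+1}-1$), defined recursively exactly as in the paper; then your telescoping verification goes through, with conditions (a)--(d) guaranteeing at each step that the relevant shortest path factors through the next index.
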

\begin{proof}
We first consider the case that the assertion (1) of Lemma\;\ref{rigidcondition} holds.
We note that $i_{2t+1}=i_{2t_0+1}\leq j_{2s_0+1}=j_{2s+1}\leq n$. In particular, we have
$t<m_{\mathbf{i}}$ and $s<m_{\mathbf{j}}$.
\begin{claim}
\label{rigidcdforxclaim}
We have the following.
\begin{enumerate}[{\rm (1)}]
\item $t_1\leq m_{\mathbf{i}}$ and $s_1\leq m_{\mathbf{j}}$.
\item If $t_1\leq m_{\mathbf{i}}$ and $s_1<m_{\mathbf{j}}$, then there exists 
$h_{i_{2p+1}}^{j_{2s+1}}:P_{i_{2p+1}}\to P_{j_{2s+1}}$ 
and $h_{i_{2t_1}}^{j_{2q+2}}:P_{i_{2t_1}}\to P_{j_{2q+1}}$ for any $p\in \{t,\dots, t_1-1\}$ and $q\in \{s,\dots,s_1-1\}$
such that 
\[\varphi(f)-h\circ d_{\mathbf{i}}-d_{\mathbf{j}}\circ h'=\varphi(g)\]
for some $g:P_{2t_1}\to P_{2s_1+1}$, where $h\in \Hom_{\Lambda}(X_{\mathbf{i}}^0,X_{\mathbf{j}}^0) $
and $h'\in \Hom_{\Lambda}(X_{\mathbf{i}}^{-1},X_{\mathbf{j}}^{-1})$ 
are morphisms given by $\{h_{i_{2p+1}}^{j_{2s+1}}\mid t\leq p\leq t_1-1\}$
and $\{h_{i_{2t_1}}^{j_{2q+2}}\mid s\leq q\leq s_1-1\}$ respectively.
\item If $i_{2t_1}=n+1$, then $\varphi(f)=0$ in $\Kb(\proj \Lambda)$.
\item If $s_1=m_{\mathbf{j}}$, then $\varphi(f)=0$ in $\Kb(\proj \Lambda)$.
\item If $t_1=m_{\mathbf{i}}$, then $s_1=m_{\mathbf{j}}$.
\end{enumerate}
\end{claim}
\begin{pfclaim}
We show (1). By the condition (1)-(b) of Lemma\;\ref{rigidcondition} 
(note that $t_1\leq m_{\mathbf{i}}+1$ by definition of $t_1$), we get that
\[i_{2t_1-1}\leq j_{2s+1}\leq n.\]
This implies that $t_1\leq m_{\mathbf{i}}.$ Then $s_1\leq m_{\mathbf{j}}$ follows from the definition of
$s_1$.
We prove (2) and (3). By Lemma\;\ref{pathcondition}, we can write
$f=l \circ f_{i_{2t}}^{j_{2s+1}}$ for some $l\in \End_{\Lambda}(P_{j_{2s+1}})$.  
By the condition (1)-(a) of Lemma\;\ref{rigidcondition}, we conclude that
  $w_{i_{2t}}^{j_{2s+1}}$ factors through $i_{2t_0+1}$.
 In particular, we obtain that
 \[\varphi(f)-h_1\circ d_{\mathbf{i}}=\varphi(f_1),\]
 for some $f_1:P_{i_{2t+2}}\to P_{j_{2s+1}}$,
 where $h_1$ is a morphism in $\Hom_{\Kb(\proj \Lambda)}(X_{\mathbf{i}}^0,X_{\mathbf{j}}^0)$ given by
 $h_{i_{2t+1}}^{j_{2s+1}}:=l\circ f_{i_{2t+1}}^{j_{2s+1}}$ and $f_1:P_{i_{2t+2}}\to P_{j_{2s+1}}$.
 Inductively, one can constructs $h_{i_{2p+1}}^{j_{2s+1}}:P_{i_{2p+1}}\to P_{j_{2s+1}}$
   for any
 $p\in \{t,t+1,\dots,t_1-2\}$ and $f':P_{i_{2t_1-2}}\to P_{j_{2s+1}}$ such that
 \[\varphi(t,s,f)-h'\circ d_{\mathbf{i}}=\varphi(f'),\]
 where $h'$ is a morphism in $\Hom_{\Kb(\proj \Lambda)}(X_{\mathbf{i}}^0,X_{\mathbf{j}}^0)$ given by
 $\{h_{i_{2p+1}}^{j_{2s+1}}\mid t\leq p\leq t_1-2\}$.
  By using the condition (1)-(b) of Lemma\;\ref{rigidcondition},  
 there are $h_{i_{2t_1-1}}^{j_{2s+1}}:P_{i_{2t_1-1}}\to P_{j_{2s+1}}$ and $f^{(1)}:P_{i_{2t_1}}\to P_{j_{2s+1}}$
 such that
 \[\varphi(f)-h\circ d_{\mathbf{i}}=\varphi(f^{(1)}),\]
 where $h$ is a morphism in $\Hom_{\Kb(\proj \Lambda)}(X_{\mathbf{i}}^0,X_{\mathbf{j}}^0)$ given by
 $\{h_{i_{2p+1}}^{j_{2s+1}}\mid t\leq p\leq t_1-1\}$. 
Thus we have  $\varphi(f)-h\circ d_{\mathbf{i}}=0$ in tha case that $i_{2t_1}=n+1$ and get the assertion (3).
Assume that $i_{2t_1}<n+1$.
 Lemma\;\ref{pathcondition} implies that there exists $l'\in \End_{\Lambda}(P_{i_{2t_1}})$
 such that $f^{(1)}=f_{i_{2t_1}}^{j_{2s+1}}\circ l'$. Now by the condition (1)-(c) of
  Lemma\;\ref{rigidcondition}, we obtain that
 $w_{i_{2t_1}}^{j_{2s+1}}$ factors through $j_{2s+2}$. Therefore, we conclude that
 \[\varphi(f^{(1)})-d_{\mathbf{j}}\circ h'_1=\varphi(f^{(1)}_1),\]
 for some $f^{(1)}_1:P_{i_{2t+2}}\to P_{j_{2s+1}}$,
 where $h'_1$ is a morphism in $\Hom_{\Kb(\proj \Lambda)}(X_{\mathbf{i}}^{-1},X_{\mathbf{j}}^{-1})$ given by
 $h_{i_{2t_1}}^{j_{2s+2}}:= f_{i_{2t_1}}^{j_{2s+2}}\circ l'$.
Inductively, (and by using the condition (1)-(d) of Lemma\;\ref{rigidcondition}),
we can construct $h_{i_{2t_1}}^{j_{2q+2}}:P_{i_{2t_1}}\to P_{j_{2q+2}}$ for any
 $q\in \{s,s+1,\dots,s_1-1\}$ and $g:P_{i_{2t_1}}\to P_{j_{2s+1}}$ such that
 \[\varphi(f^{(1)})-d_{\mathbf{j}}\circ h=\varphi(g),\]
 where $h'$ is a morphism in $\Hom_{\Kb(\proj \Lambda)}(X_{\mathbf{i}}^{-1},X_{\mathbf{j}}^{-1})$ given by
 $\{h_{i_{2t_1}}^{j_{2q+2}}\mid s\leq q\leq s_1-1\}$.
 
 
We prove (4). By the assertion (3), we may assume that $i_{2t_1}\leq n$.
Then $j_{2s_1}\leq i_{2t_1}\leq n$ follows from the condition (1)-(d) of Lemma\;\ref{rigidcondition}.
Therefore, for a morphisms $h\in \Hom_{\Lambda}(X_{\mathbf{i}}^0,X_{\mathbf{j}}^0) $
and $h'\in \Hom_{\Lambda}(X_{\mathbf{i}}^{-1},X_{\mathbf{j}}^{-1})$ constructed in the proof of (2), we have that
\[\varphi(f)-h\circ d_{\mathbf{i}}-d_{\mathbf{j}}\circ h'=0.\] 

Finally we show the assertion (5).  The condition (1)-(a) of Lemma\;\ref{rigidcondition}
implies that 
\[n+2=i_{2t_1+1}\leq j_{2s_1+1}. \]
Hence $s_1=m_{\mathbf{j}}$.
\end{pfclaim}
Therefore if $i_{2t}<j_{2s+1}$ and the assertion (1) of 
Lemma\;\ref{rigidcondition} hold, then $\varphi(f)=0$ follows from Claim\;\ref{rigidcdforxclaim}.

Next we assume that the condition (2) of Lemma\;\ref{rigidcondition} holds.
We note that $j_{2s}=j_{2s_0}\geq i_{2t_0}=i_{2t}\geq 1$. 
\begin{claim}
\label{rigidcdforxclaim2}
We have the following.
\begin{enumerate}[{\rm (1)}]
\item $s_{-1},t_{-1}\geq -1$.
\item $j_{2s_{-1}+2}>0(\Leftrightarrow P_{j_{2s_{-1}+2}}\neq 0).$
\item If $s_{-1}\geq 0$ and $t_{-1}\geq 0$, then there exists 
$h_{i_{2t}}^{j_{2q}}:P_{i_{2t}}\to P_{j_{2q}}$ 
and $h_{i_{2p+1}}^{j_{2s_{-1}+1}}:P_{i_{2p-1}}\to P_{j_{2s_{-1}+1}}$
 for any $q\in \{s,\dots,s_{-1}+1\}$ and $p\in \{t,\dots, t_{-1}+1\}$
such that 
\[\varphi(f)-d_{\mathbf{j}}\circ h'-h\circ d_{\mathbf{i}}=\varphi(g)\]
for some $g:P_{2t_{-1}}\to P_{2s_{-1}+1}$, where $h'\in \Hom_{\Lambda}(X_{\mathbf{i}}^{-1},X_{\mathbf{j}}^{-1})$ and 
$h\in \Hom_{\Lambda}(X_{\mathbf{i}}^0,X_{\mathbf{j}}^0) $
are morphisms given by $\{h_{i_{2t}}^{j_{2q}}\mid s\geq q\geq s_{-1}+1\}$
and $\{h_{i_{2p-1}}^{j_{2s_{-1}+1}}\mid t\geq p\geq t_{-1}+1\}$ respectively.
\item If $s_{-1}=-1$, then $\varphi(f)=0$ in $\Kb(\proj \Lambda)$.
\item If $t_{-1}=-1$, then $s_{-1}=-1$.
\end{enumerate}
\end{claim}
\begin{pfclaim}
The assertion (1) follows from the definitions of $s_{-1}$ and $t_{-1}$.
The condition (2)-(b') of Lemma\;\ref{rigidcondition} gives that $j_{2s_{-1}+2}\geq i_{2t_0}>0$. Hence we obtain the assertion (2).
The assetion (5) follows from (2)-(d') of Lemma\;\ref{rigidcondition}. In fact, if $t_{-1}=-1$, then
$-1=i_{2t_{-1}+1}\geq j_{2s_{-1}+1}$.
One can apply similar argument used in the proof of Claim\;\ref{rigidcdforxclaim} (2),(3)
and get the assertion (3) and (4). 
\end{pfclaim}
Then $\varphi(f)=0$ directly follows from Claim\;\ref{rigidcdforxclaim2}.
\end{proof}

By considering labeling-change $i\leftrightarrow n+1-i$ on $Q_0\sqcup \{0,n+1\}=\{0,1,\dots,n,n+1\}$, 
we also obtain the following. 
\begin{lemma}
\label{rigidcondition2}
Let $\mathbf{i},\mathbf{j}\in \Xi$.
For a pair $(t,s)$ such that $n+1>i_{2t}>j_{2s-1}>0$,
 we define two sequences $\mathbf{t}^{+}(t,s):=(t=t_{0}\geq t_{1}\geq t_{2}
 \geq \cdots)$ and 
$\mathbf{s}^{+}(t,s):=(s=s_{0}\geq s_{1}\geq  s_{2}\cdots)$
as follows$:$
\begin{itemize}
\item[{\rm (i)}] $t_r:=\left\{\begin{array}{cl}
\mathrm{min}\{t\geq -1\mid i_{2t+2}>j_{2s_{r-1}-1}\}& \mathrm{if\ } s_{r-1}\geq 1\\
-1 & \mathrm{if\ } s_{r-1}\leq 0, t_{r-1}\geq 0\\
-2 & \mathrm{if\ } t_{r-1}\leq -1\\
\end{array}\right..
 $
 \item[{\rm (ii)}] $s_r:=\left\{\begin{array}{cl}
\mathrm{min}\{s\geq 0 \mid j_{2s+1}>i_{2t_r}\}& \mathrm{if\ } t_{r}\geq 0\\
0 & \mathrm{if\ } t_{r}=-1, s_{r-1}\geq 1\\
-1 & \mathrm{if\ } s_{r-1}\leq 0\\
\end{array}\right..
 $
\end{itemize}
Also we define two sequences $\mathbf{t}^{-}(t,s):=(t=t_{0}\leq t_{-1}\leq t_{-2}\leq \cdots)$ and 
$\mathbf{s}^{-}(t,s):=(s=s_{0}\leq s_{-1}\leq s_{-2}\leq \cdots)$
as follows$:$
\begin{itemize}
\item[{\rm (iii)}] $s_r:=\left\{\begin{array}{cl}
\mathrm{max}\{s\leq m_{\mathbf{j}}+1 \mid j_{2s-3}<i_{2t_{r+1}}\}& \mathrm{if\ } t_{r+1}\leq m_{\mathbf{i}}\\
m_{\mathbf{j}}+1 & \mathrm{if\ } t_{r+1}=m_{\mathbf{i}}+1, s_{r+1}\leq m_{\mathrm{j}}\\
m_{\mathbf{j}}+2 & \mathrm{if\ } s_{r-1}\geq m_{\mathrm{j}}+1\\
\end{array}\right..
 $
\item[{\rm (iv)}] $t_r:=\left\{\begin{array}{cl}
\mathrm{max}\{t\leq m_{\mathbf{i}+1} \mid i_{2t-2}<j_{2s_r-1}\}& \mathrm{if\ } s_r\leq m_{\mathbf{j}}\\
m_{\mathbf{i}}+1 & \mathrm{if\ } s_r\geq m_{\mathbf{j}}+1, t_{r+1}\leq m_{\mathbf{i}}\\
m_{\mathbf{i}}+2 & \mathrm{if\ } t_{r+1}\geq m_{\mathbf{i}}+1\\
\end{array}\right..
$
\end{itemize}
If $\Hom_{\Kb(\proj \Lambda)}(X_{\mathbf{i}},X_{\mathbf{j}}[1])=0$, then one of the following holds.
\begin{itemize}
\item[(1)] We have {\rm (a),\ (b),\ (c)} and {\rm (d)}.
\begin{enumerate}[{\rm (a)}]
\item $i_{2t_{r}+2}>i_{2t_{r}+1}\geq
 j_{2s_{r-1}-1}$ for any $r\geq 1$ such that $t_r\geq -1$.
\item $j_{2s_{r-1}-1}>j_{2s_{r-1}-2}\geq
 i_{2t_r}$ for any $r\geq 1$ such that $s_r\geq 0$.
\item $j_{2s_{r}+1}>j_{2s_{r}}\geq
 i_{2t_r}$ for any $r\geq 1$ such that $s_r\geq 0$.
\item $i_{2t_r}>i_{2t_{r}-1}\geq
 j_{2s_{r}-1}$ for any $r\geq 0$ such that $t_r\geq 0$.
\end{enumerate} 
Where we put $i_{-1}=j_{-1}=-1$ and $i_{-2}=j_{-2}=-2$. 
\item[(2)] We have {\rm (a'),\ (b'),\ (c')} and {\rm (d')}.
\begin{enumerate}[{\rm (a')}]
\item $j_{2s_{r}-3}<j_{2s_{r}-2}\leq
 i_{2t_{r+1}}$ for any $r\leq -1$ such that $s_{r}\leq m_{\mathbf{j}}+1$.
\item $i_{2t_{r+1}}<i_{2t_{r+1}+1}\leq
 j_{2s_{r}-1}$ for any $r\leq -1$ such that $t_{r}\leq m_{\mathbf{i}}+1$.
\item $i_{2t_{r}-2}<i_{2t_{r}-1}\leq
 j_{2s_{r}-1}$ for any $r\leq -1$ such that $t_{r}\leq m_{\mathbf{i}}+1$.
\item $j_{2s_{r}-1}<j_{2s_{r}}\leq
 i_{2t_{r}}$ for any $r\leq -1$ such that $s_{r}\leq m_{\mathbf{j}}$.
\end{enumerate}
Where we put $i_{2m_{\mathbf{i}}+1}=j_{2m_{\mathbf{j}}+1}=n+2$ and $i_{2m_{\mathbf{i}}+2}=j_{2m_{\mathbf{j}}+2}=n+3$.
\end{itemize}
\end{lemma}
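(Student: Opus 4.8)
The plan is to reduce the statement to Lemma\;\ref{rigidcondition} by means of the order-reversing involution $\sigma$ of $\{0,1,\dots,n+1\}$ defined by $\sigma(i):=n+1-i$. Relabelling the vertices of $Q$ by $\sigma$ produces a quiver $Q'$ of the same shape, and if $I'\subset kQ'$ denotes the image of $I$ under the induced isomorphism $kQ\xrightarrow{\sim}kQ'$ and $\Lambda':=kQ'/I'$, then conditions (a), (b), (c) of Condition\;\ref{nscd} are manifestly invariant under relabelling; hence $\Lambda'$ again satisfies Condition\;\ref{nscd}, and all results of Subsection\;\ref{subsec:ifpart}, in particular Lemma\;\ref{rigidcondition}, hold for $\Lambda'$. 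The relabelling induces a triangle equivalence $F\colon\Kb(\proj\Lambda)\xrightarrow{\sim}\Kb(\proj\Lambda')$ sending $P_i$ to the indecomposable projective of $\Lambda'$ at $n+1-i$ and $f_i^j$ to $f_{n+1-i}^{n+1-j}$, since $\sigma$ carries the shortest path $w_i^j$ to $w_{n+1-i}^{n+1-j}$.

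Next I would carry the combinatorics across. For $\mathbf{i}=\{i_0<\dots<i_{2m}\}\in\Xi$ put $\mathbf{i}^{\vee}:=\{i^{\vee}_0<\dots<i^{\vee}_{2m}\}\in\Xi$ with $i^{\vee}_k:=n+1-i_{2m-k}$; thus $m_{\mathbf{i}^{\vee}}=m_{\mathbf{i}}$, the parity of an index is preserved, and the boundary conventions transform as $i^{\vee}_{-1}=n+1-i_{2m+1}=-1$ and $i^{\vee}_{2m+1}=n+1-i_{-1}=n+2$. Comparing the defining complexes term by term and arrow by arrow gives $F(X_{\mathbf{i}})\cong X_{\mathbf{i}^{\vee}}$, whence
\[\Hom_{\Kb(\proj\Lambda)}(X_{\mathbf{i}},X_{\mathbf{j}}[1])=0\iff\Hom_{\Kb(\proj\Lambda')}(X_{\mathbf{i}^{\vee}},X_{\mathbf{j}^{\vee}}[1])=0.\]
If $(t,s)$ satisfies $n+1>i_{2t}>j_{2s-1}>0$, then for $(t',s'):=(m_{\mathbf{i}}-t,\;m_{\mathbf{j}}-s)$ one has $i^{\vee}_{2t'}=n+1-i_{2t}$ and $j^{\vee}_{2s'+1}=n+1-j_{2s-1}$, so that $0<i^{\vee}_{2t'}<j^{\vee}_{2s'+1}<n+1$; hence $(t',s')$ is an admissible pair for Lemma\;\ref{rigidcondition} applied to $(\Lambda',\mathbf{i}^{\vee},\mathbf{j}^{\vee})$.

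Finally I would match the combinatorial conclusions. Let $\mathbf{t}^{\pm}(t',s'),\mathbf{s}^{\pm}(t',s')$ be the sequences built by rules (i)--(iv) of Lemma\;\ref{rigidcondition} for $(\mathbf{i}^{\vee},\mathbf{j}^{\vee})$. Substituting $a\mapsto m_{\mathbf{i}}-a$ for the terms of the $\mathbf{t}$-sequences, $b\mapsto m_{\mathbf{j}}-b$ for those of the $\mathbf{s}$-sequences, and $i^{\vee}_{2p}=n+1-i_{2(m_{\mathbf{i}}-p)}$, turns rules (i)--(iv) of Lemma\;\ref{rigidcondition} into rules (i)--(iv) of Lemma\;\ref{rigidcondition2} for $(\mathbf{i},\mathbf{j})$ with the pair $(t,s)$: here $\max$ and $\min$, and $\le$ and $\ge$, get exchanged because $\sigma$ reverses the order on $\{0,\dots,n+1\}$, so that an increasing sequence of Lemma\;\ref{rigidcondition} becomes the equally named decreasing sequence of Lemma\;\ref{rigidcondition2}, and the boundary cases ($t_r\le m_{\mathbf{i}}+1$, $s_r\ge -1$, etc.) correspond under $a\mapsto m_{\mathbf{i}}-a$. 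Under the same dictionary, conditions (1)(a)--(d) and (2)(a')--(d') of Lemma\;\ref{rigidcondition} for $(\mathbf{i}^{\vee},\mathbf{j}^{\vee})$ translate into conditions (1)(a)--(d) and (2)(a')--(d') of Lemma\;\ref{rigidcondition2} for $(\mathbf{i},\mathbf{j})$ (the four items of (1) among themselves, and those of (2) among themselves, being cyclically permuted). Therefore ``(1) or (2) holds'' for $(\mathbf{i}^{\vee},\mathbf{j}^{\vee})$ is equivalent to ``(1) or (2) holds'' for $(\mathbf{i},\mathbf{j})$, and the lemma follows from Lemma\;\ref{rigidcondition}. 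The main obstacle is precisely this last matching: it is a long but mechanical verification of the recursions and of every boundary convention, with no conceptual input beyond the relabelling symmetry.
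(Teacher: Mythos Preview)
Your proposal is correct and follows exactly the approach indicated in the paper: just before Lemma~\ref{rigidcondition2} the paper says only ``By considering labeling-change $i\leftrightarrow n+1-i$ on $Q_0\sqcup\{0,n+1\}$, we also obtain the following,'' and gives no further argument. Your write-up simply spells out this relabelling reduction to Lemma~\ref{rigidcondition} in detail (the dictionary $\mathbf{i}\mapsto\mathbf{i}^{\vee}$, $(t,s)\mapsto(m_{\mathbf{i}}-t,m_{\mathbf{j}}-s)$, and the verification that conditions (1)(a)--(d) and (2)(a')--(d') correspond, with the items permuted among themselves), so there is nothing to compare.
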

\begin{lemma}
\label{rigidcondition4} Let $\mathbf{i},\mathbf{j}\in \Xi$.
Let $(t,s)$ be a pair such that $n+1>i_{2t}>j_{2s-1}>0$.
If either (1) or (2) of Lemma\;\ref{rigidcondition2} holds, then $\varphi(f)=0$
 for any $f\in \Hom_{\Lambda}(P_{i_{2t}},P_{j_{2s-1}})$.
\end{lemma}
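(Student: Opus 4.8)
The plan is to obtain Lemma\;\ref{rigidcondition4} from Lemma\;\ref{rigidcondition3} by exactly the symmetry already invoked to pass from Lemma\;\ref{rigidcondition} to Lemma\;\ref{rigidcondition2}, namely the relabeling involution $\iota\colon a\mapsto n+1-a$ of the set $Q_0\sqcup\{0,n+1\}=\{0,1,\dots,n,n+1\}$.

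First I would observe that the class of algebras satisfying Condition\;\ref{nscd} is stable under $\iota$: the quiver $Q^{\circ}$ of type $A_n$ is invariant under reversing the linear order of its vertices, and conditions (b) and (c) of Condition\;\ref{nscd} are symmetric in the two endpoints of an arrow. Thus $\iota$ identifies $\Lambda$ with a relabeled copy of itself that again satisfies Condition\;\ref{nscd}, matching $P_a$ with $P_{n+1-a}$, matching an index set $\mathbf{i}=\{i_0<\cdots<i_{2m}\}\in\Xi$ with $\mathbf{i}^{\iota}:=\{\,n+1-i_{2m}<\cdots<n+1-i_0\,\}\in\Xi$ (so $i^{\iota}_k=n+1-i_{2m-k}$), matching the two-term complex $X_{\mathbf{i}}$ with $X_{\mathbf{i}^{\iota}}$, and carrying the vanishing $\Hom_{\Kb(\proj\Lambda)}(X_{\mathbf{i}},X_{\mathbf{j}}[1])=0$ to the analogous vanishing for $X_{\mathbf{i}^{\iota}},X_{\mathbf{j}^{\iota}}$. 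Moreover a pair $(t,s)$ with $n+1>i_{2t}>j_{2s-1}>0$ corresponds to the pair $(t',s'):=(m_{\mathbf{i}}-t,\,m_{\mathbf{j}}-s)$, which satisfies $0<i^{\iota}_{2t'}<j^{\iota}_{2s'+1}<n+1$, and $\Hom_{\Lambda}(P_{i_{2t}},P_{j_{2s-1}})$ is carried to $\Hom_{\Lambda}(P_{i^{\iota}_{2t'}},P_{j^{\iota}_{2s'+1}})$.

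Next I would check the (essentially mechanical) fact that, under the substitution $a\mapsto n+1-a$ together with the reindexing $t_r\mapsto m_{\mathbf{i}}-t_r$, $s_r\mapsto m_{\mathbf{j}}-s_r$, the recursively defined sequences $\mathbf{t}^{\pm}(t,s),\mathbf{s}^{\pm}(t,s)$ and the boundary conventions of Lemma\;\ref{rigidcondition2} go over term for term to those of Lemma\;\ref{rigidcondition} (each ``$\min$'' becoming a ``$\max$'' and each inequality reversing under $a\mapsto n+1-a$), so that conditions (1) and (2) of Lemma\;\ref{rigidcondition2} translate into conditions (1) and (2) of Lemma\;\ref{rigidcondition}, while the morphism $\varphi(\mathbf{i},\mathbf{j},f)$ translates into $\varphi(\mathbf{i}^{\iota},\mathbf{j}^{\iota},f^{\iota})$ for the corresponding $f^{\iota}\in\Hom_{\Lambda}(P_{i^{\iota}_{2t'}},P_{j^{\iota}_{2s'+1}})$. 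Granting this dictionary, Lemma\;\ref{rigidcondition3} applied to $(\mathbf{i}^{\iota},\mathbf{j}^{\iota},f^{\iota})$ gives $\varphi(f^{\iota})=0$ in $\Kb(\proj\Lambda)$, and transporting back along $\iota$ yields $\varphi(f)=0$, which is the assertion.

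The content of the proof is therefore bookkeeping, and the only place warranting genuine care — the main obstacle — is verifying that the superficially different defining clauses (i)--(iv) of the sequences in Lemma\;\ref{rigidcondition2}, including the degenerate cases governed by $t_r\geq m_{\mathbf{i}}$, $s_r\geq m_{\mathbf{j}}$, etc., really do match clause by clause with clauses (i)--(iv) of Lemma\;\ref{rigidcondition} after the relabeling; once the two sets of boundary conventions ($i_{-1}=j_{-1}=-1$, $i_{-2}=j_{-2}=-2$ on one side versus $i_{2m_{\mathbf{i}}+1}=j_{2m_{\mathbf{j}}+1}=n+2$, $i_{2m_{\mathbf{i}}+2}=j_{2m_{\mathbf{j}}+2}=n+3$ on the other) are identified correctly, every inequality in (a)--(d) of each case of Lemma\;\ref{rigidcondition2} maps to the corresponding one in (a)--(d) of the mirror case of Lemma\;\ref{rigidcondition}, and the proof closes.
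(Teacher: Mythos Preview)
Your proposal is correct and matches the paper's own approach: the paper states Lemma~\ref{rigidcondition4} (together with Lemma~\ref{rigidcondition2}) immediately after the sentence ``By considering labeling-change $i\leftrightarrow n+1-i$ on $Q_0\sqcup\{0,n+1\}$, we also obtain the following,'' giving no further argument. Your write-up simply unpacks this symmetry in more detail than the paper does.
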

By Lemma\;\ref{rigidcondition}, Lemma\;\ref{rigidcondition3}, Lemma\;\ref{rigidcondition2} and Lemma\;\ref{rigidcondition4},
we obtain a combinatorial description of $\Hom_{\Kb(\proj \Lambda)}(X_{\mathbf{i}},X_{\mathbf{j}}[1])=0$.
\begin{proposition}
\label{rigidconditionforx}
Let $\mathbf{i},\mathbf{j}\in Xi$. Then $\Hom_{\Kb(\proj \Lambda)}(X_{\mathbf{i}},X_{\mathbf{j}}[1])=0$ if and only if
the following two conditions hold.
\begin{enumerate}[{\rm (a)}]
\item For any pair $(t,s)$ with $0<i_{2t}<j_{2s+1}<n+1$, either (1) or (2) of Lemma\;\ref{rigidcondition} holds.
\item For any pair $(t,s)$ with $0>i_{2t}>j_{2s-1}>n+1$, either (1) or (2) of Lemma\;\ref{rigidcondition2} holds.
\end{enumerate}
\end{proposition}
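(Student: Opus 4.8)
The plan is to establish the two implications separately. The forward one is immediate: if $\Hom_{\Kb(\proj\Lambda)}(X_{\mathbf i},X_{\mathbf j}[1])=0$, then for every pair $(t,s)$ with $0<i_{2t}<j_{2s+1}<n+1$ Lemma~\ref{rigidcondition} shows that one of its alternatives (1), (2) holds, which is condition~(a); and Lemma~\ref{rigidcondition2}, obtained from Lemma~\ref{rigidcondition} by the relabelling $i\leftrightarrow n+1-i$, gives condition~(b) for every pair $(t,s)$ with $n+1>i_{2t}>j_{2s-1}>0$. So the substance of the statement is the converse.

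Assume now (a) and (b). Since $X_{\mathbf i}$ is concentrated in degrees $-1,0$ and $X_{\mathbf j}[1]$ in degrees $-2,-1$, a chain map $X_{\mathbf i}\to X_{\mathbf j}[1]$ is, with no further constraint, a single $\Lambda$-homomorphism $\varphi\colon X_{\mathbf i}^{-1}\to X_{\mathbf j}^{0}$, and its class in $\Kb(\proj\Lambda)$ vanishes precisely when $\varphi=h\circ d_{\mathbf i}+d_{\mathbf j}\circ h'$ for some $h\colon X_{\mathbf i}^{0}\to X_{\mathbf j}^{0}$ and $h'\colon X_{\mathbf i}^{-1}\to X_{\mathbf j}^{-1}$. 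Hence $\Hom_{\Kb(\proj\Lambda)}(X_{\mathbf i},X_{\mathbf j}[1])$ is spanned by the classes of the elementary morphisms $\varphi(f)$ attached to the components $f\in\Hom_{\Lambda}(P_{i_{2p}},P_{j_{2q+1}})$ of $\varphi$, so it suffices to prove that each such $\varphi(f)$ is zero in $\Kb(\proj\Lambda)$. Fix $(p,q)$ and $f$. If $i_{2p}\in\{0,n+1\}$ then $P_{i_{2p}}=0$ and $\varphi(f)=0$; otherwise $1\le i_{2p}\le n$, and $1\le j_{2q+1}\le n$ as $j_{2q+1}$ has an odd index. If $i_{2p}<j_{2q+1}$, the pair $(p,q)$ lies in the range of Lemma~\ref{rigidcondition}, hypothesis~(a) provides its alternative (1) or (2), and Lemma~\ref{rigidcondition3} gives $\varphi(f)=0$. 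If $i_{2p}>j_{2q+1}$ and $q\ge 1$, then $j_{2q-1}>0$, so $(p,q)$ lies in the range of Lemma~\ref{rigidcondition2}, hypothesis~(b) applies, and Lemma~\ref{rigidcondition4} gives $\varphi(f)=0$.

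There remain the boundary pairs not covered above: those with $q=0$ and $i_{2p}>j_{1}$, together with the ``diagonal'' pairs $i_{2p}=j_{2q+1}$. For these I would argue directly. Using Lemma~\ref{pathcondition} one writes $f=\ell\circ f_{i_{2p}}^{j_{2q+1}}$ with $\ell\in\End_{\Lambda}(P_{j_{2q+1}})$, notes that the shortest path $w_{i_{2p}}^{j_{2q+1}}$ must pass through one of the vertices of $\mathbf i$ or of $\mathbf j$ that is adjacent to $i_{2p}$ (resp.\ to $j_{2q+1}$) on the line $1,\dots,n$, and uses this factorisation — exactly the inductive step appearing inside the proofs of Lemmas~\ref{rigidcondition3} and~\ref{rigidcondition4} — to rewrite $\varphi(f)$, modulo a term $h\circ d_{\mathbf i}$ or $d_{\mathbf j}\circ h'$, as an elementary morphism sitting in a slot whose governing index has moved strictly towards $0$ or towards $n+1$; the procedure terminates at a pair already handled. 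The main obstacle I anticipate is exactly this boundary analysis, and above all excluding the possibility that a diagonal coincidence $i_{2p'}=j_{2q'+1}$ with $1\le i_{2p'}\le n$ persists: one must show such a coincidence is incompatible with (a) and (b), i.e.\ that it forces the alternative (1) or (2) of Lemma~\ref{rigidcondition} (or of Lemma~\ref{rigidcondition2}) to fail at some pair in its range, since otherwise the class of $\varphi(\mathrm{id}_{P_{j_{2q'+1}}})$ would be a nonzero element of $\Hom_{\Kb(\proj\Lambda)}(X_{\mathbf i},X_{\mathbf j}[1])$. Everything else comes down to the path identities supplied by Lemma~\ref{pathcondition}.
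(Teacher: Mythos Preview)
Your overall strategy is exactly what the paper has in mind: the forward direction is Lemmas~\ref{rigidcondition} and~\ref{rigidcondition2}, and for the converse one decomposes an arbitrary map $X_{\mathbf i}^{-1}\to X_{\mathbf j}^{0}$ into elementary pieces $\varphi(f)$ and kills each one using Lemmas~\ref{rigidcondition3} and~\ref{rigidcondition4}. Two things need fixing, one minor and one substantial.

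\textbf{Indexing for $i_{2p}>j_{2q+1}$.} In Lemma~\ref{rigidcondition4} the morphism goes from $P_{i_{2t}}$ to $P_{j_{2s-1}}$, so for your $f\colon P_{i_{2p}}\to P_{j_{2q+1}}$ you must take $(t,s)=(p,q+1)$, not $(p,q)$. Then $j_{2s-1}=j_{2q+1}\ge j_1\ge 1$ automatically, so the hypothesis $n+1>i_{2t}>j_{2s-1}>0$ holds for every $q\ge 0$. There is no separate $q=0$ boundary case.

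\textbf{The diagonal case.} Your instinct that $i_{2p}=j_{2q+1}$ is the real obstacle is exactly right, but your proposed resolution---show that (a) and (b) rule it out---is not going to succeed, because it is false. Take $n=2$, $\mathbf i=\{1\}$, $\mathbf j=\{0,1,2\}$. Then there is no pair $(t,s)$ with $0<i_{2t}<j_{2s+1}<3$ and none with $3>i_{2t}>j_{2s-1}>0$, so (a) and (b) are vacuous; yet $X_{\mathbf i}=[P_1\to 0]$, $X_{\mathbf j}=[P_2\to P_1]$, and $\varphi(\mathrm{id}_{P_1})$ is not null-homotopic, since any homotopy would have to factor the identity of $P_1$ through the radical map $f_2^1$. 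So as stated the ``if'' direction of the proposition is missing a hypothesis. The paper itself records the missing condition inside the proof of Lemma~\ref{rigidcondition}: ``$\Hom_{\Kb(\proj\Lambda)}(X_{\mathbf i},X_{\mathbf j}[1])=0$ implies that $i_{2p}\neq j_{2q+1}$ for any $p,q$.'' Once you add to (a)--(b) the clause that no $i_{2p}\in\{1,\dots,n\}$ coincides with any $j_{2q+1}$, the diagonal case disappears and your argument (with the index correction above) is complete. This amendment is harmless for the paper's applications, since what is actually used later is only that the vanishing condition is purely combinatorial and independent of $\Lambda$.
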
 
Let $\mathbb{P}_{\Lambda}:=\sttilt \Lambda\cap \add \bigoplus_{\mathbf{i}\in \Xi} X_{\mathbf{i}}$. 
\begin{lemma}
\label{casepreproj}
Let $\Lambda$ be the preprojective algebra of type $A_n$. Then 
$\mathbb{P}_{\Lambda}=\sttilt\Lambda$.
\end{lemma}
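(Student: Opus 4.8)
The plan is to pass through the bijection $\mathbf{S}\colon\sttilt\Lambda\xrightarrow{\sim}\tsilt\Lambda$ of Theorem~\ref{bijection} and reduce the statement to an identification of indecomposable two-term presilting complexes. Under $\mathbf{S}$, a basic support $\tau$-tilting pair $(M,P)$ is sent to a two-term silting complex whose indecomposable summands are the complexes $\mathbf{S}(N,0)$ with $N$ an indecomposable summand of $M$ (these have nonzero $0$th cohomology) together with the complexes $\mathbf{S}(0,P_i)=P_i[1]$ with $P_i$ an indecomposable summand of $P$. Since $P_i[1]=X_{\{i\}}$ with $\{i\}\in\Xi$ for every $1\le i\le n$, the lemma will follow once we show: for each indecomposable $\tau$-rigid $\Lambda$-module $N$ there is an $\mathbf{i}\in\Xi$ with $m_{\mathbf{i}}\ge 1$ such that $\mathbf{S}(N,0)\cong X_{\mathbf{i}}$ in $\Kb(\proj\Lambda)$; indeed this forces every indecomposable summand of every element of $\sttilt\Lambda$ to lie in $\add\bigoplus_{\mathbf{i}\in\Xi}X_{\mathbf{i}}$, whence $\mathbb{P}_\Lambda=\sttilt\Lambda$.

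To establish this identification I would build on the analysis in the proof of Lemma~\ref{trigidofpreproj}. By \cite{M} (see the proof of Lemma~\ref{trigidofpreproj}) every indecomposable $\tau$-rigid module lies in $\add I_w$ for some $w$, and $I_w=\bigoplus_{i=1}^{n}e_iI_w$, so $N\cong e_iI_w$ for suitable $i$ and $w$. That proof exhibits the Loewy structure of $e_iI_w$ as the ``polygon'' picture displayed there and attaches to it a sequence $\mathbf{i}=(i_0<i_1<\dots<i_{2m})\in\Xi$ with $m=m_{\mathbf{i}}\ge 1$. From the picture one reads off that $\Top(e_iI_w)=\bigoplus_{t}S_{i_{2t+1}}$ and that the first syzygy $\Omega(e_iI_w)$ has top $\bigoplus_{t}S_{i_{2t}}$, where by the convention $P_0=P_{n+1}=0$ the terms with $i_{2t}\in\{0,n+1\}$ are to be deleted. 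Consequently the minimal projective presentation of $e_iI_w$ is $\big[\bigoplus_{t}P_{i_{2t}}\xrightarrow{d}\bigoplus_{t}P_{i_{2t+1}}\big]$, which has exactly the two projective terms of $X_{\mathbf{i}}$; it remains to normalise the differential $d$.

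For the differential, Lemma~\ref{pathcondition}(2) says that any morphism $P_{i_{2t}}\to P_{i_{2s+1}}$ has the form $(a+l)w_{i_{2s+1}}^{i_{2t}}$ for a unique scalar $a$ and some radical element $l$. Minimality of the presentation, together with the ``polygon'' structure (which shows that in $e_iI_w$ only boundary-adjacent vertices of $\mathbf{i}$ interact), forces every component $P_{i_{2t}}\to P_{i_{2s+1}}$ with $s\notin\{t-1,t\}$ to vanish and the remaining components $P_{i_{2t}}\to P_{i_{2t\mp1}}$ to have nonzero leading scalar. Composing with suitable automorphisms of $\bigoplus_{t}P_{i_{2t}}$ and $\bigoplus_{t}P_{i_{2t+1}}$, and using Lemma~\ref{pathcondition}(1) to absorb the radical tails and rescale each component independently, one brings $d$ into the shape of $d_{\mathbf{i}}$, so $\mathbf{S}(e_iI_w,0)\cong X_{\mathbf{i}}$. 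Alternatively, once the tops of $e_iI_w$ and $\Omega(e_iI_w)$ are known one gets $g^{\mathbf{S}(e_iI_w,0)}=\sum_t[e_{i_{2t+1}}\Lambda]-\sum_t[e_{i_{2t}}\Lambda]=g^{X_{\mathbf{i}}}$, and since both complexes are two-term presilting, Theorem~\ref{gvector} yields the isomorphism directly, provided one has separately checked (e.g.\ via Proposition~\ref{rigidconditionforx}) that $X_{\mathbf{i}}$ is presilting over $\Pi_n$.

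The main obstacle is this last matching step: extracting the precise Loewy layers of $e_iI_w$ and of its syzygy from the polygon description so that the projective terms coincide with those of $X_{\mathbf{i}}$, and then showing that the differential is forced — up to an isomorphism of complexes — to be $d_{\mathbf{i}}$ rather than merely to be supported on the same projective summands. The $g$-vector route via Theorem~\ref{gvector} avoids the second difficulty but trades it for verifying presiltingness of each $X_{\mathbf{i}}$.
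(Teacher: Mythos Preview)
Your proposal is sound in outline but takes a harder route than the paper. The paper's proof is a pure counting argument and never identifies which $\mathbf{i}$ corresponds to which $e_iI_w$. It runs as follows: the indecomposable two-term presilting complexes are exactly the $\mathbf{S}(N,0)$ for $N$ indecomposable $\tau$-rigid together with the $n$ shifted projectives $P_i[1]$, so there are $\#\trigid\Lambda+n$ of them; Lemma~\ref{trigidofpreproj} gives $\#\trigid\Lambda+n\le\#\Xi$. On the other hand, by Lemma~\ref{indecomposability} and Proposition~\ref{rigidconditionforx} every $X_{\mathbf{i}}$ is an indecomposable two-term presilting complex, and the $X_{\mathbf{i}}$ have pairwise distinct $g$-vectors, hence are pairwise non-isomorphic by Theorem~\ref{gvector}. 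This produces $\#\Xi$ distinct indecomposable presilting complexes, so the two sets coincide and $\mathbb{P}_\Lambda=\sttilt\Lambda$.

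Your approach instead attempts an explicit matching $e_iI_w\mapsto X_{\mathbf{i}}$. The step you flag as the ``main obstacle'' --- normalising the differential of the minimal projective presentation to $d_{\mathbf{i}}$ --- is genuinely delicate: minimality alone does not force the non-adjacent components $P_{i_{2t}}\to P_{i_{2s+1}}$ with $s\notin\{t-1,t\}$ to vanish, so you would need a separate module-theoretic argument from the polygon shape to justify that. Your $g$-vector alternative is clean and does work, but once you invoke Proposition~\ref{rigidconditionforx} to get presiltingness of $X_{\mathbf{i}}$ and then Theorem~\ref{gvector}, you are essentially replaying the paper's counting argument one module at a time. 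The advantage of the paper's version is that it never computes the projective presentation of any $e_iI_w$: the upper bound from Lemma~\ref{trigidofpreproj} plus the lower bound from the $X_{\mathbf{i}}$'s finishes the proof in two lines.
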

 Note that the number of isomorphism classes of
  indecomposable 2-term presilting object of $\Kb(\proj \Lambda)$
is equal to $\#\trigid \Lambda+n$. Then Lemma\;\ref{trigidofpreproj} implies
 that
\[\#\trigid \Lambda+n\leq \#\{\mathbf{i}\in \Xi\mid m_{\mathbf{i}}>1\}+n=\#\Xi.\]
By Lemma\;\ref{indecomposability}, Proposition\;\ref{rigidconditionforx}, we have that
$X_{\mathbf{i}}$ is indecomposable 2-term silting object of $\Kb(\proj \Lambda)$ for any $\mathbf{i}\in \Xi$.
Hence $X$ is an indecomposable 2-term presilting object of $\Kb(\proj \Lambda)$ if and only if
$X$ is isomorphic to $X_{\mathbf{i}}$ for some $\mathbf{i}\in \Xi$. 
\begin{proposition}
\label{ifpart}
Assume that $\Lambda$ satisfies the Condition\;\ref{nscd}. Then we have
\[\sttilt \Lambda\simeq (\Sym_{n+1},\leq).\]
\end{proposition}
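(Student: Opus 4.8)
The plan is to reduce to the known case of the preprojective algebra $\Pi_n$ of type $A_n$, which by Example \ref{examplenscd}(1) also satisfies Condition \ref{nscd}, for which $\sttilt\Pi_n\simeq(\Sym_{n+1},\leq)$ by Theorem \ref{Mizunosresult}, and for which Lemma \ref{casepreproj} gives $\sttilt\Pi_n=\mathbb P_{\Pi_n}$, i.e.\ every support $\tau$-tilting module of $\Pi_n$ is a direct sum of the modules underlying the complexes $X_{\mathbf i}(\Pi_n)$, $\mathbf i\in\Xi$. The decisive point is that, by Proposition \ref{rigidconditionforx} (via Lemmas \ref{rigidcondition}--\ref{rigidcondition4}), the vanishing $\Hom_{\Kb(\proj\Lambda)}(X_{\mathbf i}(\Lambda),X_{\mathbf j}(\Lambda)[1])=0$ is equivalent to a condition on $\mathbf i,\mathbf j\in\Xi$ that does not otherwise involve $\Lambda$; since $\Pi_n$ also satisfies Condition \ref{nscd}, for all $\mathbf i,\mathbf j\in\Xi$
\[\Hom_{\Kb(\proj\Lambda)}(X_{\mathbf i}(\Lambda),X_{\mathbf j}(\Lambda)[1])=0\iff\Hom_{\Kb(\proj\Pi_n)}(X_{\mathbf i}(\Pi_n),X_{\mathbf j}(\Pi_n)[1])=0.\]
Since $X_{\mathbf i}(\Lambda)$ and $X_{\mathbf i}(\Pi_n)$ have the same projectives in each degree, $g^{X_{\mathbf i}(\Lambda)}=g^{X_{\mathbf i}(\Pi_n)}$, so by Theorem \ref{gvector} the $X_{\mathbf i}(\Lambda)$ are pairwise non-isomorphic; they are indecomposable by Lemma \ref{indecomposability}, and the case $\mathbf i=\mathbf j$ of the equivalence (with $X_{\mathbf i}(\Pi_n)$ presilting by Lemma \ref{casepreproj}) shows each $X_{\mathbf i}(\Lambda)$ is presilting.

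Next I would put $\mathcal T:=\{S\subseteq\Xi\mid\bigoplus_{\mathbf i\in S}X_{\mathbf i}(\Pi_n)\in\tsilt\Pi_n\}$, which by $\mathbf S$ (Theorem \ref{bijection}) and Lemma \ref{casepreproj} is, as a poset, $\sttilt\Pi_n\simeq(\Sym_{n+1},\leq)$, with $\#S=n$ for every $S\in\mathcal T$. For $S\in\mathcal T$ the complex $T_S(\Lambda):=\bigoplus_{\mathbf i\in S}X_{\mathbf i}(\Lambda)$ is two-term presilting by the displayed equivalence, and being presilting with $n=|\Lambda|$ indecomposable summands it is two-term silting (cf.\ \cite{AIR}); pulling back along $\mathbf S$ we get a map $\Phi\colon\mathcal T\to\sttilt\Lambda$. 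It is injective by the $g$-vector identity, and since for two-term complexes $T\geq T'$ holds iff $\Hom_{\Kb(\proj\Lambda)}(T,T'[1])=0$, which unwinds to the conjunction over $(\mathbf i,\mathbf j)\in S\times S'$ of the above combinatorial conditions (hence the same statement for $\Pi_n$), $\Phi$ preserves and reflects the order. Thus the image $\mathbb P_\Lambda$ of $\Phi$ is a subposet of $\sttilt\Lambda$ isomorphic to $(\Sym_{n+1},\leq)$; one also checks $\mathbb P_\Lambda$ equals $\sttilt\Lambda\cap\add\bigoplus_{\mathbf i\in\Xi}X_{\mathbf i}$.

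It remains to prove $\mathbb P_\Lambda=\sttilt\Lambda$. I would first show $\mathbb P_\Lambda$ is a \emph{full} subposet: for a covering $M'\lessdot M$ in $\mathbb P_\Lambda$, the corresponding covering in $\sttilt\Pi_n$ is a mutation, so the index sets satisfy $S'=(S\setminus\{\mathbf i_0\})\cup\{\mathbf j_0\}$, whence $T_S(\Lambda)$ and $T_{S'}(\Lambda)$ share the almost complete support $\tau$-tilting pair associated with $\bigoplus_{\mathbf i\in S\cap S'}X_{\mathbf i}(\Lambda)$; by Theorem \ref{basicprop}(1)--(2) this pair has exactly two completions and they are joined by a Hasse arrow, so $M'\lessdot M$ is a Hasse arrow of $\sttilt\Lambda$. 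Now $(\Sym_{n+1},\leq)$ and $\sttilt\Lambda$ are both $n$-regular (the latter by Theorem \ref{basicprop}(2)), so for each $M\in\mathbb P_\Lambda$ the inclusions $\dip_{\mathbb P_\Lambda}(M)\subseteq\dip_{\sttilt\Lambda}(M)$, $\dis_{\mathbb P_\Lambda}(M)\subseteq\dis_{\sttilt\Lambda}(M)$ together with $\#\dip_{\mathbb P_\Lambda}(M)+\#\dis_{\mathbb P_\Lambda}(M)=n=\#\dip_{\sttilt\Lambda}(M)+\#\dis_{\sttilt\Lambda}(M)$ force both to be equalities; hence no Hasse arrow of $\sttilt\Lambda$ leaves $\mathbb P_\Lambda$. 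Since $\H(\mathbb P_\Lambda)\cong\H((\Sym_{n+1},\leq))$ is finite and connected, $\mathbb P_\Lambda$ is a finite connected component of $\H(\sttilt\Lambda)$, and Theorem \ref{basicprop}(3) gives $\mathbb P_\Lambda=\sttilt\Lambda$. Therefore $\sttilt\Lambda\simeq(\Sym_{n+1},\leq)$.

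The main obstacle is the full-subposet step, i.e.\ guaranteeing that a covering relation read off from $\Pi_n$ stays a covering relation over $\Lambda$ and that consequently no Hasse arrow escapes $\mathbb P_\Lambda$; everything before it is bookkeeping with the combinatorial criterion of Proposition \ref{rigidconditionforx} and with $g$-vectors. The leverage for this step is the ``exactly two completions'' property of almost complete support $\tau$-tilting pairs (Theorem \ref{basicprop}(1)) combined with the $n$-regularity of the two posets.
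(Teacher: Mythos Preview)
Your proof is correct and follows essentially the same approach as the paper: transfer the combinatorial criterion of Proposition~\ref{rigidconditionforx} from $\Pi_n$ (where Lemma~\ref{casepreproj} and Theorem~\ref{Mizunosresult} are available) to $\Lambda$, conclude that $\mathbb P_\Lambda$ is a full subposet of $\sttilt\Lambda$ isomorphic to $(\Sym_{n+1},\leq)$, and then invoke $n$-regularity together with Theorem~\ref{basicprop}(3). The paper's proof is quite terse on the ``full subposet'' step; you have spelled out the mutation/almost-complete-pair argument (via Theorem~\ref{basicprop}(1)--(2)) that justifies it, which is exactly the right way to unpack that claim.
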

\begin{proof}
By Lemma\;\ref{indecomposability}, Proposition\;\ref{rigidconditionforx} and Lemma\;\ref{casepreproj}, we have that
$\mathbb{P}_{\Lambda}$ is a full subposet of $\sttilt \Lambda$ and 
\[\mathbb{P}_{\Lambda}\simeq (\Sym_{n+1},\leq).\]
This gives the assertion. In fact, $\mathbb{P}_{\Lambda}$ is $n$-regular and so finite connected component of $\sttilt \Lambda$.
Hence we have $\mathbb{P}_{\Lambda}=\sttilt \Lambda$.
\end{proof}

\subsection{`only if' part}
In this subsection, we prove that $\sttilt \Lambda\simeq (\Sym_{n+1},\leq)$ induces the Condition\;\ref{nscd}.
For a proof, we use an induction on $n$. In subsection\;\ref{subsec:n=2}, we treated the case $n=2$ 
and in subsection\;\ref{subsec:ifpart}, we showed  that $\sttilt \Lambda\simeq (\Sym_{n+1},\leq)$ if 
$\Lambda$ satisfies the Condition\;\ref{nscd}.
 Hence we may assume that Theorem\;\ref{mainresult} holds for any $\Lambda$ such that $\#Q_0<n$ and consider the case
that $\#Q_0=n$.

Let $\Lambda=kQ/I$ be an algebra such that
\[\sttilt \Lambda\simeq (\Sym_{n+1},\leq).\]
Let $\rho: (\Sym_{n+1},\leq)\stackrel{\sim}{\to} \sttilt \Lambda $
and denote by $T_w:=\rho (w)$. 
For any $a\in Q_0$, we set $X_a:=e_a\Lambda/e_a\Lambda (1-e_a)\Lambda$.
Then $\dip(0)=\{X_a\mid a\in Q_0\}=\{T_{s_i}\mid 1\leq i\leq n\}$. Therefore we may assume that
$Q_{0}=\{1,\dots,n\}$ and $T_{s_i}=X_i$.
\begin{lemma}
\label{determiningarrow}
For $i\neq j\in Q_0$, we set $X_{i,j}:=X_i\bigvee X_j$.
\begin{enumerate}[{\rm (1)}]
\item $[0, X_{i,j}]$
has one of the following form.  
\[
{\unitlength 0.1in%
\begin{picture}( 34.8000, 14.3000)(  9.6000,-22.7000)%
\put(18.0000,-20.0000){\makebox(0,0)[lb]{$0$}}%
\put(14.0000,-16.0000){\makebox(0,0)[lb]{$X_{i}$}}%
\put(21.4000,-16.1000){\makebox(0,0)[lb]{$X_{j}$}}%
\put(17.6000,-11.7000){\makebox(0,0)[lb]{$X_{i,j}$}}%
%
\special{pn 8}%
\special{pa 1750 1200}%
\special{pa 1470 1430}%
\special{fp}%
\special{sh 1}%
\special{pa 1470 1430}%
\special{pa 1534 1403}%
\special{pa 1511 1396}%
\special{pa 1509 1372}%
\special{pa 1470 1430}%
\special{fp}%
%
\special{pn 8}%
\special{pa 1470 1630}%
\special{pa 1750 1860}%
\special{fp}%
\special{sh 1}%
\special{pa 1750 1860}%
\special{pa 1711 1802}%
\special{pa 1709 1826}%
\special{pa 1686 1833}%
\special{pa 1750 1860}%
\special{fp}%
%
\special{pn 8}%
\special{pa 1880 1190}%
\special{pa 2160 1430}%
\special{fp}%
\special{sh 1}%
\special{pa 2160 1430}%
\special{pa 2122 1371}%
\special{pa 2120 1395}%
\special{pa 2096 1402}%
\special{pa 2160 1430}%
\special{fp}%
%
\special{pn 8}%
\special{pa 2170 1630}%
\special{pa 1920 1850}%
\special{fp}%
\special{sh 1}%
\special{pa 1920 1850}%
\special{pa 1983 1821}%
\special{pa 1960 1815}%
\special{pa 1957 1791}%
\special{pa 1920 1850}%
\special{fp}%
\put(9.6000,-15.9000){\makebox(0,0)[lb]{$\mathrm{(i)}$}}%
\put(40.0000,-24.0000){\makebox(0,0)[lb]{$0$}}%
\put(36.0000,-20.0000){\makebox(0,0)[lb]{$X_{i}$}}%
\put(43.4000,-20.1000){\makebox(0,0)[lb]{$X_{j}$}}%
\put(39.1000,-9.7000){\makebox(0,0)[lb]{$X_{i,j}$}}%
%
\special{pn 8}%
\special{pa 3950 1000}%
\special{pa 3670 1230}%
\special{fp}%
\special{sh 1}%
\special{pa 3670 1230}%
\special{pa 3734 1203}%
\special{pa 3711 1196}%
\special{pa 3709 1172}%
\special{pa 3670 1230}%
\special{fp}%
%
\special{pn 8}%
\special{pa 3670 2030}%
\special{pa 3950 2260}%
\special{fp}%
\special{sh 1}%
\special{pa 3950 2260}%
\special{pa 3911 2202}%
\special{pa 3909 2226}%
\special{pa 3886 2233}%
\special{pa 3950 2260}%
\special{fp}%
%
\special{pn 8}%
\special{pa 4080 990}%
\special{pa 4360 1230}%
\special{fp}%
\special{sh 1}%
\special{pa 4360 1230}%
\special{pa 4322 1171}%
\special{pa 4320 1195}%
\special{pa 4296 1202}%
\special{pa 4360 1230}%
\special{fp}%
%
\special{pn 8}%
\special{pa 4370 2030}%
\special{pa 4120 2250}%
\special{fp}%
\special{sh 1}%
\special{pa 4120 2250}%
\special{pa 4183 2221}%
\special{pa 4160 2215}%
\special{pa 4157 2191}%
\special{pa 4120 2250}%
\special{fp}%
\put(31.6000,-15.9000){\makebox(0,0)[lb]{$\mathrm{(ii)}$}}%
%
%
\special{pn 8}%
\special{ar 3650 1320 50 50  0.0000000  6.2831853}%
%
%
\special{pn 8}%
\special{ar 4390 1310 50 50  0.0000000  6.2831853}%
%
\special{pn 8}%
\special{pa 3640 1420}%
\special{pa 3640 1820}%
\special{fp}%
\special{sh 1}%
\special{pa 3640 1820}%
\special{pa 3660 1753}%
\special{pa 3640 1767}%
\special{pa 3620 1753}%
\special{pa 3640 1820}%
\special{fp}%
%
\special{pn 8}%
\special{pa 4410 1430}%
\special{pa 4410 1830}%
\special{fp}%
\special{sh 1}%
\special{pa 4410 1830}%
\special{pa 4430 1763}%
\special{pa 4410 1777}%
\special{pa 4390 1763}%
\special{pa 4410 1830}%
\special{fp}%
\end{picture}}%
\vspace{5pt}\]
\item In the case of {\rm (i)}, there is no edge between $i$ and $j$ in $Q$.
\item In the case of {\rm (ii)}, then we have 
\[
{\unitlength 0.1in%
\begin{picture}(  3.5000,  1.2700)(  4.0000, -3.9700)%
\put(4.0000,-4.0000){\makebox(0,0)[lb]{$i$}}%
%
\special{pn 8}%
\special{pa 540 320}%
\special{pa 740 320}%
\special{fp}%
\special{sh 1}%
\special{pa 740 320}%
\special{pa 673 300}%
\special{pa 687 320}%
\special{pa 673 340}%
\special{pa 740 320}%
\special{fp}%
%
\special{pn 8}%
\special{pa 710 370}%
\special{pa 510 370}%
\special{fp}%
\special{sh 1}%
\special{pa 510 370}%
\special{pa 577 390}%
\special{pa 563 370}%
\special{pa 577 350}%
\special{pa 510 370}%
\special{fp}%
\put(7.5000,-4.1000){\makebox(0,0)[lb]{$j$}}%
\end{picture}}%
\vspace{5pt}\]
\item $Q^{\circ}$ is a double quiver of type $A_n$, i.e. 
\[
{\unitlength 0.1in%
\begin{picture}( 20.5000,  1.3400)( 18.0000, -7.5400)%
\put(22.2000,-7.5000){\makebox(0,0)[lb]{$1$}}%
%
\special{pn 8}%
\special{pa 2360 670}%
\special{pa 2560 670}%
\special{fp}%
\special{sh 1}%
\special{pa 2560 670}%
\special{pa 2493 650}%
\special{pa 2507 670}%
\special{pa 2493 690}%
\special{pa 2560 670}%
\special{fp}%
%
\special{pn 8}%
\special{pa 2530 720}%
\special{pa 2330 720}%
\special{fp}%
\special{sh 1}%
\special{pa 2330 720}%
\special{pa 2397 740}%
\special{pa 2383 720}%
\special{pa 2397 700}%
\special{pa 2330 720}%
\special{fp}%
\put(25.7000,-7.6000){\makebox(0,0)[lb]{$2$}}%
%
\special{pn 8}%
\special{pa 2720 670}%
\special{pa 2920 670}%
\special{fp}%
\special{sh 1}%
\special{pa 2920 670}%
\special{pa 2853 650}%
\special{pa 2867 670}%
\special{pa 2853 690}%
\special{pa 2920 670}%
\special{fp}%
%
\special{pn 8}%
\special{pa 2890 720}%
\special{pa 2690 720}%
\special{fp}%
\special{sh 1}%
\special{pa 2690 720}%
\special{pa 2757 740}%
\special{pa 2743 720}%
\special{pa 2757 700}%
\special{pa 2690 720}%
\special{fp}%
\put(29.3000,-7.6000){\makebox(0,0)[lb]{$3$}}%
%
\special{pn 8}%
\special{pa 3080 677}%
\special{pa 3280 677}%
\special{fp}%
\special{sh 1}%
\special{pa 3280 677}%
\special{pa 3213 657}%
\special{pa 3227 677}%
\special{pa 3213 697}%
\special{pa 3280 677}%
\special{fp}%
%
\special{pn 8}%
\special{pa 3250 727}%
\special{pa 3050 727}%
\special{fp}%
\special{sh 1}%
\special{pa 3050 727}%
\special{pa 3117 747}%
\special{pa 3103 727}%
\special{pa 3117 707}%
\special{pa 3050 727}%
\special{fp}%
%
\special{pn 8}%
\special{pa 3630 677}%
\special{pa 3830 677}%
\special{fp}%
\special{sh 1}%
\special{pa 3830 677}%
\special{pa 3763 657}%
\special{pa 3777 677}%
\special{pa 3763 697}%
\special{pa 3830 677}%
\special{fp}%
%
\special{pn 8}%
\special{pa 3800 727}%
\special{pa 3600 727}%
\special{fp}%
\special{sh 1}%
\special{pa 3600 727}%
\special{pa 3667 747}%
\special{pa 3653 727}%
\special{pa 3667 707}%
\special{pa 3600 727}%
\special{fp}%
%
\special{pn 4}%
\special{sh 1}%
\special{ar 3320 700 8 8 0  6.28318530717959E+0000}%
\special{sh 1}%
\special{ar 3520 700 8 8 0  6.28318530717959E+0000}%
%
\special{pn 4}%
\special{sh 1}%
\special{ar 3420 700 8 8 0  6.28318530717959E+0000}%
\put(38.5000,-7.6000){\makebox(0,0)[lb]{$n$}}%
\put(18.0000,-7.8000){\makebox(0,0)[lb]{$Q^{\circ}=$}}%
\end{picture}}%
\]
\end{enumerate}
\end{lemma}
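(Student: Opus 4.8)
The plan is, for each pair $i\ne j\in Q_0$, to identify the interval $[0,X_{i,j}]$ of $\sttilt\Lambda$ with $\sttilt\Lambda_{i,j}$, where $\Lambda_{i,j}:=\Lambda/(e')$ and $e':=\sum_{a\in Q_0\setminus\{i,j\}}e_a$, and then to read off the arrows of $Q$ between $i$ and $j$ from the $n=2$ analysis of Subsection~\ref{subsec:n=2}.

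Part (1) is combinatorial. Since $\rho$ is a poset isomorphism and $X_a=T_{s_a}$, we have $\rho^{-1}(X_{i,j})=\rho^{-1}(X_i\vee X_j)=s_i\vee s_j=w_0(\{i,j\})$ by Proposition~\ref{fresultsym}(1), hence $[0,X_{i,j}]=\rho([1,w_0(\{i,j\})])=\rho(\langle s_i,s_j\rangle)$ by Proposition~\ref{fresultsym}(2). As a poset, $\langle s_i,s_j\rangle$ with the weak order is $\mathbb{Z}/2\times\mathbb{Z}/2$, i.e.\ the square of case~(i), when $s_is_j=s_js_i$ (that is, $|i-j|\ge 2$), and is the hexagon $(\Sym_3,\le)$ of case~(ii) when $|i-j|=1$; this proves (1) and shows that case~(i) occurs exactly for $|i-j|\ge2$ and case~(ii) exactly for $|i-j|=1$.

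The main step is the identification $[0,X_{i,j}]\simeq\sttilt\Lambda_{i,j}$. Put $\mathcal{I}:=\{M\in\sttilt\Lambda\mid\supp M\subseteq\{i,j\}\}$. Since $M\ge M'$ forces $\supp M\supseteq\supp M'$, $\mathcal{I}$ is an order ideal of the finite poset $\sttilt\Lambda$; by Theorem~\ref{reduction theorem} it is isomorphic to $\sttilt\Lambda_{i,j}$, which is $2$-regular and (being finite) connected by Theorem~\ref{basicprop}(2),(3), with maximum the module $\Lambda_{i,j}$. Thus $\mathcal{I}=[0,M_0]$ where $M_0\in\sttilt\Lambda$ corresponds to $\Lambda_{i,j}$. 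From $X_i,X_j\in\mathcal{I}$ we get $X_{i,j}=X_i\vee X_j\le M_0$, so $[0,X_{i,j}]\subseteq[0,M_0]=\mathcal{I}$; both being intervals, they are full subposets. In $\H([0,X_{i,j}])$ the maximum $X_{i,j}$ has exactly two Hasse-neighbours, both below it, and they remain Hasse-neighbours in $\H(\mathcal{I})$. If $X_{i,j}<M_0$, the first step of a maximal chain from $X_{i,j}$ to $M_0$ yields a third Hasse-neighbour of $X_{i,j}$ in $\H(\mathcal{I})$, contradicting $2$-regularity; hence $X_{i,j}=M_0$ and $[0,X_{i,j}]=\mathcal{I}\simeq\sttilt\Lambda_{i,j}$ (in particular $\supp X_{i,j}=\{i,j\}$).

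Finally, for $a,b\in\{i,j\}$ the arrows $a\to b$ of the quiver of $\Lambda_{i,j}$ are exactly those of $Q$: the ideal $(e')$ is spanned in positive degrees by paths through $Q_0\setminus\{i,j\}$, so an arrow between $i$ and $j$ lies neither in $(e')$ nor in $\Rad^2\Lambda$, and no new short path between $i$ and $j$ is created. Now invoke Subsection~\ref{subsec:n=2}. If $[0,X_{i,j}]$ is the square, then $\#\sttilt\Lambda_{i,j}=4$; were there an arrow between $i$ and $j$, say $\alpha\colon i\to j$, then $j\in\supp P_i$ gives $X_i\notin\{P_i,P_j\}$, while by Theorem~\ref{basicprop}(1) the completion of the almost complete support $\tau$-tilting pair $(X_i,0)$ other than $(X_i,P_j)$ is of the form $(X_i\oplus Y,0)$ with $Y$ indecomposable (it is not $(X_i,P_i)$ since $\Hom(P_i,X_i)=X_ie_i\ne0$), producing a fifth element $X_i\oplus Y\notin\{0,X_i,X_j,\Lambda_{i,j}\}$ of $\sttilt\Lambda_{i,j}$ --- a contradiction; so there is no edge between $i$ and $j$ in $Q$, which is (2). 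If $[0,X_{i,j}]$ is the hexagon, then $\sttilt\Lambda_{i,j}\simeq(\Sym_3,\le)$; there is an arrow between $i$ and $j$, since otherwise $\Lambda_{i,j}$ is a product of two local algebras and $\#\sttilt\Lambda_{i,j}=4$; say $i\to j$, and then Proposition~\ref{mr} forces exactly one arrow in each direction between $i$ and $j$, which is (3). Combining (2) and (3) with the dichotomy established in (1), the edges of $Q^{\circ}$ are precisely the pairs $\{a,a+1\}$, each carrying one arrow in each direction, i.e.\ $Q^{\circ}$ is the double quiver of $A_n$ with linear orientation; this is (4). The delicate point is the main step, since a priori $\mathcal{I}$ could be strictly larger than $[0,X_{i,j}]$, and it is the $2$-regularity of $\sttilt\Lambda_{i,j}$ that rules this out.
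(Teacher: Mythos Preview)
Your proof is correct and follows the same overall route as the paper: transport the interval $[0,X_{i,j}]$ via $\rho$ to $[1,s_i\vee s_j]=\langle s_i,s_j\rangle$, identify $[0,X_{i,j}]$ with $\sttilt\Lambda_{i,j}$ using $2$-regularity, and then invoke the $n=2$ analysis (Proposition~\ref{mr}). The paper phrases the identification as ``$[0,X_{i,j}]$ is a $2$-regular full subposet, hence a finite connected component of $\sttilt_{(1-e_i-e_j)\Lambda^-}\Lambda$'', while you argue dually that $X_{i,j}$ already exhausts the two Hasse-neighbours allowed by $2$-regularity of $\sttilt\Lambda_{i,j}$; these are two sides of the same coin.

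The one genuine variation is in part~(2). The paper observes that in the square case $X_i,X_j$ are the direct successors of $\Lambda_{i,j}$ in $\sttilt\Lambda_{i,j}$ and hence must be the indecomposable projectives of $\Lambda_{i,j}$, which immediately gives $e_i\Lambda_{i,j}e_j=0$. You instead produce a fifth support $\tau$-tilting module by completing the almost complete pair $(X_i,0)$. Both arguments are short and valid; the paper's is slightly more structural, yours slightly more elementary. Your explicit verification that the arrows between $i$ and $j$ in the quiver of $\Lambda_{i,j}$ coincide with those of $Q$ (via $e_i(\Lambda e'\Lambda)e_j\subseteq e_i\Rad^2\Lambda\,e_j$) is a point the paper leaves implicit.
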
 
\begin{proof}
Note that $[1,s_i\vee s_j]$ has following form.
\[
{\unitlength 0.1in%
\begin{picture}( 44.3000, 14.4000)(  6.6000,-22.7000)%
\put(14.0000,-15.9000){\makebox(0,0)[lb]{$s_i$}}%
\put(21.4000,-15.9000){\makebox(0,0)[lb]{$s_j$}}%
\put(16.5000,-11.6000){\makebox(0,0)[lb]{$s_is_j=s_js_i$}}%
%
\special{pn 8}%
\special{pa 1750 1200}%
\special{pa 1470 1430}%
\special{fp}%
\special{sh 1}%
\special{pa 1470 1430}%
\special{pa 1534 1403}%
\special{pa 1511 1396}%
\special{pa 1509 1372}%
\special{pa 1470 1430}%
\special{fp}%
%
\special{pn 8}%
\special{pa 1470 1630}%
\special{pa 1750 1860}%
\special{fp}%
\special{sh 1}%
\special{pa 1750 1860}%
\special{pa 1711 1802}%
\special{pa 1709 1826}%
\special{pa 1686 1833}%
\special{pa 1750 1860}%
\special{fp}%
%
\special{pn 8}%
\special{pa 1880 1190}%
\special{pa 2160 1430}%
\special{fp}%
\special{sh 1}%
\special{pa 2160 1430}%
\special{pa 2122 1371}%
\special{pa 2120 1395}%
\special{pa 2096 1402}%
\special{pa 2160 1430}%
\special{fp}%
%
\special{pn 8}%
\special{pa 2170 1630}%
\special{pa 1920 1850}%
\special{fp}%
\special{sh 1}%
\special{pa 1920 1850}%
\special{pa 1983 1821}%
\special{pa 1960 1815}%
\special{pa 1957 1791}%
\special{pa 1920 1850}%
\special{fp}%
\put(43.9000,-24.0000){\makebox(0,0)[lb]{$1$}}%
\put(39.9000,-20.0000){\makebox(0,0)[lb]{$s_i$}}%
\put(47.3000,-20.1000){\makebox(0,0)[lb]{$s_j$}}%
\put(42.3000,-9.6000){\makebox(0,0)[lb]{$s_is_js_i=s_js_is_j$}}%
%
\special{pn 8}%
\special{pa 4340 1000}%
\special{pa 4060 1230}%
\special{fp}%
\special{sh 1}%
\special{pa 4060 1230}%
\special{pa 4124 1203}%
\special{pa 4101 1196}%
\special{pa 4099 1172}%
\special{pa 4060 1230}%
\special{fp}%
%
\special{pn 8}%
\special{pa 4060 2030}%
\special{pa 4340 2260}%
\special{fp}%
\special{sh 1}%
\special{pa 4340 2260}%
\special{pa 4301 2202}%
\special{pa 4299 2226}%
\special{pa 4276 2233}%
\special{pa 4340 2260}%
\special{fp}%
%
\special{pn 8}%
\special{pa 4470 990}%
\special{pa 4750 1230}%
\special{fp}%
\special{sh 1}%
\special{pa 4750 1230}%
\special{pa 4712 1171}%
\special{pa 4710 1195}%
\special{pa 4686 1202}%
\special{pa 4750 1230}%
\special{fp}%
%
\special{pn 8}%
\special{pa 4760 2030}%
\special{pa 4510 2250}%
\special{fp}%
\special{sh 1}%
\special{pa 4510 2250}%
\special{pa 4573 2221}%
\special{pa 4550 2215}%
\special{pa 4547 2191}%
\special{pa 4510 2250}%
\special{fp}%
%
%
%
\special{pn 8}%
\special{pa 4030 1420}%
\special{pa 4030 1820}%
\special{fp}%
\special{sh 1}%
\special{pa 4030 1820}%
\special{pa 4050 1753}%
\special{pa 4030 1767}%
\special{pa 4010 1753}%
\special{pa 4030 1820}%
\special{fp}%
%
\special{pn 8}%
\special{pa 4800 1430}%
\special{pa 4800 1830}%
\special{fp}%
\special{sh 1}%
\special{pa 4800 1830}%
\special{pa 4820 1763}%
\special{pa 4800 1777}%
\special{pa 4780 1763}%
\special{pa 4800 1830}%
\special{fp}%
\put(17.9000,-19.2000){\makebox(0,0)[lb]{$1$}}%
\put(39.1000,-13.8000){\makebox(0,0)[lb]{$s_js_i$}}%
\put(46.6000,-13.8000){\makebox(0,0)[lb]{$s_is_j$}}%
\put(25.3000,-16.0000){\makebox(0,0)[lb]{$(|i-j|>1)$}}%
\put(50.9000,-16.4000){\makebox(0,0)[lb]{$(|i-j|=1)$}}%
\put(6.6000,-15.8000){\makebox(0,0)[lb]{$(\ast)$}}%
\end{picture}}%
\vspace{5pt}\]
Accordingly, we have (1).

We prove remaining assertions. If $[0,X_{i,j}]$ has form (i), then $X_i$ and $X_j$ are projective  $\Lambda/(1-e_i-e_j)$-modules. Therefore,
we obtain the assertion (2). 
Let $M$ be a maximum element of $\sttilt_{(1-e_i-e_j)\Lambda^-} \Lambda$.
Then $ X_i,X_j\leq M$ implies that $X_{i,j}\leq M$. In particular, $[0,X_{i,j}]$ is a full subposet
of $\sttilt_{(1-e_i-e_j)\Lambda^-} \Lambda\simeq \sttilt \Lambda/(1-e_i-e_j)$. Since $[0,X_{i,j}]$ is 2-regular and so finite connected component
of $\sttilt_{(1-e_i-e_j)\Lambda^-} \Lambda$. 
This implies that \[[0,X_{i,j}]=\sttilt_{(1-e_i-e_j)\Lambda^-} \Lambda\simeq \sttilt \Lambda/(1-e_i-e_j).\]
Then  the assertion (3) follows from Proposition\;\ref{mr}. 
The assertion (4) is a direct consequence of (1), (2), (3) and ($*$).
\end{proof}

\begin{lemma}
\label{localstroflambda}
Let $Y_i\in \dis(\Lambda)$ such that $P_i\not\in \add Y_i$.
We define $\sigma\in \Sym_{n} $ as follows:
\[T_{s_i w_0}= Y_{\sigma(i)}.\]
\begin{enumerate}[{\rm (1)}]
\item Let $i\neq j\in Q_0$ and $Y_{i,j}:=Y_i\wedge Y_j$. Then $[Y_{i,j},\Lambda]$
has one of the following form.  
\[
{\unitlength 0.1in%
\begin{picture}( 35.0000, 14.7000)(  9.6000,-23.1000)%
\put(17.6000,-20.2000){\makebox(0,0)[lb]{$Y_{i,j}$}}%
\put(14.0000,-16.0000){\makebox(0,0)[lb]{$Y_{i}$}}%
\put(21.5000,-16.1000){\makebox(0,0)[lb]{$Y_{j}$}}%
\put(17.6000,-11.5000){\makebox(0,0)[lb]{$\Lambda$}}%
%
\special{pn 8}%
\special{pa 1750 1200}%
\special{pa 1470 1430}%
\special{fp}%
\special{sh 1}%
\special{pa 1470 1430}%
\special{pa 1534 1403}%
\special{pa 1511 1396}%
\special{pa 1509 1372}%
\special{pa 1470 1430}%
\special{fp}%
%
\special{pn 8}%
\special{pa 1470 1630}%
\special{pa 1750 1860}%
\special{fp}%
\special{sh 1}%
\special{pa 1750 1860}%
\special{pa 1711 1802}%
\special{pa 1709 1826}%
\special{pa 1686 1833}%
\special{pa 1750 1860}%
\special{fp}%
%
\special{pn 8}%
\special{pa 1880 1190}%
\special{pa 2160 1430}%
\special{fp}%
\special{sh 1}%
\special{pa 2160 1430}%
\special{pa 2122 1371}%
\special{pa 2120 1395}%
\special{pa 2096 1402}%
\special{pa 2160 1430}%
\special{fp}%
%
\special{pn 8}%
\special{pa 2170 1630}%
\special{pa 1920 1850}%
\special{fp}%
\special{sh 1}%
\special{pa 1920 1850}%
\special{pa 1983 1821}%
\special{pa 1960 1815}%
\special{pa 1957 1791}%
\special{pa 1920 1850}%
\special{fp}%
\put(9.6000,-15.9000){\makebox(0,0)[lb]{$\mathrm{(i)}$}}%
\put(39.4000,-24.4000){\makebox(0,0)[lb]{$Y_{i,j}$}}%
\put(39.1000,-9.7000){\makebox(0,0)[lb]{$\Lambda$}}%
%
\special{pn 8}%
\special{pa 3950 1000}%
\special{pa 3670 1230}%
\special{fp}%
\special{sh 1}%
\special{pa 3670 1230}%
\special{pa 3734 1203}%
\special{pa 3711 1196}%
\special{pa 3709 1172}%
\special{pa 3670 1230}%
\special{fp}%
%
\special{pn 8}%
\special{pa 3670 2030}%
\special{pa 3950 2260}%
\special{fp}%
\special{sh 1}%
\special{pa 3950 2260}%
\special{pa 3911 2202}%
\special{pa 3909 2226}%
\special{pa 3886 2233}%
\special{pa 3950 2260}%
\special{fp}%
%
\special{pn 8}%
\special{pa 4080 990}%
\special{pa 4360 1230}%
\special{fp}%
\special{sh 1}%
\special{pa 4360 1230}%
\special{pa 4322 1171}%
\special{pa 4320 1195}%
\special{pa 4296 1202}%
\special{pa 4360 1230}%
\special{fp}%
%
\special{pn 8}%
\special{pa 4370 2030}%
\special{pa 4120 2250}%
\special{fp}%
\special{sh 1}%
\special{pa 4120 2250}%
\special{pa 4183 2221}%
\special{pa 4160 2215}%
\special{pa 4157 2191}%
\special{pa 4120 2250}%
\special{fp}%
\put(31.6000,-15.9000){\makebox(0,0)[lb]{$\mathrm{(ii)}$}}%
%
%
%
\special{pn 8}%
\special{pa 3640 1420}%
\special{pa 3640 1820}%
\special{fp}%
\special{sh 1}%
\special{pa 3640 1820}%
\special{pa 3660 1753}%
\special{pa 3640 1767}%
\special{pa 3620 1753}%
\special{pa 3640 1820}%
\special{fp}%
%
\special{pn 8}%
\special{pa 4410 1430}%
\special{pa 4410 1830}%
\special{fp}%
\special{sh 1}%
\special{pa 4410 1830}%
\special{pa 4430 1763}%
\special{pa 4410 1777}%
\special{pa 4390 1763}%
\special{pa 4410 1830}%
\special{fp}%
\put(35.8000,-14.1000){\makebox(0,0)[lb]{$Y_{i}$}}%
\put(43.4000,-14.1000){\makebox(0,0)[lb]{$Y_{j}$}}%
%
\special{pn 8}%
\special{ar 3650 1910 50 50  0.0000000  6.2831853}%
%
\special{pn 8}%
\special{ar 4410 1910 50 50  0.0000000  6.2831853}%
\end{picture}}%
\vspace{5pt}\]
\item In the case of {\rm (i)}, there is no edge between $i$ and $j$ in $Q$.
\item In the case of {\rm (ii)}, then we have 
\[
{\unitlength 0.1in%
\begin{picture}(  3.5000,  1.2700)(  4.0000, -3.9700)%
\put(4.0000,-4.0000){\makebox(0,0)[lb]{$i$}}%
%
\special{pn 8}%
\special{pa 540 320}%
\special{pa 740 320}%
\special{fp}%
\special{sh 1}%
\special{pa 740 320}%
\special{pa 673 300}%
\special{pa 687 320}%
\special{pa 673 340}%
\special{pa 740 320}%
\special{fp}%
%
\special{pn 8}%
\special{pa 710 370}%
\special{pa 510 370}%
\special{fp}%
\special{sh 1}%
\special{pa 510 370}%
\special{pa 577 390}%
\special{pa 563 370}%
\special{pa 577 350}%
\special{pa 510 370}%
\special{fp}%
\put(7.5000,-4.1000){\makebox(0,0)[lb]{$j$}}%
\end{picture}}%
\vspace{5pt}\]
\item $\sigma$ induces quiver automorphism 
\[Q^{\circ}\stackrel{\sim}{\to} Q^{\circ}.\]
\end{enumerate}

\end{lemma}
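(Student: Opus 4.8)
The strategy is to establish, for every pair $i\neq j$ of vertices, the two equivalences
\[
[Y_{i,j},\Lambda]\ \text{is a square}\iff \text{there is no edge between }i\text{ and }j\text{ in }Q\iff \bigl|\sigma^{-1}(i)-\sigma^{-1}(j)\bigr|>1,
\]
together with the fact that $[Y_{i,j},\Lambda]$ is always a square or a hexagon (forms (i) and (ii) respectively); all of (1)--(4) then follow. The proof mirrors that of Lemma~\ref{determiningarrow}, but is carried out near the maximum $\Lambda$ of $\sttilt\Lambda$ rather than near $0$, and the passage from posets to arrows is made through a Bongartz-type reduction instead of the support reduction used there.

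\emph{Step 1: shape of $[Y_{i,j},\Lambda]$, and the last equivalence.} Write $a=\sigma^{-1}(i)$, $b=\sigma^{-1}(j)$, so that $Y_i=\rho(s_aw_0)$ and $Y_j=\rho(s_bw_0)$. Since $\rho$ is a poset isomorphism, $Y_{i,j}=\rho(s_aw_0\wedge s_bw_0)$ and $[Y_{i,j},\Lambda]\cong[s_aw_0\wedge s_bw_0,\,w_0]$ in $(\Sym_{n+1},\leq)$. The order-reversing bijection $w\mapsto ww_0$ together with Proposition~\ref{fresultsym}(1),(2) identifies this interval, as a poset, with $[1,\,s_a\vee s_b]^{\mathrm{op}}=\langle s_a,s_b\rangle^{\mathrm{op}}$, which is a square when $|a-b|>1$ and a hexagon when $|a-b|=1$. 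As squares and hexagons are self-opposite, this proves (1) and shows that $[Y_{i,j},\Lambda]$ is a square exactly when $|\sigma^{-1}(i)-\sigma^{-1}(j)|>1$.

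\emph{Step 2: reading off $Q$ from the shape.} Put $N:=\bigoplus_{k\neq i,j}P_k$ and $\Lambda':=\Lambda/\bigl(\sum_{k\neq i,j}e_k\bigr)$, a two-vertex algebra whose arrows between $i$ and $j$ are exactly those of $Q$. The module $N$ is $\tau$-rigid, its Bongartz completion is $\Lambda$, and $\End_\Lambda(\Lambda)/(e)=\Lambda'$, so Theorem~\ref{reduction theorem} gives $\sttilt_N\Lambda\simeq\sttilt\Lambda'$, realized as the order-convex interval $[M_0,\Lambda]$ of $\sttilt\Lambda$, where $M_0$ is the co-Bongartz completion of $N$. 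Moreover $N\in\add Y_i\cap\add Y_j$ (indeed $Y_i$ arises from $(\Lambda,0)$ by mutation at $P_i$, hence shares the almost complete pair $(\bigoplus_{k\neq i}P_k,0)$ with $(\Lambda,0)$, of which $N$ is a summand), so $Y_i,Y_j\in[M_0,\Lambda]$, whence $M_0\leq Y_i\wedge Y_j=Y_{i,j}$ and $[Y_{i,j},\Lambda]\subseteq[M_0,\Lambda]=\sttilt_N\Lambda$ as a full subposet. By Step~1 this exhibits the square or hexagon $[Y_{i,j},\Lambda]$ as a finite connected $2$-regular full subposet of $\sttilt_N\Lambda\simeq\sttilt\Lambda'$, hence as a connected component, so by Theorem~\ref{basicprop}(3), applied to $\Lambda'$, we conclude $[Y_{i,j},\Lambda]=\sttilt_N\Lambda\simeq\sttilt\Lambda'$. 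Now Proposition~\ref{mr} (via Lemma~\ref{1to2}) says $\sttilt\Lambda'$ is a hexagon precisely when $Q$ has a unique arrow $i\to j$ with $\alpha\Lambda e_j=e_i\Lambda e_j=e_i\Lambda\alpha$ and a unique arrow $j\to i$ with the analogous property; in particular a hexagon forces a double arrow $i\leftrightarrows j$, proving (3). If $Q$ has no edge between $i$ and $j$, then $\Lambda'$ is a product of two local algebras, so $\sttilt\Lambda'$ is a square; conversely, if $\sttilt\Lambda'$ is a square and there were an arrow $i\to j$ in $Q$, then $X_i':=\Lambda'/(1-e_i)$ would be a non-projective module of support $\{i\}$, hence could not arise by mutating $(\Lambda',0)$ at $P_i$ or at $P_j$ (each such mutation yields a pair whose module part has $j$ in its support, or the pair $(P_j,P_i)$); thus $X_i'$ would be neither a coatom of $\sttilt\Lambda'$ nor equal to $0$ or $\Lambda'$, contradicting that the square $\sttilt\Lambda'$ consists only of $\{0,X_i',X_j',\Lambda'\}$. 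Hence $\sttilt\Lambda'$ is a square iff there is no edge between $i$ and $j$, which is (2) and completes the displayed equivalences.

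\emph{Step 3: conclusion.} Combining the equivalences of Steps~1 and 2 with Lemma~\ref{determiningarrow}(2)--(4) --- so that "edge between $i$ and $j$" is equivalent to $|i-j|=1$, since $Q^\circ$ is the double quiver of $A_n$ --- gives $|i-j|=1\iff|\sigma^{-1}(i)-\sigma^{-1}(j)|=1$ for all $i\neq j$. Thus $\sigma$ is an automorphism of the path graph underlying $Q^\circ$, so either $\sigma=\mathrm{id}$ or $\sigma(k)=n+1-k$ for all $k$; both of these are quiver automorphisms of the double quiver $Q^\circ$, proving (4). The step I expect to demand the most care is the identification $[Y_{i,j},\Lambda]=\sttilt_N\Lambda\simeq\sttilt\Lambda'$ --- in particular knowing that Theorem~\ref{reduction theorem} realizes $\sttilt_N\Lambda$ as an order-convex subposet $[M_0,\Lambda]$ of $\sttilt\Lambda$ --- together with the elementary but somewhat fiddly mutation bookkeeping for $\Lambda'$ behind the "square $\Rightarrow$ no arrow" implication.
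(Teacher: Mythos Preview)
Your proof is correct and follows essentially the same route as the paper's: transport the interval $[Y_{i,j},\Lambda]$ to $[s_aw_0\wedge s_bw_0,\,w_0]$ in $\Sym_{n+1}$ via $\rho$ to get the square/hexagon dichotomy, then identify $[Y_{i,j},\Lambda]$ with $\sttilt_N\Lambda\simeq\sttilt\Lambda'$ for $N=\bigoplus_{k\neq i,j}P_k$ and read off the arrow structure from Proposition~\ref{mr}. The paper compresses Step~2 to ``same argument used in the proof of Lemma~\ref{determiningarrow}'' and cites Proposition~\ref{mr} for both (2) and (3); you are more explicit, in particular supplying a direct ``square $\Rightarrow$ no arrow'' argument that the paper leaves to the reader, and you rightly flag the order-convexity of $\sttilt_N\Lambda$ as the point needing care (it is indeed less immediate here than in Lemma~\ref{determiningarrow}, where the support characterisation makes convexity automatic).
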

\begin{proof}
Similar to the proof of Lemma\;\ref{determiningarrow} (1), one can easily check (1).
Note that $Y_i,Y_j\in \sttilt_{\Lambda/(e_i \Lambda\oplus e_{j}\Lambda)} \Lambda \simeq \sttilt \Lambda/(1-e_i-e_j)$.
By using same argument used in the proof of Lemma\;\ref{determiningarrow}, one has 
$[Y_i\bigwedge Y_j,\Lambda]=\sttilt_{\Lambda/(e_i \Lambda\oplus e_{j}\Lambda)} \Lambda$.
Then the assertions (2) and (3) follow from Proposition\;\ref{mr}.
 We prove (4). $i$ and $j$ shear an edge in $Q$ if and only if 
 $|i-j|=1$. By definition of $\sigma$, we have that $|i-j|=1$ if and only if
 $Y_{\sigma(i)}$ and $Y_{\sigma(j)}$ satisfies (ii). Hence the assertion
 follows from (1), (2) and (3).  
   \end{proof}
 
 \begin{lemma}
 \label{reduction}
 Let $i<j$ and $e=e_i+e_{i+1}+\cdots+e_j$. 
 \begin{enumerate}[{\rm (1)}]
  \item $\sttilt (\Lambda/(1-e))\simeq (\Sym_{j-i+2},\leq)$.
  \item If $j-i<n-1$, then we have a path
  \[e_i\Lambda/e_i \Lambda (1-e_i)\Lambda\leftarrow
   e_i\Lambda/e_i \Lambda (1-e_i)\Lambda\oplus e_i\Lambda/e_i \Lambda (1-e_i-e_{i+1})\Lambda
   \leftarrow\cdots\leftarrow \bigoplus_{k=i}^j e_i\Lambda/e_i \Lambda (1-e_i-\cdots-e_k)\Lambda  \]
   in $\sttilt_{(1-e)\Lambda^-} \Lambda. $
   \end{enumerate}
 \end{lemma}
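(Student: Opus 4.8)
The plan is to obtain (1) from the lattice structure of $(\Sym_{n+1},\leq)$ by a purely poset‑theoretic argument, and then to deduce (2) from (1) together with the inductive hypothesis.

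For (1), normalize the given isomorphism $\rho\colon(\Sym_{n+1},\leq)\xrightarrow{\sim}\sttilt\Lambda$ so that $\rho(s_a)=X_a$ for $a\in Q_0$, and put $J=\{i,i+1,\dots,j\}$. Since $\rho$ is an isomorphism of posets and $(\Sym_{n+1},\leq)$ is a lattice, $\rho$ preserves joins, so by Proposition~\ref{fresultsym}(1) we get $\rho(w_0(J))=\bigvee_{a\in J}X_a=:Y$, and $\rho$ restricts to an order isomorphism $[1,w_0(J)]\xrightarrow{\sim}[0,Y]$. By Proposition~\ref{fresultsym}(2), $[1,w_0(J)]$ equals the standard parabolic $\langle s_a\mid a\in J\rangle$, which is a Coxeter group of type $A_{j-i+1}$ whose restricted left weak order is the left weak order of $\Sym_{j-i+2}$; hence $[0,Y]\simeq(\Sym_{j-i+2},\leq)$, a finite $(j-i+1)$‑regular poset. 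On the other hand $\sttilt_{(1-e)\Lambda^-}\Lambda=\{M\in\sttilt\Lambda\mid\Hom_\Lambda((1-e)\Lambda,M)=0\}=\{M\mid\supp M\subseteq J\}$ is downward closed in $\sttilt\Lambda$, hence a full subposet, and by the reduction theorem (Theorem~\ref{reduction theorem}) it is isomorphic to $\sttilt\Lambda/(1-e)$; therefore it is $(j-i+1)$‑regular (Theorem~\ref{basicprop}(2)) and has a maximum element $M_{\max}$, so it coincides with $[0,M_{\max}]$. Since $X_a\leq M_{\max}$ for every $a\in J$, we get $Y=\bigvee_{a\in J}X_a\leq M_{\max}$, whence $[0,Y]\subseteq[0,M_{\max}]=\sttilt_{(1-e)\Lambda^-}\Lambda$. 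Now $[0,Y]$ is a finite full subposet of the $(j-i+1)$‑regular poset $\sttilt_{(1-e)\Lambda^-}\Lambda$ and is itself $(j-i+1)$‑regular, so every vertex of $\H([0,Y])$ has the same number of direct predecessors and of direct successors in $\H([0,Y])$ as in $\H(\sttilt\Lambda/(1-e))$; thus $\H([0,Y])$ is a union of connected components of $\H(\sttilt\Lambda/(1-e))$, and being finite and non‑empty it is all of it by Theorem~\ref{basicprop}(3). Hence $\sttilt\Lambda/(1-e)\simeq[0,Y]\simeq(\Sym_{j-i+2},\leq)$.

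For (2), assume $j-i<n-1$. Then for each $m$ with $i\leq m\leq j$ the algebra $\Lambda_m:=\Lambda/(1-e_i-\dots-e_m)$ has fewer than $n$ vertices, and part (1) applied to the sub‑interval $\{i,\dots,m\}$ gives $\sttilt\Lambda_m\simeq(\Sym_{m-i+2},\leq)$, so the inductive hypothesis (Theorem~\ref{mainresult} for algebras with fewer than $n$ vertices) shows $\Lambda_m$ satisfies Condition~\ref{nscd}. Set $X_i^{(k)}:=e_i\Lambda/e_i\Lambda(1-e_i-\dots-e_k)\Lambda$; under the reduction isomorphism $\sttilt_{(1-e)\Lambda^-}\Lambda\simeq\sttilt\Lambda/(1-e)$ it corresponds to $e_i\Lambda_k$, the indecomposable projective $\Lambda_k$‑module at vertex $i$, which by Lemma~\ref{pathcondition} has $\supp X_i^{(k)}=\{i,\dots,k\}$ and, over $\Lambda_m$, minimal projective presentation $P_{k+1}\xrightarrow{f_{k+1}^i}P_i$ (convention $P_{i-1}=P_{m+1}=0$); via the functor $\mathbf S$ of Theorem~\ref{bijection} this identifies $X_i^{(k)}$ with the indecomposable $2$‑term presilting object $X_{\{i-1,i,k+1\}}$ introduced in Subsection~\ref{subsec:ifpart}. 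I would then show $M_m:=\bigoplus_{k=i}^{m}X_i^{(k)}$ is a support $\tau$‑tilting $\Lambda$‑module with support $\{i,\dots,m\}$: it has $m-i+1$ pairwise non‑isomorphic indecomposable summands and support $\{i,\dots,m\}$, so by Proposition~\ref{basicfact}(1) it suffices that it be $\tau$‑rigid, which by Proposition~\ref{basicfact}(2) can be tested over $\Lambda_m$, where it amounts to $\Hom_{\Kb(\proj\Lambda_m)}\bigl(X_{\{i-1,i,k+1\}},X_{\{i-1,i,l+1\}}[1]\bigr)=0$ for all $i\leq k,l\leq m$, verified via Proposition~\ref{rigidconditionforx}. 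Finally, for each $m<j$ the support $\tau$‑tilting pairs of $M_m$ and $M_{m+1}$ have $\bigl(M_m,(e_{m+2}+\dots+e_j)\Lambda\bigr)$ as a common basic almost complete support $\tau$‑tilting pair, so by Theorem~\ref{basicprop}(2) they are joined by an arrow of $\H(\sttilt_{(1-e)\Lambda^-}\Lambda)$; concatenating these arrows for $m=i,\dots,j-1$ produces the asserted path.

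The main difficulty is the $\tau$‑rigidity of $M_m$ in (2): pinning down the datum $\{i-1,i,k+1\}\in\Xi$ attached to $X_i^{(k)}$ and then working through the intricate vanishing conditions of Lemmas~\ref{rigidcondition} and~\ref{rigidcondition2} via Proposition~\ref{rigidconditionforx}. A more elementary route is an induction on $m$, in which the only new input at the step from $m$ to $m+1$ is $\Hom_\Lambda\bigl(X_i^{(m+1)},\tau X_i^{(k)}\bigr)=0$ for $k\leq m$; since $X_i^{(m+1)}$ is the projective $\Lambda_{m+1}$‑module at $i$, Proposition~\ref{basicfact}(2) reduces this to the claim that $S_i$ does not occur in $\tau_{\Lambda_{m+1}}X_i^{(k)}$, which follows from a multiplicity count of $S_i$ along the exact sequence $0\to\tau_{\Lambda_{m+1}}X_i^{(k)}\to\nu Q_1\to\nu Q_0\to\nu X_i^{(k)}\to0$ attached to a minimal projective presentation $Q_1\to Q_0\to X_i^{(k)}\to0$ over $\Lambda_{m+1}$, using Lemma~\ref{pathcondition} to pin down the relevant dimensions. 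The remainder of both parts is routine bookkeeping.
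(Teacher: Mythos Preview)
Your proof is correct and follows essentially the same approach as the paper: for (1), both arguments identify $[0,\bigvee_{a\in J}X_a]$ with the parabolic interval $[1,w_0(J)]$ via $\rho$ and then use $(j-i+1)$-regularity together with Theorem~\ref{basicprop}(3) to conclude that this interval exhausts $\sttilt\Lambda/(1-e)$; for (2), both invoke the induction hypothesis to obtain Condition~\ref{nscd} for the quotient and then verify $\tau$-rigidity via Proposition~\ref{rigidconditionforx} applied to the complexes $X_{\{i-1,i,k+1\}}$. The only differences are organizational---the paper works once and for all over $\Lambda_e=\Lambda/(1-e)$ rather than over each $\Lambda_m$ separately, and does not pursue your alternative multiplicity-count route---and one small slip: your common almost complete pair should read $\bigl(M_m,(1-e)\Lambda\oplus(e_{m+2}+\cdots+e_j)\Lambda\bigr)$, since the $(1-e)\Lambda$ summand is always part of the support $\tau$-tilting pair in $\sttilt_{(1-e)\Lambda^-}\Lambda$.
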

\begin{proof}
We prove (1). Since $s_i\vee s_{i+1}\vee \cdots \vee s_j$ be the longest element in 
$\langle s_i,\dots,s_j\rangle\simeq \Sym_{j-i+2},$ we have that
\[[0,T_{s_i}\vee\cdots\vee T_{s_j}]=\rho( [1, s_{i}\vee \cdots \vee s_j]) \simeq (\Sym_{j-i+2},\leq). \]
Note that $T_{s_i},\dots,T_{s_j}\in \sttilt_{(1-e)\Lambda^-} \Lambda $. Therefore for a maximum element
$M$ of $\sttilt_{(1-e)\Lambda^-} \Lambda $, we obtain that
\[T_{s_i}\vee\cdots\vee T_{s_j}\leq M.\] In particular, 
\[[0,T_{s_i}\vee\cdots\vee T_{s_j}]\subset \sttilt_{(1-e)\Lambda^-} \Lambda.\]
Since $\sttilt_{(1-e)\Lambda^-} \Lambda\simeq \sttilt (\Lambda/(1-e))$ is $(j-i+1)$-regular
poset, we conclude that 
\[\sttilt (\Lambda/(1-e))\simeq [0,T_{s_i} \vee \cdots \vee T_{s_j} ]\simeq (\Sym_{j-i+2},\leq).\]

Next we show (2). By (1) and the hypothesis of induction, $\Lambda_e :=\Lambda/(1-e)$ satisfies (a), (b) and (c)
of Condition\;\ref{nscd}.
Hence one can check that
\[\bigoplus_{k=i}^{\ell-1} e_{i}\Lambda_e/e_i\Lambda_e (1-e_i-\cdots-e_{k})\Lambda_e\leftarrow 
\bigoplus_{k=i}^{\ell} e_{i}\Lambda_e/e_i\Lambda_e (1-e_i-\cdots-e_k)\Lambda_e. \]
In fact, the 2-term presilting object in $\Kb(\proj \Lambda_e)$ corresponding to
 $e_{i}\Lambda_e/e_i\Lambda_e (1-e_{i,k})\Lambda$
is $X_{\mathbf{i}_{i,k}}$, where $e_{i,k}:=e_i+\cdots+e_k$ and $\mathbf{i}_{i,k}:=\{i-1<i<k+1\}$. 
Then $X_{\mathbf{i}_{i,k}}\oplus X_{\mathbf{i}_{i,k'}}$ is a presilting object 
of $\sttilt \Lambda_e=\sttilt_{(1-e)\Lambda^-} \Lambda$ by Proposition\;\ref{rigidconditionforx} (or direct calculation).
 Then the assertion follows from 
 \[e_{i}\Lambda_e/e_i\Lambda_e (1-e_i-\cdots-e_{k})\Lambda_e\simeq e_i \Lambda/e_i\Lambda (1-e_i-\cdots-e_{k})\Lambda.\]
\end{proof} 
\begin{lemma}
\label{titoj} We have the following.
\begin{enumerate}[{\rm(1)}]
\item If $i<j$. Then $T_{s_j s_{j-1}\cdots s_i}$ is a unique element of
\[ \dip(T_{s_{j-1}\cdots s_i})\cap [T_{s_{j-2}\cdots s_i},T_{s_{j-1}\cdots s_i}\vee T_{s_j}].\] 
Moreover, if $(i,j)\neq (1,n)$, then 
\[T_{s_js_{j-1}\cdots s_i}=\bigoplus_{k=i}^j e_{i}\Lambda /e_i\Lambda (1-e_i-\cdots-e_k)\Lambda.\]
\item If $i>j$. Then $T_{s_j s_{j+1}\cdots s_i}$ is a unique element of
\[\dip(T_{s_{j+1}\cdots s_i})\cap [T_{s_{j+2}\cdots s_i},T_{s_{j+1}\cdots s_i}
\vee T_{s_j}].\] 
Moreover, if $(i,j)\neq (n,1)$, then 
\[T_{s_js_{j+1}\cdots s_i}=\bigoplus_{k=i}^j e_{i}\Lambda /e_i\Lambda (1-e_i-\cdots-e_k)\Lambda.\]
\end{enumerate}
\end{lemma}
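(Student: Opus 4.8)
The plan is to prove (1) by induction on $n$ and on the "width" $j-i$; the assertion (2) then follows by the labeling-change symmetry $i\leftrightarrow n+1-i$ (this is the symmetry exploited in Lemma \ref{rigidcondition2}) together with the poset anti-automorphism $w\mapsto ww_0$ of $(\Sym_{n+1},\le)$. So I would concentrate entirely on (1). The first step is purely combinatorial on the Weyl group side: in $(\Sym_{n+1},\le)$, with $\ell(s_j\cdots s_i)=\ell(s_{j-1}\cdots s_i)+1$, I claim that $s_j s_{j-1}\cdots s_i$ is the unique element lying in $\dip(s_{j-1}\cdots s_i)\cap[\,s_{j-2}\cdots s_i,\; (s_{j-1}\cdots s_i)\vee s_j\,]$. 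This is a finite check inside the interval $[\,s_{j-2}\cdots s_i,\;(s_{j-1}\cdots s_i)\vee s_j\,]$, which by Proposition \ref{fresultsym}(3) equals $w_0(J)\,(s_{j-2}\cdots s_i)$ for $J=\{j-1,j\}$; since $w_0(\{j-1,j\})$ has length $3$, the interval is (a coset of) the weak-order interval $[1,s_{j-1}s_j s_{j-1}]$, the six-element hexagon, and one reads off directly which of its elements are direct predecessors of $s_{j-1}\cdots s_i$ and lie above $s_{j-2}\cdots s_i$. Transporting along the isomorphism $\rho\colon(\Sym_{n+1},\le)\xrightarrow{\sim}\sttilt\Lambda$ gives the first sentence of (1) verbatim, because $\rho$ carries $\vee$, $\dip$, and order-intervals to the corresponding notions in $\sttilt\Lambda$, and $\rho(s_{j-1}\cdots s_i\vee s_j)=T_{s_{j-1}\cdots s_i}\vee T_{s_j}$.

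The second, and harder, half of (1) is the explicit identification
\[
T_{s_j s_{j-1}\cdots s_i}=\bigoplus_{k=i}^{j} e_i\Lambda\,/\,e_i\Lambda(1-e_i-\cdots-e_k)\Lambda
\]
when $(i,j)\ne(1,n)$. The strategy is to show that the right-hand module $M:=\bigoplus_{k=i}^{j}e_i\Lambda/e_i\Lambda(1-e_i-\cdots-e_k)\Lambda$ is itself a support $\tau$-tilting module that occupies exactly the position in $\sttilt\Lambda$ characterized in the first half, and then invoke uniqueness. Concretely: since $(i,j)\ne(1,n)$ we have $j-i<n-1$, so Lemma \ref{reduction}(2) provides a length-$(j-i)$ path
\[
e_i\Lambda/e_i\Lambda(1-e_i)\Lambda\ \leftarrow\ \cdots\ \leftarrow\ \bigoplus_{k=i}^{j}e_i\Lambda/e_i\Lambda(1-e_i-\cdots-e_k)\Lambda
\]
in $\sttilt_{(1-e)\Lambda^-}\Lambda\subset\sttilt\Lambda$, with $e=e_i+\cdots+e_j$. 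The left-hand term is $X_i=T_{s_i}$. Hence $M$ lies at the end of a saturated chain of length $j-i$ going \emph{downward} from $T_{s_i}$ inside $\sttilt\Lambda$; comparing with the analogous chain $T_{s_i}\leftarrow T_{s_{i+1}s_i}\leftarrow\cdots\leftarrow T_{s_{j-1}\cdots s_i}\leftarrow T_{s_j\cdots s_i}$ on the Weyl-group side, and using that $\rho$ is a poset isomorphism together with the inductive step already established for smaller widths (which identifies the penultimate term $T_{s_{j-1}\cdots s_i}$ with $\bigoplus_{k=i}^{j-1}e_i\Lambda/e_i\Lambda(1-e_i-\cdots-e_k)\Lambda$, as $(i,j-1)\ne(1,n)$ automatically), I get that $M$ is a direct successor, in the Hasse quiver, of $T_{s_{j-1}\cdots s_i}$, obtained by adding the single indecomposable summand $e_i\Lambda/e_i\Lambda(1-e_i-\cdots-e_j)\Lambda$.

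It then remains to check that $M$ actually is $T_{s_j s_{j-1}\cdots s_i}$ rather than the \emph{other} Hasse-quiver predecessor of $T_{s_{j-1}\cdots s_i}$ obtained by mutating the summand $e_i\Lambda/e_i\Lambda(1-e_i-\cdots-e_j)\Lambda$. For this I would verify $M\le T_{s_{j-1}\cdots s_i}\vee T_{s_j}$, i.e. $M$ lies in the prescribed interval, and then uniqueness from the first half finishes it. The inequality $M\le T_{s_{j-1}\cdots s_i}\vee T_{s_j}$ can be seen on the silting side: $\mathbf{S}(M)$ and $\mathbf{S}(T_{s_{j-1}\cdots s_i}\vee T_{s_j})$ are explicit two-term complexes of the form $X_{\mathbf i}$ built from the indecomposables $X_{\mathbf i_{i,k}}$ with $\mathbf i_{i,k}=\{i-1<i<k+1\}$ (for the reduction algebra $\Lambda_e$, via Lemma \ref{reduction}), and the vanishing of the relevant $\Hom_{\Kb(\proj\Lambda)}(-,-[1])$ groups is exactly what Proposition \ref{rigidconditionforx} computes; the order relation $M\le N$ in $\sttilt\Lambda$ translates under $\mathbf S$ to $\Hom_{\Kb(\proj\Lambda)}(\mathbf S(M),\mathbf S(N)[1])=0$, which I check using Lemmas \ref{rigidcondition}--\ref{rigidcondition4}. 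I expect the main obstacle to be exactly this bookkeeping step: matching the module-theoretic description $\bigoplus_{k}e_i\Lambda/e_i\Lambda(1-e_i-\cdots-e_k)\Lambda$ with the correct $\mathbf i\in\Xi$ data and confirming, via the combinatorial $\mathbf t^\pm,\mathbf s^\pm$ machinery, that it sits on the correct side of the hexagon $[T_{s_{j-2}\cdots s_i},\,T_{s_{j-1}\cdots s_i}\vee T_{s_j}]$ — everything else is a routine transport of structure along $\rho$ and $\mathbf S$.
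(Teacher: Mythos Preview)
Your outline for the first sentence of (1) is correct and matches the paper: both of you reduce to a computation inside the length-$3$ ``hexagon'' interval $[\,s_{j-2}\cdots s_i,\,(s_{j-1}\cdots s_i)\vee s_j\,]$ in $(\Sym_{n+1},\le)$ and transport via $\rho$. Your inductive set-up for the module identification also matches the paper's: both arguments assume $T_{s_{j'}\cdots s_i}=\bigoplus_{k=i}^{j'}e_i\Lambda/e_i\Lambda(1-e_i-\cdots-e_k)\Lambda$ for $j'<j$ and invoke Lemma~\ref{reduction}(2).

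The genuine gap is in your last step, where you propose to verify $M\le T_{s_{j-1}\cdots s_i}\vee T_{s_j}$ by computing $\Hom_{\Kb(\proj\Lambda)}(-,-[1])$ via Proposition~\ref{rigidconditionforx}. You do not know $T_{s_{j-1}\cdots s_i}\vee T_{s_j}$ as a module: nothing so far identifies this join explicitly, so your claim that $\mathbf S(T_{s_{j-1}\cdots s_i}\vee T_{s_j})$ is an explicit $X_{\mathbf i}$ is unjustified. Note also that the constraints you \emph{do} have on $M$ --- namely $M\in\dip(T_{s_{j-1}\cdots s_i})$, $M\ge T_{s_{j-2}\cdots s_i}$, and $\rho^{-1}(M)\in\langle s_i,\dots,s_j\rangle$ --- do \emph{not} pin $M$ down: every $s_k s_{j-1}\cdots s_i$ with $k\in\{i,\dots,j-2,j\}$ satisfies the analogous conditions in $\Sym_{n+1}$. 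So the upper bound is essential, and your proposed route to it is circular.

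The paper closes this gap without ever computing the join. It observes that $T_{s_{j-1}w}$ and $T_{s_jw}=T_w\oplus X_j$ (here $w=s_{j-2}\cdots s_i$) both lie in $\sttilt_{T_w\oplus(1-e)\Lambda^-}\Lambda$, which by Jasso's Theorem~\ref{reduction theorem} is a $2$-regular poset (since $|T_w|+|(1-e)\Lambda|=n-2$). The hexagon $[T_w,\,T_{s_{j-1}w}\vee T_{s_j}]$ is then a $2$-regular full subposet and hence, by the connected-component argument of Theorem~\ref{basicprop}(3), equals all of $\sttilt_{T_w\oplus(1-e)\Lambda^-}\Lambda$. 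But $M=\bigoplus_{k=i}^{j}e_i\Lambda/e_i\Lambda(1-e_i-\cdots-e_k)\Lambda$ manifestly belongs to $\sttilt_{T_w\oplus(1-e)\Lambda^-}\Lambda$ (it contains $T_w$ as a summand and has support in $\{i,\dots,j\}$), so it lies in the hexagon for free, and uniqueness finishes. This use of Jasso's theorem is the idea you are missing; it replaces the $\Hom$-computation you propose and does not require identifying the join.

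Two minor slips: the chain in Lemma~\ref{reduction}(2) goes \emph{upward} from $T_{s_i}$ (the full sum is the larger element), so $M\in\dip(T_{s_{j-1}\cdots s_i})$, not $\dis$; and the silting translation of $M\le N$ is $\Hom_{\Kb(\proj\Lambda)}(\mathbf S(N),\mathbf S(M)[1])=0$, not the other way.
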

\begin{proof}
We consider the case that $i<j$.
We claim that 
\[T_{s_j s_{j-1}\cdots s_i}\in \dip(T_{s_{j-1}\cdots s_i})\cap [T_{s_{j-2}\cdots s_i},T_{s_{j-1}\cdots s_i}\vee T_{s_j}].\] 
It is obvious that $T_{s_js_{j-1}\cdots s_i}\in \dip(T_{s_{j-1}\cdots s_i})$. Put $w=s_{j-2}\cdots s_i$. 
Then we obtain that
\[
{\unitlength 0.1in%
\begin{picture}( 11.4700, 14.2000)( 32.9000,-22.6000)%
\put(39.6000,-23.7000){\makebox(0,0)[lb]{$w$}}%
\put(32.9000,-20.1000){\makebox(0,0)[lb]{$ws_j=s_j w$}}%
\put(41.4000,-20.1000){\makebox(0,0)[lb]{$s_{j-1}w$}}%
\put(37.1000,-9.7000){\makebox(0,0)[lb]{$s_js_{j-1}s_j w$}}%
%
\special{pn 8}%
\special{pa 3950 1000}%
\special{pa 3670 1230}%
\special{fp}%
\special{sh 1}%
\special{pa 3670 1230}%
\special{pa 3734 1203}%
\special{pa 3711 1196}%
\special{pa 3709 1172}%
\special{pa 3670 1230}%
\special{fp}%
%
\special{pn 8}%
\special{pa 3670 2030}%
\special{pa 3950 2260}%
\special{fp}%
\special{sh 1}%
\special{pa 3950 2260}%
\special{pa 3911 2202}%
\special{pa 3909 2226}%
\special{pa 3886 2233}%
\special{pa 3950 2260}%
\special{fp}%
%
\special{pn 8}%
\special{pa 4080 990}%
\special{pa 4360 1230}%
\special{fp}%
\special{sh 1}%
\special{pa 4360 1230}%
\special{pa 4322 1171}%
\special{pa 4320 1195}%
\special{pa 4296 1202}%
\special{pa 4360 1230}%
\special{fp}%
%
\special{pn 8}%
\special{pa 4370 2030}%
\special{pa 4120 2250}%
\special{fp}%
\special{sh 1}%
\special{pa 4120 2250}%
\special{pa 4183 2221}%
\special{pa 4160 2215}%
\special{pa 4157 2191}%
\special{pa 4120 2250}%
\special{fp}%
%
%
\special{pn 8}%
\special{pa 3640 1420}%
\special{pa 3640 1820}%
\special{fp}%
\special{sh 1}%
\special{pa 3640 1820}%
\special{pa 3660 1753}%
\special{pa 3640 1767}%
\special{pa 3620 1753}%
\special{pa 3640 1820}%
\special{fp}%
%
\special{pn 8}%
\special{pa 4410 1430}%
\special{pa 4410 1830}%
\special{fp}%
\special{sh 1}%
\special{pa 4410 1830}%
\special{pa 4430 1763}%
\special{pa 4410 1777}%
\special{pa 4390 1763}%
\special{pa 4410 1830}%
\special{fp}%
\put(34.0000,-14.0000){\makebox(0,0)[lb]{$s_{j-1}s_j w$}}%
\put(41.4000,-14.0000){\makebox(0,0)[lb]{$s_j s_{j-1}w$}}%
\end{picture}}%
\vspace{8pt}\]
Note that $w s_j \geq s_j$ and $w\not\geq s_j$. 
Hence we conclude that $w s_j=s_j\vee w$
and
\[s_{j-1}w \vee s_j= s_{j-1}w\vee w \vee s_j= s_js_{j-1}s_j w=s_{j-1}s_j s_{j-1}w. \]
In particular, we conclude that
\[T_{s_j s_{j-1} w}\in [T_w, T_{s_{j-1} w}\vee T_{s_j w}]= [T_w, T_{s_{j-1} w}\vee T_{s_j}].\]
Then the uniqueness follows from the fact that $\dis(s_js_{j-1}s_j w)=\{s_{j-1}s_j w,s_js_{j-1}w\}$.  

We note that $T_{s_i}=X_i=e_i\Lambda/e_i\Lambda(1-e_i)\Lambda$.
Now we assume that 
\[T_{s_{j'}s_{j'-1}\cdots s_i}=\bigoplus_{k=i}^{j'}e_i\Lambda/e_i\Lambda(1-e_i-\cdots-e_{j'})\Lambda\]
 holds for any $j'\in \{i,\cdots, j-1 \}$.
Then $T_{s_j w}=T_w\vee T_{s_j}= T_w \oplus X_j $ and 
$T_{s_{j-1}w}= T_{w}\oplus e_{i}\Lambda /e_i\Lambda (1-e_i-\cdots-e_{j-1})\Lambda.$
Therefore $T_{s_j w}, T_{s_{j-1}w}\in \sttilt_{T_w\oplus(1-e)\Lambda^-} \Lambda$, where $e:= e_i+\cdots+ e_j $.
This shows that $T_{s_j w}\vee T_{s_{j-1}w}\leq M$, where $M$ is a maximum element 
of $\sttilt_{T_w\oplus (1-e)\Lambda^-} \Lambda$.
In particular, we have that
\[\sttilt_{T_w\oplus(1-e)\Lambda^-} \Lambda\supset [T_w, T_{s_{j-1} w}\vee T_{s_j w}]= [T_w, T_{s_{j-1} w}\vee T_{s_j}].\]
By Jasso's theorem, we have that $\sttilt_{T_w\oplus(1-e)\Lambda^-} \Lambda$ is a two-regular poset.
Hence we obtain that    
\[\sttilt_{T_w\oplus(1-e)\Lambda^-} \Lambda
= [T_w, T_{s_{j-1} w}\vee T_{s_j w}]= [T_w, T_{s_{j-1} w}\vee T_{s_j}].\]
Note that Lemma\;\ref{reduction} implies 
\[T_{s_{j-1}w}\oplus e_i \Lambda/e_i\Lambda (1-e) \Lambda \in \sttilt_{T_w\oplus(1-e)\Lambda^-} \Lambda.\]
Hence, we get that
\[\bigoplus_{k=i}^j e_{i}\Lambda /e_i\Lambda (1-e_i-\cdots-e_k)\Lambda=
T_{s_{j-1}w}\oplus e_i \Lambda/e_i\Lambda (1-e) \Lambda\in \dip(T_{s_{j-1}w})\cap [T_w, T_{s_{j-1} w}\vee T_{s_j}].\]
In particular, the following hold.
\[T_{s_j s_{j-1} w}= \bigoplus_{k=i}^j e_{i}\Lambda /e_i\Lambda (1-e_i-\cdots-e_k)\Lambda. \] 
Accordingly, we obtain (1). Similar argument gives the assertion (2).  
\end{proof}

\begin{lemma}
\label{teqlemma}
Let $w=s_n s_{n-1}\cdots s_i$. 
\begin{enumerate}[{\rm (1)}]
\item  For any $\sigma\in \langle s_1,\dots,s_{n-1}\rangle$, we have
\[\sigma w\geq w.\]
\item Let $\sigma,\sigma'\in \langle s_1,\dots,s_{n-1}\rangle$. Then 
\[\sigma w\leq \sigma'w\Leftrightarrow \sigma\leq \sigma'.\]
\item We have the following.
\[[w,(s_1w)\vee\cdots\vee(s_{n-1}w)]=[w,(s_1\vee\cdots\vee s_{n-1})w]=\langle s_1,\dots, s_{n-1}\rangle w
\stackrel{\mathrm{poset}}{\simeq} (\Sym_{n},\leq). \]
\end{enumerate}
\end{lemma}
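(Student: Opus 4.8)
The statement asserts three facts about the element $w=s_ns_{n-1}\cdots s_i$ and the parabolic subgroup $W'=\langle s_1,\dots,s_{n-1}\rangle$ of $\Sym_{n+1}$: (1) left multiplication by any $\sigma\in W'$ only increases length, (2) the left weak order on $W'$ is faithfully reflected after right-translating by $w$, and (3) the interval above $w$ is isomorphic as a poset to $(\Sym_n,\leq)$. The natural framework is the combinatorics of reduced words via Theorem~\ref{matsumoto} together with the length/inversion dictionary recalled in its proof. I will work entirely inside $\Sym_{n+1}$ and only at the end transport the conclusion to $\sttilt\Lambda$ is not needed — this is a purely group-theoretic lemma.

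\textbf{Step 1 (proof of (1)).} By the inversion-number criterion it suffices to show $\ell(s_k w)=\ell(w)+1$ whenever $s_kw\neq w$ fails to decrease length, and then iterate; equivalently, by the bullet points in the proof of Theorem~\ref{matsumoto}, $\ell(\sigma w)\geq \ell(w)$ for all $\sigma\in W'$ holds provided $w^{-1}(k)<w^{-1}(k+1)$ for every $k\in\{1,\dots,n-1\}$. I will compute $w^{-1}$ explicitly. Writing $w=s_ns_{n-1}\cdots s_i$ as a product of adjacent transpositions applied left to right, one checks that $w$ sends $i\mapsto i+1\mapsto\cdots$ — concretely $w$ is the cycle that maps $j\mapsto j-1$ for $i<j\leq n+1$ and fixes $j<i$ while $i\mapsto n+1$ (a careful but routine verification by induction on $n-i$). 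Hence $w^{-1}$ maps $j\mapsto j+1$ for $i\leq j\leq n$, $n+1\mapsto i$, and fixes everything below $i$. Then for $k\in\{1,\dots,n-1\}$ one sees $w^{-1}(k)<w^{-1}(k+1)$ in every case (both below $i$: trivial; both in the range $[i,n]$: $w^{-1}$ is order-preserving there; the boundary cases $k=i-1$ and the wrap at $k=n$ do not occur since $k\leq n-1$ and we only need $k,k+1\in\{1,\dots,n\}$ — and indeed $k+1\leq n$). This gives $\ell(s_kw)=\ell(w)+1$ for each generator of $W'$, and then (1) follows: for a reduced word $\sigma=s_{j_\ell}\cdots s_{j_1}$ in $W'$, repeatedly use that left-multiplying by a generator either preserves the "ascent" property or is handled by the same inequality at each stage — more cleanly, invoke the standard fact (which is exactly the parabolic version of what Theorem~\ref{matsumoto}(2) proves) that if $w^{-1}(W')$ consists of "increasing on the block" permutations then $\ell(\sigma w)=\ell(\sigma)+\ell(w)$ for all $\sigma\in W'$.

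\textbf{Step 2 (proof of (2) and (3)).} Once $\ell(\sigma w)=\ell(\sigma)+\ell(w)$ is established for all $\sigma\in W'$, assertion (2) is immediate: $\sigma w\leq\sigma'w$ in the left weak order means $\sigma'w=u\,\sigma w$ with $\ell(\sigma'w)=\ell(u)+\ell(\sigma w)$ for some $u$; since $\sigma'w(\sigma w)^{-1}=\sigma'\sigma^{-1}\in W'$ we may take $u=\sigma'\sigma^{-1}$, and the length additivity turns $\ell(\sigma'w)=\ell(\sigma')+\ell(w)$ and $\ell(\sigma w)=\ell(\sigma)+\ell(w)$ into $\ell(\sigma')=\ell(u)+\ell(\sigma)$, i.e. $\sigma\leq\sigma'$; the converse direction is symmetric. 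For (3): the map $\sigma\mapsto\sigma w$ is by Step~1 and Step~2 an order-embedding of $(W',\leq)$ into $[w,\,-)$, and by Proposition~\ref{fresultsym}(3) (applied with $J=\{1,\dots,n-1\}$, whose hypothesis $w\leq s_jw$ for all $j\in J$ is exactly what Step~1 gives) its image has top element $w_0(J)w=(s_1\vee\cdots\vee s_{n-1})w$, which also equals $(s_1w)\vee\cdots\vee(s_{n-1}w)$ by the same Proposition~\ref{fresultsym}(3). Finally the image is exactly the interval $[w,(s_1\vee\cdots\vee s_{n-1})w]$: it is contained in it, it is downward-closed in $[w,-)$ inside that interval because $\sigma w$ with $\sigma\in W'$ has all its "descents towards $w$" again in $W'$ (any $u\leq \sigma w$ with $u\geq w$ satisfies $uw^{-1}\in W'$ — this is where I use Proposition~\ref{fresultsym}(2), $[1,w_0(J)]=\langle s_j\mid j\in J\rangle$, via the order-isomorphism $x\mapsto xw^{-1}$ on the relevant interval), and it is a full interval by length-counting. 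Since $W'\cong\Sym_n$ with its weak order, this yields $[w,(s_1w)\vee\cdots\vee(s_{n-1}w)]\simeq(\Sym_n,\leq)$.

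\textbf{Main obstacle.} The only genuinely delicate point is the explicit identification of $w^{-1}$ and the verification that $w$ is "fully commutative with respect to $W'$ on the left," i.e. that $\ell(\sigma w)=\ell(\sigma)+\ell(w)$ for \emph{all} $\sigma\in W'$ rather than just for the generators $s_1,\dots,s_{n-1}$. The generator-level statement (Step~1 first half) is a one-line computation with $w^{-1}$; bootstrapping it to all of $W'$ is the crux, and the cleanest route is to observe that $w^{-1}$ restricted to $\{1,\dots,n\}$ is strictly increasing, hence $\operatorname{inv}(\sigma w)=\operatorname{inv}(\sigma)+\operatorname{inv}(w)$ for every $\sigma\in W'$ by directly counting inversions of the one-line notation (the inversions of $\sigma w$ split into those "within the $w$-block," which are precisely the inversions of $w$ since $\sigma$ permutes positions $1,\dots,n$ among themselves and $w$ is increasing there, plus those coming from $\sigma$). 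Everything after that is formal poset manipulation using Proposition~\ref{fresultsym}.
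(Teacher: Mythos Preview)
Your argument is correct, and for parts (1) and (2) it is genuinely different from---and cleaner than---the paper's. The paper proves (1) by contradiction via the exchange condition (Theorem~\ref{matsumoto}): assuming $\sigma w\geq w$ but $s_j\sigma w\not\geq w$ for some $j\leq n-1$, it locates the exchange and rules it out using that the subword $s_n\cdots s_i$ sends the relevant letter to $n+1$. It then proves each direction of (2) by a separate reduced-word manipulation. You instead compute $w^{-1}$ explicitly, observe it is strictly increasing on $\{1,\dots,n\}$, and deduce the stronger statement $\ell(\sigma w)=\ell(\sigma)+\ell(w)$ for all $\sigma\in W'$ (equivalently, $w$ is the minimal-length representative of its $W'$-coset). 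This makes (1) immediate and collapses (2) to a one-line length count. Your route buys conceptual clarity and avoids the exchange-condition case analysis; the paper's route stays closer to the tools already set up in Theorem~\ref{matsumoto}.

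For (3) both approaches invoke Proposition~\ref{fresultsym}(3) for the join and Proposition~\ref{fresultsym}(2) for the containment $[w,w_0(J)w]\subseteq W'w$. One caution: your phrase ``via the order-isomorphism $x\mapsto xw^{-1}$ on the relevant interval'' is circular as written, since right multiplication by $w^{-1}$ is not an order map for the left weak order in general, and knowing it is one on this interval amounts to already knowing the interval lies in $W'w$. The fix is exactly what the paper does and what your setup already supports: given $u\in[w,w_0(J)w]$, a saturated chain $w<s_{i_1}w<\cdots<u<\cdots<w_0(J)w$ yields $uw^{-1}=s_{i_{\ell'}}\cdots s_{i_1}$ and $w_0(J)=s_{i_\ell}\cdots s_{i_1}$, both reduced by length subadditivity, hence $uw^{-1}\leq w_0(J)$ and so $uw^{-1}\in W'$ by Proposition~\ref{fresultsym}(2). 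With that adjustment your (3) coincides with the paper's.
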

\begin{proof} We show (1).
Suppose that there exists $\sigma\in \langle s_1,\dots,s_{n-1}\rangle$ and $j\in \{1,\dots ,n-1\}$ such that
$\sigma w\geq w$ and $s_j \sigma w\not\geq w$.
Let $s_{i_{\ell}}\cdots s_{i_{n-i+2}} s_n\cdots s_i$ be a reduced expression of $\sigma w$.
We set $i_{k}:= k+i-1$ for $k\leq n-i+1$ and $i_{\ell+1}=j$.
Then by Theorem\;\ref{matsumoto}, there exists $(j<k)$ such that
\begin{enumerate}[{\rm (i)}]
\item $s_{i_{\ell+1}}\cdots s_{i_{j+1}}(i_j)>s_{i_{\ell+1}}\cdots s_{i_{j+1}}(i_j+1)$ 
\end{enumerate}
If $j\leq n-i+1$, then $s_{i_{\ell+1}}\cdots s_{i_{j+1}}(i_j+1)=n+1$. This contradicts to (i).
Thus $j>n-i+1$. Then Theorem\;\ref{matsumoto} says that   
$s_j \sigma w=s_{i_{\ell+1}}\cdots \widehat{s_{i_k}}\cdots \widehat{s_{i_j}}\cdots s_{i_1}s_n\cdots s_i$ and
this expression have to be a reduced expression of $s_j\sigma w$. This contradicts to $s_j \sigma w\not\geq w$.

We prove (2). 
First we assume that $\sigma\leq \sigma' $ and show that $\sigma w\leq \sigma' w$. 
We may assume that $\sigma'=s_i \sigma$.
By (1), there exists a reduced expression $s_{i_\ell}\cdots s_{i_1}$ of $\sigma$
such that $s_{i_\ell}\cdots s_{i_1}s_n\cdots s_i$ is a reduced expression of $\sigma w$.
Then we want to show that
 $s_is_{i_\ell}\cdots s_{i_1}s_n\cdots s_1$
is a reduced expression of $\sigma'w$. 
If not, then same argument  used in the proof of (1) implies that there exists $j<k$ such that
\[s_i \sigma w=s_{i_{\ell+1}}s_{i_{\ell}}\cdots \widehat{s_{i_k}}\cdots \widehat{s_{i_j}}\cdots s_{i_1} w,\]
where we put $s_{i_{\ell+1}}:=s_i$. This implies that $\ell(s_i\sigma)<\ell(\sigma)$.
Hence we reach a contradiction.



Next we assume that $\sigma w\leq \sigma' w$. Then assertion follows from (1). In fact,
there exists a reduced expression $s_{i_\ell}\cdots s_{i_1}$ of $\sigma$
such that $s_{i_\ell}\cdots s_{i_1}s_n\cdots s_i$ is a reduced expression of $\sigma w$.
If we take a path
\[s_{i_\ell}\cdots s_{i_1}s_n\cdots s_1\rightarrow s_{i_{\ell+1}}s_{i_{\ell}}\cdots s_{i_1} w \rightarrow 
\cdots\rightarrow s_{i_{\ell+\ell'}}\cdots s_{i_{\ell+1}}s_{i_{\ell}}\cdots s_{i_1} w=\sigma'w. \]
Then $s_{i_{\ell+\ell'}}\cdots s_{i_{\ell+1}}s_{i_{\ell}}\cdots s_{i_1}$ is a reduced expression of $\sigma'$.

We show the assertion (3). Let $J=\{1,\dots,n-1\}$. By Proposition\;\ref{fresultsym}, we have that
\[\bigvee_{j\in J}(s_jw)=(\bigvee_{j\in J}s_j)w=w_0(J)w.\]
Let $w'\in [w,w_0(J)w]$. Then there is a path
\[w\leftarrow s_{i_1}w\leftarrow\cdots\leftarrow s_{i_{\ell'}}\cdots s_{i_1}w=
w'\leftarrow \cdots \leftarrow s_{i_{\ell}}\cdots s_{i_{\ell'+1}}s_{i_{\ell'}}\cdots s_{i_1}w=w_0(J)w.\]
Let $\sigma =s_{i_{\ell'}}\cdots s_{i_1}$. By definition, $s_{i_{\ell'}}\cdots s_{i_1}$
is a reduced expression of $\sigma$ and $s_{i_{\ell}}\cdots s_{i_1}$ is a reduced expression of $w_0(J)$.
Also we have that $\sigma\leq w_0(J)$. Hence Proposition\;\ref{fresultsym}(2) implies that
$w'\in \langle s_j\mid j\in J\rangle w.$ Then the assertion follows from (1) and (2).
\end{proof}
Similarly, we have the following.
\begin{lemma}
\label{teqlemma2}
Let $w=s_1 s_{2}\cdots s_i$. 
\begin{enumerate}[{\rm (1)}]
\item  For any $\sigma\in \langle s_2,\dots,s_{n}\rangle$, we have
\[\sigma w\geq w.\]
\item Let $\sigma,\sigma'\in \langle s_2,\dots,s_{n}\rangle$. Then 
\[\sigma w\leq \sigma'w\Leftrightarrow \sigma\leq \sigma'.\]
\item We have the following.
\[[w,(s_2w)\vee\cdots\vee(s_n w)]=[w,(s_2\vee\cdots\vee s_n)w]=\langle s_2,\dots, s_n\rangle w
\stackrel{\mathrm{poset}}{\simeq} (\Sym_{n},\leq). \]
\end{enumerate}
\end{lemma}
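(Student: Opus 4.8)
The plan is to deduce Lemma~\ref{teqlemma2} from Lemma~\ref{teqlemma} by transporting the statement through the diagram automorphism of type $A_n$.

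First I would introduce the unique group automorphism $\psi\colon\Sym_{n+1}\to\Sym_{n+1}$ satisfying $\psi(s_k)=s_{n+1-k}$ for all $1\le k\le n$; its existence is immediate since the assignment $s_k\mapsto s_{n+1-k}$ respects all the Coxeter relations of type $A_n$, and concretely $\psi$ is conjugation by the longest element $w_0$, because $w_0s_kw_0=s_{n+1-k}$. The crucial feature is that $\psi$ permutes the simple system $\{s_1,\dots,s_n\}$, hence carries each reduced expression to a reduced expression of the same length; in particular $\ell\circ\psi=\ell$. It follows that $\psi$ is an automorphism of the poset $(\Sym_{n+1},\le)$ for the left weak order: if $w'=\sigma w$ with $\ell(w')=\ell(w)+\ell(\sigma)$, then $\psi(w')=\psi(\sigma)\psi(w)$ with the same length additivity, so $w\le w'$ implies $\psi(w)\le\psi(w')$, and applying this to $\psi^{-1}$ gives the converse.

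Next I would record the compatibilities that make the translation mechanical. Put $w=s_1s_2\cdots s_i$ as in Lemma~\ref{teqlemma2} and set $i':=n+1-i$, which again lies in $\{1,\dots,n\}$. Then $\psi(s_ns_{n-1}\cdots s_{i'})=s_1s_2\cdots s_i=w$, so $w$ is precisely the $\psi$-image of the element occurring in Lemma~\ref{teqlemma} for the parameter $i'$; moreover $\psi(\langle s_1,\dots,s_{n-1}\rangle)=\langle s_2,\dots,s_n\rangle$, and $\psi$ restricts to a poset isomorphism between these two parabolic subgroups, each of which carries the left weak order of a copy of $\Sym_n$. Being a poset automorphism of the lattice $(\Sym_{n+1},\le)$, the map $\psi$ also preserves joins, and $\psi(s_k v)=s_{n+1-k}\psi(v)$ for every $v$.

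Finally I would simply apply $\psi$ to each of the three assertions of Lemma~\ref{teqlemma} instantiated at $i'=n+1-i$, with $v:=s_ns_{n-1}\cdots s_{i'}$. Assertion~(1) for $\sigma\in\langle s_1,\dots,s_{n-1}\rangle$ becomes, after applying $\psi$, the statement $\psi(\sigma)w\ge w$ for arbitrary $\psi(\sigma)\in\langle s_2,\dots,s_n\rangle$, which is exactly (1) of Lemma~\ref{teqlemma2}. Assertion~(2) transfers verbatim because $\psi$ both preserves and reflects $\le$. Assertion~(3) transfers because $\psi$ sends the interval $[v,(s_1v)\vee\cdots\vee(s_{n-1}v)]$, the element $(s_1\vee\cdots\vee s_{n-1})v$, and the orbit $\langle s_1,\dots,s_{n-1}\rangle v$ to $[w,(s_2w)\vee\cdots\vee(s_nw)]$, $(s_2\vee\cdots\vee s_n)w$, and $\langle s_2,\dots,s_n\rangle w$ respectively, and carries the displayed poset isomorphism with $(\Sym_n,\le)$ to its analogue. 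Since every ingredient is merely pushed forward along a poset isomorphism, there is essentially no real obstacle; the single point that deserves a line of care is checking that $\psi$ preserves the \emph{left} weak order, i.e.\ that conjugation by $w_0$ is length-preserving because it stabilises the simple system setwise. Alternatively, one may re-run the proof of Lemma~\ref{teqlemma} verbatim with $k$ replaced by $n+1-k$ throughout and with the reduced expression $s_1s_2\cdots s_i$ of $w$ in place of $s_ns_{n-1}\cdots s_i$, invoking Theorem~\ref{matsumoto} and Proposition~\ref{fresultsym} exactly as there; this is longer but strictly parallel.
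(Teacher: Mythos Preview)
Your proof is correct. The paper gives no explicit proof of this lemma at all; it simply writes ``Similarly, we have the following'' immediately after Lemma~\ref{teqlemma}, leaving the reader to re-run that argument with the obvious substitutions. Your approach via the diagram automorphism $\psi\colon s_k\mapsto s_{n+1-k}$ (conjugation by $w_0$) is a clean way to make this ``similarly'' precise without repeating any work: once you observe that $\psi$ permutes the simple reflections and is therefore a length-preserving group automorphism, hence a poset automorphism for the left weak order, all three parts of Lemma~\ref{teqlemma} transfer at once. Your closing remark, that one could alternatively re-run the proof of Lemma~\ref{teqlemma} verbatim with relabelled indices, is exactly what the paper intends. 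Either route is fine; yours has the mild advantage of isolating the underlying symmetry explicitly rather than appealing to it implicitly.
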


\begin{lemma}
\label{keylemma} Let $w^+_i=s_n\cdots s_i$.
We put $M_i^+\in \ind \add T_{w_i^+}$ such that $M_i^+ \not\in \add T_{s_nw_i^+}$. Then
\[\sttilt_{M_i^+} \Lambda =\rho( [w_i^+, (s_1w_i^+)\vee\cdots \vee (s_{n-1} w_i^+)])
=\rho( \langle s_1,\dots s_{n-1} \rangle w_i^+). \]
\end{lemma}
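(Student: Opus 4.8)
The plan is to prove $\sttilt_{M_i^+}\Lambda=\rho(\langle s_1,\dots,s_{n-1}\rangle w_i^+)$, which by Lemma~\ref{teqlemma}~(3) is the same as the asserted identity. By that lemma $\langle s_1,\dots,s_{n-1}\rangle w_i^+$ equals the interval $I:=[w_i^+,(s_1w_i^+)\vee\cdots\vee(s_{n-1}w_i^+)]$ of $(\Sym_{n+1},\leq)$, and as a poset $I\simeq(\Sym_n,\leq)$; hence $\rho(I)$ is finite, connected, $(n-1)$-regular, and, being an interval of $\sttilt\Lambda\simeq(\Sym_{n+1},\leq)$, a full subposet of $\sttilt\Lambda$. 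On the other hand, Jasso's reduction theorem (Theorem~\ref{reduction theorem}) applied to the $\tau$-rigid pair $(M_i^+,0)$ gives $\sttilt_{M_i^+}\Lambda\simeq\sttilt\Gamma$ with $|\Gamma|=n-1$, so $\sttilt_{M_i^+}\Lambda$ is $(n-1)$-regular and, by Theorem~\ref{basicprop}~(3), any finite connected component of it is all of it. Granting the inclusion $\rho(I)\subseteq\sttilt_{M_i^+}\Lambda$, one finishes as in the proof of Proposition~\ref{ifpart}: $\rho(I)$ is then a full subposet of $\sttilt_{M_i^+}\Lambda$ too, a connected full $(n-1)$-regular subposet of an $(n-1)$-regular poset is a union of connected components, so $\rho(I)$, being connected and finite, equals all of $\sttilt_{M_i^+}\Lambda$.

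Thus everything reduces to the inclusion, i.e.\ to $M_i^+\in\add T_{\sigma w_i^+}$ for all $\sigma\in\langle s_1,\dots,s_{n-1}\rangle$. Using the inversion rules of Theorem~\ref{matsumoto} and $(w_i^+)^{-1}(n+1)=i$, one checks that $w_i^+=s_n\cdots s_i$ has a single direct predecessor in $(\Sym_{n+1},\leq)$, namely $s_nw_i^+=s_{n-1}\cdots s_i$, so that the $n$ Hasse neighbours of $T_{w_i^+}$ are $T_{s_nw_i^+}$ below and $T_{s_jw_i^+}$, $1\leq j\leq n-1$, above. Since $M_i^+\notin\add T_{s_nw_i^+}$ by definition and a mutation exchanges exactly one indecomposable summand, the edge $T_{w_i^+}-T_{s_nw_i^+}$ is the mutation of $T_{w_i^+}$ at $M_i^+$; hence the $n-1$ ascending edges all retain $M_i^+$, so $M_i^+\in\add T_{s_jw_i^+}$ for $1\leq j\leq n-1$. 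One now induces on $\ell(\sigma)$: if $\sigma=s_j\sigma'$ with $j\leq n-1$, $\ell(s_j\sigma')=\ell(\sigma')+1$, and $M_i^+\in\add T_{\sigma'w_i^+}$, it is enough to show that the mutation of $T_{\sigma'w_i^+}$ at $M_i^+$ lands on $T_{s_n\sigma'w_i^+}$ — the unique Hasse neighbour of $T_{\sigma'w_i^+}$ lying outside the coset $\langle s_1,\dots,s_{n-1}\rangle w_i^+$, because $s_k\sigma'\in\langle s_1,\dots,s_{n-1}\rangle$ exactly when $k\leq n-1$ — for then the ascending step to $T_{s_j\sigma'w_i^+}$ does not touch $M_i^+$, completing the induction.

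The main obstacle is exactly this last point, equivalently that $M_i^+\notin\add T_{s_n\sigma'w_i^+}$ for every $\sigma'\in\langle s_1,\dots,s_{n-1}\rangle$. I would carry it along in the same induction and split according to whether $s_n\sigma'w_i^+<\sigma'w_i^+$ or $s_n\sigma'w_i^+>\sigma'w_i^+$ (the sign of the comparison being governed, via Theorem~\ref{matsumoto}, by whether $(\sigma')^{-1}(n)\geq i$). When the passage $T_{\sigma'w_i^+}\to T_{s_n\sigma'w_i^+}$ changes the support, the exchanged summand is a projective one; writing $e=e_i+\cdots+e_n$, so that $M_i^+\simeq e_i(\Lambda/(1-e))$ is non-projective over $\Lambda$ when $i\geq2$ (the case $i=1$, where $M_1^+=e_1\Lambda$, being handled separately through $\sttilt_{e_1\Lambda}\Lambda\simeq\sttilt(\Lambda/(e_1))$), a comparison of supports settles the claim. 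In the remaining case one pins down the exchanged summand from the explicit forms of $T_{s_{n-1}\cdots s_i}$ and $T_{w_i^+}$ in Lemma~\ref{titoj}, together with injectivity of the $g$-vector map (Theorem~\ref{gvector}), which prevents $g^{M_i^+}$ from appearing among the $g$-vectors of the summands of $\mathbf{S}(T_{s_n\sigma'w_i^+})$. An alternative I would try first is to establish directly that $T_{w_i^+}$ is the minimum element of $\sttilt_{M_i^+}\Lambda$ — its only direct predecessor $T_{s_nw_i^+}$ in $\sttilt\Lambda$ lacks $M_i^+$, and every interval of $\sttilt\Lambda$ is finite since $\Sym_{n+1}$ is — and then to read the coset description off the combinatorial identity $\langle s_1,\dots,s_{n-1}\rangle w_i^+=\{w\in\Sym_{n+1}\mid w(i)=n+1\}$.
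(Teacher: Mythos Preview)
Your inductive strategy has a real gap at precisely the point you flag: you need that the mutation of $T_{\sigma'w_i^+}$ at $M_i^+$ lands on $T_{s_n\sigma'w_i^+}$ for \emph{every} $\sigma'\in\langle s_1,\dots,s_{n-1}\rangle$, and none of the patches you sketch actually delivers this. The support argument only covers the case where the $s_n$-step changes the support; the $g$-vector idea would require already knowing the $g$-vectors of all summands of $T_{s_n\sigma'w_i^+}$, which is exactly what is being determined. Worse, your treatment of $i=1$ invokes $M_1^+=e_1\Lambda$, but that identification is Lemma~\ref{impotantlemma}(1), whose proof \emph{uses} the present lemma, so this is circular.

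The paper avoids the induction entirely by exploiting a structural fact you do not use: for any $\tau$-rigid module $N$ the set $\sttilt_N\Lambda$ is \emph{convex} in $\sttilt\Lambda$, i.e.\ coincides with the interval $[T_{\min}^N,T_{\max}^N]$. Indeed, if $T_{\min}^N\le M\le T_{\max}^N$ then $N\in\fac T_{\min}^N\subseteq\fac M$ and $\fac M\subseteq\fac T_{\max}^N={}^{\perp}(\tau N)$, so $N$ is Ext-projective in $\fac M$ and hence $N\in\add M$. Granting this, the paper's argument is short: the unique direct successor $T_{s_nw_i^+}$ of $T_{w_i^+}$ lacks $M_i^+$, so $T_{w_i^+}$ is the minimum of $\sttilt_{M_i^+}\Lambda$; each $T_{s_jw_i^+}$ with $j\le n-1$ lies in $\sttilt_{M_i^+}\Lambda$, hence so does their join, and by convexity $\rho(I)=[T_{w_i^+},\bigvee_{j\le n-1}T_{s_jw_i^+}]\subseteq\sttilt_{M_i^+}\Lambda$. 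The $(n{-}1)$-regularity argument you already have then forces equality. Your ``alternative'' at the end is heading in exactly this direction; what it is missing is the convexity step, which replaces the whole induction.
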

\begin{proof}
Since $s_j w_i^+\geq w_i^+$ for any $j\in \{1,\dots, n-1\}$, we see that $T_{w_i^+}$ have to be the minimum element
of $\sttilt_{M_i^+} \Lambda$. (Note that $M_i^+\in \add T_{s_1w_i^+}\cap\cdots\cap \add T_{s_{n-1}w_i^+}$.)
Let $T$ be a maximum element of $\sttilt_{M_i^+} \Lambda$. Then we obtain that
\[T_{s_1w_i^+}\vee\cdots\vee T_{s_{n-1}w_i^+}\leq T.\]
In particular, Lemma\;\ref{teqlemma} implies that
\[\sttilt_{M_i^+} \Lambda\supset [T_{w_i^+},T_{s_1w_i^+}\vee\cdots\vee T_{s_{n-1}w_i^+}]
=\rho(\langle s_1,\dots, s_{n-1} \rangle w_i^+)\simeq (\Sym_n,\leq).\]
Since $(\Sym_n,\leq)$ is a $(n-1)$-regular poset, we conclude that
\[\sttilt_{M_i^+} \Lambda= [T_{w_i^+},T_{s_1w_i^+}\vee\cdots\vee T_{s_{n-1}w_i^+}]
=\rho(\langle s_1,\dots, s_{n-1} \rangle w_i^+).\]\end{proof}
Similarly we obtain the following.
\begin{lemma}
\label{keylemma1'} Let $w^-_i=s_1\cdots s_i$.
We put $M_i^-\in \ind \add T_{w_i^-}$ such that $M_i^- \not\in \add T_{s_1w_i^-}$. Then
\[\sttilt_{M_i^-} \Lambda =\rho( [w_i^-, (s_2w_i^-)\vee\cdots \vee (s_n w_i^-)])
=\rho( \langle s_2,\dots s_n \rangle w_i^-). \]
\end{lemma}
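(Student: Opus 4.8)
The approach is to repeat the proof of Lemma~\ref{keylemma} essentially verbatim, replacing the parabolic subgroup $\langle s_1,\dots,s_{n-1}\rangle$ by $\langle s_2,\dots,s_n\rangle$ and the auxiliary combinatorial input Lemma~\ref{teqlemma} by its mirror Lemma~\ref{teqlemma2}. The point is that the proof of Lemma~\ref{keylemma} uses nothing about $\Lambda$ beyond the fixed poset isomorphism $\rho\colon(\Sym_{n+1},\leq)\stackrel{\sim}{\to}\sttilt\Lambda$ together with Jasso's reduction theorem; once the analogous statement about the weak order is available, the argument transports word for word. Since Lemma~\ref{teqlemma2} records exactly that mirror statement, there is nothing new to prove — hence the excerpt's phrase ``Similarly we obtain the following''.

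In detail, I would proceed as follows. First, $s_j w_i^-\geq w_i^-$ for every $j\in\{2,\dots,n\}$ by Lemma~\ref{teqlemma2}(1); transporting through $\rho$, this gives $M_i^-\in\add T_{s_2w_i^-}\cap\cdots\cap\add T_{s_nw_i^-}$ and $T_{w_i^-}\leq T_{s_jw_i^-}$ for all such $j$, so $T_{w_i^-}$ is the minimum element of $\sttilt_{M_i^-}\Lambda$. Second, for a maximum element $T$ of $\sttilt_{M_i^-}\Lambda$ we get $T_{s_2w_i^-}\vee\cdots\vee T_{s_nw_i^-}\leq T$, whence the interval $[T_{w_i^-},\,T_{s_2w_i^-}\vee\cdots\vee T_{s_nw_i^-}]$ is contained in $\sttilt_{M_i^-}\Lambda$; by Lemma~\ref{teqlemma2}(3) this interval equals $\rho(\langle s_2,\dots,s_n\rangle w_i^-)$ and is poset isomorphic to $(\Sym_n,\leq)$, which is an $(n-1)$-regular poset. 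Third, Theorem~\ref{reduction theorem} gives $\sttilt_{M_i^-}\Lambda\simeq\sttilt\Gamma$ for an algebra $\Gamma$ with $|\Gamma|=n-1$, so $\sttilt_{M_i^-}\Lambda$ is itself $(n-1)$-regular; since an $(n-1)$-regular full subposet of an $(n-1)$-regular poset is a union of connected components, the finite connected interval above must be all of $\sttilt_{M_i^-}\Lambda$ (Theorem~\ref{basicprop}(3) applied to $\Gamma$). This yields both claimed equalities.

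I do not expect any genuine obstacle here, since every step is the exact mirror of one already carried out for Lemma~\ref{keylemma}. The only two points needing (routine) care are the well-definedness of $M_i^-$ — which holds because two support $\tau$-tilting modules joined by a Hasse arrow differ in a single indecomposable summand, so $M_i^-$ is the unique summand of $T_{w_i^-}$ not occurring in its predecessor $T_{s_1w_i^-}$ — and the identification $\sttilt_{M_i^-}\Lambda\simeq\sttilt\Gamma$ of Theorem~\ref{reduction theorem}, applied in exactly the same manner as in Lemma~\ref{keylemma}. Thus the lemma follows \emph{by symmetry}.
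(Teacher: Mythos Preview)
Your proposal is correct and is precisely the ``by symmetry'' argument the paper intends: the paper gives no separate proof of this lemma, merely stating ``Similarly we obtain the following'' after Lemma~\ref{keylemma}, and your write-up is the mirror of that proof with Lemma~\ref{teqlemma2} in place of Lemma~\ref{teqlemma}. If anything, you have supplied more detail than the paper does (the explicit invocation of Theorem~\ref{reduction theorem} and Theorem~\ref{basicprop}(3) to justify the regularity step), but the approach is identical.
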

\begin{lemma}
\label{keylemma2}
$\sttilt_{P_{\sigma(i)}} \Lambda=\rho([\bigwedge_{k\neq i}(s_k w_0), w_0 ])=\rho(\langle s_k\mid k\neq i \rangle w_0.)$
\end{lemma}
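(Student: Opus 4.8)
The plan is to reduce the statement to an order identity inside $(\Sym_{n+1},\le)$ and then to match the resulting interval with $\sttilt_{P_{\sigma(i)}}\Lambda$, using Jasso's reduction theorem and the mutation description of the co-atoms of $\sttilt\Lambda$. Write $J:=\{1,\dots,n\}\setminus\{i\}=\{1,\dots,i-1\}\sqcup\{i+1,\dots,n\}$ and $W_J:=\langle s_k\mid k\in J\rangle\cong\Sym_{i}\times\Sym_{n+1-i}$. By Proposition~\ref{fresultsym}(2) the induced order on $W_J=[1,w_0(J)]\subseteq(\Sym_{n+1},\le)$ is the product weak order, so the subposet $W_J$ is finite, has connected Hasse quiver, and is $(i-1)+(n-i)=(n-1)$-regular.

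First I would prove the purely order-theoretic identity $[\bigwedge_{k\in J}(s_kw_0),w_0]=W_Jw_0$ in $(\Sym_{n+1},\le)$. The order-reversing bijection $w\mapsto ww_0$ of $(\Sym_{n+1},\le)$ carries the interval $[1,w_0(J)]$ onto $[w_0(J)w_0,w_0]$, giving $[w_0(J)w_0,w_0]=W_Jw_0$ by Proposition~\ref{fresultsym}(2); and since it interchanges joins and meets, Proposition~\ref{fresultsym}(1) yields $\bigl(\bigwedge_{k\in J}(s_kw_0)\bigr)w_0=\bigvee_{k\in J}s_k=w_0(J)$, i.e. $\bigwedge_{k\in J}(s_kw_0)=w_0(J)w_0$. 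Transporting along the poset isomorphism $\rho$ (which preserves meets and intervals) then gives $\rho([\bigwedge_{k\in J}(s_kw_0),w_0])=[L,\Lambda]$ with $L:=\rho(w_0(J)w_0)=\bigwedge_{k\in J}T_{s_kw_0}$ and $\Lambda=T_{w_0}$; being an interval isomorphic to $W_Jw_0\cong W_J$, this $[L,\Lambda]$ is a finite, connected, $(n-1)$-regular full subposet of $\sttilt\Lambda$.

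Next I identify $\sttilt_{P_{\sigma(i)}}\Lambda$. By definition of $\sigma$ we have $T_{s_kw_0}=Y_{\sigma(k)}$; and for $k\in J$, i.e. $\sigma(k)\ne\sigma(i)$, the co-atom $Y_{\sigma(k)}$ is the unique one adjacent to $\Lambda$ not having $P_{\sigma(k)}$ as a summand, so by Theorem~\ref{basicprop}(1)--(2) it contains all $P_l$ with $l\ne\sigma(k)$; in particular $P_{\sigma(i)}\in\add Y_{\sigma(k)}$, and of course $P_{\sigma(i)}\in\add\Lambda$. Since $(P_{\sigma(i)},0)$ is a $\tau$-rigid pair with Bongartz completion $\Lambda$, Theorem~\ref{reduction theorem} shows $\sttilt_{P_{\sigma(i)}}\Lambda$ is an order-convex full subposet of $\sttilt\Lambda$, isomorphic to $\sttilt(\Lambda/(e_{\sigma(i)}))$ and hence $(n-1)$-regular, with maximum $\Lambda$ and a minimum $N_0$; thus $\sttilt_{P_{\sigma(i)}}\Lambda=[N_0,\Lambda]$. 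As each $Y_{\sigma(k)}$ ($k\in J$) lies in $[N_0,\Lambda]$, we get $N_0\le\bigwedge_{k\in J}Y_{\sigma(k)}=L$, whence $L\in[N_0,\Lambda]$ and $[L,\Lambda]\subseteq\sttilt_{P_{\sigma(i)}}\Lambda$. For the reverse inclusion, $[L,\Lambda]$ is a finite, connected, $(n-1)$-regular full subposet of the $(n-1)$-regular poset $\sttilt_{P_{\sigma(i)}}\Lambda$, hence a union of its connected components; being connected and containing $\Lambda$, it is the connected component of $\Lambda$, which by Theorem~\ref{basicprop}(3) (transported along $\sttilt_{P_{\sigma(i)}}\Lambda\simeq\sttilt(\Lambda/(e_{\sigma(i)}))$) is all of $\sttilt_{P_{\sigma(i)}}\Lambda$. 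Combined with the first step this yields $\sttilt_{P_{\sigma(i)}}\Lambda=[L,\Lambda]=\rho([\bigwedge_{k\in J}(s_kw_0),w_0])=\rho(\langle s_k\mid k\ne i\rangle w_0)$.

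The point I expect to need the most care is the \emph{order-convexity} of $\sttilt_{P_{\sigma(i)}}\Lambda$ inside $\sttilt\Lambda$ — equivalently, that a common indecomposable direct summand of both endpoints of an interval of support $\tau$-tilting modules is a direct summand of every module in the interval. This is precisely what turns $N_0\le L\le\Lambda$ into $L\in\sttilt_{P_{\sigma(i)}}\Lambda$; it is already used tacitly in the proofs of Lemmas~\ref{titoj} and~\ref{keylemma} and follows from Jasso's reduction theorem, but it deserves to be isolated as a separate statement. The remaining verifications — the connectivity and $(n-1)$-regularity of $W_J$, and of the right coset $W_Jw_0$ with its induced order, and that $w_0(J)w_0$ is the bottom of the relevant interval — are routine.
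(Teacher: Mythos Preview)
Your proof is correct and follows essentially the same route as the paper's: identify $\bigwedge_{k\ne i}(s_kw_0)=w_0(J)w_0$ via the anti-automorphism $w\mapsto ww_0$ (the paper does this computation by a two-line sandwich argument rather than invoking the anti-automorphism directly), note that each $T_{s_kw_0}$ for $k\ne i$ lies in $\sttilt_{P_{\sigma(i)}}\Lambda$ so the interval is contained there, and conclude equality by the $(n-1)$-regularity/connected-component argument. Your discussion of order-convexity is more careful than the paper's (which leaves it implicit), and in fact the projective case is even simpler than you suggest: if $N\le M$ and $P_j\in\add N$ then $P_j\in\fac N\subset\fac M$, and projectivity forces the surjection $M^r\twoheadrightarrow P_j$ to split, so $\sttilt_{P_{\sigma(i)}}\Lambda$ is an up-set with a minimum and hence equals $[N_0,\Lambda]$ without appeal to Jasso.
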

\begin{proof}
Note that for any $k\neq i$, we have $P_{\sigma(i)}\in \add T_{s_k w_0}$.
Let $N_i$ be the minimum element of $\sttilt_{P_{\sigma(i)}} \Lambda$. Then 
$N_i\leq \bigwedge_{i\neq k} T_{s_kw_0}$ and 
\[\sttilt_{P_{\sigma(i)}} \Lambda \supset [\bigwedge_{i\neq k} T_{s_kw_0},\Lambda].\]
Now we let $\bigwedge_{i\neq k} (s_{k}w_0)=w w_0$ and $w'=\bigvee_{i\neq k} s_k$.
Then $w'w_0\leq s_k w_0$ for any $k\neq i$. Thus we conclude that
\[w'w_0\leq ww_0.\] 
Therefore we obtain
\[w'\geq w.\]
On the other hands, we have
$ ww_0\leq s_{k} w_0(\Leftrightarrow w\geq s_k)$ for any $k\neq i$.
In particular, $w\geq w'$. Hence we obtain $w=w'$ and
\[[\bigwedge_{k\neq i}(s_k w_0),w_0]=[(\bigvee_{k\neq i} s_k)w_0,w_0]
=\langle s_k\mid k\neq i\rangle w_0\simeq (\Sym_i\times \Sym_{n-i+1},\leq^{\mathrm{op}}).\]
Since $(\Sym_i\times \Sym_{n-i+1},\leq^{\mathrm{op}})$ is a $(n-1)$-regular poset,
  we obtain the assertion.
\end{proof}
\begin{lemma}
\label{impotantlemma}
We have the following.
\begin{enumerate}[{\rm (1)}]
\item $M_1^+=P_{\sigma(n)}$ and $M_n^-=P_{\sigma(1)}$.
\item If $i\neq 1$, then $M_i^+=e_{i}\Lambda /e_i\Lambda (1-e_i-\cdots-e_n)\Lambda.$
Furthermore, we have that
\[\sttilt_{M_i^+}\Lambda \cap \sttilt_{P_{\sigma(n-i+1)}} \Lambda \neq \emptyset.\]
\item If $i\neq n$, then $M_i^-=e_{i}\Lambda /e_i\Lambda (1-e_i-\cdots-e_1)\Lambda.$
Moreover, we have that
\[\sttilt_{M_i^-}\Lambda \cap \sttilt_{P_{\sigma(n-i+1)}} \Lambda \neq \emptyset.\]
\end{enumerate}
\end{lemma}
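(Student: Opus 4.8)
The statement to prove, Lemma \ref{impotantlemma}, collects three facts relating the distinguished indecomposable summands $M_i^{\pm}$ appearing in $T_{w_i^{\pm}}$ with certain indecomposable projectives $P_{\sigma(j)}$ and with specific explicit modules $e_i\Lambda/e_i\Lambda(1-e_i-\cdots-e_n)\Lambda$. The overall strategy is to combine the three preceding lemmas --- Lemma \ref{keylemma}, Lemma \ref{keylemma1'} and Lemma \ref{keylemma2} --- which identify the intervals $\sttilt_{M_i^{+}}\Lambda$, $\sttilt_{M_i^{-}}\Lambda$ and $\sttilt_{P_{\sigma(i)}}\Lambda$ with subposets of $\sttilt\Lambda$ coming from parabolic cosets in $(\Sym_{n+1},\leq)$, together with the explicit computation of $T_{s_js_{j-1}\cdots s_i}$ from Lemma \ref{titoj}. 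The point is that all of $M_i^{\pm}$, $P_{\sigma(j)}$ are pinned down by \emph{which} parabolic coset their $\sttilt$-support equals, and these cosets are determined purely combinatorially in $\Sym_{n+1}$.

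\textbf{Part (1).} First I would treat $M_1^+ = P_{\sigma(n)}$ (the claim $M_n^- = P_{\sigma(1)}$ is symmetric via the labelling-change $i\leftrightarrow n+1-i$). By Lemma \ref{keylemma}, $\sttilt_{M_1^+}\Lambda = \rho(\langle s_1,\dots,s_{n-1}\rangle w_1^+)$ where $w_1^+ = s_n s_{n-1}\cdots s_1 = w_0(\{1,\dots,n\})$ is actually a very specific element; in fact one checks $\langle s_1,\dots,s_{n-1}\rangle w_1^+$ is exactly the coset $\langle s_1,\dots,s_{n-1}\rangle w_0$ when $n$ and the indices match up --- here is where I must be careful with the precise identity $s_n\cdots s_1 \cdot (\text{parabolic}) $. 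On the other hand, by Lemma \ref{keylemma2}, $\sttilt_{P_{\sigma(i)}}\Lambda = \rho(\langle s_k\mid k\neq i\rangle w_0)$. Taking $i = n$ gives $\sttilt_{P_{\sigma(n)}}\Lambda = \rho(\langle s_1,\dots,s_{n-1}\rangle w_0)$. So it suffices to verify the coset identity $\langle s_1,\dots,s_{n-1}\rangle w_1^+ = \langle s_1,\dots,s_{n-1}\rangle w_0$ in $\Sym_{n+1}$, i.e. that $w_1^+ w_0^{-1}\in\langle s_1,\dots,s_{n-1}\rangle$; this is a short length/reduced-word computation using Theorem \ref{matsumoto}. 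Once $\sttilt_{M_1^+}\Lambda = \sttilt_{P_{\sigma(n)}}\Lambda$, the fact that a support $\tau$-tilting pair with a prescribed nonzero $\tau$-rigid summand is recovered from the minimum of $\sttilt_{N}\Lambda$ (together with $|M_1^+| = |P_{\sigma(n)}| = 1$, both indecomposable) forces $M_1^+\cong P_{\sigma(n)}$, since both are the distinguished indecomposable $N$ for which these intervals are $\sttilt_N\Lambda$; if needed one compares $g$-vectors via Theorem \ref{gvector}.

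\textbf{Parts (2) and (3).} For $i\neq 1$, I would use Lemma \ref{titoj}(1): taking $j = n$ there (and noting $(i,j) = (i,n)\neq(1,n)$ since $i\neq 1$) gives $T_{s_n s_{n-1}\cdots s_i} = \bigoplus_{k=i}^n e_i\Lambda/e_i\Lambda(1-e_i-\cdots-e_k)\Lambda$. But $T_{s_n\cdots s_i} = T_{s_n w_i^+}$ where $w_i^+ = s_{n-1}\cdots s_i$, so $M_i^+$, defined as the indecomposable summand of $T_{w_i^+}$ not in $\add T_{s_n w_i^+}$, is identified by the mutation structure: $T_{w_i^+}$ and $T_{s_n w_i^+} = T_{w_i^+}$ with $M_i^+$ replaced by the next summand, so $M_i^+$ is the unique summand of $\bigoplus_{k=i}^{n-1}e_i\Lambda/e_i\Lambda(1-e_i-\cdots-e_k)\Lambda$'s ``top'' analogue --- more precisely, comparing $T_{w_i^+} = \bigoplus_{k=i}^{n-1}e_i\Lambda/e_i\Lambda(1-e_i-\cdots-e_k)\Lambda$ (again by Lemma \ref{titoj}(1) with $j = n-1$) against $T_{s_n w_i^+} = \bigoplus_{k=i}^{n}e_i\Lambda/e_i\Lambda(1-e_i-\cdots-e_k)\Lambda$ shows the mutation at $w_i^+$ in the direction of $s_n$ \emph{adds} the summand $e_i\Lambda/e_i\Lambda(1-e_i-\cdots-e_n)\Lambda$; hence $M_i^+ = e_i\Lambda/e_i\Lambda(1-e_i-\cdots-e_n)\Lambda$ is the summand that is ``removed'' in going the other way, i.e. the one not in $\add T_{s_n w_i^+}$ --- I must get the direction right here, and this is the main subtlety. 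For the nonemptiness $\sttilt_{M_i^+}\Lambda\cap\sttilt_{P_{\sigma(n-i+1)}}\Lambda\neq\emptyset$, by Lemmas \ref{keylemma} and \ref{keylemma2} this translates to $\rho(\langle s_1,\dots,s_{n-1}\rangle w_i^+)\cap\rho(\langle s_k\mid k\neq n-i+1\rangle w_0)\neq\emptyset$, equivalently $\langle s_1,\dots,s_{n-1}\rangle w_i^+ \cap \langle s_k\mid k\neq n-i+1\rangle w_0\neq\emptyset$; I would exhibit an explicit common element --- $w_0$ itself should lie in both cosets, since $w_i^+ = s_{n-1}\cdots s_i$ is a prefix of $w_0$ up to a parabolic factor and $s_1,\dots,s_{n-1}$ generate enough, and $w_0 = (\text{element of }\langle s_k\mid k\neq n-i+1\rangle)\cdot w_0$ trivially if one checks the other containment --- so the real work is checking $w_0\in\langle s_1,\dots,s_{n-1}\rangle w_i^+$, i.e. $w_0 (w_i^+)^{-1}\in\langle s_1,\dots,s_{n-1}\rangle$, a concrete reduced-word check. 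Part (3) is the mirror image under $i\leftrightarrow n+1-i$, using Lemma \ref{titoj}(2), Lemma \ref{keylemma1'} and Lemma \ref{keylemma2}.

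\textbf{Main obstacle.} The genuinely delicate point is bookkeeping the direction of the support $\tau$-tilting mutation in Part (2)/(3): identifying $M_i^{\pm}$ as the summand \emph{not} in $\add T_{s_* w_i^{\pm}}$ requires knowing precisely which of the two basic support $\tau$-tilting pairs extending the common almost complete pair corresponds to $w_i^{\pm}$ versus $s_* w_i^{\pm}$, and then matching that against the explicit module lists from Lemma \ref{titoj}. Everything else reduces to verifying coset identities in $\Sym_{n+1}$ of the form ``$u v^{-1}$ lies in a standard parabolic subgroup'', which are routine given Theorem \ref{matsumoto} and Proposition \ref{fresultsym}.
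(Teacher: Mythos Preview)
Your Part (1) matches the paper's approach exactly: show the coset identity $\langle s_1,\dots,s_{n-1}\rangle w_1^+=\langle s_1,\dots,s_{n-1}\rangle w_0$, then invoke Lemmas~\ref{keylemma} and~\ref{keylemma2} to conclude $\sttilt_{M_1^+}\Lambda=\sttilt_{P_{\sigma(n)}}\Lambda$, whence $M_1^+\cong P_{\sigma(n)}$.

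In Part (2) there are two issues. First, you have the definition of $w_i^+$ wrong: by Lemma~\ref{keylemma} it is $w_i^+=s_n s_{n-1}\cdots s_i$, not $s_{n-1}\cdots s_i$. With the correct definition, $s_n w_i^+=s_{n-1}\cdots s_i$ is \emph{shorter}, so Lemma~\ref{titoj}(1) gives (for $i\neq 1$)
\[
T_{w_i^+}=\bigoplus_{k=i}^{n} e_i\Lambda/e_i\Lambda(1-e_i-\cdots-e_k)\Lambda,\qquad
T_{s_n w_i^+}=\bigoplus_{k=i}^{n-1} e_i\Lambda/e_i\Lambda(1-e_i-\cdots-e_k)\Lambda,
\]
and $M_i^+$ is read off immediately as the summand indexed by $k=n$. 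Your text has the two modules interchanged, which is why the ``direction'' discussion becomes self-contradictory (you end up asserting that $M_i^+$ both is and is not in $\add T_{s_n w_i^+}$). This is easily repaired once $w_i^+$ is corrected.

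The second issue is a genuine gap: your proposed witness $w_0$ for the nonemptiness $\langle s_1,\dots,s_{n-1}\rangle w_i^+\cap\langle s_k\mid k\neq n-i+1\rangle w_0\neq\emptyset$ does \emph{not} lie in the first coset when $i\neq 1$. Indeed $w_0(w_i^+)^{-1}$ sends $n+1$ to $w_0(i)=n+2-i\neq n+1$, so $w_0(w_i^+)^{-1}\notin\langle s_1,\dots,s_{n-1}\rangle$. The paper instead exhibits the specific element
\[
w=(s_{n-1}\cdots s_1)(s_{n-1}\cdots s_2)\cdots(s_{n-1}\cdots s_{i-1})\,w_i^+,
\]
which visibly lies in $\langle s_1,\dots,s_{n-1}\rangle w_i^+$, and then checks $w(a)\leq n-i+1$ for all $a\geq i+1$, using the characterisation $\langle s_k\mid k\neq n-i+1\rangle w_0=\{w:\ w(a)\leq n-i+1\text{ for all }a\geq i+1\}$ (established by a cardinality count). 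You need a replacement witness along these lines; the element $w_0$ only works in the case $i=1$ already handled in Part~(1).
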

\begin{proof}
We show (1). Note that $w_0=s_1(s_2s_1)\cdots(s_n\cdots s_1)\in \langle s_1,\dots,s_{n-1}\rangle w_1^+$
and $w_1^+=s_n\cdots s_1\in \langle s_{1},\dots,s_{n-1} \rangle w_0.$
We also note that $w_0=(s_n)(s_{n-1}s_n)\cdots(s_1\cdots s_n)\in \langle s_2,\dots,s_n\rangle w_n^-$ and
$w_n^-=s_1\cdots s_n \in \langle s_{2},\dots,s_n \rangle w_0.$
Hence we have $\langle s_1,\dots,s_{n-1}\rangle w_1^+=\langle s_1,\dots,s_{n-1}\rangle w_0$
and $\langle s_2,\dots,s_n\rangle w_n^-=\langle s_2,\dots,s_n\rangle w_0$.
Then the assertion follows from
Lemma\;\ref{keylemma}, Lemma\;\ref{keylemma1'} and Lemma\;\ref{keylemma2}.
In fact, we see that
\[\sttilt_{M_1^+}\Lambda=\sttilt_{P_{\sigma(n)}} \Lambda,\ \sttilt_{M_n^-}\Lambda=\sttilt_{P_{\sigma(1)}} \Lambda.\]
 
Next we prove (2). We claim that
\[\langle s_k \mid k\neq n-i+1 \rangle w_0=\{w\in \Sym_{n+1}\mid w(a)\leq n-i+1\ \mathrm{for\ any\ } a\geq i+1  \}.  \]
Since $w_0(a)=n-a+2$, we obtain 
\[\langle s_k \mid k\neq n-i+1 \rangle w_0\subset \{w\in \Sym_{n+1}\mid w(a)\leq n-i+1\ \mathrm{for\ any\ } a\geq i+1  \}.\]
Then 
\[\langle s_k \mid k\neq n-i+1 \rangle w_0=\{w\in \Sym_{n+1}\mid w(a)\leq n-i+1\ \mathrm{for\ any\ } a\geq i+1  \}.  \]
follows from the fact that
\[ \langle s_k \mid k\neq n-i+1 \rangle w_0\stackrel{1:1}{\leftrightarrow}\Sym_{i}\times \Sym_{n-i+1} 
\stackrel{1:1}{\leftrightarrow}  \{w\in \Sym_{n+1}\mid w(a)\leq n-i+1\ \mathrm{for\ any\ } a\geq i+1  \}.\]

Let $w=(s_{n-1}\cdots s_{1})(s_{n-1}\cdots s_{2})\cdots(s_{n-1}\cdots s_{i-1})w_i^+$. One can easily check that
\[w(a)\leq n-i+1 \]
for any $a\geq i+1$. Hence $w\in \langle s_k \mid k\neq n-i+1 \rangle w_0 \cap \langle s_1\dots,s_{n-1}\rangle w_i^+$.
Then the assertion follows from Lemma\;\ref{titoj} (1), Lemma\;\ref{keylemma} and Lemma\;\ref{keylemma2}. 

Similar argument implies the assertion (3).
\end{proof}

\begin{lemma}
\label{pathha0jyanai}
We have the following.
\begin{enumerate}[{\rm (1)}]
\item $\sigma(i)=n-i+1$.
\item $\sttilt_{P_n^-}\Lambda=\rho(\langle s_1,\dots,s_{n-1} \rangle)$.
\item $\sttilt_{P_1^-} \Lambda=\rho(\langle s_{2},\dots,s_{n}\rangle)$.
\item $\supp(P_1)=\supp(P_n)=Q_0$.
\end{enumerate}
\end{lemma}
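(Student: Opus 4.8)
The plan is to establish parts (2) and (3) first (they do not depend on the rest), then part (1), and finally deduce (4) from (1).

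\emph{Parts (2) and (3).} Since $M\le M'$ in $\sttilt\Lambda$ forces $\supp M\subseteq\supp M'$, the subset $\sttilt_{P_n^-}\Lambda=\{M\in\sttilt\Lambda\mid n\notin\supp M\}$ is downward closed, hence a full subposet of $\sttilt\Lambda$; as $\sttilt_{P_n^-}\Lambda\simeq\sttilt(\Lambda/(e_n))$ it is $(n-1)$-regular and has a maximal element $N$, so $\sttilt_{P_n^-}\Lambda=[0,N]$. Since $\sttilt\Lambda\simeq(\Sym_{n+1},\le)$ is a lattice and $[0,N]$ has a top, this interval is closed under joins; as $X_1,\dots,X_{n-1}$ all lie in it (their supports are singletons $\neq\{n\}$), so does $X_1\vee\cdots\vee X_{n-1}$, i.e. $X_1\vee\cdots\vee X_{n-1}\le N$. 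By Proposition~\ref{fresultsym} we get $\rho(\langle s_1,\dots,s_{n-1}\rangle)=\rho([1,w_0(\{1,\dots,n-1\})])=[0,X_1\vee\cdots\vee X_{n-1}]\subseteq\sttilt_{P_n^-}\Lambda$, and the left-hand side is itself an $(n-1)$-regular full subposet (being isomorphic to $(\Sym_n,\le)$). A nonempty full $(n-1)$-regular subposet of the connected $(n-1)$-regular poset $\sttilt_{P_n^-}\Lambda$ must be all of it, so $\sttilt_{P_n^-}\Lambda=\rho(\langle s_1,\dots,s_{n-1}\rangle)$, which is (2); part (3) is the symmetric statement, with $n$ replaced by $1$ and $\langle s_1,\dots,s_{n-1}\rangle$ by $\langle s_2,\dots,s_n\rangle$.

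\emph{Part (1).} By Lemma~\ref{localstroflambda}(4), $\sigma$ induces a quiver automorphism of $Q^\circ$, and by Lemma~\ref{determiningarrow}(4) $Q^\circ$ is the doubled linear $A_n$-quiver on $\{1,\dots,n\}$; hence $\sigma$ is either $\mathrm{id}$ or $\sigma_0\colon i\mapsto n+1-i$, and it suffices to exclude $\sigma=\mathrm{id}$. Assume $\sigma=\mathrm{id}$, so $M_1^+=P_{\sigma(n)}=P_n$ by Lemma~\ref{impotantlemma}(1). Now $T_{w_1^+}=T_{s_n\cdots s_1}\in\dip(T_{s_{n-1}\cdots s_1})$ is obtained from $T_{s_{n-1}\cdots s_1}=\bigoplus_{k=1}^{n-1}e_1\Lambda/e_1\Lambda(1-e_1-\cdots-e_k)\Lambda$ (Lemma~\ref{titoj}(1), which has $n-1$ pairwise non-isomorphic summands, hence support exactly $\{1,\dots,n-1\}$) by a single mutation whose new indecomposable summand is $M_1^+=P_n$; since $P_n$ is not a quotient of $P_1$ and $n\notin\supp T_{s_{n-1}\cdots s_1}$, this mutation must enlarge the support by $n$, so $T_{w_1^+}$ has full support and contains the projective $P_n$ together with the summand $W:=e_1\Lambda/e_1\Lambda e_n\Lambda$. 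If $n\in\supp P_1$, then $W$ is a \emph{proper} factor of $P_1$, so $P_n$ occurs in the minimal projective presentation of $W$ and $\Hom_{\Kb(\proj\Lambda)}(\mathbf{S}(W),\mathbf{S}(P_n)[1])\neq0$; by Theorem~\ref{bijection} this means $W\oplus P_n$ is not $\tau$-rigid, contradicting $W\oplus P_n\in\add T_{w_1^+}$. The remaining case $n\notin\supp P_1$ (where $W=P_1$) is excluded by the same type of obstruction applied to the summand $e_1\Lambda/e_1\Lambda(1-e_1)\Lambda$ of $T_{w_1^+}$, after reducing via Proposition~\ref{basicfact}(2) to the two-vertex situation of Lemma~\ref{1to2}. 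Hence $\sigma=\sigma_0$, i.e. $\sigma(i)=n+1-i$.

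\emph{Part (4).} By (1) and Lemma~\ref{impotantlemma}(1), $M_1^+=P_{\sigma(n)}=P_1$ and $M_n^-=P_{\sigma(1)}=P_n$. Suppose $n\notin\supp P_1$. Then $e_1\Lambda e_n=0$, so $e_1\Lambda(1-e_1-\cdots-e_{n-1})\Lambda=e_1\Lambda e_n\Lambda=0$ and $e_1\Lambda/e_1\Lambda(1-e_1-\cdots-e_{n-1})\Lambda=P_1$; but this is exactly the $k=n-1$ summand of $T_{s_{n-1}\cdots s_1}$, so $P_1=M_1^+\in\add T_{s_{n-1}\cdots s_1}=\add T_{s_n w_1^+}$, contradicting the definition of $M_1^+$. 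Hence $n\in\supp P_1$; moreover each summand of $T_{s_{n-1}\cdots s_1}$ is a quotient of $P_1$, so $\{1,\dots,n-1\}=\supp T_{s_{n-1}\cdots s_1}\subseteq\supp P_1$, and therefore $\supp P_1=Q_0$. The symmetric argument, with $w_n^-=s_1\cdots s_n$, $M_n^-=P_n$, and $T_{s_2\cdots s_n}=\bigoplus_{k=2}^n e_n\Lambda/e_n\Lambda(1-e_k-\cdots-e_n)\Lambda$ from Lemma~\ref{titoj}(2), gives $\supp P_n=Q_0$.

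The main obstacle is Part (1): the diamond/hexagon combinatorics of Lemmas~\ref{determiningarrow} and~\ref{localstroflambda} is invariant under $\sigma=\mathrm{id}$ as well, so ruling it out genuinely requires the explicit modules furnished by Lemma~\ref{impotantlemma} together with a non-$\tau$-rigidity obstruction in the spirit of Lemma~\ref{1to2}; keeping track of which indecomposable summand a mutation changes, and the split according to whether $n\in\supp P_1$, is the delicate point.
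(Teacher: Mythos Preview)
Your arguments for (2), (3) and (4) are correct. In fact your proof of (4) is cleaner than the paper's: once (1) gives $M_1^+=P_1$, the observation that $n\notin\supp P_1$ would force $W_{n-1}=P_1=M_1^+\in\add T_{s_n w_1^+}$, contradicting the very definition of $M_1^+$, is a nice shortcut compared with the paper's coset computation $\langle s_1,\dots,s_{n-1}\rangle w_0\cap\langle s_1,\dots,s_{n-1}\rangle=\emptyset$.

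The real problem is your Part~(1), Case~B. Assuming $\sigma=\mathrm{id}$ and $n\notin\supp P_1$ (so $e_1\Lambda e_n=0$), you propose to exclude this by ``reducing via Proposition~\ref{basicfact}(2) to the two-vertex situation of Lemma~\ref{1to2}'' applied to $X_1\oplus P_n$. But this reduction does not work: passing to $\Lambda/(e_2+\cdots+e_{n-1})$ gives a two-vertex algebra in which $e_1(\Lambda/(e))e_n$ is a quotient of $e_1\Lambda e_n=0$, so there is \emph{no} arrow $1\to n$ and Lemma~\ref{1to2} yields no obstruction; indeed $X_1\oplus P_n$ is genuinely $\tau$-rigid here (the degree $-1$ term of $\mathbf S(X_1)$ contains no $P_n$ summand because $\Top(e_1\Lambda(1-e_1)\Lambda)$ sees only vertices reachable from $1$, and $n$ is not among them). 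So Case~B is left open by your argument.

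The paper sidesteps this difficulty by never using $M_1^+$ for (1). It takes $i$ near $n/2$ and uses Lemma~\ref{impotantlemma}(2): under $\sigma=\mathrm{id}$ one gets that $M_{i+1}^+\oplus P_{i-1}$ is $\tau$-rigid, hence every map $P_i\to P_{i-1}$ factors through $P_{i+1}$; but the \emph{arrow} $\alpha_{i-1}\in e_{i-1}\Lambda e_i\setminus e_{i-1}(\Rad\Lambda)^2 e_i$ cannot so factor, since $e_{i-1}\Lambda e_{i+1}\cdot e_{i+1}\Lambda e_i\subset(\Rad\Lambda)^2$. The point is that an arrow is always nonzero, whereas your obstruction in Case~A relied on the long path $1\to n$ being nonzero, which is precisely what fails in Case~B. (If you want to rescue your approach: in Case~B one has $P_1=W_{n-1}\in\add T_{s_{n-1}\cdots s_1}$, so $s_{n-1}\cdots s_1\in\rho^{-1}(\sttilt_{P_1}\Lambda)=\langle s_2,\dots,s_n\rangle w_0$ by Lemma~\ref{keylemma2} with $\sigma=\mathrm{id}$; but $(s_{n-1}\cdots s_1)w_0(1)=s_{n-1}\cdots s_1(n+1)=n+1\neq 1$, a contradiction.)
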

\begin{proof}
We prove (1). We first consider the case that $n$ is odd. Let $i=\frac{n+1}{2}$.
By Lemma\;\ref{localstroflambda} (4), either 
(i)\;$\sigma(a)=n+1-a$ for any $a\in\{1,\dots,n\}$ or (ii)\;$\sigma(a)=a$ for any $a\in\{1,\dots,n\}$.
occurs. In particular, we have $\sigma(i)=i$. Now it is sufficient to show that $\sigma(i-1)=i+1$.
If not, then we have $\sigma(i-1)=i-1$.
Let a minimal projective presentation
\[P_{M^+_{i+1}}:=[P_i^r\to P_{i+1} (\to e_{i+1} \Lambda/e_{i+1}\Lambda e_i \Lambda=M^+_{i+1})]\] 
of $M^+_{i+1}$. By Lemma\;\ref{impotantlemma}, we conclude that
$M^+_{i+1}\oplus P_{\sigma(n-i)}=M^+_{i+1}\oplus P_{i-1}$ is $\tau$-rigid.
Therefore $\Hom_{\Kb(\proj \Lambda)}(P_{M^+_{i+1}},P_{i-1}[1])=0$. This implies that
$\alpha_{i-1}\in e_{i-1}\Lambda e_i$ factors through $i+1$. (Note that $r>0$.) Accordingly, we reach a contradiction.

Assume that $n$ is even and let $i=\frac{n}{2}$. It is sufficient to show that
$\sigma(i)=n-i+1=i+1$. If not, then $\sigma(i)=i$.
Consider a minimal projective presentation
\[P_{M^+_{i+1}}:=[P_i^r\to P_{i+1} (\to e_{i+1} \Lambda/e_{i+1}\Lambda e_i \Lambda=M^+_{i+1})]\] 
of $M^+_{i+1}$. Then as in the case that $n$ is odd,
we see that $\Hom_{\Kb}(P_{M^+_{i+1}},P_i[1])=0$.
This is a contradiction. 

We prove (2). Note that $w:=s_1(s_2s_1)\cdots(s_{n-1}\cdots s_1)=s_1\vee\cdots\vee s_{n-1}$ is a maximum element of
$\langle s_1,\dots,s_{n-1}\rangle$.
Let $M$ be a maximum element of $\sttilt_{P_n^-} \Lambda$. Then $M\geq T_{s_1}\vee\cdots\vee T_{s_{n-1}}=T_{w}.$
Since $[0,T_w]=\rho( [1,w])=\rho(\langle s_1,\dots,s_{n-1}\rangle) \simeq (\Sym_{n},\leq )$, one obtains that
\[\sttilt_{P_n^-} \Lambda=[0,T_w]=\rho (\langle s_1,\dots,s_{n-1}\rangle).  \]
Similarly, one sees the assertion (3).

We show (4). $\supp(P_1)\neq Q_0$ implies that $(P_1,P_n)$ is a $\tau$-rigid pair.
In particular, we have that $\sttilt_{P_1} \Lambda\cap \sttilt_{P_n^-}\neq \emptyset$.
By Lemma\;\ref{keylemma2} and (1), one obtains that
\[\sttilt_{P_1} \Lambda=\rho(\langle s_1,\dots,s_{n-1} \rangle w_0).\]
On the other hand, the assertion (2) of this Lemma implies that
\[\sttilt_{P_n^-}\Lambda=\rho(\langle s_1,\dots,s_{n-1} \rangle).\]
Note that for any element $w\in \langle s_1,\dots,s_{n-1} \rangle w_0$, we have $w(n+1)\neq n+1$.
Also note that for any element $w\in \langle s_1,\dots,s_{n-1}\rangle $, we have $w(n+1)= n+1$.
This shows that \[\langle s_1,\dots,s_{n-1} \rangle w_0\cap \langle s_1,\dots,s_{n-1} \rangle=\emptyset.\] 
We conclude that
\[\supp (P_1)=Q_0.\]
Similar argument implies that
\[\supp (P_n)=Q_0.\]
\end{proof}
\begin{lemma}
\label{impotantlemma2}
We have the following.
\begin{enumerate}[{\rm (1)}]
\item $P_1\oplus X_1$, $P_n\oplus X_n$ are $\tau$-rigid and $P_i\oplus M^{\pm}_i$ is $\tau$-rigid for any $i\neq 1,n$.
\item For any $i\neq 1,n$, we have a minimum projective presentation
\[P_{i-1}\stackrel{\alpha_{i-1}^*}{\to} P_i\to e_i \Lambda/e_i\Lambda e_{i-1}\Lambda=M^+_i  \]
of $e_i \Lambda/e_i\Lambda e_{i-1}\Lambda$ and
a minimum projective presentation
\[P_{i+1}\stackrel{\alpha_{i}}{\to} P_i\to e_i \Lambda/e_i\Lambda e_{i+1}\Lambda=M^-_i  \]
of $e_i \Lambda/e_i\Lambda e_{i+1}\Lambda$.
 Furthermore, we obtain that
\[\alpha_{i-1}^* \Lambda= e_i\Lambda e_{i-1}\Lambda,\ e_i \Lambda \alpha_{i-1}^*=e_{i}\Lambda e_{i-1},\ 
 \alpha_i\Lambda = e_i\Lambda e_{i+1} \Lambda\ \mathrm{and\ }e_i\Lambda \alpha_i=e_i\Lambda e_{i+1}.
\] 
\item We have a minimum projective presentation
\[P_{2}\stackrel{\alpha_{1}}{\to} P_1\to e_1 \Lambda/e_1\Lambda e_{2}\Lambda=X_1\]
of $X_1$. Moreover, we obtain that
\[\alpha_{1} \Lambda= e_1\Lambda e_{2}\Lambda\ \mathrm{and\ }e_1 \Lambda \alpha_{1}=e_{1}\Lambda e_{2}.\]
\item We have a minimum projective presentation
\[P_{n-1}\stackrel{\alpha_{n-1}^*}{\to} P_n\to e_n \Lambda/e_n\Lambda e_{n-1}\Lambda=X_n\]
of $X_n$. Furthermore, we obtain that
\[\alpha_{n-1}^* \Lambda= e_n\Lambda e_{n-1}\Lambda\ \mathrm{and\ }e_n \Lambda \alpha_{n-1}^*=e_{n}\Lambda e_{n-1}.\]
\end{enumerate}
\end{lemma}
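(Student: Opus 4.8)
The plan is to reduce all four assertions to the computation in the proof of Lemma~\ref{1to2}, fed by the $\tau$-rigidity statements of part~(1) and by the structural facts already in hand: $Q^{\circ}$ is the double quiver of type $A_n$ (Lemma~\ref{determiningarrow}\,(4)), $\sigma(i)=n-i+1$ and $\supp P_1=\supp P_n=Q_0$ (Lemma~\ref{pathha0jyanai}\,(1),(4)), and $M_i^{+}=e_i\Lambda/e_i\Lambda(1-e_i-\cdots-e_n)\Lambda$ for $i\neq1$, $M_i^{-}=e_i\Lambda/e_i\Lambda(1-e_i-\cdots-e_1)\Lambda$ for $i\neq n$, together with the corresponding non-emptiness statements (Lemma~\ref{impotantlemma}\,(2),(3)).

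I would first settle (1). Since $\supp X_1=\{1\}\subsetneq Q_0=\supp P_1$ and $X_1$ has simple top, $X_1$ is indecomposable and non-projective, so $M_1^{-}=X_1$; symmetrically $M_n^{+}=X_n$. As $\sigma(n-i+1)=i$, Lemma~\ref{impotantlemma}\,(2),(3) give $\sttilt_{M_i^{+}}\Lambda\cap\sttilt_{P_i}\Lambda\neq\emptyset$ for $i\neq1$ and $\sttilt_{M_i^{-}}\Lambda\cap\sttilt_{P_i}\Lambda\neq\emptyset$ for $i\neq n$; a module in such an intersection has $M_i^{\pm}\oplus P_i$ as a direct summand, and a direct summand of a support $\tau$-tilting module is $\tau$-rigid. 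In particular $P_i\oplus M_i^{\pm}$ is $\tau$-rigid for $i\neq1,n$, and the cases $i=1$, $i=n$ give the $\tau$-rigidity of $P_1\oplus X_1$ and $P_n\oplus X_n$.

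For (2), fix $i\neq1,n$. Because $Q^{\circ}$ is the type-$A_n$ double quiver, every path from $i$ to a vertex $j\le i-1$ splits, at its first return to $i-1$, as a path $i\to i-1$ followed by a path out of $i-1$; hence $e_i\Lambda(1-e_i-\cdots-e_n)\Lambda=e_i\Lambda e_{i-1}\Lambda$, so $M_i^{+}=e_i\Lambda/e_i\Lambda e_{i-1}\Lambda$ and $\ker(P_i\to M_i^{+})=e_i\Lambda e_{i-1}\Lambda$. The same splitting shows that any path from $i$ through $i-1$ to a vertex $j\neq i-1$ lies in $(e_i\Lambda e_{i-1}\Lambda)\Rad\Lambda$, whence $\Top(e_i\Lambda e_{i-1}\Lambda)\cong S_{i-1}^{\oplus r}$ and a minimal projective presentation of $M_i^{+}$ has the form $P_{i-1}^{r}\xrightarrow{p}P_i\to M_i^{+}\to0$ with $\Im p=e_i\Lambda e_{i-1}\Lambda$. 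From (1) we have $\Hom_{\Lambda}(P_i,\tau M_i^{+})=0$ (the only nontrivial part of $\tau$-rigidity of $P_i\oplus M_i^{+}$, since $\tau P_i=0$ and $M_i^{+}$ is $\tau$-rigid), and by Theorem~\ref{bijection} together with the argument of Lemma~\ref{1to2} this says precisely that every $f\in\Hom_{\Lambda}(P_{i-1}^{r},P_i)$ factors through $p$. Since $\alpha_{i-1}^{*}\in e_i\Lambda e_{i-1}$ does not lie in $(e_i\Lambda e_{i-1}\Lambda)\Rad\Lambda$, after rescaling we may take the first component of $p$ to be $\alpha_{i-1}^{*}$; factoring $(\alpha_{i-1}^{*},0,\dots,0)$ through $p$ produces $x\in e_i\Lambda e_i$ with $x\alpha_{i-1}^{*}=\alpha_{i-1}^{*}$ and $xg_k=0$ for the remaining components $g_k$ of $p$, so $x$ is a unit of $e_i\Lambda e_i$ and each $g_k=0$, forcing $r=1$. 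Thus the presentation reads $P_{i-1}\xrightarrow{\alpha_{i-1}^{*}}P_i\to M_i^{+}\to0$, giving $\alpha_{i-1}^{*}\Lambda=\Im p=e_i\Lambda e_{i-1}\Lambda$; and factoring an arbitrary $f\in\Hom_{\Lambda}(P_{i-1},P_i)\cong e_i\Lambda e_{i-1}$ through left multiplication by $\alpha_{i-1}^{*}$ gives $e_i\Lambda e_{i-1}=(e_i\Lambda e_i)\alpha_{i-1}^{*}=e_i\Lambda\alpha_{i-1}^{*}$. The presentation $P_{i+1}\xrightarrow{\alpha_i}P_i\to M_i^{-}\to0$ and its two relations follow by the symmetric argument under $j\mapsto n+1-j$, and (3), (4) are this same computation applied to $X_1=M_1^{-}$ and $X_n=M_n^{+}$, using the $\tau$-rigidity of $P_1\oplus X_1$ and $P_n\oplus X_n$ from (1).

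The delicate point is the middle step: one must verify that the minimal projective presentation of $M_i^{\pm}$ involves a \emph{single} indecomposable projective in degree $-1$, sitting at vertex $i\mp1$, since only then does the Lemma~\ref{1to2} factoring trick apply; this rests on both the path combinatorics in the type-$A_n$ double quiver (to compute $\Top(e_i\Lambda e_{i-1}\Lambda)$) and the correct translation, via $\mathbf{S}$, of $\tau$-rigidity of $P_i\oplus M_i^{\pm}$ into the factorization property.
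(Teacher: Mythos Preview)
Your argument is correct and follows essentially the same route as the paper: the core of parts (2)--(4) is exactly the factorization computation from Lemma~\ref{1to2}, fed by the $\tau$-rigidity from part~(1), and you carry it out the same way. The only noticeable difference is in part~(1): the paper establishes $\tau$-rigidity of $P_1\oplus X_1$ by showing $M_1^{+}=P_{\sigma(n)}=P_1$ and then reading $X_1\oplus P_1\in\add T_{w_1^{+}}$ off the mutation $T_{w_1^{+}}=T_{s_nw_1^{+}}\oplus M_1^{+}$, whereas you take the dual shortcut $M_1^{-}=X_1$, $M_n^{+}=X_n$ and invoke Lemma~\ref{impotantlemma}\,(2),(3) at $i=1,n$ directly; both are valid, and yours is arguably more uniform. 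You are also more explicit than the paper in justifying that the degree $-1$ term of the minimal presentation of $M_i^{+}$ is concentrated at vertex $i-1$ (the paper simply writes $P_{i-1}^r$ without comment), which is a genuine point that deserves the sentence you give it.
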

\begin{proof}
We prove (1). 
By Lemma\;\ref{pathha0jyanai} (2), we see that $n\not\in \supp(T_{s_n w_1^+})$ and $n\in \supp (T_{w_1^+})$.
This implies that $T_{ w_1^+}=T_{s_n w_1^+}\oplus M_1^+$ (see definition of $M_1^+$).
 By Lemma\;\ref{impotantlemma} and Lemma\;\ref{pathha0jyanai},
 we obtain that $M_1^+=P_1$.
In particular, $\add (T_{s_nw_1^+}\oplus P_1)\ni X_1\oplus P_1$ is $\tau$-rigid.
Similarly, we can check that $P_n\oplus X_n$ is $\tau$-rigid.
Also $P_i\oplus M_i^{\pm}$ are $\tau$-rigid by Lemma\;\ref{impotantlemma} (2), (3) and Lemma\;\ref{pathha0jyanai} (1).

We show (2). Let 
\[\bigoplus_{t=1}^r P_{i-1}^{(t)}=P_{i-1}^r \stackrel{f}{\to} P_i\to e_i \Lambda/e_i\Lambda e_{i-1}\Lambda=M^+_i  \]
be a minimal projective presentation
of $M^+_i=e_i \Lambda/e_i\Lambda e_{i-1}\Lambda$.
 (1) implies that \[\Hom_{\Kb(\proj \Lambda)}(P_{M^+_{i}},P_{i}[1])=0.\]
 Now we put $f=(f^{(t)}:P_{i-1}^{(t)}\to P_i)$ and consider $\phi\in\Hom_{\Kb(\proj \Lambda)}(P_{M^+_i}, P_i[1] ) $
 given by $\varphi^{(t)}:P_{i-1}^{(t)}\to P_i$, where
 $\varphi^{(t)}=\left\{
 \begin{array}{cl}
 \alpha^*_{i-1} & t=1 \\
 0& t\neq 1\\
 \end{array}\right.
 $.
 Then there exists $h\in \End_{\Lambda}(P_i)$ such that
 \[(\ast)\ \  h\circ f^{(t)}=\varphi^{(t)}\]
 for any $t$. This shows that $h$ has to be an isomorphism and $r=1$.
 Let $x=f(e_{i-1})$ and $y=h(e_i)$. Then $x\Lambda=e_i\Lambda e_{i-1} \Lambda$
 and $yx=\alpha_{i-1}^*$.
 Since $x\Lambda=e_i\Lambda e_{i-1} \Lambda$, there exists $y'\in e_{i-1}\Lambda e_{i-1}\setminus \Rad(e_{i-1} \Lambda e_{i-1})$
 such that $xy'=\alpha_{i-1}^*$. Hence we obtain
 \[\alpha_{i-1}^*\Lambda=xy' \Lambda =x\Lambda=e_i \Lambda e_{i-1} \Lambda.\]
   $\Hom_{\Kb}(P_{M^+_{i}},P_{i}[1])=0$ implies that
  for any morphism  $g$ from $P_{i-1}$ to $P_i$, there exists $h'\in \End_{\Lambda}(P_i)$ such that $g=h'\circ f$. This says that
  $e_i \Lambda e_{i-1}=e_i \Lambda x$.
 Therefore, we see that
 \[e_i\Lambda\alpha^*_{i-1}=e_i \Lambda yx=e_i \Lambda x= e_i \Lambda e_{i-1}.\] 
 By applying same argument to the minimum projective presentation 
 \[\bigoplus_{t=1}^r P_{i+1}^{(t)}=P_{i+1}^r \stackrel{f}{\to} P_i\to e_i \Lambda/e_i\Lambda e_{i+1}\Lambda=M^-_i\]
 of $M_i^-$, we have that $r=1$ and
 \[\alpha_i\Lambda  = e_i\Lambda e_{i+1} \Lambda,\ e_i\Lambda \alpha_i=e_i\Lambda e_{i+1}.\]
We now get the assertion (2).

Similarly one obtains (3) and (4).
\end{proof}
By Lemma\;\ref{pathha0jyanai} and Lemma\;\ref{impotantlemma2}, we have the following.
\begin{proposition}
\label{onlyifpart}
$\sttilt \Lambda\simeq (\Sym_{n+1},\leq)$ only if $\Lambda$ satisfies the Condition\;\ref{nscd}. 
\end{proposition}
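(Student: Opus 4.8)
The plan is to run the induction on $n=|Q_0|$ set up just before the statement: the case $n=2$ is Proposition~\ref{mr}, so I would assume $n\ge 3$, that Theorem~\ref{mainresult} holds for algebras with fewer than $n$ vertices, and that $\Lambda=kQ/I$ satisfies $\sttilt\Lambda\simeq(\Sym_{n+1},\leq)$. After fixing an isomorphism $\rho$, writing $T_w=\rho(w)$, and using $\dip(0)=\{X_a\mid a\in Q_0\}=\{T_{s_i}\mid 1\le i\le n\}$, I would relabel so that $Q_0=\{1,\dots,n\}$ and $T_{s_i}=X_i$. The proof then amounts to reading Conditions~\ref{nscd}(a), (b), (c) off the structural lemmas, and this assembly is what I would write out. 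For Condition~\ref{nscd}(a), I would invoke Lemma~\ref{determiningarrow}: for $i\ne j$ the interval $[0,X_i\vee X_j]$ is one of the two shapes in Lemma~\ref{determiningarrow}(1), it is a finite connected component of $\sttilt\Lambda/(1-e_i-e_j)$, and the two-vertex classification (Proposition~\ref{mr}) then pins down, via Lemma~\ref{determiningarrow}(2)--(4), that $Q^\circ$ is the double quiver of type $A_n$ and that the non-loop arrows of $Q$ are exactly $\alpha_i\colon i\to i+1$ and $\alpha_i^{*}\colon i+1\to i$ for $1\le i\le n-1$.

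For Condition~\ref{nscd}(b), I would first pin down the permutation $\sigma$ given by $T_{s_iw_0}=Y_{\sigma(i)}$: Lemma~\ref{localstroflambda}(4) forces $\sigma$ to be an automorphism of $Q^\circ$, and Lemma~\ref{pathha0jyanai}(1) gives $\sigma(i)=n+1-i$. Then, using Lemmas~\ref{keylemma}, \ref{keylemma1'}, \ref{keylemma2} to identify $\sttilt_{M_i^{\pm}}\Lambda$ and $\sttilt_{P_{\sigma(i)}}\Lambda$ with explicit cosets of parabolic subgroups of $\Sym_{n+1}$, Lemma~\ref{impotantlemma} yields $M_1^{+}=P_{\sigma(n)}=P_1$, $M_n^{-}=P_{\sigma(1)}=P_n$, the explicit form of $M_i^{\pm}$ for $i\ne 1,n$, and the $\tau$-rigidity of the pairs $M_i^{\pm}\oplus P_i$. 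Feeding these $\tau$-rigidity facts through the $\mathbf{S}$-correspondence of Theorem~\ref{bijection} forces, as carried out in Lemma~\ref{impotantlemma2}, the minimal projective presentation of each $M_i^{\pm}$ ($i\ne 1,n$) and of $X_1,X_n$ to use a single copy of $P_{i\mp 1}$ and to satisfy
\[
\alpha_i\Lambda=e_i\Lambda e_{i+1}\Lambda,\quad e_i\Lambda\alpha_i=e_i\Lambda e_{i+1},\quad \alpha_i^{*}\Lambda=e_{i+1}\Lambda e_i\Lambda,\quad e_{i+1}\Lambda\alpha_i^{*}=e_{i+1}\Lambda e_i
\]
for every $i$; multiplying the first two on the right by $e_{i+1}$ and the last two by $e_i$ gives precisely $\alpha_i\Lambda e_{i+1}=e_i\Lambda e_{i+1}=e_i\Lambda\alpha_i$ and $\alpha_i^{*}\Lambda e_i=e_{i+1}\Lambda e_i=e_{i+1}\Lambda\alpha_i^{*}$, which, since these are all the non-loop arrows, is Condition~\ref{nscd}(b).

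For Condition~\ref{nscd}(c), I would start from $\supp(P_1)=\supp(P_n)=Q_0$ (Lemma~\ref{pathha0jyanai}(4)). Since $Q^\circ$ is a linear $A_n$, every path out of $1$ reaching $j$ meets each $i<j$ and every path out of $n$ reaching $j$ meets each $i>j$, so $e_1\Lambda e_j=e_1\Lambda e_i\Lambda e_j$ and $e_n\Lambda e_j=e_n\Lambda e_i\Lambda e_j$ hold in $\Lambda$; hence $e_i\Lambda e_j\ne 0$ for all $i,j$, i.e.\ $\supp(P_i)=Q_0$ for every $i$. The same linearity factors $e_i\Lambda e_j$ through all intermediate idempotents, and iterating Condition~\ref{nscd}(b) along that factorisation collapses it to $e_i\Lambda e_j=e_i\Lambda\,w_j^i$; since $e_i\Lambda e_j\ne0$ this forces $w_j^i\ne0$, completing the induction.

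The step I expect to be the main obstacle is Condition~\ref{nscd}(b): passing from the purely combinatorial hypothesis $\sttilt\Lambda\simeq(\Sym_{n+1},\leq)$ to the ideal-theoretic identities on the arrows requires correctly identifying the indecomposable summands $M_i^{\pm}$ of the relevant support $\tau$-tilting modules and the permutation $\sigma$, and then arguing that $\tau$-rigidity of $M_i^{\pm}\oplus P_i$ (together with the $\supp$-information) pins the minimal projective presentation of $M_i^{\pm}$ down to the exact form above --- this is the substance of Lemmas~\ref{impotantlemma} and \ref{impotantlemma2}. A secondary subtlety is that Lemma~\ref{titoj}'s explicit description of $T_{s_j\cdots s_i}$ degenerates at the extremal pair $(1,n)$, which is exactly why $M_1^{+}$ and $M_n^{-}$ come out as the projectives $P_1,P_n$ rather than the ``generic'' module, and why the non-vanishing input needed for Condition~\ref{nscd}(c) must be routed through $\supp(P_1)=\supp(P_n)=Q_0$ rather than through that formula.
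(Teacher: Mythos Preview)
Your proposal is correct and follows essentially the same approach as the paper: both deduce Condition~\ref{nscd}(a) from Lemma~\ref{determiningarrow}(4), Condition~\ref{nscd}(b) from Lemma~\ref{impotantlemma2} (with the chain Lemmas~\ref{localstroflambda}, \ref{keylemma}--\ref{keylemma2}, \ref{impotantlemma}, \ref{pathha0jyanai}(1) in the background), and Condition~\ref{nscd}(c) from $\supp(P_1)=\supp(P_n)=Q_0$ (Lemma~\ref{pathha0jyanai}(4)) combined with the already-established Condition~(b). The only cosmetic difference is in the last step: the paper reduces (c) to showing the two extremal paths $\alpha_1\cdots\alpha_{n-1}$ and $\alpha_{n-1}^{*}\cdots\alpha_1^{*}$ are nonzero (any shorter $w_j^i$ being a subword of one of these), whereas you argue directly that $e_i\Lambda e_j\neq 0$ for all $i,j$ by factoring $e_1\Lambda e_j$ and $e_n\Lambda e_j$ through the intermediate idempotent $e_i$; these are equivalent formulations of the same deduction.
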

\begin{proof}
Condition\;\ref{nscd} (a) follows from Lemma\;\ref{determiningarrow} (4) and Condition\;\ref{nscd} (b)
follows from Lemma\;\ref{impotantlemma2}. Hence
it is sufficient to show that
 \[\alpha_1\cdots\alpha_{n-1}\neq 0\neq \alpha_{n-1}^*\cdots\alpha_1^*.\]
If  $\alpha_1\cdots\alpha_{n-1}= 0$, then Lemma\;\ref{impotantlemma2} implies that
$n\not\in\supp(P_1)$. This contradicts to Lemma\;\ref{pathha0jyanai} (4). Therefore,
we obtain \[\alpha_1\cdots\alpha_{n-1}\neq 0.\]
Likewise, we also obtain \[\alpha_{n-1}^*\cdots\alpha_1^*\neq 0.\] 
\end{proof}
\section{Some remarks on g-vectors}
In this section,  we see that for two algebras satisfying the Condition\;\ref{nscd},
 an poset isomorphism $\sttilt \Lambda$ from $\sttilt \Gamma$ preserves g-vectors.
\begin{proposition}
\label{propforgvec}
Let $\rho$, $\rho'$ be poset isomorphisms from $(\Sym_{n+1},\leq)$ to $\sttilt \Lambda$.
If $\rho(s_i)=\rho'(s_i)$ holds for any $i$, then we have $\rho=\rho'$. 
\end{proposition}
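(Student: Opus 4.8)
The plan is to show that a poset isomorphism $\rho\colon(\Sym_{n+1},\leq)\to\sttilt\Lambda$ is completely determined by its values on the simple reflections $s_1,\dots,s_n$, by using the join operation together with the fact that $\sttilt\Lambda$ has the lattice property inherited from $(\Sym_{n+1},\leq)$. The key observation is that $(\Sym_{n+1},\leq)$ is generated as a join-semilattice by $\{s_1,\dots,s_n\}$ in the following sense: every $w\in\Sym_{n+1}$ with $w\neq 1$ is the join of the atoms below it, and more generally every interval $[1,w]$ is itself a lattice in which $w$ is the join of certain atoms; since a poset isomorphism preserves joins (and meets), the element $\rho(w)$ is forced once we know $\rho$ on the atoms. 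But actually even this is not quite immediate, because an atom $s_i$ need not lie below a given $w$; so the argument must be set up inductively along saturated chains.

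First I would recall that $\rho$, being an isomorphism of posets, commutes with $\vee$ and $\wedge$ whenever these exist, and that by the lattice property of $(\Sym_{n+1},\leq)$ (stated just after the definition of the weak order, via \cite[Section 3.2]{BjB}) all finite joins exist. Next I would argue by induction on $\ell(w)$ that $\rho(w)=\rho'(w)$. The base cases $\ell(w)=0$ (where $\rho(1)=\rho'(1)=0$, the minimum) and $\ell(w)=1$ (the hypothesis) are given. For the inductive step, take $w$ with $\ell(w)=\ell\geq 2$ and fix a reduced expression $w=s_{i_\ell}\cdots s_{i_1}$. Set $w'=s_{i_{\ell-1}}\cdots s_{i_1}$, so $\ell(w')=\ell-1$ and $w=s_{i_\ell}w'\gtrdot w'$. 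The subtlety is that $w$ is a direct successor of $w'$, so knowing $\rho(w')$ alone does not pin down $\rho(w)$; I need a second element below $w$. For this I would use Proposition~\ref{fresultsym}(3): since $w'\leq w$ means precisely $w'\lessdot s_{i_\ell}w'$, consider instead the meet description, or more directly: $w$ covers $w'$, and $w$ also covers (or lies above) at least one element obtained by another reduced word, UNLESS $w'$ is the unique lower cover, i.e. unless $w$ is join-irreducible. So I would split into two cases.

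\textbf{Case 1: $w$ is not join-irreducible}, i.e. $w$ has at least two lower covers $w',w''$ in $(\Sym_{n+1},\leq)$. Then $w=w'\vee w''$ in the lattice (in a finite graded lattice, an element with $\geq 2$ lower covers of rank $\ell(w)-1$ is the join of any two of them, because the interval $[1,w]$ has $w$ as its top and the two covers are distinct coatoms of that interval whose join must exceed both). By induction $\rho(w')=\rho'(w')$ and $\rho(w'')=\rho'(w'')$, hence $\rho(w)=\rho(w')\vee\rho(w'')=\rho'(w')\vee\rho'(w'')=\rho'(w)$. \textbf{Case 2: $w$ is join-irreducible}, with unique lower cover $w'$. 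Here I use the left-multiplication symmetry: the join-irreducibles of $(\Sym_{n+1},\leq)$ are exactly the elements of the form $s_j s_{j+1}\cdots s_k$ or $s_j s_{j-1}\cdots s_k$ — more precisely, $w$ join-irreducible means $w$ has a unique descent on the left, equivalently $w^{-1}(a)>w^{-1}(a+1)$ for exactly one $a$; such $w$ satisfies $w=s_a\cdot w'$ with $w'<w$, and by Theorem~\ref{matsumoto} there is then a distinct reduced word route realizing $w$ as $w'\vee s_a$ once one checks $w'\ngeq s_a$, which holds because $s_a\leq w'$ would give $\ell(s_aw')<\ell(w')$, contradicting $w=s_aw'$ with $\ell(w)=\ell(w')+1$. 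Thus again $w=w'\vee s_a$, and $\rho(w)=\rho(w')\vee\rho(s_a)=\rho'(w')\vee\rho'(s_a)=\rho'(w)$ by induction and the hypothesis.

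The main obstacle I anticipate is the bookkeeping in Case 2: verifying cleanly that a join-irreducible $w\neq s_i$ can always be written as $w=w'\vee s_a$ with $w'$ of strictly smaller length and $s_a$ a simple reflection, both genuinely below $w$, and that this join is actually equal to $w$ and not something larger. This is where Proposition~\ref{fresultsym}(3) (the identity $\bigvee_{j\in J}(s_jw)=w_0(J)w$ when $w\leq s_jw$ for all $j\in J$, specialized to $|J|=1$ giving $s_a\vee w'=s_aw'$ when $w'\le s_aw'$) does the work, together with the covering relation $s_aw'\gtrdot w'$. Everything else — preservation of joins under a poset isomorphism, the lattice property, graded-ness — is quoted from the excerpt. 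Once the induction closes we conclude $\rho=\rho'$.
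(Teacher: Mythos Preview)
Your Case~1 is fine: in a graded lattice, an element with two distinct lower covers is their join. The trouble is Case~2.

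The claim that a join-irreducible $w$ with unique lower cover $w'=s_aw$ satisfies $w=w'\vee s_a$ is false. Take $w=s_2s_1\in\Sym_3$: its only lower cover in the left weak order is $w'=s_1$, so $a=2$. But $s_2\not\leq s_2s_1$ (the only reduced expression of $s_2s_1$ ends in $s_1$, not $s_2$), and $w'\vee s_2=s_1\vee s_2=w_0\neq w$. Indeed $s_1$ is the \emph{only} atom below $s_2s_1$, so $s_2s_1$ is not a join of smaller elements at all---that is exactly what join-irreducible means. The invocation of Proposition~\ref{fresultsym}(3) with $|J|=1$ is vacuous (it reduces to $s_aw'=s_aw'$), and the line ``$s_a\leq w'$ would give $\ell(s_aw')<\ell(w')$'' confuses left and right descents: in the left weak order $s_a\leq w'$ means $w'$ has a reduced expression \emph{ending} in $s_a$, which controls $\ell(w's_a)$, not $\ell(s_aw')$.

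Because the weak order has non-atom join-irreducibles, no argument using only joins of shorter elements can succeed. The paper's proof supplies the missing mechanism: to pin down $s_jw$ from data at length $\leq\ell(w)$, it introduces the auxiliary element $w'=s_js_{i_{\ell-1}}\cdots s_{i_1}$ (replacing the leftmost letter of a reduced word for $w$). When $\ell(w')=\ell(w)-2$ one gets a genuine join expression; but when $\ell(w')=\ell(w)$ one instead identifies $s_jw$ as the \emph{unique} element covering $w$ inside the interval $[w,\,w\vee w']$. This ``unique cover in a prescribed interval'' is a poset-theoretic operation preserved by any isomorphism, and it is precisely what handles the join-irreducible elements your argument cannot reach. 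For $w=s_1$, $j=2$ one takes $w'=s_2$, and $s_2s_1$ is recovered as the unique cover of $s_1$ in $[s_1,w_0]$.
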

\begin{proof}
We show the following claim.
\begin{claim}
\label{propforgvecclaim}
Let $w\in \Sym_{n+1}$ and $s_{i_{\ell}}\cdots s_{i_1}$ a reduced expression of $w$.
Assume that $\ell(s_jw)=\ell+1$ and put $w'=s_js_{i_{\ell-1}}\cdots s_{i_1}$. Then we have the following.
\begin{enumerate}[{\rm (a)}]
\item If $\ell(w')=\ell$, then $s_jw$ is a unique element of $\dip(w)\cap[w, w\vee w']$.
\item If $\ell (w')=\ell-2$, then $s_jw=s_{i_{\ell}}w'\vee s_j w'$.
\end{enumerate} 
\end{claim}
\begin{pfclaim}
We show the assertion (a). In the case that $|i_{\ell}-j|>1$, it is obvious that $s_jw=w\vee w'$.
Thus we may assume that $|i_{\ell}-j|=1$. Note that $\ell(s_{i_{\ell}}s_jw)=\ell(s_js_{i_{\ell}}w')=\ell+2$.
If not, then $\ell(s_{i_{\ell}}s_jw)=\ell(s_js_{i_{\ell}}w')=\ell$ and $s_{i_{\ell}}s_jw=s_js_{i_{\ell}}w'=s_jw\wedge s_{i_{\ell}}w'$.
Thus  we have that $s_{i_{\ell-1}}\cdots s_{i_1}\leq s_jw,s_{i_{\ell}}w'$ and  
\[s_{i_{\ell-1}}\cdots s_{i_1}<s_{i_{\ell}}s_jw.\] By considering lengths, we see that there exists $k$ such that
$s_ks_{i_{\ell-1}}\cdots s_{i_1}=s_{i_{\ell}}s_jw$. Hence $s_k=s_{i_{\ell}}s_js_{i_{\ell}}$, this is a contradiction.

Therefore, there are two paths 
\[s_{i_{\ell}}s_jw\to s_j w\to w \to s_{i_{\ell-1}}\cdots s_{i_1}\ \mathrm{and}
\ s_{i_{\ell}}s_jw=s_js_{i_{\ell}}w'\to s_{i_{\ell}}w'\to w' \to s_{i_{\ell-1}}\cdots s_{i_1}. \] 
This gives the assertion (a).

Next we show the assertion (b). Since $\ell(w')=\ell-2$, we have that 
$|i_{\ell}-j|=1$ and $\ell(s_jw)=\ell(s_{i_{\ell}}s_js_{i_{\ell}}w')=\ell+1$.
Then we have two paths 
\[s_jw\to w \to s_{i_{\ell}-1}\cdots s_{i_1}=s_jw'\to w'\ \mathrm{and\ }
s_j w=s_{i_{\ell}}s_js_{i_{\ell}}w'\to s_js_{i_{\ell}}w'\to s_{i_{\ell}}w'\to w'.\]
This implies the assertion (b).
\end{pfclaim}

Claim\;\ref{propforgvecclaim} says that an  poset automorphism $\varphi$ is uniquely determined by 
$\varphi(s_1),\dots,\varphi(s_n)$. In, particular, if $\varphi(s_i)=s_i$ holds for any $i$, then $\varphi=\mathrm{id}$. 
This gives the assertion.
\end{proof}
\begin{corollary}
\label{propforgvec2}
Let $\Lambda=kQ/I$, $\Gamma=kQ'/I'$ be algebras satisfying the Condition\;\ref{nscd}. 
Assume that $Q^{\circ}$ and $(Q')^{\circ}$ are the double quiver of
$1\to 2\to \cdots\to n.$
 Then there is 
a unique poset isomorphism $\rho:\sttilt \Lambda\stackrel{\sim}{\to}\sttilt \Gamma$
satisfying $\rho (e_i\Lambda/e_i\Lambda(1-e_i) \Lambda)=e_i\Gamma/e_i\Gamma(1-e_i)\Gamma$.
Moreover, $\rho$ preserves g-vectors i.e. we have that
\[g^T=g^{\rho (T)},\]
for any $T\in \sttilt \Lambda.$
\end{corollary}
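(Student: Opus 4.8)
The plan is to combine three ingredients already available in the paper: the main theorem (both algebras $\Lambda$ and $\Gamma$ have $\sttilt$ isomorphic to $(\Sym_{n+1},\leq)$ since they satisfy Condition \ref{nscd}), the rigidity of a poset isomorphism of $(\Sym_{n+1},\leq)$ under fixing the atoms (Proposition \ref{propforgvec}), and the explicit description of the indecomposable $2$-term presilting objects as the complexes $X_{\mathbf i}$ attached to the combinatorial data $\mathbf i\in\Xi$ (Lemma \ref{indecomposability}, Proposition \ref{rigidconditionforx}, and the counting argument around Lemma \ref{casepreproj}). First I would fix poset isomorphisms $\rho_\Lambda\colon(\Sym_{n+1},\leq)\xrightarrow{\sim}\sttilt\Lambda$ and $\rho_\Gamma\colon(\Sym_{n+1},\leq)\xrightarrow{\sim}\sttilt\Gamma$; by the discussion at the start of Section 3 and Proposition \ref{propforgvec}, I may normalize both so that $\rho_\Lambda(s_i)=e_i\Lambda/e_i\Lambda(1-e_i)\Lambda=X_i(\Lambda)$ and $\rho_\Gamma(s_i)=X_i(\Gamma)$. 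Then $\rho:=\rho_\Gamma\circ\rho_\Lambda^{-1}\colon\sttilt\Lambda\to\sttilt\Gamma$ is a poset isomorphism carrying $X_i(\Lambda)$ to $X_i(\Gamma)$, and Proposition \ref{propforgvec} (applied to the automorphism $\rho_\Gamma^{-1}\circ(\text{any other such }\rho)\circ\rho_\Gamma$, which fixes all $s_i$) shows $\rho$ is the unique such isomorphism. This establishes existence and uniqueness.

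For the $g$-vector claim, the key observation is that the bijection $\mathbf S$ of Theorem \ref{bijection} together with Lemma \ref{indecomposability}, Proposition \ref{rigidconditionforx} and Lemma \ref{casepreproj} shows that, for \emph{any} algebra satisfying Condition \ref{nscd} with $Q^\circ$ the double quiver of $1\to\cdots\to n$, the indecomposable summands of support $\tau$-tilting modules correspond under $\mathbf S$ exactly to the complexes $X_{\mathbf i}(\Lambda)$, $\mathbf i\in\Xi$; moreover the poset structure is determined purely by the combinatorial vanishing condition $\Hom_{\Kb(\proj\Lambda)}(X_{\mathbf i},X_{\mathbf j}[1])=0$ of Proposition \ref{rigidconditionforx}, which depends only on $\mathbf i,\mathbf j$ and not on $\Lambda$. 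Hence there is a canonical labelling of the indecomposable $2$-term presilting objects of $\Kb(\proj\Lambda)$ by $\Xi$, compatible with the poset of $2$-silting complexes, and likewise for $\Gamma$; the normalized isomorphism $\rho$ must respect these labellings, because on atoms $X_i$ corresponds to $\mathbf i=\{i-1<i<i+1\}$ (with the convention $P_0=P_{n+1}=0$), and by Claim \ref{propforgvecclaim} the whole isomorphism is forced once the atoms are pinned down. Therefore $\rho$ sends the indecomposable $2$-term presilting object labelled $\mathbf i$ over $\Lambda$ to the one labelled $\mathbf i$ over $\Gamma$. Finally I would compute the $g$-vector of $X_{\mathbf i}$ directly from its definition: for $\mathbf i=\{i_0<i_1<\cdots<i_{2m}\}$ one reads off from the shape of the complex $[X_{\mathbf i}^{-1}\to X_{\mathbf i}^0]$ that $[X_{\mathbf i}^0]-[X_{\mathbf i}^{-1}]=\sum_{t}[e_{i_{2t-1}}\Lambda]-\sum_{t}[e_{i_{2t}}\Lambda]$ in $K_0(\proj\Lambda)$, and the same formula holds verbatim over $\Gamma$ since it depends only on $\mathbf i$; so $g^{X_{\mathbf i}(\Lambda)}=g^{X_{\mathbf i}(\Gamma)}$, and additivity of $g$-vectors on direct sums (plus the injectivity in Theorem \ref{gvector}, if needed to identify summands) gives $g^T=g^{\rho(T)}$ for every $T\in\sttilt\Lambda$.

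The main obstacle I anticipate is making rigorous the claim that the labelling of indecomposable $2$-term presilting objects by $\Xi$ is \emph{intrinsic} and compatible with $\rho$: one must check that $\rho$, which a priori is only a poset isomorphism of $\sttilt$, actually matches up indecomposable direct summands in the way dictated by the $\Xi$-labelling. This follows because an indecomposable presilting $X_{\mathbf i}$ is recovered from the poset as (a summand controlling) the covering relations in $\H(\sttilt\Lambda)$ — concretely, $X_{\mathbf i}$ appears as the "new" summand along a specific Hasse arrow, and the combinatorics of which arrows involve which $\mathbf i$ is governed entirely by Proposition \ref{rigidconditionforx} and Theorem \ref{basicprop}(2), hence is transported by $\rho$. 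Once that compatibility is in place, the $g$-vector computation itself is routine, as is the uniqueness statement via Proposition \ref{propforgvec}.
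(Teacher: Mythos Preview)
Your proposal is correct and reaches the same conclusion as the paper, but takes a more circuitous route. The paper's proof is very short: it \emph{defines} $\rho$ directly on indecomposable $2$-term presilting summands by $X_{\mathbf i}(\Lambda)\mapsto X_{\mathbf i}(\Gamma)$, observes that Proposition~\ref{rigidconditionforx} makes the vanishing of $\Hom(X_{\mathbf i},X_{\mathbf j}[1])$ a purely combinatorial (hence algebra-independent) condition on $(\mathbf i,\mathbf j)$, so this assignment extends to a poset isomorphism $\sttilt\Lambda\xrightarrow{\sim}\sttilt\Gamma$; since $X_i$ corresponds to $\mathbf i=\{i-1<i<i+1\}$, this $\rho$ sends $e_i\Lambda/e_i\Lambda(1-e_i)\Lambda$ to $e_i\Gamma/e_i\Gamma(1-e_i)\Gamma$ by construction, and uniqueness follows from Proposition~\ref{propforgvec}. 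The $g$-vector preservation is then immediate with no separate argument needed, because $\rho$ is \emph{built} to match $\Xi$-labels and $g^{X_{\mathbf i}}$ depends only on $\mathbf i$. By contrast, you first construct $\rho$ abstractly through $(\Sym_{n+1},\leq)$ and then must argue---as you yourself flag as the ``main obstacle''---that this abstract $\rho$ respects the $\Xi$-labelling of indecomposable summands along Hasse edges. That argument can be completed, but the paper sidesteps it entirely by reversing the order of construction: build $\rho$ via $\Xi$ first, then verify the atom condition, rather than build via atoms and then recover $\Xi$-compatibility afterwards.
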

\begin{proof}
By Proposition\;\ref{rigidconditionforx}, the map $X_{\mathbf{i}}(\Lambda)\to X_{\mathbf{i}}(\Gamma)$ 
induces a desired poset isomorphism.
Uniqueness follows from Proposition\;\ref{propforgvec}. 
\end{proof}

\end{document}